\documentclass[11pt]{amsart}

\usepackage[margin=1in]{geometry}
\usepackage{amsmath,amssymb,amsthm,mathtools}
\usepackage{algorithm}
\usepackage{algpseudocode}
\usepackage{xcolor}
\usepackage{tikz}
\usetikzlibrary{arrows.meta}
\usepackage[hidelinks]{hyperref}

\makeatletter
\renewcommand\subsection{\@startsection{subsection}{2}{\z@}%
  {2.5ex \@plus 1ex \@minus .2ex}%
  {-1em}%
  {\normalfont\normalsize\bfseries}}
\def\l@subsection{\@tocline{2}{0pt}{1pc}{5pc}{\small}}
\makeatother

\newtheorem{lemma}{Lemma}[section]
\newtheorem{theorem}[lemma]{Theorem}
\newtheorem{proposition}[lemma]{Proposition}
\newtheorem{definition}[lemma]{Definition}
\newtheorem{corollary}[lemma]{Corollary}
\newtheorem{problem}[lemma]{Problem}
\theoremstyle{remark}
\newtheorem{remark}[lemma]{Remark}
\theoremstyle{plain}
\newtheorem*{claim}{Claim}

\newcommand{\R}{\mathbb R}
\newcommand{\norm}[1]{\left\lVert #1\right\rVert}
\newcommand{\abs}[1]{\left\lvert #1\right\rvert}
\newcommand{\FM}{\operatorname{FM}}

\algrenewcommand\algorithmicrequire{\textbf{Input:}}
\algrenewcommand\algorithmicensure{\textbf{Output:}}

\title[Level-set entropy and sparse embeddings]{Level-set entropy and sparse randomized embeddings}
\author{Konstantin Tikhomirov}
\address{Department of Mathematical Sciences, Carnegie Mellon University}
\email{ktikhomi@andrew.cmu.edu}
\date{}

\begin{document}

\begin{abstract}
Let $\Pi$ be a $k\times n$ sparse random matrix.  For a fixed
$r$-dimensional subspace $V\subset\R^n$, let $U_V:\R^r\to\R^n$ denote an isometry from $\R^r$
onto $V$.  The product $\Pi U_V$ is a central model in randomized
dimension reduction and has been studied primarily through trace and Gaussian comparison inequalities.  In this work,
we develop an approach to the spectral norm
of the matrix product $\Pi U_V$, based on entropy estimates for
level sets of vectors $x\in V$.
Combining the method
with existing estimates, we show the following.
Assume that
\[
    k\ge C\,r(\log\log r)^2,\qquad
    p\ge (\log k)/k.
\]
Let $\Pi$ be a $k\times n$ matrix with i.i.d. entries
equidistributed with the product $b\,\xi$, where $b$ is a Bernoulli($p$)
random variable and $\xi$ is mean-zero, independent of $b$, and satisfies
$|\xi|\le1$ almost surely.
Then with high probability
\[
    \|\Pi U_V\|\le C\sqrt{kp}.
\]
Matching results hold for other random models with negatively associated entries.
\end{abstract}

\maketitle

\setcounter{tocdepth}{1}
\tableofcontents

\section{Introduction}

\subsection{Literature Overview}

Let $V\subset\mathbb R^n$ be a fixed $r$-dimensional subspace, and let
$\Pi:\mathbb R^n\to\mathbb R^k$ be a random linear map.  A basic question in
randomized dimension reduction is to quantify the action of $\Pi$ on the whole
of $V$.  
This question has
become a standard component of the modern theory of randomized numerical
linear algebra
\cite{Sarlos06,HalkoMartinssonTropp11,
MartinssonTropp20,Woodruff14}.

Throughout the paper,
$U_V:\R^{r}\to\R^n$ is an isometry from $\R^{r}$ to $V$. 
In context of the above question, one seeks information about the singular values of
$\Pi U_V$, independently of the structure of $V$.
The random map $\Pi$ is an {\it Oblivious Subspace Embedding\footnote{In this paper we use the term OSE for the ``flat'' matrix orientation ($k\times n$). In the literature, the OSE matrix is often defined with the transposed orientation $n\times k$.}} (OSE) \cite{NelsonNguyen13} with dimension parameter $r$
and distortion $\varepsilon>0$, if
for every fixed $r$--dimensional subspace $V$, we have
\begin{equation}\label{eq: NN OSE}
(1+\varepsilon)^{-1} \leq s_{\min}(\Pi U_V)\leq \|\Pi U_V\|\leq (1+\varepsilon)\quad\mbox{with probability close to one,}
\end{equation}
with $s_{\min}(\cdot)$ denoting the smallest singular value of the corresponding matrix.
Subspace embeddings are used to
compress regression problems, construct preconditioners,
and obtain low-rank approximations.  
As an illustration, for an OSE matrix $\Pi$ and any $n\times d$ matrix $A$ of rank $r$,
the non-trivial singular values $\|A\|=s_1(A)\geq s_2(A)\geq \dots\geq s_r(A)$ and the singular values of the product $\Pi A$ are related as
$$
(1+\varepsilon)^{-1}s_i(\Pi A)\leq s_i(A)\leq (1+\varepsilon)s_i(\Pi A)
\quad \mbox{w.h.p}
$$

Establishing the OSE property for a given random model
is a very active line of research.
Dense random maps give strong concentration and nearly optimal tradeoff
between embedding
dimension $k$, the subspace dimension $r$, and distortion parameter $\varepsilon$ in \eqref{eq: NN OSE}, but applying them may be computationally inefficient.
This led to development of sparse transforms with simple discrete entries
\cite{Achlioptas03,AilonChazelle09,DasguptaKumarSarlos10,KaneNelson14}.
Sparse
embeddings can be applied in time nearly proportional to
the number of entries of the input matrix
\cite{ClarksonWoodruff13,NelsonNguyen13,Cohen16,Woodruff14}.
Necessary tradeoffs
between embedding dimension, column sparsity, and distortion for two-sided
oblivious subspace embeddings were established in
\cite{NelsonNguyen14}.
Sparsity changes the mathematical character of the problem: a small number
of unusually large coordinates of a vector may interact with atypical row or
column occupancies, and estimates that are immediate for dense subgaussian
matrices may no longer be uniform over an entire subspace.

Two sparse random models traditionally considered in the literature in this context are the i.i.d. Bernoulli-sparse model and the {\it SparseStack} \cite{NelsonNguyen13}
(or, more precisely, the transpose of SparseStack when considering the ``flat" $k\times n$ matrix orientation).
In the former case, the matrix is populated with independent copies of a product $b\,\xi$ where $b$ is a Bernoulli($p$) variable, and $\xi$
is an independent symmetric sign variable, or, more generally, a centered variable satisfying extra moment/boundedness conditions.
In the latter case, the $k\times n$ matrix is constructed by dividing $[k]$ into a few equal-sized blocks and generating i.i.d. columns, where each column has 
exactly one non-zero entry in each block, distributed uniformly within the block\footnote{We shall formally define the SparseStack model later in the paper.}.

Among recent results, 
sign matrices {\it of polylogarithmic sparsity} (both i.i.d and SparseStack$^\top$ models)
were shown to attain essentially minimal linear
embedding dimension at constant embedding distortion
\cite{ChenakkodDerezinskiDongRudelson24}: for every fixed $\theta>0$, one
can take $k\ge(1+\theta)r$ with $O_\theta(\log^4 r)$ nonzero entries per
column to guarantee,
for every choice of an $r$--dimensional non-random subspace $V$,
$\|\Pi U_V\|\leq C_\theta\,s_{\min}(\Pi U_V)$ with high probability.
The corresponding small-distortion bounds gave the optimal order
$k=\Theta(r/\varepsilon^2)$ with
$O\bigl(\log^4 (r)/\varepsilon^6\bigr)$ nonzero entries per column.  Subsequent work retained the
optimal embedding dimension while reducing the column sparsity to
$O\bigl(\log^2(r/(\varepsilon))/\varepsilon
    +\log^3(r/(\varepsilon))\bigr)$ \cite{ChenakkodDerezinskiDong25}.
More recently, $(1+\varepsilon)$--Oblivious Subspace Embedding property
was verified for the SparseStack$^\top$
random matrix $\Pi$ for $\varepsilon\ge r^{-O(1)}$, with
$k=O(r \log^{o(1)}r/\varepsilon^2)$ rows and
$(\log^{1+o(1)} r)/\varepsilon$ nonzero entries per column,
where $o(1)$ decays roughly as $1/\log\log\log(r)$
\cite{ChenakkodDerezinskiDong25b}.
Whereas the result of \cite{ChenakkodDerezinskiDong25b} comes relatively close to establishing the seminal Nelson--Nguyen conjecture \cite{NelsonNguyen13},
it does not provide constant-distortion bounds for truly proportional dimension $k=C\,r$ and logarithmic sparsity (see open problem below).
We further remark that the lower edge $s_{\min}(\Pi U_V)$ has been
actively studied recently \cite{OymakTropp18,CamanoEpperlyMeyerTropp25,Tropp26,HuangRudelsonTikhomirov26},
and, in particular,
\cite{Tropp26} provided estimates for very sparse random maps which resolve
the {\it lower bound} in the Nelson--Nguyen conjecture, up to exact dependence on $\varepsilon$.

Despite the very active research and substantial progress in the last few years,
several fundamental questions regarding spectral properties
of sparse random maps remain unresolved as of this writing.
The concrete problem which motivated this paper is the following fixed-distortion form
of the Nelson--Nguyen conjecture
\cite{NelsonNguyen13,TroppICERM26}:

\begin{problem}[Fixed-distortion sparse embeddings; see \cite{TroppICERM26}]\label{problem main}
Fix $\varepsilon>0$.  Does there exist a constant
$C_\varepsilon\ge1$ such that, for every $n\ge r$, one can construct a
random normalized sparse sign matrix
$\Pi=(kp)^{-1/2}(\eta_{ui}\varepsilon_{ui})_{u,i}$ with
$k\le C_\varepsilon r$ and entry density
$p=C_\varepsilon(\log r)/k$ (equivalently, expected or exact column sparsity
$kp=C_\varepsilon\log r$, according to the model) for which, for every fixed
$r$-dimensional subspace $V\subset\mathbb R^n$,
\[
    \mathbb P\left\{
    (1+\varepsilon)^{-1}\leq
    s_{\min}(\Pi U_V)
    \leq
        \norm{\Pi U_V}
        \le 1+\varepsilon
    \right\}
    =1-o(1)?
\]
\end{problem}
While the leftmost inequality above is verified in \cite{Tropp26}
for standard sparse random models, the upper estimate on the
spectral norm of $\Pi U_V$ remains a challenging open problem.

Regarding the available methods,
spectral moments and matrix trace
inequalities provide one route to controlling the singular spectrum; in the sparse setting,
the resulting high trace expansions can be organized by combinatorial
multigraphs \cite{KaneNelson14,NelsonNguyen13,Cohen16}.  Non-asymptotic
comparison inequalities provide another route: matrix universality permits
one to compare the spectrum of a sum of independent random matrices with
that of a Gaussian model having the same mean and covariance
\cite{BrailovskayaVanHandel24}.  This principle, combined with
model-specific trace estimates and other ingredients, underlies several recent sparse OSE analyses
\cite{ChenakkodDerezinskiDongRudelson24,ChenakkodDerezinskiDong25,
ChenakkodDerezinskiDong25b}.  The recent lower-edge comparison for sums of
positive-semidefinite random matrices \cite{Tropp26} yields sparse subspace-injection
estimates.  Another route uses generic chaining to treat
relevant supremum for dimension reduction and subspace
embeddings \cite{BourgainDirksenNelson15}.

\subsection{Main Results}

The present paper develops a completely different
approach to the upper spectral edge based on evaluating
the entropy (number of possible realizations) of level
sets of unit vectors in $V$. To formulate the main result, we introduce an admissible random matrix model
which comprises the classical i.i.d and SparseStack constructions.

\begin{definition}[Negatively associated support mask]
\label{def:negatively-associated-support-mask}
Let $0<p\le1$.  A random mask
$\eta=(\eta_{ui})_{u\in[k],\,i\in[n]}\in\{0,1\}^{k\times n}$ is called a
\emph{negatively associated support mask with density $p$} if
\[
    \mathbb E\eta_{ui}=p
    \qquad (u\in[k],\ i\in[n])
\]
and the family $(\eta_{ui})_{u,i}$ is negatively associated.  Thus, whenever
$S,T\subset[k]\times[n]$ are disjoint and $f,g$ are bounded nonnegative
coordinatewise non-decreasing functions,
\[
    \mathbb E f(\eta_S)g(\eta_T)
    \le
    \mathbb E f(\eta_S)\,\mathbb E g(\eta_T).
\]
\end{definition}

\begin{definition}[Admissible sparse-entry model]
\label{def:admissible-sparse-entry-model}
Let $0<p\le1$, let $\eta$ be a negatively associated support mask with
density $p$, and let $\xi$ be a real random variable.  A $k\times n$ random
matrix $\Pi=(\pi_{ui})$ follows the \emph{admissible sparse-entry model} with
parameters $p$ and $\xi$ if
\[
    \pi_{ui}=\eta_{ui}\xi_{ui},
    \qquad u\in[k],\ i\in[n],
\]
where the variables $\xi_{ui}$ are independent copies of $\xi$, independent
of the mask $\eta$.
\end{definition}

\begin{theorem}[Main result]
\label{thm:centered-entry-extension}
For every $B\ge1$ there is a constant
$C_{\text{\tiny\ref*{thm:centered-entry-extension}}}=C(B)$ with the
following property.  Let
\[
    n\ge k\ge r\ge3,
    \qquad
    n\le r^{10},
    \qquad
    k\ge r\log^2\left(\frac{en}{r}\right),
    \qquad
    \frac{\log k}{k}\le p\le1.
\]
Let $\Pi=(\pi_{ui})$ follow the admissible sparse-entry model with parameter $p$
and an entry variable $\xi$ satisfying
\[
    \mathbb E\xi=0,
    \qquad
    |\xi|\le1\quad\text{almost surely}.
\]
Then, for every non-random $r$-dimensional subspace $V\subset\mathbb R^n$,
\[
    \mathbb P\left\{
        \|\Pi U_V\|>
        C_{\text{\tiny\ref*{thm:centered-entry-extension}}}\sqrt{kp}
    \right\}
    \le k^{-B}.
\]
\end{theorem}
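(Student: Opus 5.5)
We need to bound $\sup_{x\in V,\|x\|=1}\|\Pi x\|$ by $C\sqrt{kp}$ with probability $1-k^{-B}$. The plan is a net-and-level-set decomposition tailored to sparsity.

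\textbf{Step 1: reduction to a net and level sets.} Fix a $1/2$-net $\mathcal N$ of the unit sphere of $V$ of cardinality $5^r$. By the standard argument, $\|\Pi U_V\|\le 2\max_{x\in\mathcal N}\|\Pi x\|$. For each $x\in\mathcal N$ decompose the coordinates dyadically into level sets
\[
L_j(x)=\{i:\ 2^{-j}<|x_i|\le 2^{-j+1}\},\qquad j\ge1,
\]
and write $x=\sum_j x^{(j)}$ with $x^{(j)}=x\mathbf 1_{L_j(x)}$. Coordinates below $n^{-2}$ can be absorbed trivially, since $\|\Pi\|_{HS}\le\sqrt{kn}$ and $n\le r^{10}$. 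So only $O(\log n)$ levels matter. Since $|L_j|\le 4^j$, the number of distinct realizations of $L_j$ as $x$ ranges over $\mathcal N$ is at most $\min(5^r,\binom{n}{4^j})$. This is the ``level-set entropy'', and it is much smaller than $5^r$ for the top levels, where $4^j\ll r/\log(en/r)$.

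\textbf{Step 2: two regimes per level.} For \emph{spread levels} (large $|L_j|$, small entries $|x_i|\le 2^{-j+1}$), apply Bernstein's inequality to $\|\Pi x^{(j)}\|^2=\sum_u(\sum_i\pi_{ui}x_i^{(j)})^2$. Negative association of $\eta$ preserves exponential moment bounds for monotone functions, and the $\xi$'s are independent, bounded and centered. First condition on $\eta$ and use Hoeffding in $\xi$ row by row; this gives a row-sum variance $\sum_i\eta_{ui}(x^{(j)}_i)^2$. Then bound $\sum_u\sum_i\eta_{ui}(x^{(j)}_i)^2$, whose mean is $kp\|x^{(j)}\|^2$, via Chernoff under negative association. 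The resulting tail has the form $\exp(-c\,t\,kp\,4^{-j}\cdot 4^j\cdots)$. Choose thresholds $t_j$ so that the union bound over the $\min(5^r,\binom{n}{4^j})$ realizations of each level (and over sign and magnitude patterns, quantized) succeeds. This is where $k\ge r\log^2(en/r)$ and $kp\ge\log k$ enter: the level count and the entropy must be paid for by $kp\cdot(\text{deviation})$, with deviations summed via $\sum_j\|x^{(j)}\|\cdot(\ldots)$ and Cauchy–Schwarz with geometrically decaying weights. For \emph{peaky levels} (few large coordinates, $|L_j|\le 4^j$ small), use a combinatorial bound instead. The submatrix $\Pi_{[k]\times L}$ with $|L|=m$ has at most $C(kpm+\log\binom nm+B\log k)$ nonzeros per relevant pattern, and each row meets $L$ in $O(1+\frac{\log\binom nm}{\log k})$ nonzeros w.h.p. by negative-association Chernoff, uniformly over all $L$ of size $m$. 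Bounding $\|\Pi_{[k]\times L}\|$ by the geometric mean of the maximal row and column $\ell_1$ norms (Schur test) gives $O(\sqrt{kp})$ once column counts are $O(kp)$. The column counts are uniformly $O(kp)$ since $kp\ge\log k$ and $n\le r^{10}\le k^{10}$.

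\textbf{Step 3: assemble.} Sum the level contributions, handling cross terms by the triangle inequality on $\|\Pi x\|\le\sum_j\|\Pi x^{(j)}\|$. This is too lossy by a $\sqrt{\log n}$ factor, so instead group levels into $O(\log\log)$ blocks and use the $\ell_2$ structure within blocks. The main obstacle is this assembly: a plain triangle inequality over $\log n$ levels loses logarithms. I would first try grouping levels so that within a block, the bound $\|\Pi y\|\lesssim\sqrt{kp}\|y\|+(\text{entropy term})$ holds with the entropy term summable geometrically across blocks. That summability is exactly what the hypothesis $k\ge r\log^2(en/r)$ should buy, and it is where I expect the delicate bookkeeping.
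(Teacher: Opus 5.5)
\textbf{There is a genuine gap at the core of the proposal.} Apart from the $1/2$-net reduction, you use the subspace $V$ only through the cardinality $5^r$ of the net and the trivial count $\binom{n}{m}$ of possible level sets. No argument built from these inputs alone can give $O(\sqrt{kp})$.

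To see this, take $n=r^{10}$, $k=\lceil r\log^2(en/r)\rceil$, $p=(\log k)/k$, Rademacher $\xi$, and $m\approx r/(100\log r)$, so that $2^m\binom{n}{m}\le 5^r$. Consider the deterministic family of all vectors $m^{-1/2}\sum_{i\in L}\epsilon_i e_i$ with $|L|=m$ and $\epsilon\in\{\pm1\}^L$. It has at most $5^r$ unit vectors, and their level sets have at most $\min\{5^r,\binom{n}{m}\}$ realizations. Your Steps 2--3, which bound $\max_{x\in\mathcal N}\|\Pi x\|_2$, use nothing about $\mathcal N$ beyond these properties, so they would apply verbatim to this family. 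Yet every row has degree about $pn\gg m$. Choosing $L$ inside the support of one row $u$ with $\epsilon_i=\pi_{ui}$ gives $\|\Pi x\|_2\ge\sqrt{m}\gg\sqrt{\log k}=\sqrt{kp}$.

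Quantitatively, let a level $L$ of size $s$ at height $\beta\approx s^{-1/2}$ be covered by $s/t$ rows, each meeting it in $t\gtrsim\log k$ coordinates. This has probability about $(Cps/t)^s$ and contributes about $t$ to the unsigned row profile. Against $(en/s)^s$ candidate sets, the union bound excludes only $t\gtrsim pn$, far above $kp$.

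This heavy-row phenomenon is exactly where your concrete steps break.
\begin{itemize}
\item \emph{Peaky levels.} Uniformly over $|L|=m$, the largest row count into $L$ is $\min\{m,\max_u d_u(\eta,[n])\}$. Your bound $O(1+\log\binom{n}{m}/\log k)$ is itself $\gtrsim m$ for $m\le\sqrt{n}$, since $n\ge k$. So the Schur test gives $\sqrt{m\,kp}$, not $\sqrt{kp}$.
\item \emph{Spread levels.} A Bernstein bound for $\sum_u X_u^2$ at deviation $kp$ that survives a $5^r$ union bound requires $\max_u\sum_i\eta_{ui}x_i^2\lesssim kp/r$. You control only the \emph{sum} of these variance proxies, and heavy rows are precisely what make the maximum large.
\item \emph{Assembly.} This step is, as you acknowledge, left open.
\end{itemize}

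\textbf{What is missing.} First, you need an entropy bound for level sets that depends on $r$ but not on $n$. The paper proves $|\mathcal F_V(\beta,s)|\le\beta^{-2s}\binom{r+s-1}{s}/s!$. It does this by bounding Bombieri norms of $y\mapsto\prod_{i\in I}\langle y,P_Ve_i\rangle$ and summing them with the identity $\sum_i P_Ve_i\otimes P_Ve_i=I_V$. This is the point where linearity of $V$ is used. It turns the cost $(en/s)^s$ into $(C(r+s)/(\beta^2 s^2))^s$, capping heavy-row contributions at about $p(r+s)$.

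Second, entropy does not bound individual row degrees, so the heavy row--level interactions must be split off. The paper's Tall--Flat decomposition zeroes the entries of rows with $d_u(\eta,I_j(x))\ge L\max\{pr_*,p|I_j(x)|\}$.
\begin{itemize}
\item The Tall part is a Kahn--Szemer\'edi bilinear argument over pairs $(x,y)$. Light columns and light couples are handled by Bernstein-type bounds. Heavy couples are handled by entropy-sensitive edge discrepancy, together with the row-side bound $\lesssim pr_*|J|$ that the cutoff supplies. The classical dyadic summation over pairs of levels then closes the estimate, and this is also what avoids the logarithmic loss in your assembly.
\item The Flat part is bounded by an unsigned majorant. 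Peeling and a leaf square function reduce it to fixed-width estimates, which are proved by entropy--probability balancing plus a delayed-prefix dimension reduction.
\end{itemize}
The hypothesis $k\ge r\log^2(en/r)$ enters only when summing over the $O(\log(en/r))$ leaf widths, not to pay for level counts as you anticipated. Your conditional Hoeffding step would handle centered non-symmetric $\xi$ directly, whereas the paper symmetrizes; that part is fine.
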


\begin{remark}[Concrete random models]\label{rem:main-thm-spec}
Theorem~\ref{thm:centered-entry-extension} yields the
high-probability bound
\[
    \norm{\Pi U_V}\le C_B\sqrt{kp}
\]
for three standard sparse models considered in this paper: the i.i.d.
Bernoulli-sparse model, the fixed-column-degree combinatorial model, and the
unnormalized SparseStack$^{\mathsf T}$.  The precise definitions,
parameter restrictions, and reductions to Theorem~\ref{thm:centered-entry-extension}
are given in Section~\ref{sec:proofs-main-results}.
\end{remark}

Theorem~\ref{thm:centered-entry-extension}
can be combined with the matrix universality inequality of
\cite{BrailovskayaVanHandel24} to yield an $n$--independent bound on $k$
for each of the three concrete models listed in the preceding remark:

\begin{corollary}[A hybrid leverage-score argument]
\label{rem:hybrid-leverage-score-argument}
Assume
\[
r\geq 3,\quad
k\ge C\,r\bigl(\log\log(r)\bigr)^2,\quad p\ge (\log k)/k,
\]
where $C>0$ is a sufficiently large universal constant.
Let $\Pi$ be a $k\times n$ matrix (for arbitrary $n\geq r$) with i.i.d. entries
equidistributed with the product $b\,\xi$, where $b$ is Bernoulli($p$)
and $\xi$ is centered, independent of $b$, and satisfies $|\xi|\leq 1$ a.e.
Then for every non-random $V\subset\R^n$ of dimension $r$,
$$\norm{\Pi U_V}= O(\sqrt{kp})$$
with probability at least $1-r^{-10}$.
Matching results hold for the fixed-column-degree and SparseStack models\footnote{See Section~\ref{sec:proofs-main-results} for details.}.
\end{corollary}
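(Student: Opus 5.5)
We split the coordinates of $V$ according to leverage scores and treat the two parts by different tools. Let $P_V=U_VU_V^\top$ be the orthogonal projection onto $V$, with leverage scores $\ell_i=\|U_V^\top e_i\|_2^2$, so that $\sum_i\ell_i=r$ and $\ell_i\le 1$. Fix a threshold $\tau$ and let $H=\{i:\ell_i>\tau\}$ be the set of high-leverage coordinates, so that $|H|\le r/\tau$; let $L=[n]\setminus H$. Writing $U_V=D_HU_V+D_LU_V$, where $D_H,D_L$ are the coordinate restrictions, we get
\[
\|\Pi U_V\|\le\|\Pi D_HU_V\|+\|\Pi D_LU_V\|.
\]
The plan is to bound the first term by Theorem~\ref{thm:centered-entry-extension} and the second by matrix universality \cite{BrailovskayaVanHandel24}.

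For the high-leverage part, the column space of $D_HU_V$ is a subspace $V_H\subset\R^H$ of dimension at most $r$, and $\|D_HU_V\|\le1$. Hence $\|\Pi D_HU_V\|\le\|\Pi_H U_{V_H}\|$, where $\Pi_H$ is the $k\times|H|$ submatrix of $\Pi$; if $\dim V_H<r$ we pad the ambient space with dummy coordinates so that the dimension is exactly $r$. The submatrix $\Pi_H$ still follows the admissible model. We choose $\tau=(\log r)^{-c}$, which gives ambient dimension $n'=|H|\le r(\log r)^{c}\le r^{10}$. The requirement $k\ge r\log^2(en'/r)$ then becomes $k\ge Cr(\log\log r)^2$, which is the assumption of the corollary. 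Theorem~\ref{thm:centered-entry-extension} with $B$ chosen so that $k^{-B}\le r^{-10}/2$ therefore gives $\|\Pi D_HU_V\|\le C\sqrt{kp}$.

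For the low-leverage part, write
\[
\Pi D_LU_V=\sum_{u,\,i\in L}\pi_{ui}\,e_u\,(U_V^\top e_i)^\top,
\]
a sum of independent centered random matrices, since in the i.i.d. model independence holds. Each summand has norm at most $\sqrt{\ell_i}\le\sqrt\tau$. We apply the Brailovskaya--van Handel bound for $\mathbb E\|X\|$, namely
\[
\mathbb E\|X\|\lesssim\|\mathbb EX^*X\|^{1/2}+\|\mathbb EXX^*\|^{1/2}+\sigma_*(\log d)^{1/2}+R^{1/3}\sigma^{2/3}(\log d)^{2/3}+R\log d,
\]
in its rectangular form, together with its tail version. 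The relevant quantities are as follows. First, $\mathbb EX^*X=kp\,\mathbb E\xi^2\,U_V^\top D_LU_V\preceq kp$. Second, $\mathbb EXX^*\preceq p\sum_{i}\ell_i\,I_k=pr\,I_k\le kp$. Third, $\sigma_*\le\sqrt p$ and $R\le\sqrt\tau$. With $d=k+r$, all the Gaussian-type terms are $O(\sqrt{kp})$ because $kp\ge\log k$. The remaining terms are $R\log d=\sqrt\tau\log k$ and $R^{1/3}\sigma^{2/3}(\log d)^{2/3}\le\tau^{1/6}(kp)^{1/3}(\log k)^{2/3}$.

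The main obstacle is reconciling these two requirements on $\tau$. The low-leverage part needs $\sqrt\tau\log k\lesssim\sqrt{kp}$; in the worst case $kp=\log k$ this forces $\tau\lesssim 1/\log k$. Since $k$ could be huge relative to $r$, this choice is fine for the low part: it makes $|H|\le r\log k$, and we take $\tau=c/\log k$. The difficulty then moves to the high part, where $\log(en'/r)\approx\log\log k$ must be compared with $\log\log r$. If $k\le r^{C}$, then $\log\log k\asymp\log\log r$ and the argument closes. If $k>r^{C}$, the hypothesis $k\ge r\log^2(en'/r)$ holds trivially, since $k\ge r(\log\log k)^2$. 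In both cases we also need $n'\le r^{10}$, and $|H|\le r\log k$ may violate this when $k$ is huge. In that regime we would instead reduce $n'$ by noting that $\Pi_H$ has at most $k$ relevant dimensions, or apply Theorem~\ref{thm:centered-entry-extension} with the roles of $r$ adjusted (padding $r$ up to $r'=\max(r,|H|^{1/10})$ and using $k\ge r'$).

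The high-probability statements come from the tail form of universality, with deviation $O(\sqrt{kp})$ at confidence $r^{-10}/2$ using $kp\ge\log k$. A union bound over the two events finishes the argument. The fixed-degree and SparseStack models are handled the same way, after a decoupling or negative-association reduction to independent summands for the low-leverage part.
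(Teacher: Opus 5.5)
\textbf{There are genuine gaps.} Your architecture is the paper's: a leverage-score split, Theorem~\ref{thm:centered-entry-extension} on the high-leverage block with the dimension parameter padded to $R=\max\{r,\lceil N^{1/10}\rceil\}$, and \cite{BrailovskayaVanHandel24} on the low-leverage block.

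\textbf{The low-leverage step fails with $\tau=c/\log k$.} You list the term $R^{1/3}\sigma^{2/3}(\log d)^{2/3}\le\tau^{1/6}(kp)^{1/3}(\log k)^{2/3}$, but then you only enforce the constraint coming from $R\log d$. In the critical regime $kp\asymp\log k$, the choice $\tau=c/\log k$ makes this term of order $c^{1/6}(\log k)^{5/6}$. That is much larger than $\sqrt{kp}\asymp(\log k)^{1/2}$, so the universality bound does not give $O(\sqrt{kp})$. With your entrywise decomposition (summand norms at most $\sqrt\tau$), you need $\tau\le c(\log k)^{-3}$. This correction is harmless for the high-leverage part: $\abs H\le r/\tau$ enters only through $\log(eN/R)\lesssim\log(ek/r)+\log\log k$. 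For the i.i.d.\ model, the corrected entrywise argument (bounded summands plus the BvH tail bound) is a valid and somewhat lighter alternative to the paper's route.

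\textbf{The extension to the fixed-column-degree and SparseStack$^{\mathsf T}$ models does not go through as sketched.} In these models the entries within a column are dependent. So $\sum_{u,i}\pi_{ui}e_u(U_V^\top e_i)^\top$ is not a sum of independent matrices, and negative association does not supply the independent summands that universality needs. The paper stresses that the corollary is not asserted for general negatively associated masks.

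What works for all three models, and is what the paper does, is to sum over independent columns: $\Pi P_{H^{\mathsf c}}U=\sum_{i\in H^{\mathsf c}}\Pi_{\cdot\,i}(U^*e_i)^*$, each column having covariance $p\,\mathbb E\xi^2 I_k$. The price is that a summand now has norm up to $\norm{\Pi_{\cdot\,i}}_2\sqrt{\ell_i}\approx\sqrt{kp\tau}$ instead of $\sqrt\tau$, which forces $\tau\lesssim(\log k)^{-4}$. The paper takes $\tau=q^{-4}$ with $q\asymp\log k$ and uses the moment comparison \cite[Theorem~2.9]{BrailovskayaVanHandel24}. There $q^2R_{2q}\lesssim q^2\sqrt{kp\tau}=\sqrt{kp}$, and the unbounded i.i.d.\ columns are handled through their $2q$-th moments, with the Gaussian comparator bounded via Gordon's inequality and Gaussian concentration.

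\textbf{A smaller omission.} Theorem~\ref{thm:centered-entry-extension} also requires ambient dimension at least $k$. Take $N=\max\{k,r,\abs H\}$, pad $\Pi$ with independent columns when $N>n$, and verify $k\ge R\log^2(eN/R)$ in both cases $R=r$ and $R>r$.
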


\begin{remark}
The required bound for $k$ in the above statement is a factor of
$(\log\log(r))^2$ greater than the conjectured optimal lower bound
$k=\Omega(r)$ \cite{NelsonNguyen13,TroppICERM26},
while the sparsity assumption $p\ge (\log k)/k$
is optimal for $k$ polynomial in $r$.
Compared to the above result, the strongest available lower
bound on $k$ in the constant-distortion regime
and with {\it logarithmic average column sparsity} prior to this writing 
is $k=\Omega(r\log r)$ \cite{Cohen16},
or $k=\Omega(r\log^{c/\log\log\log r} r)$
for near-logarithmic sparsity \cite{ChenakkodDerezinskiDong25b},
with both bounds asymptotically larger than $r\,(\log\log(r))^2$.
\end{remark}

\begin{remark}[Extensions of the entropy-guided framework]
The entropy-guided framework is expected to provide
$(1+\varepsilon)$--sharp estimates for the upper spectral edge, as well as
control of the lower spectral edge $\inf_{x\in V\cap S^{n-1}}\norm{\Pi x}_2$.  These
extensions are not pursued in this paper and are intended to be explored in
future work.
\end{remark}

\subsection{Technical Overview}
In this subsection, we discuss the architecture of the proof.

\subsubsection{The classical Kahn--Szemeredi argument}
\label{sec:technical-overview-kahn-szemeredi}

The Kahn--Szemeredi argument is a standard non-asymptotic tool 
in estimating the spectral norm of sparse random matrices with bounded entries,
and is a starting point of our investigation.
The argument originates in the work \cite{FriedmanKahnSzemeredi89} on the
second eigenvalue of random regular graphs; further developments were obtained
by Feige and Ofek for sparse
Erd\H{o}s--R\'enyi graphs \cite{FeigeOfek05}, and by Keshavan, Montanari, and Oh for rectangular
matrices \cite{KeshavanMontanariOh10}. 

Let $\Pi$ follow the admissible sparse-entry model of
Definition~\ref{def:admissible-sparse-entry-model}, so that
$\pi_{ui}=\eta_{ui}\xi_{ui}$, where $\eta$ is the support mask of density
$p$ and the entry variables are centered and independent of the mask.  We
write
\[
    |\xi|\le1\qquad\text{almost surely}.
\]
The independent-copy symmetrization used in the proof of the
main theorem reduces the general centered case to a symmetric entry variable,
at the cost of an absolute factor; the estimates below otherwise use only the
bound $|\xi|\le1$ and the support-mask geometry.

In the context of the present paper, the objective of the Kahn--Szemeredi argument
is the supremum of the bilinear form 
\[
    B_\Pi(x,y)
    :=\langle y,\Pi x\rangle
    =\sum_{u=1}^k\sum_{i=1}^n y_u\pi_{ui}x_i,
    \qquad
    x\in V\cap S^{n-1},\quad y\in S^{k-1},
\]
which, in view of standard variational formulas, coincides with 
$\|\Pi U_V\|$.  Constant-resolution nets
${\mathcal N}_V\subset V\cap S^{n-1}$ and
$\mathcal N_k\subset S^{k-1}$, of cardinalities $\exp(O(r))$ and
$\exp(O(k))$, reduce the problem, up to an absolute factor, to pairs
$(x,y)\in{\mathcal N}_V\times\mathcal N_k$. 

Put $\rho:=\sqrt{p/k}$.  For a fixed pair $(x,y)$, consider the
decomposition
\[
    B_\Pi(x,y)
    =\sum_{|x_i y_u|\le\rho}y_u\pi_{ui}x_i
     +\sum_{|x_i y_u|>\rho}y_u\pi_{ui}x_i
    =:B_{\rm light}(x,y)+B_{\rm heavy}(x,y).
\]
The supremum of $B_{\rm light}(x,y)$ ({\it light couples}) is expected to be dealt with
using standard Bernstein--type inequalities, at scale $\sqrt{kp}$.
The {\it heavy couples} require a more elaborate treatment.
Decompose the two
vectors into dyadic level sets
\[
    I_j(x):=\{i:2^{-j}<|x_i|\le 2^{-j+1}\},
    \qquad
    J_\ell(y):=\{u:2^{-\ell}<|y_u|\le 2^{-\ell+1}\}.
\]
Only pairs of levels with $2^{-j-\ell+2}>\rho$ occur, and, writing
\[
    e_\eta(I,J):=\sum_{u\in J,\ i\in I}\eta_{ui}
\]
for the number of support edges in a rectangle, the entry bound gives the
deterministic
majorization
\[
    |B_{\rm heavy}(x,y)|
    \le
    \sum_{2^{-j-\ell+2}>\rho}
    2^{-j-\ell+2}e_\eta(I_j(x),J_\ell(y)),
\]
where the sum is over $j,\ell\in\mathbb Z$.
In the unrestricted-coordinate version of the argument
\cite{FriedmanKahnSzemeredi89,FeigeOfek05,KeshavanMontanariOh10}, the
summation is controlled via the edge discrepancy estimate
\[
    e_\eta(I,J)
    \lesssim
    p\,|I|\,|J|+\min\left\{
        |I|\, |J|,\,kp |I|,\,np |J|,\,
        \frac{Q}{\log(e+Q/(p\,|I|\,|J|))}
    \right\},
\]
where
\[
    Q=|I|\log\frac{en}{|I|}+|J|\log\frac{ek}{|J|}.
\]
Indeed, $I\subset[n]$ is a set of columns and $J\subset[k]$ is a set of
rows: the two degree bounds are therefore $kp|I|$ and $np|J|$, respectively.
This light/heavy architecture is the
common thread in \cite{FriedmanKahnSzemeredi89,FeigeOfek05,
KeshavanMontanariOh10}.

In our setting of $n$ being possibly much larger than $r$,
a direct adaptation of the heavy-couples summation argument
would require the
estimate $e_\eta(I,J)\lesssim p k\,|J|$, where $I$ is an $x$-level and $J$
is a $y$-level.
Such an estimate is not available (is false) in our setting, leading
to a blow-up of the dyadic summation formula.
The issue is deeper than high edge discrepancy:
it can be easily checked that without the assumption that
the discrete set ${\mathcal N}_V$ is confined within an $r$--dimensional
linear subspace and using only cardinality bounds for ${\mathcal N}_V$,
the corresponding supremum of bilinear forms can be much larger than
$O(\sqrt{kp})$,
i.e for the proof to close, the linear structure of $V$ must
be exploited in an essential way.

\subsubsection{Entropy of coordinate level sets}
\label{sec:technical-overview-entropy}

The required use of the linear structure of $V$ enters through an {\it entropy
bound} for coordinate level sets.  Fix an $r$--dimensional subspace
$V\subset\R^n$.  For $\beta>0$ and an integer $s\ge1$, consider the family
\[
    \mathcal F_V(\beta,s)
    :=
    \left\{
        I\subset[n]: |I|=s,\ \exists x\in V,\ \norm{x}_2\le1,\
        |x_i|\ge\beta\ \text{for every }i\in I
    \right\}.
\]
Thus, $\mathcal F_V(\beta,s)$ comprises all $s$--element subsets of $[n]$
which are contained in a ``$\geq\beta$--level'' set of some unit vector $x\in V$.
The trivial bound $|\mathcal F_V(\beta,s)|\le\binom ns$ ignores the
subspace and the level set condition, 
and is useless when $n$ is much bigger than $r$.  The entropy
bound, verified in Lemma~\ref{lem:full-trace-counting} of this paper, is
\begin{equation}
\label{eq:technical-overview-full-trace-entropy}
    |\mathcal F_V(\beta,s)|
    \le
    \frac{\beta^{-2s}}{s!}\binom{r+s-1}{s}
    \le
    \left(
        C\,\frac{r+s}{\beta^2s^2}
    \right)^s.
\end{equation}
Note that the estimate is independent of the ambient dimension $n$.
It counts only those coordinate patterns that can actually be realized by a
unit vector in $V$.

The exact first bound in
\eqref{eq:technical-overview-full-trace-entropy} 
is proved by observing that for every $I$ in $\mathcal F_V(\beta,s)$,
there is a unit vector $x$ in $V$ satisfying $\big|\prod_{i\in I} \langle x,P_V e_i\rangle\big|\geq \beta^s$,
where $P_V:\R^n\to\R^n$ is the orthogonal projection onto $V$.
Thus, counting the subsets $I$ can be reduced to estimating the number
of homogeneous degree $s$ polynomials in $r$ variables satisfying certain
point estimates, which can further be interpreted as a lower bound condition for their
Bombieri--Weyl norms. A key part in the proof is played
by the Parseval identity $\sum_{i=1}^n P_V e_i\,\otimes \,P_V e_i=I_V$,
which encodes the linear subspace structure into the argument.
The identity enables an exact estimate on the total squared Bombieri--Weyl
norm of these polynomials over all ordered $s$--tuples, namely
$\binom{r+s-1}{s}$; passing to unordered sets produces the additional factor $1/s!$.
This identity, which has no analogue for an arbitrary collection of vectors known
only through its cardinality, is the precise point at which linearity of $V$ is
exploited.

The estimate \eqref{eq:technical-overview-full-trace-entropy} is nearly optimal, up to a factor $C^s$, throughout the natural
range $\beta^2s\le1$. Indeed, assume $s\leq r$ and $n\geq r(\beta^2s)^{-1}$, let
$m=\lfloor(\beta^2s)^{-1}\rfloor$ and take $V$ to be spanned by the
indicators of $r$ disjoint blocks of size $m$.  Choosing one coordinate from
each of $s$ distinct blocks shows that
\[
    |\mathcal F_V(\beta,s)|
    \ge \binom rs m^s
    \ge \left(c\,\frac{r}{\beta^2s^2}\right)^s.
\]

\medskip

The entropy estimate \eqref{eq:technical-overview-full-trace-entropy} partially resolves the edge-discrepancy obstruction
described above.  For a coordinate level $I$ of size $s$ and height $\beta$,
a union bound no longer pays for all $\binom ns$ possible column sets, but
only for the members of $\mathcal F_V(\beta,s)$; equivalently, the ambient
cost $s\log(en/s)$ is replaced by an intrinsic cost depending on $r$, $s$,
and $\beta$.  Combined with fixed-rectangle tail estimates, this makes it
possible to control discrepancy simultaneously over the coordinate levels
that can actually arise from vectors in $V$.  It does not, however, control
the local degree of each individual row into such a level.  A small number of
very heavy rows may still collect far more than $pr$ (and even $pk$) incidences, so the
missing estimate for $e_\eta(I,J)$ cannot be recovered
from entropy alone.  The proof must therefore separate these
exceptional row--level interactions from the remainder: the latter is
accessible to the Kahn--Szemeredi discrepancy argument, whereas the former
requires a different heavy-row mechanism.  This is the reason for introducing
the {\it Tall--Flat decomposition}.

\subsubsection{Tall--Flat decomposition}
\label{sec:technical-overview-tall-flat}

We now describe the vector-dependent decomposition that repairs the missing
row-side degree estimate in the preceding discussion.  We first give the
definitions in their general form.  Let
$\eta=(\eta_{ui})\in\{0,1\}^{k\times n}$ be a mask.  For a row $u\in[k]$
and a set $J\subset[n]$, write
\[
    d_u(\eta,J):=\sum_{i\in J}\eta_{ui}.
\]
Thus the degree is computed from the designated support mask, independently
of whether an entry amplitude vanishes.  When the mask is fixed, its
dependence is suppressed from the notation for the Tall and Flat objects.
For $x\in\R^n$ and a subset $\mathcal J\subset[n]$, write
\[
    I_j(x):=\{i\in[n]:2^{-j}<\abs{x_i}\le2^{-j+1}\},
    \qquad
    I_j^{\mathcal J}(x):=I_j(x)\cap\mathcal J.
\]

\begin{definition}[Tall contribution matrix]
Let $\eta\in\{0,1\}^{k\times n}$ be a mask, and let
$\Pi=(\pi_{ui})$ be a $k\times n$ matrix supported on $\eta$, in the sense
that $\pi_{ui}=0$ whenever $\eta_{ui}=0$.  Let
\[
    \mathfrak Q=\bigl((J_q,K_q)\bigr)_{q=1}^m
\]
be a finite sequence of pairs, where the sets $J_q\subset[n]$ are pairwise
disjoint and the real numbers $K_q$ are positive.  Define the
tall contribution matrix $\mathcal T(\Pi,\mathfrak Q)$ by
\[
    \bigl(\mathcal T(\Pi,\mathfrak Q)\bigr)_{u i}
    :=
    \begin{cases}
    0, & \text{if there is }1\le q\le m\text{ such that }
        i\in J_q\text{ and }d_u(\eta,J_q)\ge K_q,\\
    \pi_{ui}, & \text{otherwise}.
    \end{cases}
\]
Equivalently, $\mathcal T(\Pi,\mathfrak Q)$ is obtained from $\Pi$ by zeroing out
every entry $\pi_{ui}$ for which $i\in J_q$ for some $q$ and row $u$ is
$K_q$-heavy in the mask $\eta$ with respect to $J_q$, while leaving all other
entries unchanged.
In particular, entries outside $\bigcup_{q=1}^mJ_q$ are unchanged.
\end{definition}

\begin{definition}[Flat contribution matrix]
Under the assumptions of the preceding definition, define the flat
contribution matrix by
\[
    \bigl(\mathcal F(\Pi,\mathfrak Q)\bigr)_{u i}
    :=
    \begin{cases}
    \pi_{ui}, & \text{if there is }1\le q\le m\text{ such that }
        i\in J_q\text{ and }d_u(\eta,J_q)\ge K_q,\\
    0, & \text{otherwise}.
    \end{cases}
\]
Thus $\mathcal F(\Pi,\mathfrak Q)$ retains precisely the row--set
interactions removed from the Tall matrix; in particular, it vanishes on
all columns outside $\bigcup_{q=1}^mJ_q$.  Entrywise,
\[
    \Pi
    =
    \mathcal T(\Pi,\mathfrak Q)
    +\mathcal F(\Pi,\mathfrak Q).
\]
\end{definition}

For the proof of the main result we use the following canonical specialization.

\begin{definition}[Canonical tall--flat partition]
\label{def:canonical-tall-flat-partition}
Fix $L\ge1$.  For $x\in\R^n$, define the threshold profile
\[
    K_j^{(L)}(x)
    :=
    L\max\{pr_*,p\abs{I_j(x)}\}
    \qquad j\in\mathbb Z ,
\]
where
$$
r_*=\max\Big(r,\frac{\log k}{p}\Big).
$$
Write $\kappa^{(L)}(x)=(K_j^{(L)}(x))_{j\in\mathbb Z}$ and
\[
    \mathfrak Q_L(x)
    :=
    \bigl(
        (I_j(x),K_j^{(L)}(x))
    \bigr)_{j\in\mathbb Z:\,I_j(x)\ne\varnothing}.
\]
The canonical tall and flat matrices associated with $x$ and $L$ are
\[
    \mathcal T_L^{\rm can}(\Pi,x):=\mathcal T(\Pi,\mathfrak Q_L(x)),
    \qquad
    \mathcal F_L^{\rm can}(\Pi,x):=\mathcal F(\Pi,\mathfrak Q_L(x))
    =\Pi-\mathcal T_L^{\rm can}(\Pi,x).
\]
Thus
\[
    \Pi x
    =\mathcal T_L^{\rm can}(\Pi,x)x
     +\mathcal F_L^{\rm can}(\Pi,x)x.
\]
\end{definition}

\begin{figure}[htbp]
\centering
\begin{tikzpicture}[x=0.46cm,y=0.46cm,scale=1.2,transform shape]
    \fill[gray!4] (0,-6) rectangle (15,0);
    \draw[step=1,black!35,line width=0.3pt] (0,-6) grid (15,0);

    \foreach \i/\u/\sgn in {
        4/1/+,9/1/-,12/1/+,
        2/2/-,15/2/+,
        8/3/-,
        11/4/+,13/4/-,
        3/5/+,
        1/6/+,10/6/-,13/6/+
    }{
        \filldraw[fill=blue!22,draw=black!45,line width=0.3pt]
            ({\i-1},{-\u}) rectangle (\i,{1-\u});
        \node[font=\scriptsize] at ({\i-0.5},{0.5-\u}) {$\sgn$};
    }

    \foreach \i/\u/\sgn in {
        1/1/+,3/1/-,
        5/2/+,7/2/-,
        10/3/+,12/3/-,14/3/+,
        1/4/-,2/4/+,4/4/-,6/4/+,
        9/5/+,11/5/-,15/5/+,
        5/6/-,8/6/+
    }{
        \filldraw[fill=orange!38,draw=black!45,line width=0.3pt]
            ({\i-1},{-\u}) rectangle (\i,{1-\u});
        \node[font=\scriptsize] at ({\i-0.5},{0.5-\u}) {$\sgn$};
    }

    \draw[black!70,line width=0.8pt] (3,0) -- (3,-6);
    \draw[black!70,line width=0.8pt] (8,0) -- (8,-6);
    \draw[black!70,line width=0.6pt] (0,-6) rectangle (15,0);

    \foreach \i in {1,...,15}
        \node[font=\tiny] at ({\i-0.5},0.20) {\i};
    \foreach \u in {1,...,6}
        \node[font=\tiny,anchor=e] at (-0.12,{0.5-\u}) {\u};
    \node[font=\small] at (-1.8,-3) {$\Pi=$};

    \node[font=\scriptsize,align=center] at (1.5,1.55)
        {$I_2(x)$\\[-1pt]$4/\sqrt{75}$};
    \node[font=\scriptsize,align=center] at (5.5,1.55)
        {$I_3(x)$\\[-1pt]$2/\sqrt{75}$};
    \node[font=\scriptsize,align=center] at (11.5,1.55)
        {$I_4(x)$\\[-1pt]$1/\sqrt{75}$};

\end{tikzpicture}
\caption{A canonical Tall--Flat partition for a fixed vector and matrix
realization.  Here $k=6$, $n=15$, $r=3$,
$p=(\log k)/k=(\log6)/6$, $L=1$, and
$x=75^{-1/2}(4,4,4,2,2,2,2,2,1,1,1,1,1,1,1)$.  Thus $r_*=6$ and the
integer row-degree cutoffs for $I_2(x),I_3(x),I_4(x)$ are, respectively,
$2,2,3$.  A nonzero entry is orange precisely when its row meets the cutoff
on the corresponding level; these entries form the Flat matrix.  The blue
entries form the Tall matrix, blank cells are zero, and the signs specify the
chosen realization of $\Pi$.}
\label{fig:canonical-tall-flat-example}
\end{figure}

The triangle inequality gives
\begin{equation}
\label{eq:technical-overview-canonical-tall-flat}
    \norm{\Pi x}_2
    \le
    \norm{\mathcal T_L^{\rm can}(\Pi,x)x}_2
    +\norm{\mathcal F_L^{\rm can}(\Pi,x)x}_2.
\end{equation}
Consequently, if ${\mathcal N}_V\subset V\cap S^{n-1}$ is a $1/2$-net, then
\begin{equation}
\label{eq:technical-overview-canonical-net-reduction}
    \norm{\Pi U_V}
    \le
    2\sup_{z\in{\mathcal N}_V}
        \norm{\mathcal T_L^{\rm can}(\Pi,z)z}_2
    +2\sup_{z\in{\mathcal N}_V}
        \norm{\mathcal F_L^{\rm can}(\Pi,z)z}_2.
\end{equation}

\begin{figure}[htbp]
\centering
\begin{tikzpicture}[
    proofarrow/.style={-{Latex[length=2mm]},thick},
    rootnode/.style={draw,rounded corners=2pt,fill=blue!12,
        minimum width=2.6cm,minimum height=0.9cm,align=center,font=\small},
    branchnode/.style={draw,rounded corners=2pt,fill=blue!6,
        text width=4.1cm,minimum height=1.25cm,align=center,font=\small,
        inner sep=5pt},
    flatnode/.style={draw,rounded corners=2pt,fill=gray!14,
        text width=4.1cm,minimum height=1.25cm,align=center,font=\small,
        inner sep=5pt},
    leafnode/.style={draw,rounded corners=2pt,fill=green!7,
        text width=5.0cm,minimum height=1.65cm,align=center,font=\small,
        inner sep=5pt},
    couplenode/.style={draw,rounded corners=2pt,fill=green!7,
        text width=6.8cm,minimum height=2.0cm,align=center,font=\scriptsize,
        inner sep=5pt}
]
\node[rootnode] (root) at (0,0) {$\Pi\, x$};

\node[branchnode] (tall) at (-2.5,-1.7)
    {\textbf{Canonical Tall}\\[-1pt]
     $\mathcal T_L^{\rm can}(\Pi,x)\,x$};
\node[flatnode] (flat) at (2.5,-1.7)
    {\textbf{Canonical Flat}\\[-1pt]
     $\mathcal F_L^{\rm can}(\Pi,x)\,x$};

\node[leafnode] (lightcol) at (-4.5,-4.3)
    {\textbf{Light columns}\\[1pt]
     $\mathcal T_L^{\rm can}(\Pi,x)\,x_{\mathcal J(x)^c}$,\\[5pt]
     {\scriptsize classical Bernstein--type row\\
     concentration is sufficient to\\
     union-bound over all $x$ in the net}};
\node[branchnode] (heavycol) at (0.5,-4.3)
    {\textbf{Heavy columns}\\[1pt]
     $\mathcal T_L^{\rm can}(\Pi,x)\,x_{\mathcal J(x)}$};

\node[couplenode] (lightcouple) at (-3.65,-7.0)
    {\textbf{Light couples}\\[1pt]
     $\bigl(\mathcal T_L^{\rm can}(\Pi,x)\bigr)_{ui} x_i\,y_u$, $i\in \mathcal J(x)$, $|x_i y_u|\leq\rho$\\[5pt]
     Kahn--Szemeredi:\\
     Bernstein concentration,\\
     union bound over nets};
\node[couplenode] (heavycouple) at (3.65,-7.0)
    {\textbf{Heavy couples}\\[1pt]
     $\bigl(\mathcal T_L^{\rm can}(\Pi,x)\bigr)_{ui} x_i\,y_u$, $i\in \mathcal J(x)$, $|x_i y_u|>\rho$\\[5pt]
     modified Kahn--Szemeredi:\\
     level set entropy, edge discrepancy,\\
     and dyadic summation};

\draw[proofarrow] (root) -- (tall);
\draw[proofarrow] (root) -- (flat);
\draw[proofarrow] (tall) -- (lightcol);
\draw[proofarrow] (tall) -- (heavycol);
\draw[proofarrow] (heavycol) -- (lightcouple);
\draw[proofarrow] (heavycol) -- (heavycouple);
\end{tikzpicture}
\caption{High-level organization of the proof. The bound on the supremum of $\|\Pi x\|_2$ over the net is obtained via the Tall--Flat canonical decomposition, followed
by the modified Kahn--Szemeredi argument for the Tall contribution, relying on the level set entropy bounds. Here
$\mathcal J(x)=\{i\leq n:\;|x_i|>r_*^{-1/2}\}$ and $\rho=\sqrt{p/k}$.}
\label{fig:high-level-partition}
\end{figure}

We next argue why the first term in
\eqref{eq:technical-overview-canonical-net-reduction} is accessible to the
Kahn--Szemeredi method.  Let
\[
    \mathcal J(x):=\{i\in[n]:|x_i|>r_*^{-1/2}\}.
\]
The Tall contribution of the complementary, small coordinates
$x_{\mathcal J(x)^c}$ ({\it light columns}) is controlled by a row-wise
concentration argument at scale $\sqrt{kp}$.
For the coordinates $x_{\mathcal J(x)}$, one bilinearizes against
$y\in S^{k-1}$ and applies the light/heavy-couple split from the preceding
subsection. The light couples, for which
$|x_i y_u|\le\rho=\sqrt{p/k}$, are controlled by Bernstein's inequality and
a union bound over the two nets, again at scale $\sqrt{kp}$.

For the heavy couples, let
\[
    I_j(x):=\{i:2^{-j}<|x_i|\le 2^{-j+1}\},
    \qquad
    J_\ell(y):=\{u:2^{-\ell}<|y_u|\le 2^{-\ell+1}\}.
\]
The support-edge count remaining in the
canonical Tall matrix is
\[
    e_{\rm can}^{(L)}(I_j(x),J_\ell(y))
    :=
    \sum_{u\in J_\ell(y)}d_u(\eta,I_j(x))
        \mathbf 1_{\{d_u(\eta,I_j(x))<K_L(I_j(x))\}},
    \quad
    K_L(I_j(x)):=L\max\{pr_*,p|I_j(x)|\}.
\]
By construction,
\begin{equation}
\label{eq:technical-overview-canonical-row-envelope}
    e_{\rm can}^{(L)}(I_j(x),J_\ell(y))
    \le K_L(I_j(x))|J_\ell(y)|.
\end{equation}
If $2^{-j+1}\ge r_*^{-1/2}$, then
$|I_j(x)|\le2^{2j}\le4r_*$, and hence
\[
    K_L(I_j(x))\le4Lpr_*.
\]
Thus \eqref{eq:technical-overview-canonical-row-envelope} gives exactly the
missing row-side estimate
\[
    e_{\rm can}^{(L)}(I_j(x),J_\ell(y))
    \lesssim_L pr_*|J_\ell(y)|.
\]
The remaining ingredients reproduce the other components of the classical
rectangle envelope, with the {\it effective coordinate scale} $r_*$ replacing the unavailable ambient
scale $n$.  The standard dyadic summation now
applies and bounds the heavy-couple contribution by $O_L(\sqrt{kp})$.
Together with the light-column and light-couple estimates, it gives, for
every $D>0$ and every deterministic ${\mathcal N}_V\subset V\cap S^{n-1}$ with
$|{\mathcal N}_V|\le5^r$,
\[
    \mathbb P\left\{
        \sup_{z\in{\mathcal N}_V}
        \norm{\mathcal T_L^{\rm can}(\Pi,z)z}_2
        >C(D,L)\sqrt{kp}
    \right\}
    \le k^{-D}.
\]
In view of \eqref{eq:technical-overview-canonical-net-reduction}, the Tall
matrix is therefore completely handled by the Kahn--Szemeredi argument.  The
remaining task is to control
$\norm{\mathcal F_L^{\rm can}(\Pi,z)z}_2$ uniformly over the net.

\subsubsection{Flat contributions}
\label{sec:technical-overview-flat-contribution}

The Flat matrix is signed and depends on the realization of the entries of
$\Pi$.  For its analysis we use a nonnegative row profile which records only
the mask geometry of the heavy row--set interactions. 

\begin{definition}[Flat row profile and majorant]
\label{def:flat-row-profile-majorant}
Let $\eta\in\{0,1\}^{k\times n}$ be a mask.  Let
\[
    \mathfrak P=\bigl((J_q,\omega_q,K_q)\bigr)_{q=1}^m
\]
be a finite sequence of triples, where the sets $J_q\subset[n]$ are pairwise
disjoint, the weights $\omega_q$ are positive, and the thresholds $K_q$ are
positive.  Define the Flat row profile $\mathbf m(\eta,\mathfrak P)\in\R^k$ by
\[
    \bigl(\mathbf m(\eta,\mathfrak P)\bigr)_u
    :=
        \sum_{\substack{1\le q\le m\\ d_u(\eta,J_q)\ge K_q}}
        \omega_q d_u(\eta,J_q),
    \qquad u\in[k],
\]
and define the associated Flat majorant by
\[
    \FM(\eta,\mathfrak P):=\norm{\mathbf m(\eta,\mathfrak P)}_2.
\]
If the mask is clear from context, we write $\mathbf m(\mathfrak P)$ and $\FM(\mathfrak P)$.
\end{definition}

\begin{definition}[Canonical Flat profile and majorant]
\label{def:canonical-flat-profile-majorant}
For the canonical threshold profile, define
\[
    \mathfrak P_L^{\rm can}(x)
    :=
    \bigl(I_j(x),2^{-j},K_j^{(L)}(x)\bigr)_{
        j\in\mathbb Z:\,I_j(x)\ne\varnothing},
\]
and write
\[
    \mathbf m_L^{\rm can}(\eta,x)
    :=\mathbf m\bigl(\eta,\mathfrak P_L^{\rm can}(x)\bigr),
    \qquad
    \FM_L^{\rm can}(\eta,x)
    :=\FM\bigl(\eta,\mathfrak P_L^{\rm can}(x)\bigr)
    =\norm{\mathbf m_L^{\rm can}(\eta,x)}_2.
\]
\end{definition}

\bigskip

Taking the Euclidean norm gives
\begin{equation}
\label{eq:technical-overview-flat-contribution-bound}
    \norm{\mathcal F_L^{\rm can}(\Pi,x)x}_2
    \le
    2\FM_L^{\rm can}(\eta,x).
\end{equation}
Indeed, the row profile replaces $|\pi_{ui}|$ by the mask entry $\eta_{ui}$
and $|x_i|$ on $I_j(x)$ by its lower dyadic scale $2^{-j}$; the factor two
in the display accounts for the upper endpoint of the level.

\begin{figure}[htbp]
\centering
\begin{tikzpicture}[x=0.31cm,y=0.38cm]
    \fill[gray!4] (0,-6) rectangle (15,0);
    \draw[step=1,black!35,line width=0.3pt] (0,-6) grid (15,0);
    \foreach \i/\u/\sgn in {
        1/1/+,3/1/-,
        5/2/+,7/2/-,
        10/3/+,12/3/-,14/3/+,
        1/4/-,2/4/+,4/4/-,6/4/+,
        9/5/+,11/5/-,15/5/+,
        5/6/-,8/6/+
    }{
        \filldraw[fill=orange!38,draw=black!45,line width=0.3pt]
            ({\i-1},{-\u}) rectangle (\i,{1-\u});
        \node[font=\tiny] at ({\i-0.5},{0.5-\u}) {$\sgn$};
    }
    \draw[black!70,line width=0.8pt] (3,0) -- (3,-6);
    \draw[black!70,line width=0.8pt] (8,0) -- (8,-6);
    \draw[black!70,line width=0.6pt] (0,-6) rectangle (15,0);
    \foreach \i in {1,...,15}
        \node[font=\tiny] at ({\i-0.5},0.20) {\i};
    \foreach \u in {1,...,6}
        \node[font=\tiny,anchor=e] at (-0.12,{0.5-\u}) {\u};
    \node[font=\tiny] at (1.5,0.85) {$I_2(x)$};
    \node[font=\tiny] at (5.5,0.85) {$I_3(x)$};
    \node[font=\tiny] at (11.5,0.85) {$I_4(x)$};

    \node[draw,rounded corners=2pt,fill=orange!12,
        text width=2.5cm,minimum height=3.2cm,align=center,
        inner sep=4pt,font=\scriptsize] (profile) at (23.7,-3)
        {\textbf{row profile}\\[3pt]
         $\displaystyle
         \mathbf m_L^{\rm can}(x)=
         \begin{pmatrix}
             1/2\\ 1/4\\ 3/16\\ 3/4\\ 3/16\\ 1/4
         \end{pmatrix}$};

    \node[draw,rounded corners=2pt,fill=blue!10,
        text width=2.75cm,minimum height=1.8cm,align=center,
        inner sep=5pt,font=\scriptsize] (majorant) at (36.8,-3)
        {\textbf{Flat majorant}\\[4pt]
         $\FM_L^{\rm can}(x)
          =\norm{\mathbf m_L^{\rm can}(x)}_2$\\[3pt]
         $\displaystyle=\sqrt{\frac{129}{128}}$};

    \draw[-{Latex[length=2mm]},thick]
        (15.25,-3) -- node[midway,above,font=\tiny,align=center]
        {weighted\\row sums} (profile.west);
    \draw[-{Latex[length=2mm]},thick]
        (profile.east) -- node[midway,above,font=\tiny] {$\ell_2$ norm}
        (majorant.west);
\end{tikzpicture}
\caption{Construction of the canonical Flat row profile and majorant for the
Flat matrix in Figure~\ref{fig:canonical-tall-flat-example}.  The signs are
discarded, entries from $I_j(x)$ receive weight $2^{-j}$, and the weighted
degrees are summed within each row.  For example, row $4$ contains two Flat
entries in $I_2(x)$ and two in $I_3(x)$, giving
$2\cdot\frac14+2\cdot\frac18=\frac34$.  Taking the Euclidean norm of the six
row values gives $\FM_L^{\rm can}(x)=\sqrt{129/128}$.}
\label{fig:flat-row-profile-majorant}
\end{figure}

Combining \eqref{eq:technical-overview-flat-contribution-bound} with
\eqref{eq:technical-overview-canonical-net-reduction} gives
\[
    \norm{\Pi U_V}
    \le
    2\sup_{z\in{\mathcal N}_V}
        \norm{\mathcal T_L^{\rm can}(\Pi,z)z}_2
    +4\sup_{z\in{\mathcal N}_V}\FM_L^{\rm can}(\eta,z).
\]

In the actual proof we replace the canonical Flat profile by a reduced
profile and majorant, introduced in Section~\ref{sec:flat-majorants}, for
technical reasons.  The reduced thresholds depend only on the coordinate
scale, rather than on the individual vector, which makes the probabilistic
estimates uniform.  The comparison with the canonical profile costs only
$O(\sqrt{pr})$ and is therefore harmless at the target scale
$O(\sqrt{kp})$.

The mechanism by which entropy rules out a large Flat contribution is
clearest in the balanced case.  Fix a coordinate scale $\beta$, let
$I\in\mathcal F_V(\beta,s)$, and suppose, for simplicity, that the
supports of $m$ rows restricted to $I$ form a disjoint partition
$I=E_1\sqcup\cdots\sqcup E_m$, where $m=s/K$, every $E_q$ has cardinality
$K$, and the sets are assigned to distinct rows. We assume that $K\gtrsim\log k$. Thus the supporting rows
account for exactly $s$ selected support incidences, with no overlap. For a
fixed $I$, the number of choices of the rows and the partition is at most
$k^m s!/(K!)^m$; since each prescribed incidence costs a factor $p$ and
$K\gtrsim\log k$, the probability of such a configuration is bounded by
\[
    k^m\frac{s!}{(K!)^m}p^s
    \le
    \left(C\,\frac{ps}{K}\right)^s.
\]
The entropy bound gives
$|\mathcal F_V(\beta,s)|\le(C(r+s)\beta^{-2}/s^2)^s$.  On the other hand, the
corresponding squared row-profile contribution is bounded above by
\[
    m(\beta K)^2=\beta^2sK.
\]
Consequently, multiplying probability by entropy yields
\[
    \left(C\,\frac{(r+s)\beta^{-2}}{s^2}\right)^s
    \left(C\,\frac{ps}{K}\right)^s
    =
    \left(C\,\frac{p(r+s)}{\beta^2sK}\right)^s.
\]
Thus a squared contribution $\beta^2sK\gg p(r+s)$ is exponentially unlikely
after the union bound; at this stage the resulting norm scale is
$\sqrt{p(r+s)}$, not $\sqrt{pr}$.  The linear restriction
$s=O(r)$, established later for the selected partition, reduces this to the
target scale $\sqrt{pr}$.  A direct argument as described here would lead to
logarithmic losses; the actual proof groups unequal
row supports and different coordinate levels dyadically, but uses
the same cancellation mechanism.

\begin{figure}[htbp]
\centering
\begin{tikzpicture}[x=0.372cm,y=0.372cm]
    \fill[gray!4] (0,-16) rectangle (16,0);
    \fill[blue!7] (2,-16) rectangle (14,0);
    \draw[step=1,black!35,line width=0.3pt] (0,-16) grid (16,0);

    \foreach \i/\u/\sgn in {
        1/1/+,16/1/-,
        2/3/-,15/5/+,
        1/10/-,16/11/+,
        2/15/+,15/16/-
    }{
        \filldraw[fill=blue!22,draw=black!45,line width=0.3pt]
            ({\i-1},{-\u}) rectangle (\i,{1-\u});
        \node[font=\scriptsize] at ({\i-0.5},{0.5-\u}) {$\sgn$};
    }

    \foreach \i/\u/\sgn in {
        3/8/+,4/8/-,5/8/+,6/8/+,
        7/8/-,8/8/+,9/8/-,10/8/-,
        11/8/+,12/8/-,13/8/+,14/8/-
    }{
        \filldraw[fill=orange!38,draw=black!45,line width=0.3pt]
            ({\i-1},{-\u}) rectangle (\i,{1-\u});
        \node[font=\scriptsize] at ({\i-0.5},{0.5-\u}) {$\sgn$};
    }

    \draw[orange!85!black,line width=0.8pt] (2,-8) rectangle (14,-7);

    \draw[black!70,line width=0.6pt] (0,-16) rectangle (16,0);

    \node[font=\small] at (17.25,-8) {$\cdots$};
    \fill[gray!4] (18.5,-16) rectangle (21.5,0);
    \foreach \x in {19.5,20.5}
        \draw[black!35,line width=0.3pt] (\x,-16) -- (\x,0);
    \foreach \y in {-1,...,-15}
        \draw[black!35,line width=0.3pt] (18.5,\y) -- (21.5,\y);
    \foreach \i/\u/\sgn in {
        1/2/+,3/4/-,
        2/12/+,1/16/-
    }{
        \filldraw[fill=blue!22,draw=black!45,line width=0.3pt]
            ({17.5+\i},{-\u}) rectangle ({18.5+\i},{1-\u});
        \node[font=\scriptsize] at ({18+\i},{0.5-\u}) {$\sgn$};
    }
    \draw[black!70,line width=0.6pt] (18.5,-16) rectangle (21.5,0);

    \foreach \i in {1,...,16}
        \node[font=\tiny] at ({\i-0.5},0.22) {\i};
    \foreach \i/\lab in {1/98,2/99,3/100}
        \node[font=\tiny] at ({18+\i},0.22) {\lab};
    \foreach \u in {1,...,16}
        \node[font=\tiny,anchor=e] at (-0.12,{0.5-\u}) {\u};

    \node[font=\small] at (-2.15,-8) {$\Pi=$};

    \draw[blue!65!black,line width=0.8pt]
        (2,0.65) -- (2,1.0) -- (14,1.0) -- (14,0.65);
    \node[font=\scriptsize,align=center] at (8,2.05)
        {$I=I_2(x)=\{3,\ldots,14\}\in
         \mathcal F_V(12^{-1/2},12)$\\[3pt]
         $x=12^{-1/2}(0,0,\underbrace{1,\ldots,1}_{12},0,\ldots,0)\in V$,
         $\dim V=r=3$\\[14pt]};

    \node[draw,rounded corners=2pt,fill=green!7,
        text width=5.0cm,minimum height=3.0cm,align=center,
        inner sep=5pt,font=\scriptsize] (balance) at (31.2,-8.0)
        {\textbf{entropy--probability balance}\\[-1pt]
         \textbf{for Flat contributions}\\[3pt]
         $r=3$, $k=16$, $n=100$, $s=K=12$, $m=1$,\\[-1pt]
         $\beta=12^{-1/2}$, $p=(\log16)/16$, $np\approx17.33$,\\[3pt]
         $\displaystyle
         |\mathcal F_V|\,\Pr
         \le
         \frac{12^{12}}{12!}\binom{14}{12}
         16p^{12}$\\[2pt]
         $\displaystyle
         =16\binom{14}{12}\frac{(12p)^{12}}{12!}
         \approx 0.01987$};
\end{tikzpicture}
\caption{The entropy--probability balance illustration.
The blue band is a coordinate level
$I\in\mathcal F_V(12^{-1/2},12)$ for an $r=3$ dimensional subspace
$V\subset\mathbb R^{100}$.  The first $16$ columns and the final three
columns of $\Pi$ are displayed, while the dots represent columns
$17,\ldots,97$.  The orange incidences form
$E_1=I$, so one row meets $I$ in $K=12$ coordinates.  Although the level set may vary over the entropy
class $\mathcal F_V$, producing this heavy-row alignment for any fixed
$I$ has probability at most
$16p^{12}\approx1.173\cdot10^{-8}$ for
$k=16$, $n=100$, and $p=(\log16)/16$.
The first bound in \eqref{eq:technical-overview-full-trace-entropy} gives
$|\mathcal F_V|\le 12^{12}\binom{14}{12}/12!$; multiplying the two bounds
gives a total of roughly $0.02$, as displayed in the green box.}
\label{fig:balanced-entropy-probability}
\end{figure}

\bigskip

\noindent\textbf{Funding acknowledgement.}
K.T. was partially supported by NSF grant DMS 2452120.

\medskip

\noindent\textbf{Acknowledgement of AI Assistance.}
The author used ChatGPT for language editing, literature search, and
assistance in developing and checking some proof arguments during the
preparation of this manuscript. All mathematical statements, proofs, and
final wording were independently reviewed and verified by the author, who
takes full responsibility for the content of the paper.

\section{Notation and Preliminaries}

\subsection{Global notation}

\begin{definition}[Effective scale]
\label{def:effective-scale}
For parameters $k\ge r\ge3$ and $p\ge(\log k)/k$, define the effective scale
\[
    r_*:=\max\left\{r,\frac{\log k}{p}\right\}.
\]
Thus
\[
    r\le r_*\le k,
    \qquad
    pr_*=\max\{pr,\log k\}.
\]
\end{definition}

\begin{definition}[Coordinate and column restrictions]
For $J\subset[n]$ and $x\in\R^n$, write
$x_J:=(x_i)_{i\in J}\in\R^J$. By a slight abuse of notation, we will sometimes denote by $x_J$
a vector in $\R^n$ obtained from $x$ by zeroing out its components in $J^c$. If $M$ is a matrix whose columns are indexed
by $[n]$, write $M_{\cdot\, j}$ for its $j$th column and $M_J$ for its
restriction to the columns indexed by $J$.
\end{definition}

\begin{definition}[Restricted dyadic level sets]
For a vector $x\in\R^n$ and $j\in\mathbb Z$, define its $j$th dyadic level set
by
\[
    I_j(x):=\{i\in[n]:2^{-j}<\abs{x_i}\le2^{-j+1}\}.
\]
Further, if $\mathcal J\subset[n]$ is any nonempty subset, we denote the
restriction of this level set to $\mathcal J$ by
\[
    I_j^{\mathcal J}(x):=I_j(x)\cap\mathcal J.
\]
\end{definition}

\begin{definition}[Restricted row degree]
Let $\eta=(\eta_{ui})\in\{0,1\}^{k\times n}$ be a mask.  For a row $u\in[k]$
and a set $J\subset[n]$, write
\[
    d_u(\eta,J):=\sum_{i\in J}\eta_{ui}.
\]
When $\eta$ is clear from context, write $d_u(J)$.
\end{definition}

\begin{definition}[Heavy row with respect to a set]
Let $\eta\in\{0,1\}^{k\times n}$ be a mask, let $I\subset[n]$, let
$u\in[k]$, and let $\tau>0$.  We say that row $u$ is $\tau$-heavy with
respect to $I$ if
\[
    d_u(\eta,I)\ge\tau .
\]
\end{definition}

\begin{definition}[Heavy row degree sum]
\label{def:heavy-row-degree-sum}
Let $\eta\in\{0,1\}^{k\times n}$ be a mask, let $J\subset[n]$, and let
$\tau>0$.  Define
\[
    R_\tau(\eta,J):=
    \sum_{\substack{u\in[k]\\ d_u(\eta,J)\ge\tau}}d_u(\eta,J).
\]
When $\eta$ is clear from context, write $R_\tau(J)$.
\end{definition}

\subsection{Concentration of Negatively Associated Variables}

\begin{lemma}[Upper-tail calculus for admissible support masks]
\label{lem:negative-association-calculus}
Let $\eta=(\eta_q)_{q\in\mathcal Q}$ be a finite negatively associated
family of $\{0,1\}$-valued variables with common mean $p$.  Then the
following statements hold.

First, if $Q_1,\ldots,Q_m\subset\mathcal Q$ are pairwise disjoint and
$F_1,\ldots,F_m$ are nonnegative coordinate-wise non-decreasing functions on $\R^{Q_1},\dots,\R^{Q_m}$, respectively,
then
\begin{equation}
\label{eq:negative-association-block-product}
    \mathbb E\prod_{\ell=1}^m F_\ell(\eta_{Q_\ell})
    \le
    \prod_{\ell=1}^m\mathbb EF_\ell(\eta_{Q_\ell}).
\end{equation}
Second, for arbitrary nonnegative coefficients $(c_q)_{q\in\mathcal Q}$
and $\lambda\ge0$,
\begin{equation}
\label{eq:negative-association-laplace-domination}
    \mathbb E\exp\left(\lambda\sum_qc_q\eta_q\right)
    \le
    \prod_q\left(1-p+pe^{\lambda c_q}\right).
\end{equation}
Consequently, for $X=\sum_{q\in Q}\eta_q$ and every real number
$t\ge p|Q|$,
\begin{equation}
\label{eq:negative-association-binomial-tail}
    \mathbb P\{X\ge t\}
    \le
    \left(\frac{ep|Q|}{t}\right)^t.
\end{equation}
More generally, there is an absolute constant $c>0$ such that
\begin{equation}
\label{eq:negative-association-weighted-bernstein}
    \mathbb P\left\{
        \sum_qc_q\eta_q
        >p\sum_qc_q+t
    \right\}
    \le
    \exp\left[-c\min\left\{
        \frac{t^2}{p\sum_qc_q^2},
        \frac{t}{\max_qc_q}
    \right\}\right]
\end{equation}
for every $t>0$, with the usual conventions when a denominator vanishes.
\end{lemma}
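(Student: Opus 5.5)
The lemma is a short chain of consequences of negative association. Each later statement follows from the earlier ones by standard Chernoff/Bernstein arguments.

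\textbf{Block product inequality \eqref{eq:negative-association-block-product}.} Induct on $m$. For $m=2$ it is the defining inequality of negative association. One small point: the definition is stated for bounded functions, so we first truncate $F_\ell\wedge N$, which keeps nonnegativity and monotonicity, and let $N\to\infty$ by monotone convergence. For the inductive step, set $G:=\prod_{\ell<m}F_\ell(\eta_{Q_\ell})$. This is a nonnegative coordinatewise non-decreasing function of $\eta_{Q_1\cup\dots\cup Q_{m-1}}$, a set disjoint from $Q_m$. Negative association gives $\mathbb E\,G\,F_m\le \mathbb E G\,\mathbb E F_m$, and the induction hypothesis bounds $\mathbb E G$.

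\textbf{Laplace domination \eqref{eq:negative-association-laplace-domination}.} Apply the block product inequality to singleton blocks $Q_\ell=\{q\}$ with $F_q(t)=e^{\lambda c_qt}$. These are nonnegative and non-decreasing because $\lambda c_q\ge0$. For a $\{0,1\}$-valued variable of mean $p$ we have $\mathbb E e^{\lambda c_q\eta_q}=1-p+pe^{\lambda c_q}$.

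\textbf{Binomial tail \eqref{eq:negative-association-binomial-tail}.} Take $c_q=\mathbf 1_{q\in Q}$ and use $1+p(e^\lambda-1)\le\exp(p(e^\lambda-1))$. Markov's inequality then gives
\[
\mathbb P\{X\ge t\}\le \exp\bigl(p|Q|(e^\lambda-1)-\lambda t\bigr).
\]
For $t>p|Q|$, choose $\lambda=\log(t/(p|Q|))\ge0$. This yields $\exp(t-p|Q|)\,(p|Q|/t)^t\le (ep|Q|/t)^t$. The boundary case $t=p|Q|$ is trivial, since the right-hand side is then $e^t\ge1$.

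\textbf{Weighted Bernstein \eqref{eq:negative-association-weighted-bernstein}.} Set $M=\max_q c_q$, $S=\sum_q c_q$, $\sigma^2=p\sum_q c_q^2$. For $0\le\lambda M\le1$ we use
\[
\log\bigl(1-p+pe^{\lambda c_q}\bigr)\le p(e^{\lambda c_q}-1)\le p\lambda c_q+p\lambda^2c_q^2,
\]
which relies on $e^x-1-x\le x^2$ for $0\le x\le1$. Combining this with Laplace domination and Markov's inequality gives
\[
\mathbb P\{\textstyle\sum_q c_q\eta_q>pS+t\}\le\exp(\lambda^2\sigma^2-\lambda t).
\]
Now optimize with $\lambda=\min\{t/(2\sigma^2),1/M\}$. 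In the first regime the exponent is $-t^2/(4\sigma^2)$. In the second regime $1/M\le t/(2\sigma^2)$, so $\lambda^2\sigma^2\le\lambda t/2$ and the exponent is at most $-t/(2M)$. Taking $c=1/4$ covers both. The degenerate conventions handle a vanishing denominator: if $M=0$ the left side is zero.

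\textbf{Main obstacle.} There is no real obstacle here. The only point needing care is the truncation/monotone-convergence step in the block product inequality, since the negative-association definition allows only bounded test functions and singleton blocks must be genuinely disjoint. Everything else is the standard Chernoff calculus, done after the moment generating function has been dominated by that of independent Bernoulli($p$) variables.
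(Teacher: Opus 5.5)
Your proposal is correct and follows the paper's proof: induction on the number of blocks (products of nonnegative non-decreasing functions remain non-decreasing), singleton test functions $s\mapsto e^{\lambda c_q s}$ for the Laplace domination, then the standard exponential-Markov Chernoff bound and Bernstein optimization. You also supply the computations the paper leaves to its references: the choice $\lambda=\log(t/(p|Q|))$, the inequality $e^x-1-x\le x^2$ on $[0,1]$, and the explicit constant $c=1/4$.
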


\begin{proof}
The two-function definition of negative association extends to any finite
collection of disjoint blocks by induction, because a product of
nonnegative non-decreasing functions is again non-decreasing.  This proves
\eqref{eq:negative-association-block-product}.
Applying that estimate to
the singleton functions $s\mapsto e^{\lambda c_qs}$ gives
\eqref{eq:negative-association-laplace-domination}.  Exponential Markov's
inequality then gives the usual binomial Chernoff bound \eqref{eq:negative-association-binomial-tail}.
Finally, the standard
Bernstein optimization of the centered version of
\eqref{eq:negative-association-laplace-domination} gives
\eqref{eq:negative-association-weighted-bernstein}.
We refer to \cite{JoagDevProschan83,DubhashiRanjan98,BoucheronLugosiMassart13} for details.
\end{proof}

\subsection{Row-degree upper bound}

\begin{definition}[Row-degree upper bound]
\label{def:row-degree-upper-bound}
Let $\eta\in\{0,1\}^{k\times n}$ be a mask, and let $0<p\le1$ and $L\ge1$.
We say that $\eta$ satisfies the \emph{row-degree upper bound} with
parameters $p,L$ if, for every row $u\in[k]$,
\[
    d_u(\eta,[n])
    \le
    L\max\{\log(e k),pn\}.
\]
\end{definition}

\begin{lemma}[Row-degree upper bound under admissible support laws]
\label{lem:row-degree-upper-bound}
Let $D\ge1$.  There is a constant
$L_{\text{\tiny\ref*{lem:row-degree-upper-bound}}}(D)\ge1$ with the
following property.  Let $0<p\le1$, and let
$\Pi=(\pi_{ui})$ follow the admissible sparse-entry model of
Definition~\ref{def:admissible-sparse-entry-model} with parameter $p$, some
entry variable $\xi$, and support mask $\eta$.  Then, with probability at
least $1-(e k)^{-D}$, the mask $\eta$ satisfies the row-degree upper bound
with parameters
$p,L_{\text{\tiny\ref*{lem:row-degree-upper-bound}}}(D)$.
\end{lemma}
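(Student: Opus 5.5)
The plan is a per-row Chernoff bound followed by a union bound over the $k$ rows. Fix a row $u\in[k]$ and put $X_u:=d_u(\eta,[n])=\sum_{i=1}^n\eta_{ui}$. The family $(\eta_{ui})_{i\in[n]}$ is a subfamily of a negatively associated family, so it is itself negatively associated, and each variable has mean $p$. Hence the binomial tail bound \eqref{eq:negative-association-binomial-tail} of Lemma~\ref{lem:negative-association-calculus} applies with $Q=\{u\}\times[n]$ and $|Q|=n$:
\[
    \mathbb P\{X_u\ge t\}\le\left(\frac{epn}{t}\right)^t
    \qquad (t\ge pn).
\]
Only the mask enters, so the entry variable $\xi$ plays no role.

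Set $M:=\max\{\log(ek),pn\}$ and take the threshold $t:=LM$ with $L\ge e^2$. Then $t\ge pn$, and since $pn\le M$ we get $epn/t\le e/L\le e^{-1}$. This gives
\[
    \mathbb P\{X_u> LM\}\le e^{-LM}\le e^{-L\log(ek)}=(ek)^{-L}.
\]

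A union bound over the $k$ rows then gives a failure probability of at most $k(ek)^{-L}\le(ek)^{1-L}$. Choosing $L=L_{\text{\tiny\ref*{lem:row-degree-upper-bound}}}(D):=\max\{e^2,D+1\}$ makes this at most $(ek)^{-D}$, which is the claimed bound. Note that the statement uses $d_u\le L\max\{\cdot\}$ rather than a strict inequality; the event $X_u>LM$ is contained in $X_u\ge LM$, so the tail bound covers it.

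There is essentially no obstacle here. The only points needing care are that negative association is inherited by subfamilies, which is immediate from the definition since the test functions may ignore extra coordinates, and that the Chernoff tail is applied in the regime $t\ge pn$.
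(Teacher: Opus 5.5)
Your proposal is correct and follows essentially the same route as the paper: a per-row Chernoff bound from \eqref{eq:negative-association-binomial-tail} at threshold $LM$ with $M=\max\{\log(ek),pn\}$, followed by a union bound over the $k$ rows. The only difference is cosmetic: the paper keeps the rate $c_L=L\log(L/e)$ and chooses $L$ with $c_L\ge D+2$, whereas you take $L\ge e^2$ to simplify the per-row tail to $(ek)^{-L}$ and then set $L=\max\{e^2,D+1\}$ explicitly.
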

\begin{proof}
Put $M:=\max\{\log(e k),pn\}$. By Lemma~\ref{lem:negative-association-calculus}, for every $L>e$,
\[
    \mathbb P\{d_u(\eta,[n])>LM\}
    \le
    \left(\frac{epn}{LM}\right)^{LM}
    \le
    \left(\frac eL\right)^{LM}
    \le
    \exp(-c_L\,\log(e k)),
\]
where $c_L:=L\log(L/e)\to\infty$ as $L\to\infty$.  Choose
$L=L_{\text{\tiny\ref*{lem:row-degree-upper-bound}}}(D)$ so large that
$c_L\ge D+2$.  A union bound over the $k$ rows gives
\[
    \mathbb P\{\exists u\in[k]:d_u(\eta,[n])>LM\}
    \le
    k\exp(-c_L\,\log(e k))
    \le
    (e k)^{-D},
\]
which proves the assertion.
\end{proof}

\subsection{Heavy-row statistics}

\begin{lemma}[Fixed-set heavy-row tail]
\label{lem:fixed-set-heavy-row-tail}
Let $\Pi$ follow the admissible sparse-entry model of
Definition~\ref{def:admissible-sparse-entry-model} with parameter $p$ and
support mask $\eta$.  Let
$I\subset[n]$ have cardinality $s\ge1$, and let $K>0$.  If
\[
    \lceil K\rceil\ge2\log(e k)
    \qquad\text{and}\qquad
    \lceil K\rceil\ge e^3ps,
\]
then, for every $t\ge1$,
\[
    \mathbb P\{R_K(\eta,I)\ge t\}
    \le
    e^2\left(e^2\frac{ps}{\lceil K\rceil}\right)^t
    \le
    \left(C\frac{ps}{\lceil K\rceil}\right)^t
\]
for an absolute constant $C\ge1$.
\end{lemma}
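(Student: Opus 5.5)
We need to bound $\mathbb P\{R_K(I)\ge t\}$, where $R_K(I)=\sum_u d_u(I)\mathbf 1_{\{d_u(I)\ge K\}}$ and $d_u(I)=\sum_{i\in I}\eta_{ui}$. Put $K':=\lceil K\rceil$; since degrees are integers, the condition $d_u\ge K$ is the same as $d_u\ge K'$. The rows $u\in[k]$ use disjoint blocks $\{u\}\times I$ of the mask. Each summand $F_u:=d_u\mathbf 1_{\{d_u\ge K'\}}$ is a nonnegative, coordinatewise non-decreasing function of its own block. By \eqref{eq:negative-association-block-product} applied to $e^{\lambda F_u}$, which is still nonnegative and non-decreasing, we get
\[
\mathbb E e^{\lambda R_K}\le\prod_{u}\mathbb E e^{\lambda F_u}.
\]
So it suffices to control the exponential moment of a single $F_u$ and then apply Markov's inequality.

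\textbf{Single-row moment.} Write $a:=ps/K'\le e^{-3}$. For one row, \eqref{eq:negative-association-binomial-tail} gives $\mathbb P\{d_u\ge m\}\le (eps/m)^m\le (ea)^m$ for every $m\ge K'$. Choose $\lambda:=\log\frac{1}{e^2a}\ge1$, so that $e^\lambda ea=e^{-1}$. Then
\[
\mathbb E e^{\lambda F_u}-1\le\sum_{m\ge K'}e^{\lambda m}\mathbb P\{d_u=m\}\le\sum_{m\ge K'}(e^{\lambda}ea)^m=\sum_{m\ge K'}e^{-m}\le 2e^{-K'}.
\]
Here I use summation by parts, or simply $\mathbb P\{d_u=m\}\le\mathbb P\{d_u\ge m\}$; the latter suffices. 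Since $K'\ge 2\log(ek)$, we have $e^{-K'}\le (ek)^{-2}$. Hence
\[
\prod_u \mathbb E e^{\lambda F_u}\le \bigl(1+2(ek)^{-2}\bigr)^k\le e^{2/(e^2k)}\le e^2,
\]
with ample room to spare.

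\textbf{Markov step.} This gives
\[
\mathbb P\{R_K\ge t\}\le e^{2}e^{-\lambda t}=e^2(e^2a)^t=e^2\Bigl(e^2\frac{ps}{K'}\Bigr)^t,
\]
which is the first claimed bound. For the second bound, note that $t\ge1$ and $e^2a\le e^{-1}<1$. The prefactor $e^2$ can therefore be absorbed: $e^2(e^2a)^t\le (e^4a)^t$, because $e^2\le e^{2t}$. So $C=e^4$ works.

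\textbf{Main obstacle.} The only delicate point is justifying the factorization of the exponential moment across rows under negative association. The truncation indicator $d_u\mathbf 1_{\{d_u\ge K'\}}$ must be monotone for the block-product inequality to apply, and it is, since increasing any $\eta_{ui}$ can only raise $d_u$ past the threshold. A secondary point is checking that the choice of $\lambda$ keeps the geometric series summable with the stated constants. That step is where the hypotheses $K'\ge e^3ps$ (giving $\lambda\ge1>0$) and $K'\ge2\log(ek)$ (keeping the product over $k$ rows bounded) enter.
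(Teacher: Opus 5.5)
Your proof is correct and follows the paper's argument essentially step for step. Both use the choice $\lambda=\log(\lceil K\rceil/(e^2ps))$, the per-row bound $e^{\lambda m}\mathbb P\{d_u=m\}\le e^{-m}$ from the Chernoff tail, factorization over rows via the negative-association block-product inequality, and Markov's inequality; your sharper estimate $e^{-\lceil K\rceil}\le(ek)^{-2}$ (versus the paper's $ke^{-\lceil K\rceil}\le1$) and the explicit constant $C=e^4$ are only cosmetic refinements.
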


\begin{proof}
Write $\pi_{ui}=\eta_{ui}\xi_{ui}$ as in
Definition~\ref{def:admissible-sparse-entry-model}.  The row degrees
$d_u(\eta,I)$, $u\in[k]$, depend on pairwise disjoint row blocks of the
negatively associated mask. Put
\[
    \lambda:=\log\left(\frac{\lceil K\rceil}{e^2ps}\right)>0.
\]
For every $u$ and $\lceil K\rceil\le b\le s$, the Chernoff estimate
\eqref{eq:negative-association-binomial-tail} gives
\[
    e^{\lambda b}\,\mathbb P\{d_u(\eta,I)=b\}
    \le
    \left(\frac{\lceil K\rceil}{eb}\right)^b
    \le e^{-b}.
\]
Consequently, for
$Y_u:=d_u(\eta,I)\mathbf 1_{\{d_u(\eta,I)\ge \lceil K\rceil\}}$,
\[
    \mathbb E e^{\lambda Y_u}
    \le
    1+\sum_{b=\lceil K\rceil}^{\infty}e^{-b}
    \le
    \exp(2e^{-\lceil K\rceil}).
\]
The functions $d_u(\eta,I)\mapsto e^{\lambda Y_u}$ are nonnegative and
non-decreasing.  Hence
\eqref{eq:negative-association-block-product} and Markov's inequality yield
\[
\begin{aligned}
    \mathbb P\{R_K(\eta,I)\ge t\}
    &\le
    \exp(-\lambda t+2ke^{-\lceil K\rceil}).
\end{aligned}
\]
The first assumption gives $ke^{-\lceil K\rceil}\le1$.  If
$\lceil K\rceil>s$, the event under consideration is empty.  Otherwise,
\[
    \mathbb P\{R_K(\eta,I)\ge t\}
    \le
    e^2\left(e^2\frac{ps}{\lceil K\rceil}\right)^t
    \le
    \left(C\frac{ps}{\lceil K\rceil}\right)^t,
\]
as claimed.
\end{proof}

\begin{lemma}[Total degree of canonical-heavy rows]
\label{lem:total-degree-canonical-heavy-rows}
For every $\alpha\ge0$ and $B\ge1$ there are constants
$L_{\text{\tiny\ref*{lem:total-degree-canonical-heavy-rows}}}\ge1$ and
$C_{\text{\tiny\ref*{lem:total-degree-canonical-heavy-rows}}}(\alpha,B)\ge1$
with the following property.  Assume
\[
    r^{10}\ge n\ge k\ge r\ge3,
    \qquad
    \frac{\log k}{k}\le p\le1.
\]
Let $\Pi$ follow the admissible sparse-entry model of
Definition~\ref{def:admissible-sparse-entry-model} with parameter $p$ and
support mask $\eta$, and let
$\mathcal N\subset S^{n-1}$ be a finite set with
$\abs{\mathcal N}\le \exp(\alpha r)$.  Fix
$L\ge L_{\text{\tiny\ref*{lem:total-degree-canonical-heavy-rows}}}$, and for
$x\in\mathcal N$, let 
$K_{j}^{(L)}(x)$
be the canonical threshold profile from
Definition~\ref{def:canonical-tall-flat-partition}.
Then, with probability at least $1-k^{-B}$,
\[
    \sup_{x\in\mathcal N}
    \sum_{j\in\mathbb Z}\sum_{u=1}^k
    d_u(\eta,I_j(x))
    \mathbf 1_{\{d_u(\eta,I_j(x))\ge K_{j}^{(L)}(x)\}}
    \le
    C_{\text{\tiny\ref*{lem:total-degree-canonical-heavy-rows}}}(\alpha,B)\,r .
\]
\end{lemma}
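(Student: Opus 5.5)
The plan is a single exponential moment bound for each fixed $x\in\mathcal N$, followed by a union bound over $\mathcal N$. The key point is to treat the whole double sum over $j$ and $u$ at once, rather than level by level, so that the possibly large number of nonempty levels $I_j(x)$ costs nothing in the exponent. Fix $x\in\mathcal N$. For each $j$ with $I_j(x)\ne\varnothing$, write $s_j=|I_j(x)|$ and $K_j=K_j^{(L)}(x)=L\max\{pr_*,ps_j\}$. Set
\[
Y_{u,j}:=d_u(\eta,I_j(x))\mathbf 1_{\{d_u(\eta,I_j(x))\ge K_j\}},
\qquad
S(x):=\sum_{j}\sum_{u=1}^k Y_{u,j}.
\]
The level sets $I_j(x)$ are pairwise disjoint. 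Hence the blocks $\{u\}\times I_j(x)$ of $[k]\times[n]$ are pairwise disjoint. Each $Y_{u,j}$ is a non-decreasing nonnegative function of $\eta$ restricted to its block. By \eqref{eq:negative-association-block-product}, for every $\lambda\ge0$,
\[
\mathbb E e^{\lambda S(x)}\le\prod_{j}\prod_{u}\mathbb E e^{\lambda Y_{u,j}}.
\]

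Next I would bound each factor uniformly, copying the proof of Lemma~\ref{lem:fixed-set-heavy-row-tail} but with one $\lambda$ for all levels. Since $ps_j/\lceil K_j\rceil\le 1/L$, take $\lambda:=\log(L/e^2)$, which is at least $1$ once $L\ge e^3$. For every $b\ge\lceil K_j\rceil\ge Lps_j$, the bound \eqref{eq:negative-association-binomial-tail} gives
\[
e^{\lambda b}\,\mathbb P\{d_u(\eta,I_j(x))=b\}\le\bigl(eps_jL/(e^2b)\bigr)^b\le e^{-b}.
\]
Therefore $\mathbb E e^{\lambda Y_{u,j}}\le\exp(2e^{-\lceil K_j\rceil})\le\exp(2e^{-Lpr_*})\le\exp(2k^{-L})$, using $pr_*\ge\log k$. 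The number of nonempty levels is at most $n\le r^{10}\le k^{10}$. So the product over all pairs $(u,j)$ is at most $\exp(2k^{11-L})\le e^2$ once $L\ge 11$. This is where the hypothesis $n\le r^{10}$ enters; it is the step I expect to need care, namely checking that the uniform choice of $\lambda$ is legitimate for every level simultaneously, including levels with tiny $|x_i|$.

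Markov's inequality then gives
\[
\mathbb P\{S(x)\ge t\}\le e^2e^{-\lambda t}\le e^{2-t}.
\]
A union bound over $|\mathcal N|\le e^{\alpha r}$ bounds the failure probability by $e^{2+\alpha r-t}$. Take $t=C r$ with $C=\alpha+10B+3$. Since $k\le n\le r^{10}$, we have $B\log k\le 10B\log r\le 10Br$. With $r\ge3$, this gives $e^{2+\alpha r-Cr}\le e^{-10Br-r+2}\le k^{-B}$. This proves the claim with $L_{\ref*{lem:total-degree-canonical-heavy-rows}}:=\max\{e^3,11\}$ (any larger absolute constant also works) and $C_{\ref*{lem:total-degree-canonical-heavy-rows}}(\alpha,B)=\alpha+10B+3$.
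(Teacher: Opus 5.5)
Your proposal is correct and follows essentially the same route as the paper: one exponential moment for the whole double sum via the negative-association block product over the disjoint blocks $\{u\}\times I_j(x)$, per-block Chernoff bounds using $K_j\ge Lpr_*\ge L\log k$ and the fact that at most $n\le k^{10}$ levels are nonempty, then Markov's inequality and a union bound over $\mathcal N$ using $\log k\lesssim r$. The only cosmetic difference is that you take $\lambda=\log(L/e^2)$, giving per-term decay $e^{-b}$, whereas the paper takes $\lambda=1$ and tracks the decay $\delta^b$ with $\delta=e^2/L$.
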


\begin{proof}
Fix $x\in\mathcal N$ and put
\[
    s_j:=\abs{I_j(x)},
    \qquad
    K_j:=K_j^{(L)}(x).
\]
Choose $L_0$ large enough so that, for all $L\ge L_0$,
\[
    \delta:=\frac{e^2}{L}<\frac12
    \qquad\text{and}\qquad
    L\log(1/\delta)>12 .
\]
If $b\ge\lceil K_j\rceil$ and
$s_j>0$, then
\[
    p s_j
    \le
    K_j/L
    \le b/L .
\]
Therefore \eqref{eq:negative-association-binomial-tail} gives
\[
    \mathbb P\{d_u(\eta,I_j(x))\ge b\}
    \le
    \left(\frac{eps_j}{b}\right)^b
    \le
    \left(\frac eL\right)^b .
\]
Consequently,
\[
\begin{aligned}
    \mathbb E\exp\{d_u(\eta,I_j(x))
        \mathbf 1_{\{d_u(\eta,I_j(x))\ge\lceil K_j\rceil\}}\}
    &\le
    1+\sum_{b=\lceil K_j\rceil}^{s_j}
        e^b\,\mathbb P\{d_u(\eta,I_j(x))=b\}                          \\
    &\le
    1+\sum_{b=\lceil K_j\rceil}^{s_j}\delta^b
    \le
    \exp(2\delta^{\lceil K_j\rceil}).
\end{aligned}
\]
Set
\[
    R_L^{\rm can}(x)
    :=
    \sum_{j\in\mathbb Z}\sum_{u=1}^k
    d_u(\eta,I_j(x))
    \mathbf 1_{\{d_u(\eta,I_j(x))\ge\lceil K_j\rceil\}} .
\]
The degrees $d_u(\eta,I_j(x))$ depend on pairwise disjoint blocks of the negatively
associated support mask as $(u,j)$ varies.
Since the functions
$d_u(\eta,I_j(x))\mapsto
\exp\{d_u(\eta,I_j(x))
\mathbf 1_{\{d_u(\eta,I_j(x))\ge\lceil K_j\rceil\}}\}$ are
nonnegative and non-decreasing, the block-product estimate
\eqref{eq:negative-association-block-product} gives
\[
    \mathbb E e^{R_L^{\rm can}(x)}
    \le
    \exp\left(2k
    \sum_{\substack{j\in\mathbb Z:\\s_j\ge\lceil K_j\rceil}}
    \delta^{\lceil K_j\rceil}\right).
\]
We now bound
$\sum_{\substack{j\in\mathbb Z:\,s_j\ge\lceil K_j\rceil}}
\delta^{\lceil K_j\rceil}$ uniformly in $x$.  Since
$\lceil K_j\rceil\ge K_j$ and
$K_j\ge Lpr_*\ge L\log k$,
\[
    \sum_{\substack{j\in\mathbb Z:\\s_j\ge\lceil K_j\rceil}}
    \delta^{\lceil K_j\rceil}
    \le
    n\delta^{Lpr_*}
    \le
    k^{10-L\log(1/\delta)} .
\]
Here we used that at most $n$ dyadic level sets are nonempty.  Since
$L\log(1/\delta)>12$, this implies
\[
    2k\sum_{\substack{j\in\mathbb Z:\\s_j\ge\lceil K_j\rceil}}
    \delta^{\lceil K_j\rceil}
    \le 1.
\]
By Markov's inequality, for every $T>0$,
\[
    \mathbb P\{R_L^{\rm can}(x)\ge T\}
    \le
    \exp\{-T+1\}.
\]
Taking the union bound over $\mathcal N$ gives
\[
    \mathbb P\left\{
        \sup_{x\in\mathcal N}R_L^{\rm can}(x)\ge T
    \right\}
    \le
    \exp\{-T+\alpha r+1\}.
\]
Choose
\[
    T:=(\alpha+C_0 B)r+1 ,
\]
where $C_0$ is a sufficiently large absolute constant.  Since
$\log k\le C_0r$, the right-hand side
$\exp\{-T+\alpha r+1\}$ is at most $k^{-B}$, which proves the lemma.
\end{proof}

\section{Entropy of Level Sets}
\label{sec:entropy-of-level-sets}

\subsection{A generic entropy bound}

\begin{lemma}
\label{lem:full-trace-counting}
Let $W\subset\R^n$ be a subspace of dimension $d\ge1$.  For $\beta>0$ and
$s\ge1$, let
\[
    \mathcal F_W(\beta,s)
    :=
    \left\{
        I\subset[n]:
        \abs I=s,\ \exists x\in W,\ \norm x_2\le1,\
        \abs{x_i}\ge\beta\ \text{for every }i\in I
    \right\}.
\]
Then
\[
    \abs{\mathcal F_W(\beta,s)}
    \le
    \frac{\beta^{-2s}}{s!}\binom{d+s-1}{s}
    \le
    \left(
        C_{\text{\tiny\ref*{lem:full-trace-counting}}}
        \frac{d+s}{\beta^2s^2}
    \right)^s
\]
for an absolute constant
$C_{\text{\tiny\ref*{lem:full-trace-counting}}}\ge1$.
\end{lemma}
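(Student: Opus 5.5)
The plan is to linearize the level-set condition through the projection vectors $v_i:=P_We_i\in W$, $i\in[n]$. For every $x\in W$ we have $x_i=\langle x,e_i\rangle=\langle x,v_i\rangle$, and the Parseval identity $\sum_{i=1}^n v_i\otimes v_i=I_W$ holds. The aim is to attach to each $I\in\mathcal F_W(\beta,s)$ an element of the symmetric tensor power $\mathrm{Sym}^s(W)$ whose Hilbert--Schmidt norm is at least $\beta^s$. We then show that the total squared norm of these elements is at most $\dim\mathrm{Sym}^s(W)=\binom{d+s-1}{s}$.

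\textbf{Step 1: a norm lower bound for each $I$.} Let $P_{\rm sym}$ denote the orthogonal projection of $W^{\otimes s}$ onto $\mathrm{Sym}^s(W)$. For an ordered tuple $\mathbf i=(i_1,\dots,i_s)\in[n]^s$, put
\[
T_{\mathbf i}:=P_{\rm sym}(v_{i_1}\otimes\cdots\otimes v_{i_s}).
\]
Fix $I\in\mathcal F_W(\beta,s)$ and choose $x\in W$ with $\norm x_2\le1$ and $\abs{x_i}\ge\beta$ for all $i\in I$. Let $\mathbf i$ be any ordering of $I$. Since $x^{\otimes s}$ is symmetric, we get
\[
\langle T_{\mathbf i},x^{\otimes s}\rangle=\prod_{i\in I}\langle v_i,x\rangle=\prod_{i\in I}x_i .
\]
The right-hand side has absolute value at least $\beta^s$, and $\norm{x^{\otimes s}}_{HS}\le1$. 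By Cauchy--Schwarz, $\norm{T_{\mathbf i}}_{HS}^2\ge\beta^{2s}$. This equivalently expresses the Bombieri--Weyl norm of the polynomial $\prod_{i\in I}\langle v_i,\cdot\rangle$. Each of the $s!$ orderings of $I$ gives the same tensor, hence the same bound.

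\textbf{Step 2: the Parseval trace identity.} We sum over all ordered tuples, repetitions included:
\[
\sum_{\mathbf i\in[n]^s}\norm{T_{\mathbf i}}_{HS}^2
=\operatorname{tr}\Bigl(P_{\rm sym}\Bigl(\sum_{i}v_i\otimes v_i\Bigr)^{\otimes s}P_{\rm sym}\Bigr)
=\operatorname{tr}(P_{\rm sym}\, I_W^{\otimes s}\,P_{\rm sym})
=\dim\mathrm{Sym}^s(W)=\binom{d+s-1}{s}.
\]
This is the only place where the linearity of $W$ enters, and I expect it to be the conceptual crux. The computation itself is a routine expansion of $\norm{P_{\rm sym}(\otimes_k v_{i_k})}^2$ as a quadratic form.

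\textbf{Step 3: counting.} All terms in Step 2 are nonnegative. We therefore restrict the sum to tuples of distinct indices that enumerate some $I\in\mathcal F_W(\beta,s)$. Each such set contributes $s!$ tuples, each with squared norm at least $\beta^{2s}$ by Step 1. This gives
\[
s!\,\beta^{2s}\,\abs{\mathcal F_W(\beta,s)}\le\binom{d+s-1}{s},
\]
which is the first inequality.

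For the second inequality, use $\binom{d+s-1}{s}\le\bigl(e(d+s)/s\bigr)^s$ and $s!\ge(s/e)^s$. Together these give the bound with $C_{\text{\tiny\ref*{lem:full-trace-counting}}}=e^2$.
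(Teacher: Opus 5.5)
Your proof is correct and is essentially the paper's argument. The paper works with the polynomials $\prod_t\langle y,u_{i_t}\rangle$ under the Bombieri norm. It bounds that norm below by $\beta^s$ via evaluation at $x$, and it verifies $\sum_{\mathbf i}\|p_{\mathbf i}\|_*^2=\binom{d+s-1}{s}$ by an explicit coefficient computation from $\sum_i u_iu_i^{\mathsf T}=I_W$. That is your Steps 1--3 transported through the isometry between $\mathrm{Sym}^s(W)$ with the Hilbert--Schmidt norm and degree-$s$ forms with the Bombieri norm: your identity $\sum_{\mathbf i}\|T_{\mathbf i}\|_{HS}^2=\operatorname{tr}P_{\rm sym}$ is a coordinate-free packaging of the same computation, and your constant $e^2$ is what the paper's final estimate gives.
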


\begin{proof}
Let $P_W:\R^n\to W$ be the orthogonal projection.  Choose an orthonormal basis of $W$ and identify $W$ with
$\R^d$.  For $i\in[n]$, put
$u_i:=P_We_i\in W$.  The projected coordinate vectors satisfy
\begin{equation}
\label{eq:full-trace-resolution-identity}
    \sum_{i=1}^n u_i u_i^{\mathsf T}=I_W .
\end{equation}

In what follows, given a multi-index $\alpha$, denote by $\abs\alpha$ its sum.
For a homogeneous polynomial of degree $s$,
\[
    p(y)=\sum_{\abs\alpha=s}c_\alpha y^\alpha,
\]
write
\[
    M_\alpha:=\frac{s!}{\alpha_1!\cdots\alpha_d!},
    \qquad
    \norm p_*^2
    :=
    \sum_{\abs\alpha=s}\frac{c_\alpha^2}{M_\alpha}.
\]
Here, $\norm p_*$ is the {\it Bombieri norm};
see \cite{BeauzamyBombieriEnfloMontgomery90}.
By Cauchy--Schwarz and the multinomial theorem,
\begin{equation}
\label{eq:full-trace-polynomial-cauchy}
    \abs{p(y)}
    \le
    \norm p_*\norm y_2^s.
\end{equation}

For an ordered tuple
$\mathbf i=(i_1,\ldots,i_s)\in[n]^s$, define
\[
    p_{\mathbf i}(y)
    :=
    \prod_{t=1}^s\langle y,u_{i_t}\rangle.
\]
We claim that
\begin{equation}
\label{eq:full-trace-ordered-polynomial-sum}
    \sum_{\mathbf i\in[n]^s}\norm{p_{\mathbf i}}_*^2
    =
    \binom{d+s-1}{s}.
\end{equation}
Indeed, fix a multi-index $\alpha$ with $\abs\alpha=s$, and let
$\Phi_\alpha$ be the set of maps $\phi:[s]\to[d]$ satisfying
$\abs{\phi^{-1}(a)}=\alpha_a$ for every $a\in[d]$.  If
$c_\alpha(\mathbf i)$ denotes the coefficient of $y^\alpha$ in
$p_{\mathbf i}$, then
\[
    c_\alpha(\mathbf i)
    =
    \sum_{\phi\in\Phi_\alpha}
    \prod_{t=1}^s (u_{i_t})_{\phi(t)}.
\]
Using \eqref{eq:full-trace-resolution-identity},
\begin{align*}
    \sum_{\mathbf i\in[n]^s}c_\alpha(\mathbf i)^2
    &=
    \sum_{\phi,\psi\in\Phi_\alpha}
    \prod_{t=1}^s
    \sum_{i=1}^n
    (u_i)_{\phi(t)}(u_i)_{\psi(t)} \\
    &=
    \sum_{\phi,\psi\in\Phi_\alpha}
    \prod_{t=1}^s\mathbf 1_{\{\phi(t)=\psi(t)\}}
    =
    \abs{\Phi_\alpha}
    =
    M_\alpha.
\end{align*}
Summing over the $\binom{d+s-1}{s}$ multi-indices $\alpha$ proves
\eqref{eq:full-trace-ordered-polynomial-sum}.

For an unordered set $I\subset[n]$ with $\abs I=s$, put
\[
    p_I(y):=\prod_{i\in I}\langle y,u_i\rangle.
\]
Every ordering of $I$ gives the same polynomial, and hence
\[
    s!\sum_{\substack{I\subset[n]\\\abs I=s}}\norm{p_I}_*^2
    \le
    \binom{d+s-1}{s}.
\]
If $I\in\mathcal F_W(\beta,s)$, choose a corresponding vector $x\in W$.
Since $\langle x,u_i\rangle=x_i$, we have
\[
    \abs{p_I(x)}\ge\beta^s.
\]
The polynomial estimate \eqref{eq:full-trace-polynomial-cauchy} therefore
gives $\norm{p_I}_*^2\ge\beta^{2s}$.  Consequently,
\[
    \abs{\mathcal F_W(\beta,s)}
    \le
    \beta^{-2s}\frac1{s!}\binom{d+s-1}{s}.
\]
Finally,
\[
    s!\ge(s/e)^s,
    \qquad
    \binom{d+s-1}{s}
    \le
    \left(e\frac{d+s}{s}\right)^s.
\]
Combining these estimates proves the lemma.
\end{proof}

\begin{corollary}[A basic entropy-probability balancing]
\label{lem:entropy-support-certificate}
Let $W\subset\mathbb R^n$ be a subspace of dimension $d\ge1$, let
$\beta>0$, let $q\ge1$ be an integer, and let $\kappa>0$.  Suppose that
$\Pi$ follows the admissible sparse-entry model with support mask $\eta$ and
density $p$, and assume
\[
    \lceil\kappa\rceil\ge2\log(e k),
    \qquad
    \lceil\kappa\rceil\ge e^3pq.
\]
Then
\begin{equation}
\label{eq:entropy-support-certificate}
    \sum_{J\in\mathcal F_W(\beta,q)}
    \mathbb P\{R_\kappa(\eta,J)\ge q\}
    \le
    \left(
        C\,\frac{p(d+q)}{\lceil\kappa\rceil\beta^2q}
    \right)^q.
\end{equation}
\end{corollary}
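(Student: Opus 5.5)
The plan is to combine the fixed-set heavy-row tail (Lemma~\ref{lem:fixed-set-heavy-row-tail}) with the entropy bound (Lemma~\ref{lem:full-trace-counting}) through a plain union bound over $\mathcal F_W(\beta,q)$. For each fixed $J\in\mathcal F_W(\beta,q)$ we have $|J|=q$. The hypotheses of Lemma~\ref{lem:fixed-set-heavy-row-tail} with $s=q$, $K=\kappa$ and $t=q\ge1$ are exactly $\lceil\kappa\rceil\ge2\log(ek)$ and $\lceil\kappa\rceil\ge e^3pq$, which are assumed. That lemma therefore gives
\[
    \mathbb P\{R_\kappa(\eta,J)\ge q\}\le\left(C_1\frac{pq}{\lceil\kappa\rceil}\right)^q,
\]
uniformly in $J$. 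The admissible model enters only through the mask $\eta$, so the entry variables play no role.

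Next I sum over $J$. The number of summands is bounded by Lemma~\ref{lem:full-trace-counting} applied to $W$ with $s=q$:
\[
    |\mathcal F_W(\beta,q)|\le\left(C_2\frac{d+q}{\beta^2q^2}\right)^q.
\]
Multiplying the two bounds, the factor $q^q$ from the probability cancels one power of $q$ in $q^{2q}$. This yields
\[
    \left(C_1C_2\,\frac{p(d+q)}{\lceil\kappa\rceil\beta^2 q}\right)^q,
\]
which is \eqref{eq:entropy-support-certificate} with $C=C_1C_2$.

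There is no substantial obstacle. The only care needed is to check that the hypotheses of the fixed-set tail lemma hold for every $J$ in the family, which they do because each $J$ has cardinality exactly $q$ and the conditions depend only on $|J|$. The absolute constant $C\ge1$ from the tail lemma absorbs the prefactor $e^2$.
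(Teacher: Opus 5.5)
Your proposal is correct and follows the paper's proof exactly: you bound each probability with Lemma~\ref{lem:fixed-set-heavy-row-tail} (taking $s=q$, $K=\kappa$, $t=q$) and the number of sets with Lemma~\ref{lem:full-trace-counting}, then multiply. One power of $q^q$ cancels, exactly as you describe.
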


\begin{proof}
Lemma~\ref{lem:full-trace-counting} bounds the number of sets in the sum by
\[
    \left(C\,\frac{d+q}{\beta^2q^2}\right)^q.
\]
For each fixed set $J$ in that family,
Lemma~\ref{lem:fixed-set-heavy-row-tail}, with $K=\kappa$ and $t=q$,
bounds the corresponding probability by
$(Cpq/\lceil\kappa\rceil)^q$.  Multiplication gives
\eqref{eq:entropy-support-certificate}.
\end{proof}

\subsection{Application I: Subspace-sensitive edge count estimates}

As the first application of the entropy lemma, consider the problem of edge counting.
Let $k\ge3$,
let $1\le r\le k$ and $(\log k)/k\le p\le1$, let $V\subset\R^n$ be a fixed
$r$-dimensional subspace. Recall the notation
\[
    r_*:=\max\left\{r,\frac{\log k}{p}\right\}.
\]
Let $\eta=(\eta_{ui})_{u\in[k],\,i\in[n]}$ be a $\{0,1\}$-valued mask whose
coordinates are negatively associated and satisfy
$\mathbb E\eta_{ui}=p$.  
We can view $\eta$ as the adjacency structure of a random bipartite graph on $[k]\sqcup[n]$;
the quantities $e_\eta(I,J)$ defined in
Subsection~\ref{sec:technical-overview-kahn-szemeredi} are then interpreted as the number of edges of the graph
connecting the vertex subsets $I$ and $J$. 
We assume that $I$ comes as a subset of a level set of some unit vector $x\in V$.
The next lemma provides an upper bound on the edge count
independent of the ambient dimension $n$ and uniform over all admissible choices of $I$ and $J$.
This is a key component in repairing the Kahn--Szemeredi argument
in our setting.

\begin{lemma}[Subspace-sensitive edge count estimates]
\label{lem:ks-tall-entropy-rectangle-bound}
There is an absolute constant $C_H\ge4e$ with the following property.  For
every $j\in\mathbb Z$ satisfying
$1/\sqrt{r_*}\le2^{-j+1}\le1$ and every $s\ge1$, let
\[
    \mathcal F_j(s)
    :=
    \left\{
    I\subset[n]: |I|=s,\ \exists x\in V,\ \norm x_2\le1,\
    |x_i|>2^{-j}\ \text{for every }i\in I
    \right\}
\]
and define the level-set entropy
\[
    \Phi(j,s)
    :=
    s\log\left(
        C_H\frac{2^{2j-2}(r+s)}{s^2}
    \right).
\]
Then for every $B>0$ there is
$C_{\rm rect}=C_{\rm rect}(B)<\infty$ such that, with probability at
least $1-k^{-B}$, the following holds simultaneously for every
$j\in\mathbb Z$ satisfying $1/\sqrt{r_*}\le2^{-j+1}\le1$, every
$s,t\ge1$, every $I\in\mathcal F_j(s)$, and every
$J\subset[k]$ with $|J|=t$:
\begin{equation}
\label{eq:ks-tall-entropy-rectangle-bound}
    e_\eta(I,J)
    \le
    C_{\rm rect}
    \left[
        pst
        +
        \frac{Q(j,s,t)}
        {\log\left(e+Q(j,s,t)/(pst)\right)}
    \right].
\end{equation}
Here
\[
    Q(j,s,t):=\Phi(j,s)+t\log\frac{ek}{t}.
\]
\end{lemma}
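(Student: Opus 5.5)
Following the classical Feige--Ofek/Keshavan--Montanari--Oh scheme, I would fix the rectangle, apply a Chernoff tail, and take a union bound over the pairs $(I,J)$. The key change is that the column sets $I$ are counted through the entropy bound of Lemma~\ref{lem:full-trace-counting} instead of the ambient $\binom ns$. The first step is to check that $\mathcal F_j(s)\subset\mathcal F_V(2^{-j},s)$. Lemma~\ref{lem:full-trace-counting} with $d=r$ and $\beta=2^{-j}$ then gives
\[
|\mathcal F_j(s)|\le\Bigl(C\,\tfrac{2^{2j}(r+s)}{s^2}\Bigr)^s\le e^{\Phi(j,s)}
\]
once $C_H$ is chosen at least $4C_{\ref*{lem:full-trace-counting}}$. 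When $\Phi(j,s)$ is small or negative, $\mathcal F_j(s)$ is empty and the claim is vacuous. Note also that $s\le 2^{2j}\le 4r_*\le 4k$ whenever $\mathcal F_j(s)\neq\varnothing$, and that $j$ ranges over only $O(\log r_*)=O(\log k)$ values. The number of row sets $J$ with $|J|=t$ is at most $\binom kt\le\exp(t\log(ek/t))$. So the total entropy attached to the triple $(j,s,t)$ is at most $Q(j,s,t)$, up to the harmless convention that $\Phi\ge0$ once the family is nonempty.

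For a fixed rectangle, $e_\eta(I,J)$ is a sum of $st$ negatively associated indicators with mean $p$. The Chernoff bound \eqref{eq:negative-association-binomial-tail} gives, for $\tau\ge e^2pst$,
\[
\mathbb P\{e_\eta(I,J)\ge\tau\}\le\exp\bigl(-\tau\log(\tau/(e\,pst))\bigr).
\]
Set $\mu=pst$ and let $\tau=C_{\rm rect}[\mu+Q/\log(e+Q/\mu)]$. The standard calculation, the same one that produces the $Q/\log(e+Q/\mu)$ term in the classical discrepancy lemma, shows $\tau\log(\tau/(e\mu))\ge (B+10)\bigl(Q+\log k\bigr)$ once $C_{\rm rect}$ is large. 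There are two regimes. If $Q\le\mu$, then $\tau\ge C_{\rm rect}\mu$ and the logarithm is at least a constant, giving an exponent of order $C_{\rm rect}\mu\ge C_{\rm rect}Q$. If $Q>\mu$, then $\tau\ge C_{\rm rect}Q/\log(e+Q/\mu)$ and $\log(\tau/(e\mu))\gtrsim\log(Q/\mu)-\log\log(e+Q/\mu)\gtrsim\log(e+Q/\mu)$. To absorb the extra $\log k$ needed for the union over $(j,s,t)$, I use $Q\ge t\log(ek/t)\ge\log k$ roughly. Alternatively, I replace $Q$ by $Q+B\log k$ in the computation and note that $\mu\ge p\ge(\log k)/k$ and $t\ge1$ keep the resulting change within a constant factor of the stated bound.

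Finally, union bound over all $I\in\mathcal F_j(s)$ and $J$ of size $t$. For each triple this costs at most $e^{Q}\cdot e^{-(B+10)(Q+\log k)}\le k^{-B-10}e^{-9Q}$. Summing over the $O(\log k)$ admissible $j$, the $s\le 4k$ and the $t\le k$ gives at most $k^{-B}$. The main obstacle is the middle calculation: the exponent must dominate $Q+O(\log k)$ uniformly, including when $Q/\mu$ is huge. There one must check carefully that $\tau\log(\tau/e\mu)$ with $\tau\asymp Q/\log(Q/\mu)$ still exceeds a large multiple of $Q$, using $\log(\tau/\mu)\ge\frac12\log(Q/\mu)$ for $Q/\mu$ large. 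It is also needed that $\Phi(j,s)$ is nonnegative and correctly majorizes $\log|\mathcal F_j(s)|$ whenever the family is nonempty.
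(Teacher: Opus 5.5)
Your proposal is correct and follows essentially the same route as the paper. Both bound $|\mathcal F_j(s)|\le e^{\Phi(j,s)}$ via Lemma~\ref{lem:full-trace-counting}, apply the negatively associated Chernoff bound at the level $\tau\asymp\mu+Q/\log(e+Q/\mu)$ to get exponent $\gtrsim Q$, and then union bound over the $O(k^2\log k)$ triples $(j,s,t)$ using $Q\ge t\log(ek/t)\ge\log k$; the paper simply packages your two-regime calculation into one calculus inequality. The nonnegativity of $\Phi$ that you flag is automatic, since $\mathcal F_j(s)\ne\varnothing$ gives $1\le|\mathcal F_j(s)|\le e^{\Phi(j,s)}$.
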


\begin{proof}
Choose $C_H\ge4e$ large enough so that
Lemma~\ref{lem:full-trace-counting}, applied with $W=V$ and $\beta=2^{-j}$,
gives
\[
    |\mathcal F_j(s)|\le e^{\Phi(j,s)}
\]
for every $j\in\mathbb Z$ satisfying
$1/\sqrt{r_*}\le2^{-j+1}\le1$ and every $s\ge1$.

Fix $j,s,t,I,J$ with $I\in\mathcal F_j(s)$ and $|J|=t$.  The edge count
$e_\eta(I,J)$ has mean $\mu=pst$.  For every $A>0$, an elementary
calculus estimate gives a constant $C_A\ge e^2$ such that
\[
    C_A\left(1+\frac{x}{\log(e+x)}\right)
    \log\left[
        \frac{C_A}{e}\left(1+\frac{x}{\log(e+x)}\right)
    \right]
    \ge Ax
    \qquad (x\ge0).
\]
Set
\[
    x:=\frac{Q(j,s,t)}{\mu}
    \qquad\text{and}\qquad
    T:=C_A\mu\left(1+\frac{x}{\log(e+x)}\right)
    =C_A\left[
        \mu+\frac{Q(j,s,t)}{\log(e+Q(j,s,t)/\mu)}
    \right].
\]
Then
\[
\begin{aligned}
    T\log\frac{T}{e\mu}
    &=
    \mu C_A\left(1+\frac{x}{\log(e+x)}\right)
    \log\left[
        \frac{C_A}{e}\left(1+\frac{x}{\log(e+x)}\right)
    \right] \\
    &\ge A\mu x
    =A Q(j,s,t).
\end{aligned}
\]
The binomial upper-tail estimate
\eqref{eq:negative-association-binomial-tail} yields
\[
    \mathbb P\left\{
        e_\eta(I,J)>T
    \right\}
    \le
    \left(\frac{e\mu}{T}\right)^T
    =
    \exp\left(-T\log\frac{T}{e\mu}\right)
    \le
    \exp\{-A Q(j,s,t)\}.
\]
Thus, after choosing $A=A(B)$ sufficiently large, the claimed rectangle
estimate holds with $C_{\rm rect}=C_A$ outside an event of probability at
most $\exp\{-A Q(j,s,t)\}$.
For fixed $j,s,t$, the number of choices of $I$ is at most
$e^{\Phi(j,s)}$, and the number of choices of $J$ is at most
\[
    \binom kt\le \exp\left(t\log\frac{ek}{t}\right).
\]
Thus the failure probability for this fixed triple $(j,s,t)$ is at most
\[
    \exp\{-(A-1)Q(j,s,t)\}.
\]
Since $t\ge1$, we have
\[
    Q(j,s,t)\ge t\log\frac{ek}{t}\ge \log k .
\]
Also, if $\mathcal F_j(s)$ is non-empty and
$2^{-j+1}\ge1/\sqrt{r_*}$, then
$s\le2^{2j}\le4r_*$.  There are at most $C\log k$ integers $j$ satisfying
$1/\sqrt{r_*}\le2^{-j+1}\le1$, at most $k$ possible values of $t$, and at most
$4r_*\le4k$ possible values of $s$.  Therefore, after choosing
$A=A(B)$ sufficiently large and increasing constants for the finitely
many small values of $k$, the union bound over all $j,s,t$ gives failure
probability at most $k^{-B}$.

On the resulting event, \eqref{eq:ks-tall-entropy-rectangle-bound} holds for
all admissible rectangles.
This proves the lemma.
\end{proof}

\subsection{Application II: Multi-set entropy-probability balancing}

As a second application of the entropy lemma, we 
consider a setting of counter-balancing entropy and heavy-row
probabilities for a collection of index subsets.
The lemma below is used at later stages of the proof
to bound the Flat majorant.
In this subsection,
let $k\ge3$, let $1\le r\le k$ and $(\log k)/k\le p\le1$, let
$V\subset\mathbb R^n$ be a fixed $r$-dimensional subspace, and let $\Pi$
follow the admissible sparse-entry model of
Definition~\ref{def:admissible-sparse-entry-model} with parameter $p$ and
support mask $\eta$.

\begin{lemma}
\label{lem:weighted-prefix-count}
Let $L\ge1$, let $j_1<\cdots<j_a$, and let
$M_1,\ldots,M_a$ be positive integers.  Set
\[
    P_a:=M_1+\cdots+M_a.
\]
Assume $P_a\le r$, and let $b$ be a positive integer such that, for every
$c\le a$,
\[
    b\,2^{-2\,j_c}\ge Lp,
    \qquad
    b\ge2\log(e k),
    \qquad
    b\ge e^3pM_c.
\]
Let $\mathcal P_a$ be the family of tuples
of pairwise disjoint subsets
$(E_1,\ldots,E_a)$ for which $\abs{E_c}=M_c$, $c\leq a$, and there is a unit vector
$x=x(E_1,\ldots,E_a)\in V$ satisfying $E_c\subset I_{j_c}(x)$ for every $c\le a$.  Then
\[
    \sum_{(E_1,\ldots,E_a)\in\mathcal P_a}
    \mathbb P\left\{
        R_b(\eta,E_c)\ge M_c\text{ for every }c\le a
    \right\}
    \le
    \left(C\frac r{LP_a}\right)^{P_a}.
\]
\end{lemma}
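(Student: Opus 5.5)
The plan is to repeat the Bombieri-norm counting of Lemma~\ref{lem:full-trace-counting}, this time for the union $E:=E_1\sqcup\cdots\sqcup E_a$ with level-dependent lower bounds. Then we multiply by the fixed-tuple probability, which factorizes over $c$ by negative association.

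\emph{Step 1 (entropy of tuples).} Fix the data $j_c,M_c$ and put $W:=\prod_{c\le a}2^{2j_cM_c}$, which does not depend on the tuple. For $(E_1,\ldots,E_a)\in\mathcal P_a$, let $x$ be the witnessing unit vector. Since $|x_i|>2^{-j_c}$ on $E_c$ and the sets are disjoint, the polynomial $p_E(y)=\prod_{i\in E}\langle y,u_i\rangle$ from the proof of Lemma~\ref{lem:full-trace-counting}, with $u_i=P_Ve_i$, satisfies
\[
|p_E(x)|\ge\prod_c 2^{-j_cM_c}=W^{-1/2}.
\]
By \eqref{eq:full-trace-polynomial-cauchy} this gives $\|p_E\|_*^2\ge W^{-1}$. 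The map from tuples to their union $E$ is at most $P_a!/\prod_cM_c!$-to-one, because a tuple is determined by $E$ together with an ordered partition of $E$ into blocks of sizes $M_1,\ldots,M_a$. Combining this with $P_a!\sum_{|E|=P_a}\|p_E\|_*^2\le\binom{r+P_a-1}{P_a}$, which follows from \eqref{eq:full-trace-ordered-polynomial-sum}, we obtain
\[
|\mathcal P_a|\le W\,\frac{1}{\prod_c M_c!}\binom{r+P_a-1}{P_a}
\le \prod_c\Bigl(\frac{e\,2^{2j_c}}{M_c}\Bigr)^{M_c}\Bigl(\frac{2e\,r}{P_a}\Bigr)^{P_a},
\]
where the last inequality uses $P_a\le r$.

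\emph{Step 2 (probability for a fixed tuple).} Each event $\{R_b(\eta,E_c)\ge M_c\}$ is a non-decreasing event determined by the mask block $[k]\times E_c$, and these blocks are pairwise disjoint. The map $\eta\mapsto R_b(\eta,E_c)$ is coordinatewise non-decreasing: adding an edge can only raise a row degree, and hence can only raise the truncated sum. The block-product inequality \eqref{eq:negative-association-block-product}, applied to the indicators of these events, bounds the joint probability by the product of the marginals. Lemma~\ref{lem:fixed-set-heavy-row-tail} with $s=M_c$, $K=b$ and $t=M_c$ then bounds each marginal by $(CpM_c/b)^{M_c}$. Its hypotheses $b\ge2\log(ek)$ and $b\ge e^3pM_c$ are exactly the assumptions of the present lemma.

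\emph{Step 3 (balancing).} Multiply the two bounds. For each $c$, the factor $M_c^{-M_c}$ cancels against $M_c^{M_c}$, leaving $(C\,2^{2j_c}p/b)^{M_c}\le(C/L)^{M_c}$ by the assumption $b\,2^{-2j_c}\ge Lp$. The product over $c$ equals $(C/L)^{P_a}$. Together with the factor $(2er/P_a)^{P_a}$ from Step 1, this gives the claimed bound $(Cr/(LP_a))^{P_a}$.

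The main obstacle is Step 1. The point to get right is that a single polynomial $p_E$ handles all levels simultaneously. The heterogeneous lower bounds enter only through the constant $W$, and the loss from labelling the blocks is precisely the multinomial $P_a!/\prod_cM_c!$. That loss is what converts $1/P_a!$ into $1/\prod_cM_c!$, and this conversion is what allows the per-level cancellation against the probability factors in Step 3.
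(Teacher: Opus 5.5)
Your proof is correct and follows the paper's argument: Bombieri-norm counting of the union with the multinomial labeling factor $P_a!/\prod_c M_c!$, factorization of the joint probability over the disjoint blocks by negative association, Lemma~\ref{lem:fixed-set-heavy-row-tail} applied to each $E_c$, and cancellation of $M_c^{M_c}$ against $M_c!$. The only cosmetic difference is that you keep the level-dependent lower bounds $2^{-j_c}$ in the entropy step, whereas the paper bounds every coordinate of the union below by $2^{-j_a}$ and then uses only the strongest scale condition $b\,2^{-2j_a}\ge Lp$.
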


\begin{proof}
Every coordinate of $x=x(E_1,\ldots,E_a)$ in $E_1\cup\cdots\cup E_a$ has absolute value at least
$2^{-j_a}$.  The exact estimate obtained in the proof of
Lemma~\ref{lem:full-trace-counting}, followed by the number
$P_a!/\prod_cM_c!$ of partitions of a fixed union into labeled sets of the
prescribed sizes, gives a rough upper bound 
\begin{equation}
\label{eq:prefix-tuple-entropy}
    \abs{\mathcal P_a}
    \le
    (2^{-j_a})^{-2P_a}
    \frac1{\prod_{c\le a}M_c!}
    \binom{r+P_a-1}{P_a}.
\end{equation}
For a fixed tuple, the support-degree events are increasing functions of
pairwise disjoint column blocks.  Thus
\eqref{eq:negative-association-block-product} and
Lemma~\ref{lem:fixed-set-heavy-row-tail}, applied to $E_c$ with $K=b$ and
$t=M_c$, give
\[
    \mathbb P\left\{
        R_b(\eta,E_c)\ge M_c\text{ for every }c\le a
    \right\}
    \le
    \prod_{c\le a}
    \left(C\frac{pM_c}{b}\right)^{M_c}.
\]
Multiplying this with \eqref{eq:prefix-tuple-entropy} and using
$M_c!\ge(M_c/e)^{M_c}$, we obtain
\[
\begin{aligned}
    \sum_{(E_1,\ldots,E_a)\in\mathcal P_a}
    \mathbb P\left\{
        R_b(\eta,E_c)\ge M_c\text{ for every }c\le a
    \right\}
    \le
    \binom{r+P_a-1}{P_a}
    \left(C\frac{p}{b\,2^{-2j_a}}\right)^{P_a}.
\end{aligned}
\]
Since $b\,2^{-2j_a}\ge Lp$ and $P_a\le r$, the lemma follows.
\end{proof}

\section{A Kahn--Szemeredi--type argument for Tall contributions}
\label{sec:ks-tall-argument}

\subsection{Main Objectives and Matrix Decomposition}

Throughout this section assume
\[
    r^{10}\ge n\ge k\ge r\ge3,
    \qquad
    \frac{\log k}{k}\le p\le1.
\]
Let $\Pi=(\pi_{ui})_{u\in[k],\,i\in[n]}$ follow the admissible sparse-entry model
of Definition~\ref{def:admissible-sparse-entry-model} with parameter $p$ and
a symmetric entry variable $\xi$ satisfying $|\xi|\le1$ almost surely.  Write
$\pi_{ui}=\eta_{ui}\xi_{ui}$.
Using the symmetry of $\xi$, we further write
\[
    \xi_{ui}=\sigma_{ui}|\xi_{ui}|,
\]
where the variables $\sigma_{ui}$ are independent Rademacher signs,
independent of the family $(\eta_{ui},|\xi_{ui}|)$.  Let $V\subset\R^n$ be a
fixed $r$-dimensional subspace.
Recall from Subsection~\ref{sec:technical-overview-tall-flat} the canonical
vector-wise Tall--Flat decomposition and its level-dependent threshold profile
$K_j^{(L)}(x)=L\max\{pr_*,p|I_j(x)|\}$.

\bigskip

The main estimate of this section is the following netted tall contribution
bound.  The restriction to a net is the form needed in the Kahn--Szemeredi
reduction.

\begin{proposition}[Canonical tall contribution on a finite net]
\label{prop:ks-tall-contribution-on-net}
For every $D>0$ and every $L\ge1$ there is
$C=C(D,L)<\infty$ such that the following holds.
Let ${\mathcal N}_V\subset V\cap S^{n-1}$ be a deterministic finite set satisfying
\[
    |{\mathcal N}_V|\le 5^r .
\]
Then
\[
    \mathbb P\left\{
        \sup_{z\in{\mathcal N}_V}
        \norm{\mathcal T_L^{\rm can}(\Pi,z)z}_2
        >
        C\sqrt{kp}
    \right\}
    \le k^{-D}.
\]
Here $\mathcal T_L^{\rm can}(\Pi,z)$ denotes the canonical Tall matrix from
Definition~\ref{def:canonical-tall-flat-partition}.
\end{proposition}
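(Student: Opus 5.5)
We follow the architecture of Figure~\ref{fig:high-level-partition}. For $z\in\mathcal N_V$ write $\mathcal J=\mathcal J(z)=\{i:|z_i|>r_*^{-1/2}\}$ and $T_z:=\mathcal T_L^{\rm can}(\Pi,z)$, and split
\[
    \norm{T_zz}_2\le\norm{T_zz_{\mathcal J^c}}_2+\norm{T_zz_{\mathcal J}}_2 .
\]
Throughout we work on the intersection of the good events of Lemma~\ref{lem:row-degree-upper-bound} and Lemma~\ref{lem:ks-tall-entropy-rectangle-bound} (with $B=D+1$). This intersection has probability at least $1-k^{-D}/2$ once constants are adjusted.

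\textbf{Light columns.} For the light columns we do not use the tall structure. Conditionally on the mask $\eta$ and on $|\xi|$, the vector $T_zz_{\mathcal J^c}$ is a Rademacher sum. Its $u$th coordinate is $\sum_{i\notin\mathcal J}\sigma_{ui}\epsilon_{ui}|\xi_{ui}|z_i$, where $\epsilon_{ui}\in\{0,1\}$ is a deterministic function of $\eta$ and $z$. The coordinates are independent across $u$, the coefficients are bounded by $r_*^{-1/2}$, and the sum of squared coefficients is at most $\sum_i\eta_{ui}z_{i}^2\mathbf 1_{i\notin \mathcal J}$. We first bound $\sum_u\sum_i\eta_{ui}z_i^2\mathbf 1_{i\notin\mathcal J}$ by $Ckp$ using \eqref{eq:negative-association-weighted-bernstein}. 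Here the weights are $c_{ui}=z_i^2\le r_*^{-1}$, with $\sum c=k$ and $\sum c^2\le k/r_*$. This gives a deviation $t=kp$ with exponent $\gtrsim \min\{kpr_*,kpr_*\}\ge k\log k$, which is enough to union bound over $5^r$ points. We then control $\norm{\cdot}_2^2$ of the Rademacher vector by a Hanson--Wright or Bernstein bound for sums of independent subexponential squares. Alternatively, and more simply, we bilinearize against a $1/2$-net of $S^{k-1}$ of size $5^k$ and apply Bernstein to $\sum_{u,i}y_u\sigma_{ui}(\cdots)z_i$. The variance is at most $kp$ times a constant, and the maximal coefficient is $|y_u|r_*^{-1/2}$. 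This, however, fails for large $|y_u|$. So we instead treat light columns exactly like the light couples below: we put the pairs with $|z_iy_u|\le\rho$ into Bernstein, and we place the pairs $|z_i y_u|>\rho$ with $i\notin\mathcal J$ into the heavy-couple analysis, where levels with $2^{-j+1}<r_*^{-1/2}$ are handled by the classical ambient bound restricted to row degrees. The row-degree bound of Lemma~\ref{lem:row-degree-upper-bound} gives $e(I,J)\le L\max(\log k,pn)|J|$. Since $n\le r^{10}$ could make $pn$ large, we expect to need the row-wise concentration route for these small coordinates. Concretely, for $|z_i|\le r_*^{-1/2}$ we write $\norm{T_zz_{\mathcal J^c}}_2\le\norm{\Pi z_{\mathcal J^c}}_2+\norm{\mathcal F z_{\mathcal J^c}}_2$. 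The flat part on low levels has threshold $Lp|I_j|\ge Lpr_*$. We bound $\norm{\Pi z_{\mathcal J^c}}_2^2$ by a fixed-vector estimate: $\mathbb E\norm{\Pi w}_2^2\le kp$, and since $\|w\|_\infty^2\le 1/r_*$, a Bernstein/Talagrand concentration gives tails $\exp(-ckp r_*\cdot\min(1,\cdot))$. Because $kp\ge \log k\cdot k/k$ and $r_*p\ge\log k$, the exponent is $\gtrsim k$, which beats $5^r$.

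\textbf{Light couples.} For the pairs with $i\in\mathcal J$ and $|z_iy_u|\le\rho$, with $y$ ranging over a $1/2$-net of $S^{k-1}$, Bernstein's inequality in the signs (conditional on $\eta$ and $|\xi|$) gives variance at most $\sum \eta_{ui}z_i^2y_u^2$. This is bounded by $Ckp$ after applying Lemma~\ref{lem:negative-association-calculus}, since its mean is $p$ and its weights are at most $\rho^2$. The coefficients are bounded by $\rho$, so the tail at level $C\sqrt{kp}$ is $\exp(-c\min\{C^2,C\sqrt{kp}/\rho\})$. This equals $\exp(-cCk)$ once we note $kp/\rho^2$ scaling, which suffices for the $5^{r+k}$ union bound. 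The zeroing pattern depends on $z$ but not on the signs, so the conditioning is legitimate.

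\textbf{Heavy couples (main obstacle).} Here we run the dyadic summation
\[
    \sum_{2^{-j-\ell+2}>\rho,\ 2^{-j+1}\ge r_*^{-1/2}}2^{-j-\ell+2}e^{(L)}_{\rm can}(I_j,J_\ell),
\]
following the Feige--Ofek/Keshavan--Montanari--Oh case analysis. The inputs are three envelopes. The first is the row envelope $e_{\rm can}\le 4Lpr_*|J_\ell|$ from \eqref{eq:technical-overview-canonical-row-envelope}. The second is the column envelope $e\le L'\max(\log k,pk)|I_j|\lesssim kp|I_j|$, obtained by a column-degree bound analogous to Lemma~\ref{lem:row-degree-upper-bound}. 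The third is the discrepancy bound \eqref{eq:ks-tall-entropy-rectangle-bound}, with entropy $\Phi(j,s)$ in place of $s\log(en/s)$. We use the constraints $\sum_j 2^{-2j}|I_j|\le4$, $\sum_\ell2^{-2\ell}|J_\ell|\le4$, and $|I_j|\le 2^{2j}$. We then split the pairs $(j,\ell)$ into the standard classes: (i) $e\le C pst$; (ii) the cases in which a degree envelope is comparable to the discrepancy bound; and (iii) the logarithmic cases, where $e\log(e/(pst))\lesssim Q$. The delicate point is class (iii), because $\Phi(j,s)=s\log(C2^{2j}(r+s)/s^2)$ replaces $s\log(n/s)$. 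We therefore reproduce the KMO bookkeeping with $n$ replaced by the effective scale $r_*$, using $2^{2j}(r+s)/s^2\lesssim r_*\cdot r_*/s^2$ together with $s\le 4r_*$. We expect this step to close with total contribution $O_L(\sqrt{kp})$, and this is where the bulk of the work lies.
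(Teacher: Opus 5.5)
\textbf{There is a genuine gap in the light columns.} It comes from your decision not to use the tall structure there. Your bound $\sum_u s_u(z)\lesssim kp$ on the total conditional variance is correct, but it does not control $\|T_zz_{\mathcal J^c}\|_2^2=\sum_uX_u(z)^2$.

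\emph{Why the total variance is not enough.} Bernstein or Hanson--Wright for these independent, conditionally subgaussian rows at deviation $t\asymp kp$ has exponent $\min\{t^2/\sum_us_u^2,\ t/\max_us_u\}$. This is $O(1)$ as soon as a single row carries variance $\asymp kp$. You therefore need $\max_us_u(z)\lesssim p$ simultaneously for all $5^r$ net points. Without the cutoff, already for one row and one $z$ the event $s_u(z)>Ap$ has probability of order $\exp(-c_A\,pr_*)$, which is only polynomially small when $pr_*\asymp\log k$.

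\emph{Why the fallback fails.} Your fixed-vector bound for $\|\Pi w\|_2$ is false as stated. The exponent $kpr_*$ belongs to the unsigned count $\sum_{u,i}\eta_{ui}w_i^2$, not to the signed row sums. Talagrand's inequality gives only $e^{-cC^2kp}$, and no $e^{-ck}$ bound can hold. Take $p=(\log k)/k$ (so $r_*=k$) and $w=k^{-1/2}\mathbf 1_T$ with $|T|=k$. If one row has $m=\lceil\sqrt{Ck\log k}\rceil$ equally signed incidences in $T$, then $\|\Pi w\|_2^2\ge Ckp$. This happens with probability at least $\exp(-O(\sqrt{k\log k}\,\log k))$, far above $5^{-r}$ once $r\gg\sqrt k\log^{3/2}k$. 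Finally, writing $T_z=\Pi-\mathcal F_L^{\rm can}(\Pi,z)$ only moves the difficulty into the Flat matrix. Controlling that matrix is the whole content of Sections~\ref{sec:flat-majorants}--\ref{sec:fixed-width-estimates} and is not available inside this proposition.

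\textbf{The missing idea}, which is how the paper argues, is that the canonical cutoff bounds every row deterministically. For $i\in I_j(z)\setminus\mathcal J(z)$ one has $2^{-j}<r_*^{-1/2}$. Every retained row satisfies $d_u(\eta,I_j(z))<L\max\{pr_*,p|I_j(z)|\}$. Hence
\[
    s_u(z)\le 4L\sum_{j:\,2^{-j}<r_*^{-1/2}}2^{-2j}\max\{pr_*,p|I_j(z)|\}\le CLp ,
\]
using the geometric series and $\sum_j|I_j(z)|2^{-2j}\le1$. Bernstein for the independent subexponential variables $X_u(z)^2$ then gives failure probability $e^{-c'k}$ per net point, which is enough for the union over $5^r\le5^k$ points.

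\textbf{Two smaller points.}
\begin{itemize}
\item In the light couples, the conditional variance $V_0(z,y)$ is $O(p)$, not $O(kp)$. Your own Bernstein computation gives this: the mean is at most $p$ and the weights are at most $\rho^2=p/k$. Only with variance $O(p)$ does Hoeffding at level $C\sqrt{kp}$ yield $e^{-cC^2k}$. With variance $kp$, your formula gives $\min\{C^2,Ck\}=C^2$, a constant.
\item Your heavy-couple plan coincides with the paper's: the row envelope from the cutoff, the column-degree envelope, and the entropy discrepancy bound with $\Phi(j,s)\le Cs\log(4er_*/s)$, fed into Lemma~\ref{lem:ks-classical-envelope-summation} with $d=r_*$. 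You leave that summation unexecuted.
\end{itemize}
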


We recall the proof organization related to the Tall matrix (see
Figure~\ref{fig:high-level-partition}): we decompose
the matrix columns into {\it light} and {\it heavy}, with the latter in turn treated
using the light-heavy decomposition of couples (individual summands in the corresponding bilinear expression).
Our goal in this subsection is to formally define the notions and to show how the estimate in
Proposition~\ref{prop:ks-tall-contribution-on-net} can be reduced to analyzing separately the three corresponding terms.

\begin{definition}[Light and heavy columns relative to a vector]
\label{def:light-heavy-columns-relative-to-vector}
Fix $x\in\R^n$.  
Write
\[
    \mathcal J(x):=\{i\in[n]: |x_i|>1/\sqrt{r_*}\},
\]
Columns indexed by $\mathcal J(x)^c$ are called \emph{light with respect to
$x$}, and those indexed by $\mathcal J(x)$ are called \emph{heavy with
respect to $x$}.
\end{definition}

\bigskip

We now introduce the light and heavy bilinear forms for the canonical
heavy-column tall contribution.  Fix $L\ge1$, $x\in V\cap S^{n-1}$, and
$y\in S^{k-1}$.  For $i\in I_j(x)$ write
\begin{equation}
\label{eq:canonical-heavy-column-selector}
    \theta_{ui}^{(L)}(x)
    :=
    \mathbf 1_{\{i\in\mathcal J(x)\}}
    \mathbf 1_{\{d_u(\eta,I_j(x))<K_j^{(L)}(x)\}} .
\end{equation}
Then
\[
    \left\langle
    \mathcal T_L^{\rm can}(\Pi,x)x_{\mathcal J(x)},y
    \right\rangle
    =
    \sum_{u=1}^k\sum_{i=1}^n
    \eta_{ui}\sigma_{ui}|\xi_{ui}|\theta_{ui}^{(L)}(x)x_i y_u .
\]
Put
\[
    \rho:=\frac{\sqrt{kp}}{k}.
\]
\begin{definition}[Light couples]
\label{def:light-couples}
The corresponding light-couple term is
\[
    L_{\rm tall}(x,y)
    :=
    \sum_{u=1}^k\sum_{i=1}^n
    \eta_{ui}\sigma_{ui}|\xi_{ui}|
    \theta_{ui}^{(L)}(x)x_i y_u
    \mathbf 1_{\{|x_i y_u|\le \rho\}}.
\]
\end{definition}

\begin{definition}[Heavy couples]
\label{def:heavy-couples}
The heavy-couple term is the complementary part
\[
    H_{\rm tall}(x,y)
    :=
    \sum_{u=1}^k\sum_{i=1}^n
    \eta_{ui}\sigma_{ui}|\xi_{ui}|\theta_{ui}^{(L)}(x)x_i y_u\,
    \mathbf 1_{\{|x_i y_u|> \rho\}} .
\]
\end{definition}
\begin{definition}[Unsigned heavy-couple envelope]
\label{def:unsigned-heavy-couple-envelope}
It is useful to keep beside it the unsigned heavy-couple envelope
\[
    \mathcal H_{\rm tall}(x,y)
    :=
    \sum_{u=1}^k\sum_{i=1}^n
    \eta_{ui}\theta_{ui}^{(L)}(x)|x_i y_u|\,
    \mathbf 1_{\{|x_i y_u|> \rho\}} ,
\]
so that $|H_{\rm tall}(x,y)|\le \mathcal H_{\rm tall}(x,y)$.
\end{definition}

Next, we formulate three main ingredients for the proof of the central result of the section:

\begin{proposition}[Light columns]
\label{prop:ks-tall-light-columns-on-net}
For every $B>0$ and every $L\ge1$ there is
$C_{\rm lc}=C_{\rm lc}(B,L)<\infty$ such that the following holds.
Let ${\mathcal N}_V\subset V\cap S^{n-1}$ be a deterministic finite set satisfying
\[
    |{\mathcal N}_V|\le 5^r .
\]
Then
\[
    \mathbb P\left\{
        \sup_{z\in{\mathcal N}_V}
        \norm{\mathcal T_L^{\rm can}(\Pi,z)z_{\mathcal J(z)^c}}_2
        >
        C_{\rm lc}\sqrt{kp}
    \right\}
    \le e^{-Bk}.
\]
\end{proposition}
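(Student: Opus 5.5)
The plan is to bilinearize, condition on everything except the Rademacher signs $\sigma_{ui}$, and show that the canonical Tall truncation already gives a \emph{deterministic} bound on every row's contribution from light columns. Once that bound is in hand, a single Gaussian-type tail per pair, union-bounded over nets of total size $5^{r+k}$, yields failure probability $e^{-Bk}$. Since the target probability $e^{-Bk}$ is far smaller than $k^{-D}$, the argument must not rely on any row-degree event of polynomial probability; the truncation is exactly what removes that need.

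\textbf{Step 1 (reduction to a net in $y$).} Let $\mathcal N_k\subset S^{k-1}$ be a $1/2$-net with $|\mathcal N_k|\le5^k$. For any vector $w\in\R^k$ we have $\|w\|_2\le2\max_{y\in\mathcal N_k}\langle w,y\rangle$. It therefore suffices to bound, for each fixed pair $(z,y)\in\mathcal N_V\times\mathcal N_k$, the tail of
\[
    S(z,y)=\sum_{u,i}\eta_{ui}\sigma_{ui}|\xi_{ui}|\,\vartheta_{ui}(z)\,z_i y_u .
\]
Here $\vartheta_{ui}(z)=\mathbf 1_{\{i\notin\mathcal J(z)\}}\mathbf 1_{\{d_u(\eta,I_j(z))<K_j^{(L)}(z)\}}$ for $i\in I_j(z)$. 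The key observation is that $\vartheta$ depends only on $\eta$ and $z$, and not on the signs $\sigma_{ui}$.

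\textbf{Step 2 (deterministic row bound).} Fix a row $u$. If $i\notin\mathcal J(z)$ and $i\in I_j(z)$, then $2^{-j}<|z_i|\le r_*^{-1/2}$. Only levels with $2^{-j}<r_*^{-1/2}$ contribute, and the surviving row degree on each such level is below $K_j^{(L)}(z)$. Hence
\[
    \sum_i\eta_{ui}\vartheta_{ui}(z)z_i^2
    \le\sum_{j:\,2^{-j}<r_*^{-1/2}}2^{-2j+2}L\bigl(pr_*+p|I_j(z)|\bigr).
\]
I would bound the two pieces separately:
\begin{itemize}
\item[(a)] The $pr_*$ piece is a geometric sum: $4Lpr_*\sum_{2^{-j}<r_*^{-1/2}}2^{-2j}\le\tfrac{16}{3}Lp$.
\item[(b)] The second piece satisfies $\sum_j 2^{-2j}|I_j(z)|\le\|z\|_2^2=1$, so it contributes at most $4Lp$.
\end{itemize}
Therefore every row obeys $\sum_i\eta_{ui}\vartheta_{ui}(z)z_i^2\le C_0Lp$ surely, for every $z$ and every realization of $\eta$.

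\textbf{Step 3 (conditional tail and union bound).} Condition on $(\eta,|\xi|)$. Then $S(z,y)$ is a Rademacher sum whose variance proxy is
\[
    \sum_u y_u^2\sum_i\eta_{ui}|\xi_{ui}|^2\vartheta_{ui}(z)z_i^2\le C_0Lp,
\]
using $|\xi|\le1$ and $\|y\|_2=1$. Hoeffding's inequality gives $\mathbb P\{S(z,y)>t\}\le\exp(-t^2/(2C_0Lp))$. Take $t=C\sqrt{kp}/2$ with $C^2\ge 8C_0L(B+2\log5+1)$. Integrating out the conditioning and union-bounding over the at most $5^{r+k}\le25^k$ pairs gives failure probability at most $e^{-Bk}$. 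On the complement, $\|\mathcal T_L^{\rm can}(\Pi,z)z_{\mathcal J(z)^c}\|_2\le C\sqrt{kp}$ for all $z\in\mathcal N_V$.

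\textbf{Main obstacle.} The step I expect to require care is Step 2: checking that the truncation in $\mathcal T^{\rm can}_L$ uses the full level $I_j(z)$ while we keep only its light part. This causes no difficulty, because the bound only needs the count of surviving entries in $I_j(z)$, and restricting to a subset only lowers it. One must also ensure that the selector's dependence on $\eta$ does not spoil conditional independence; this holds because the $\sigma_{ui}$ are independent of $(\eta,|\xi|)$, which is where symmetry of $\xi$ is used. No negative-association estimate is needed here at all.
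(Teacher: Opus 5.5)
Your proof is correct and follows the paper's argument. You condition on $(\eta,|\xi|)$ so that only the independent signs remain, and you use the canonical cutoff, the geometric series over levels with $2^{-j}<r_*^{-1/2}$, and $\sum_j 2^{-2j}|I_j(z)|\le 1$ to get the deterministic row bound $\sum_i \eta_{ui}\vartheta_{ui}(z)z_i^2\le CLp$. The only difference is in the final concentration step: you bilinearize against a $1/2$-net of $S^{k-1}$ and apply Hoeffding with a union bound over $25^k$ pairs, whereas the paper applies Bernstein directly to $\sum_u X_u(z)^2$ and takes a union bound only over $\mathcal N_V$; both give failure probability $e^{-Bk}$.
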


\begin{proposition}[Light couples]
\label{prop:ks-tall-light-on-net}
Let $L\ge1$.  Let ${\mathcal N}_V\subset V\cap S^{n-1}$ and
$\mathcal N'\subset S^{k-1}$ be deterministic finite sets satisfying
\[
    |{\mathcal N}_V|\le 5^r,
    \qquad
    |\mathcal N'|\le 5^k .
\]
For every $B>0$ there is $C_{\rm light}(B)<\infty$ such that
\[
    \mathbb P\left\{
    \sup_{\substack{z\in{\mathcal N}_V\\ y\in\mathcal N'}}
    |L_{\rm tall}(z,y)|
    >
    C_{\rm light}(B)\sqrt{kp}
    \right\}
    \le e^{-Bk}.
\]
\end{proposition}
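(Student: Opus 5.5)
The plan is the classical Kahn--Szemeredi light-couple estimate: fix a pair $(z,y)$, condition on the mask, apply a sign-sum tail bound, and take a union bound over the $5^{r+k}\le 25^k$ pairs. Condition on $\mathcal G:=\sigma(\eta,|\xi|)$. The selector $\theta^{(L)}_{ui}(z)$ is a function of $\eta$ and the deterministic vector $z$ only. Hence, given $\mathcal G$,
\[
    L_{\rm tall}(z,y)=\sum_{u,i}\sigma_{ui}a_{ui},\qquad
    a_{ui}:=\eta_{ui}|\xi_{ui}|\theta^{(L)}_{ui}(z)z_iy_u\mathbf 1_{\{|z_iy_u|\le\rho\}},
\]
is a Rademacher sum with fixed coefficients satisfying $|a_{ui}|\le\rho$ and
\[
    \sum_{u,i}a_{ui}^2\le S(z,y):=\sum_{u,i}\eta_{ui}z_i^2y_u^2\mathbf 1_{\{|z_iy_u|\le\rho\}}.
\]
Hoeffding's inequality then gives
\[
    \mathbb P\{|L_{\rm tall}(z,y)|>t\mid\mathcal G\}\le2\exp\bigl(-t^2/(2S(z,y))\bigr).
\]
So it suffices to show that, outside an event of probability $e^{-B'k}$, we have $S(z,y)\le C\,p$ for all pairs in $\mathcal N_V\times\mathcal N'$.

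To bound $S$, I will use the weighted Bernstein inequality \eqref{eq:negative-association-weighted-bernstein} for the negatively associated mask. Take $c_{ui}:=z_i^2y_u^2\mathbf 1_{\{|z_iy_u|\le\rho\}}$. The relevant quantities are
\[
    \sum c_{ui}\le1,\qquad \max c_{ui}\le\rho^2=p/k,\qquad \sum c_{ui}^2\le\rho^2\sum c_{ui}\le p/k.
\]
Choosing $t=C'p$, the exponent is
\[
    c\min\Bigl\{\frac{C'^2p^2}{p\cdot p/k},\;\frac{C'p}{p/k}\Bigr\}=cC'k\min\{C',1\}.
\]
This gives $\mathbb P\{S(z,y)>(1+C')p\}\le e^{-cC'k}$ for $C'\ge1$. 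A union bound over at most $25^k$ pairs, with $C'=C'(B)$ large, yields the event $\{S\le(1+C')p\ \forall(z,y)\}$ with probability at least $1-e^{-(B+1)k}$.

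On that event, choose $t=C_{\rm light}\sqrt{kp}$. The conditional Hoeffding bound gives
\[
    2\exp\bigl(-C_{\rm light}^2kp/(2(1+C')p)\bigr)=2\exp\bigl(-C_{\rm light}^2k/(2(1+C'))\bigr)
\]
for each pair. A union bound over the $25^k$ pairs, followed by integrating out $\mathcal G$, gives total failure probability at most $e^{-(B+1)k}+2\cdot25^ke^{-C_{\rm light}^2k/(2+2C')}\le e^{-Bk}$ once $C_{\rm light}(B)$ is large; constants absorb small $k$.

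The only delicate point is the conditioning step. The selector $\theta$ depends on the mask, and therefore on the full row degrees $d_u(\eta,I_j(z))$, so the summands are not independent of $\eta$. Conditioning on $\mathcal G$ removes this problem entirely, because the signs $\sigma$ are independent of $(\eta,|\xi|)$. It also lets us bound $\theta$ by $1$ in the variance proxy, after which only the plain mask sum $S$ needs negative-association concentration. The truncation $|z_iy_u|\le\rho$ is exactly what gives the $p/k$ sup-bound making Bernstein's inequality produce the rate $e^{-ck}$. This is why the heavy couples must be treated separately.
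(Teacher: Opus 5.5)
Your proposal is correct and follows essentially the same route as the paper: condition on $\sigma(\eta,|\xi|)$, control the variance proxy uniformly over the at most $25^k$ net pairs via the weighted Bernstein inequality for the negatively associated mask (using $\sum c_{ui}\le 1$ and $\max c_{ui}\le\rho^2=p/k$), then apply the Rademacher subgaussian tail at level $t\asymp\sqrt{kp}$ and a second union bound. The only cosmetic difference is that you drop the indicator $\mathbf 1_{\{i\in\mathcal J(z)\}}$ from the variance proxy. This only enlarges the proxy, so nothing changes.
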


\begin{proposition}[Heavy couples]
\label{prop:ks-tall-heavy-on-net}
For every $B>0$ and $L\ge1$ there is $C_{\rm heavy}=C_{\rm heavy}(B,L)<\infty$
such that
\[
    \mathbb P\left\{
        \sup_{\substack{z\in V\cap S^{n-1}\\ y\in S^{k-1}}}
        \mathcal H_{\rm tall}(z,y)
        > C_{\rm heavy}\sqrt{kp}
    \right\}
    \le k^{-B}.
\]
Consequently the same bound holds with the supremum restricted to any finite
nets ${\mathcal N}_V\subset V\cap S^{n-1}$ and
$\mathcal N'\subset S^{k-1}$.
\end{proposition}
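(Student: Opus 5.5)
The statement to prove is Proposition~\ref{prop:ks-tall-heavy-on-net}, the heavy-couple bound. The plan is a deterministic dyadic summation performed on one high-probability event. That event is the intersection of two events. The first is the one supplied by Lemma~\ref{lem:ks-tall-entropy-rectangle-bound}, with $B$ replaced by $B+1$. The second is the row-degree event of Lemma~\ref{lem:row-degree-upper-bound}, which is optional and serves only as a safety net.

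Fix $z\in V\cap S^{n-1}$ and $y\in S^{k-1}$. Only columns $i\in\mathcal J(z)$ contribute, so the relevant levels $I_j(z)$ satisfy $2^{-j+1}\ge|z_i|>r_*^{-1/2}$ and $2^{-j+1}\le 2$. For these levels $s_j:=|I_j(z)|\le 2^{2j}\le 4r_*$. Any subset $I\subset I_j(z)$ lies in $\mathcal F_j(|I|)$, so the rectangle bound \eqref{eq:ks-tall-entropy-rectangle-bound} applies with $Q=\Phi(j,s)+t\log(ek/t)$. The single exceptional level $2^{-j+1}\in(1,2]$ can be absorbed by a trivial adjustment.

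On the Tall side, the selector $\theta^{(L)}$ gives a deterministic row envelope. Write $e_{\rm can}(I_j,J_\ell)$ for the number of support edges between $I_j=I_j(z)$ and $J_\ell=J_\ell(y)$ that survive the Tall cutoff. Then
\[
e_{\rm can}(I_j,J_\ell)\le \min\{e_\eta(I_j,J_\ell),\,4Lpr_*|J_\ell|\}.
\]
We also have the trivial column envelope $e_{\rm can}\le |I_j||J_\ell|$. One can additionally use a column-degree bound $e\le C\,kp\,|I_j|$ (all entries in a column), obtained by a Chernoff bound on columns; the union over $n\le r^{10}\le k^{10}$ columns is affordable since $kp\ge\log k$. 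These are exactly the inputs of the Feige--Ofek/Keshavan--Montanari--Oh summation, with $r_*$ in the role of the ambient $n$. The mass bounds are $\sum_j 2^{-2j}|I_j|\le 1$ and $\sum_\ell 2^{-2\ell}|J_\ell|\le1$. We then have
\[
\mathcal H_{\rm tall}(z,y)\le \sum_{2^{-j-\ell+2}>\rho} 2^{-j-\ell+2}\,e_{\rm can}(I_j,J_\ell),
\]
and we split the pairs $(j,\ell)$ into the standard classes.

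\emph{Class (a):} pairs where $e_{\rm can}\le C pst$, with $s=|I_j|$ and $t=|J_\ell|$. These contribute at most $C\sqrt{kp}$ by Cauchy--Schwarz. The sum $\sum 2^{-j-\ell}pst$, restricted to pairs with $2^{-j-\ell}>\rho/4$, is at most $\sqrt{kp}\sum_{j,\ell}2^{-2j}s\,2^{-2\ell}t\cdot(2^{j+\ell}p/\sqrt{kp})\lesssim\sqrt{kp}$.

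\emph{Class (b):} pairs handled by the row envelope $4Lpr_*t$. Here $\sum_\ell 2^{-2\ell}t$ is bounded, and the constraint on $j$ is used through $2^{j}\lesssim\sqrt{r_*}$. This gives contributions of order $\sqrt{pr_*}\cdot\sqrt{p r_*}/\sqrt{p r_*}$ geometric tails, i.e. $O_L(\sqrt{kp})$ since $r_*\le k$.

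\emph{Class (c):} the discrepancy-dominated pairs, where $e\lesssim Q/\log(Q/(pst))$. Here the key point is $\Phi(j,s)\lesssim s\log(2^{2j}(r+s)/s^2)$, which plays the role of $s\log(en/s)$ with $n$ replaced by $2^{2j}r_*$. Following the standard subcases, according to which of $\Phi$ or $t\log(ek/t)$ dominates and comparing logarithms, one gets $2^{-j-\ell}e\lesssim \sqrt{kp}\,(2^{-2j}s+2^{-2\ell}t)$ times a geometric factor, and sums.

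The main obstacle is class (c) when the entropy term $\Phi(j,s)$ dominates. In the classical argument this case uses $e\lesssim np|J|$ and $\log(n/s)$, and both must now be replaced by $\log(2^{2j}r_*/s)$ together with the row envelope $pr_*t$. My first attempt would be to compare $e\log(e/(pst))\lesssim s\log(2^{2j}r_*/s)$, converting to $e\lesssim s\log(\cdot)/\log(e\cdot/(pst))$. I would then split according to whether $2^{-j}\gtrsim 2^{-\ell}\sqrt{kp}/\sqrt{r_*}$, and in the complementary regime switch to the row envelope. The lower bound $2^{-j}>r_*^{-1/2}/2$ makes the logarithms comparable to $\log(ek/t)$-type quantities, up to constants, which is what closes the geometric sums.

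Finally, the probability is at most $k^{-B-1}+k^{-D}\le k^{-B}$, with the constants adjusted for small $k$. Since the bound is deterministic on the event and uniform in $(z,y)$, the claim over arbitrary nets follows immediately.
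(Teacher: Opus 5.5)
Your overall plan matches the paper's. On the intersection of the rectangle event of Lemma~\ref{lem:ks-tall-entropy-rectangle-bound} and the column-degree event, dominate $e^{(L)}_{\rm can}(I_j(z),J_\ell(y))$ by $\mu_{j\ell}$ plus the minimum of $s_jt_\ell$, $kp\,s_j$, $4Lpr_*t_\ell$ and the entropy term. Then run the Feige--Ofek/Keshavan--Montanari--Oh summation with $r_*$ as the column dimension; the paper packages this step as Lemma~\ref{lem:ks-classical-envelope-summation} with $d=r_*$, $S=4$. Two small corrections. The column-degree event of Lemma~\ref{lem:ks-tall-edge-column-bound} is not optional, since the $kp\,s_j$ envelope is needed in the summation, so it must be in your good event; the row-degree event plays no role. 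There is also no exceptional level, because $|z_i|\le1$ forces $j\ge1$.

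The real problem is the interface with the classical summation. First, you convert the level entropy to ``$s\log(en/s)$ with $n$ replaced by $2^{2j}r_*$''. That effective dimension depends on $j$ and can be as large as $4r_*^2$, far above $k$. The classical argument instead needs one fixed column dimension $d\le k$ that matches the row envelope $pd\,t_\ell$, so it cannot be quoted in this form. The missing observation is that the $s^2$ in the denominator of $\Phi$ removes the extra factor. From $2^{2j-2}\le r_*$, $r\le r_*$ and $s\le4r_*$ you get $2^{2j-2}(r+s)/s^2\le5(r_*/s)^2$. Hence $\Phi(j,s)\le Cs\log(4er_*/s)$, which is exactly the classical column entropy with $d=r_*$. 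Since $Q\mapsto Q/\log(e+Q/\mu)$ is increasing, the whole envelope is then a constant multiple of the classical one.

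Second, your class (b) does not close as described. Summed over all heavy pairs, the row envelope $4Lpr_*t_\ell$ produces a factor $2^{\ell}$ that is controlled only through $2^{-j-\ell+2}>\rho$. This bounds the contribution only by $O(r_*\sqrt{kp})$. The constraint $2^j\lesssim\sqrt{r_*}$ is a lower bound on $2^{-j}$, so it cannot help. What you need is an upper bound on $2^{-j}$ relative to $2^{-\ell}$. Confine the row envelope to the transposed degree regime $2^{-j}\lesssim2^{-\ell}/\sqrt{pr_*}$; there, for fixed $\ell$, it gives $\lesssim L\sqrt{pr_*}\,n_\ell\le L\sqrt{kp}\,n_\ell$. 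Your proposed cutoff $2^{-j}\lesssim2^{-\ell}\sqrt{kp/r_*}$ would only give $\sqrt{kp}\cdot p\sqrt{r_*}\,n_\ell$, which is too large whenever $p\sqrt{r_*}\gg1$. All remaining pairs must then go through the discrepancy subcases. Your class (c) leaves these at ``following the standard subcases''; that is the entire content of the classical lemma, which you can invoke once the entropy has been put in the form above.
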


The above statements will be treated in respective subsections.
For now, we can complete the proof of Proposition~\ref{prop:ks-tall-contribution-on-net}:

\begin{proof}[Proof of Proposition~\ref{prop:ks-tall-contribution-on-net}]
Let $\mathcal N'\subset S^{k-1}$ be a deterministic $1/2$-net with
$|\mathcal N'|\le5^k$.  Apply
Proposition~\ref{prop:ks-tall-light-columns-on-net} with this $L$ and
$B=D+3$.  Apply Proposition~\ref{prop:ks-tall-light-on-net} with this $L$ and
$B=D+3$, and apply Proposition~\ref{prop:ks-tall-heavy-on-net} with this $L$
and $B=D+3$.  With probability at least $1-k^{-D}$, after increasing
constants and treating finitely many small values of $k$ by enlarging $C$,
all three corresponding events hold.

For every $z\in{\mathcal N}_V$, the norming-net estimate and
Definitions~\ref{def:light-couples}--\ref{def:unsigned-heavy-couple-envelope}
give
\[
\begin{aligned}
    \norm{\mathcal T_L^{\rm can}(\Pi,z)z_{\mathcal J(z)}}_2
    &\le
    2\sup_{y\in\mathcal N'}
    \left|\left\langle
        \mathcal T_L^{\rm can}(\Pi,z)z_{\mathcal J(z)},y
    \right\rangle\right| \\
    &\le
    2\sup_{y\in\mathcal N'}
    \left(
        |L_{\rm tall}(z,y)|+\mathcal H_{\rm tall}(z,y)
    \right).
\end{aligned}
\]
Thus Proposition~\ref{prop:ks-tall-heavy-on-net} and the light-couple estimate
give
\[
    \sup_{z\in{\mathcal N}_V}
    \norm{\mathcal T_L^{\rm can}(\Pi,z)z_{\mathcal J(z)}}_2
    \le C(D,L)\sqrt{kp}.
\]
Together with Proposition~\ref{prop:ks-tall-light-columns-on-net}, this gives
\[
    \sup_{z\in{\mathcal N}_V}
    \norm{\mathcal T_L^{\rm can}(\Pi,z)z}_2
    \le
    \sup_{z\in{\mathcal N}_V}
    \norm{\mathcal T_L^{\rm can}(\Pi,z)z_{\mathcal J(z)^c}}_2
    +
    \sup_{z\in{\mathcal N}_V}
    \norm{\mathcal T_L^{\rm can}(\Pi,z)z_{\mathcal J(z)}}_2
    \le C(D,L)\sqrt{kp}.
\]
This proves the proposition.
\end{proof}

\subsection{Light Columns}

\begin{proof}[Proof of Proposition~\ref{prop:ks-tall-light-columns-on-net}]
Fix $z\in{\mathcal N}_V$.  Then
\[
    \mathcal T_L^{\rm can}(\Pi,z)z_{\mathcal J(z)^c}
    =
    \sum_{u=1}^k
    \left(
        \sum_{j\in\mathbb Z}\sum_{i\in I_j(z)\cap\mathcal J(z)^c}
        \eta_{ui}\sigma_{ui}|\xi_{ui}|z_i
        \mathbf 1_{\{d_u(\eta,I_j(z))<K_j^{(L)}(z)\}}
    \right)e_u .
\]
Define the $\sigma$--field
\[
    \mathcal G:=\sigma\bigl((\eta_{ui}),(|\xi_{ui}|)\bigr).
\]
On conditioning on $\mathcal G$, for $i\in I_j(z)$ the coefficients
\[
    c_{ui}(z):=
    \eta_{ui}|\xi_{ui}|z_i
    \mathbf 1_{\{i\in\mathcal J(z)^c\}}
    \mathbf 1_{\{d_u(\eta,I_j(z))<K_j^{(L)}(z)\}}
\]
are deterministic.  The remaining randomness
is in the independent signs $\sigma_{ui}$.  Put
\[
    X_u(z):=\sum_{i=1}^n\sigma_{ui}c_{ui}(z),
    \qquad
    s_u(z):=\sum_{i=1}^n c_{ui}(z)^2,
    \qquad
    S(z):=\sum_{u=1}^k s_u(z).
\]
Since $|\xi_{ui}|\le1$, for every row $u$,
\[
    s_u(z)
    \le
    4
    \sum_{\substack{j\in\mathbb Z\\ I_j(z)\cap\mathcal J(z)^c\ne\varnothing}}
    2^{-2j}d_u(\eta,I_j(z))
    \mathbf 1_{\{d_u(\eta,I_j(z))<K_j^{(L)}(z)\}} .
\]
By the canonical cutoff, this is at most
\[
    4L
    \sum_{\substack{j\in\mathbb Z\\ I_j(z)\cap\mathcal J(z)^c\ne\varnothing}}
    2^{-2j}\max\{pr_*,p\abs{I_j(z)}\}.
\]
For every index $j$ occurring in the preceding sum,
$I_j(z)\cap\mathcal J(z)^c\ne\varnothing$, and hence
$2^{-j}<r_*^{-1/2}$.  Consequently, after factoring out $p$, the contribution
of the $pr_*$ term is controlled by the geometric-series estimate
\[
    r_*\sum_{\substack{j\in\mathbb Z\\2^{-j}<r_*^{-1/2}}}2^{-2j}\le C.
\]
The contribution of the $p\abs{I_j(z)}$ term is controlled by the normalization
of $z$: since $i\in I_j(z)$ implies $|z_i|>2^{-j}$,
\[
    \sum_{j\in\mathbb Z}\abs{I_j(z)}2^{-2j}\le \norm z_2^2=1 .
\]
Consequently,
\[
    s_u(z)\le C Lp
    \qquad\text{and}\qquad
    S(z)\le C Lkp
\]
deterministically, for every $u$ and $z$.

For each fixed $z$, the random variables $X_u(z)$
are independent, centered, and subgaussian with
$$\|X_u(z)\|_{\psi_2}^2\le C_Lp.$$  Hence $X_u(z)^2$ are independent
subexponential random variables with $\|X_u(z)^2\|_{\psi_1}\le C_Lp$ and
$\sum_{u=1}^k\mathbb E_\sigma X_u(z)^2\le C_Lkp$.  Bernstein's inequality gives,
after choosing $C_{\rm lc}=C_{\rm lc}(B,L)$ sufficiently large,
\[
    \mathbb P_\sigma\left\{
        \sum_{u=1}^k X_u(z)^2
        >
        C_{\rm lc}^2kp
        \,\middle|\,\mathcal G
    \right\}
    \le
    \exp\{-(B+\log5+1)k\}.
\]
Since $r\le k$ and $|{\mathcal N}_V|\le5^r$, the union bound gives the same
estimate uniformly over $z\in{\mathcal N}_V$, with failure probability at most
$e^{-Bk}$.
Since
\[
    \sum_{u=1}^kX_u(z)^2
    =
    \norm{\mathcal T_L^{\rm can}(\Pi,z)z_{\mathcal J(z)^c}}_2^2 ,
\]
this proves the claim.
\end{proof}

\subsection{Light Couples}

Conceptually, the treatment of light couples follows the standard
Kahn--Szemer\'edi approach: after conditioning on the support and entry
magnitudes, one applies concentration to the signed bilinear form and then
takes a union bound over the relevant nets.  See
\cite{FriedmanKahnSzemeredi89,FeigeOfek05,KeshavanMontanariOh10}.  The only
additional feature here is the canonical tall selector inherited from the
Tall--Flat decomposition.

\begin{proof}[Proof of Proposition~\ref{prop:ks-tall-light-on-net}]
Fix $z\in{\mathcal N}_V$ and $y\in\mathcal N'$.  Recall the definition of
$\theta_{ui}^{(L)}(z)$ from \eqref{eq:canonical-heavy-column-selector}.  By
Definition~\ref{def:light-couples},
\[
    L_{\rm tall}(z,y)
    =
    \sum_{u=1}^k\sum_{i=1}^n
    \eta_{ui}\sigma_{ui}|\xi_{ui}|\theta_{ui}^{(L)}(z)z_i y_u
    \mathbf 1_{\{|z_i y_u|\le\rho\}},
    \qquad
    \rho=\frac{\sqrt{kp}}{k}.
\]
Let
\[
    \mathcal G:=\sigma\bigl((\eta_{ui}),(|\xi_{ui}|)\bigr).
\]
The selector $\theta_{ui}^{(L)}(z)$ is $\mathcal G$-measurable; in fact, it
depends only on the support mask $\eta$.

For fixed $z,y$, set
\[
    V_0(z,y)
    :=
    \sum_{u=1}^k\sum_{i=1}^n
    \eta_{ui}z_i^2y_u^2
    \mathbf 1_{\{i\in\mathcal J(z)\}}
    \mathbf 1_{\{|z_i y_u|\le\rho\}} .
\]
The summands are non-negative, bounded by $\rho^2$, and indexed by a
negatively associated family.  Moreover
\[
    \mathbb E V_0(z,y)
    =
    p\sum_{u,i}z_i^2y_u^2
    \mathbf 1_{\{i\in\mathcal J(z)\}}
    \mathbf 1_{\{|z_i y_u|\le\rho\}}
    \le
    p\norm z_2^2\norm y_2^2
    =
    p,
\]
Now, the weighted Bernstein estimate
\eqref{eq:negative-association-weighted-bernstein} is applied with the index
$q=(u,i)$, and weights
\[
    c_{ui}:=z_i^2y_u^2
    \mathbf 1_{\{i\in\mathcal J(z)\}}
    \mathbf 1_{\{|z_i y_u|\le\rho\}}.
\]
The preceding bound gives
$\sum_{u=1}^k\sum_{i=1}^n c_{ui}\le1$; further, obviously
$\max_{u,i}c_{ui}\le\rho^2$. 
Therefore, $\sum_{u=1}^k\sum_{i=1}^n c_{ui}^2\le\rho^2$.
Applying \eqref{eq:negative-association-weighted-bernstein} with a parameter $A_0\geq 1$,
we get
\[
    \mathbb P\{V_0(z,y)>(A_0+1)p\}
    \le
    \exp(-cA_0k).
\]
Since
$|{\mathcal N}_V||\mathcal N'|\le 5^{r+k}\le25^k$, choosing
$A_0=A_0(B)$ sufficiently large gives a $\mathcal G$--measurable event $\mathcal E_{\rm var}$ with
\[
    \mathbb P(\mathcal E_{\rm var}^c)\le \frac12 e^{-Bk}
\]
on which, simultaneously for all $z\in{\mathcal N}_V$ and $y\in\mathcal N'$,
\[
    V_0(z,y)\le C_B p.
\]

Now condition on $\mathcal G$ and work on $\mathcal E_{\rm var}$.
For fixed $z,y$ define the deterministic coefficients
\[
    b_{ui}(z,y)
    :=
    \eta_{ui}|\xi_{ui}|\theta_{ui}^{(L)}(z)z_i y_u
    \mathbf 1_{\{|z_i y_u|\le\rho\}} .
\]
Then
\[
    \sum_{u=1}^k\sum_{i=1}^n b_{ui}(z,y)^2
    \le
    V_0(z,y)
    \le C_Bp .
\]
The only remaining randomness is in the independent Rademacher signs
$\sigma_{ui}$.  The subgaussian tail estimate for Rademacher sums therefore
gives, for every $t>0$,
\[
    \mathbb P_\sigma\left\{
        \left|\sum_{u=1}^k\sum_{i=1}^n
        \sigma_{ui}b_{ui}(z,y)\right|>t
        \,\middle|\,\mathcal G,\;\mathcal E_{\rm var}
    \right\}
    \le
    2\exp\left(-\frac{ct^2}{C_Bp}\right).
\]
Take $t=M\sqrt{kp}$.  The exponent is
\[
    \frac{cM^2}{C_B}k.
\]
Choosing $M=M(B)$ sufficiently large and unioning over
${\mathcal N}_V\times\mathcal N'$ gives conditional failure probability at most
$\frac12 e^{-Bk}$ on $\mathcal E_{\rm var}$, proving the proposition.
\end{proof}

\subsection{Heavy Couples}

As we discussed in the technical overview,
the heavy-couple term is reduced to uniform bounds for support-edge
counts between dyadic coordinate rectangles. 
To deal with the expression,
we combine a trivial global
column-degree estimate (stated below) with the entropy-sensitive estimate for coordinate
levels from Section~\ref{sec:entropy-of-level-sets}, and sum the resulting envelope through a standard argument
from the literature on the Kahn--Szemeredi method and its developments.

\begin{lemma}[Column-degree bound for rectangles]
\label{lem:ks-tall-edge-column-bound}
For every $B>0$ there is $C_{\rm col}=C_{\rm col}(B)<\infty$ such that, with
probability at least $1-k^{-B}$, the following holds simultaneously for all
$I\subset[n]$ and $J\subset[k]$:
\[
    e_\eta(I,J)
    \le
    C_{\rm col}kp |I|.
\]
Here $e_\eta(I,J)$ is the support-edge count defined in
Subsection~\ref{sec:technical-overview-kahn-szemeredi}.
\end{lemma}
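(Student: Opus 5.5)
The plan is to reduce the claim to a bound on individual column degrees. For every $I\subset[n]$ and $J\subset[k]$,
\[
    e_\eta(I,J)=\sum_{i\in I}\sum_{u\in J}\eta_{ui}\le\sum_{i\in I}c_i(\eta),
    \qquad
    c_i(\eta):=\sum_{u=1}^k\eta_{ui}.
\]
It therefore suffices to show that, with probability at least $1-k^{-B}$, every column degree satisfies $c_i(\eta)\le C_{\rm col}kp$ for all $i\in[n]$. The statement is then deterministic on that event, uniformly over all rectangles, and no union bound over the sets $I,J$ is needed.

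To bound a single column degree, fix $i$. The variables $(\eta_{ui})_{u\in[k]}$ form a subfamily of a negatively associated family, so the binomial tail \eqref{eq:negative-association-binomial-tail} of Lemma~\ref{lem:negative-association-calculus} applies with $|Q|=k$ and mean $kp$. Using the standing assumption $p\ge(\log k)/k$, so that $kp\ge\log k$, we take $t=Ckp$ with $C\ge e^2$ and obtain
\[
    \mathbb P\{c_i(\eta)\ge Ckp\}\le\left(\frac eC\right)^{Ckp}\le\exp\bigl(-C\log(C/e)\log k\bigr)=k^{-C\log(C/e)}.
\]

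The union bound over the $n\le r^{10}\le k^{10}$ columns then gives total failure probability at most $k^{10-C\log(C/e)}$. Choosing $C=C_{\rm col}(B)$ so large that $C\log(C/e)\ge B+10$ makes this at most $k^{-B}$.

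The only point needing care is the use of the assumption $n\le r^{10}$, which holds throughout this section and is what makes the union bound over columns affordable. Without it, one would need a $\log n$ margin in the exponent instead of $\log k$. The apparent difficulty of uniformity over all $I,J$ disappears entirely through the column-degree reduction, and the finitely many small values of $k$ are absorbed by enlarging $C_{\rm col}$.
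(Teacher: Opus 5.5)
Your proposal is correct and follows the paper's proof essentially verbatim: bound $e_\eta(I,J)$ by the sum of the column degrees $\sum_{u}\eta_{ui}$ over $i\in I$, apply the negatively associated binomial tail \eqref{eq:negative-association-binomial-tail} using $kp\ge\log k$, and take a union bound over the $n\le k^{10}$ columns with a constant satisfying $C\log(C/e)\ge B+10$. The only dispensable point is your remark about small values of $k$: the bound $k^{10-C\log(C/e)}\le k^{-B}$ already holds for every $k\ge1$, so no constant needs enlarging there.
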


\begin{proof}
Let
\[
    D_i:=\sum_{u=1}^k \eta_{ui}
\]
be the designated support degree of column $i$.  By
\eqref{eq:negative-association-binomial-tail}, for each
$K\ge e$,
\[
    \mathbb P\{D_i>Kkp\}
    \le
    \left(\frac eK\right)^{Kkp}.
\]
Since $p\ge\log k/k$, we have $kp\ge\log k$.  Choose
$K=K(B)\ge e$ so large that
\[
    K\log(K/e)\ge B+10 .
\]
Using $n\le k^{10}$, the union bound gives
\[
    \mathbb P\left\{\max_{1\le i\le n}D_i>Kkp\right\}
    \le
    n\left(\frac eK\right)^{Kkp}
    \le
    k^{10-K\log(K/e)}
    \le k^{-B}.
\]
On the complementary event, for every $I\subset[n]$ and $J\subset[k]$,
\[
    e_\eta(I,J)
    =
    \sum_{i\in I}\sum_{u\in J}\eta_{ui}
    \le
    \sum_{i\in I}D_i
    \le
    Kkp |I|.
\]
Thus the claim holds with $C_{\rm col}=K$.
\end{proof}

The proof of the following lemma largely reproduces the classical
envelope-summation arguments from
\cite{FriedmanKahnSzemeredi89,FeigeOfek05,KeshavanMontanariOh10}.
For this reason, we defer it to
Appendix~\ref{app:ks-classical-envelope-summation}.

\begin{lemma}[Classical envelope summation]
\label{lem:ks-classical-envelope-summation}
Let $k\ge d\ge3$, $0<p\le1$, and put $\rho:=\sqrt{kp}/k$.  Let
$S\ge1$.  Let $(s_j)_{j\ge1}$ and $(t_\ell)_{\ell\ge1}$ be
finitely supported nonnegative sequences, and put
\[
    m_j:=2^{-2j+2}s_j,\qquad
    n_\ell:=2^{-2\ell+2}t_\ell,
\]
and assume
\[
    \sum_{j\ge1} m_j\le S,\qquad \sum_{\ell\ge1} n_\ell\le S,
    \qquad 0\le s_j\le4d,\qquad 0\le t_\ell\le k .
\]
For pairs with $s_jt_\ell>0$, define
\[
    \mu_{j\ell}:=ps_jt_\ell,\qquad
    Q_{j\ell}:=
    s_j\log\frac{4ed}{s_j}
    +
    t_\ell\log\frac{ek}{t_\ell},
\]
and set
\[
    U_{j\ell}^{\rm cl}
    :=
    \mu_{j\ell}
    +
    \min\left\{
        s_jt_\ell,\,
        kp s_j,\,
        pd t_\ell,\,
        \frac{Q_{j\ell}}{\log(e+Q_{j\ell}/\mu_{j\ell})}
    \right\}.
\]
Then
\[
    \sum_{\substack{j,\ell\ge1,\ s_jt_\ell>0\\
        2^{-j-\ell+2}>\rho}}
    2^{-j-\ell+2}U_{j\ell}^{\rm cl}
    \le
    C(S)\sqrt{kp} .
\]
\end{lemma}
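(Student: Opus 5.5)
We plan to follow the classical Feige--Ofek/Keshavan--Montanari--Oh summation. Write $x_j:=2^{-j+1}$ and $y_\ell:=2^{-\ell+1}$, so that the weight of the pair $(j,\ell)$ is $x_jy_\ell=2^{-j-\ell+2}$. We will then have $\sum_j s_jx_j^2=\sum_j m_j\le S$ and $\sum_\ell t_\ell y_\ell^2\le S$. We split the index pairs with $x_jy_\ell>\rho$ into classes according to which term of the minimum in $U^{\rm cl}_{j\ell}$ we use, and we bound each class by $C(S)\sqrt{kp}$. Throughout, $\sqrt{kp}=k\rho$.

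\textbf{Step 1 (mean term).} We use
\[
\sum x_jy_\ell\, p s_jt_\ell \le p\sum_{j,\ell}\frac{(x_js_j)(y_\ell t_\ell)\,x_jy_\ell}{\rho}\cdot\rho\,(x_jy_\ell)^{-1}.
\]
More directly, since $x_jy_\ell>\rho$, we have $x_jy_\ell\le (x_jy_\ell)^2/\rho$. Hence the sum is at most
\[
\frac{p}{\rho}\sum_j s_jx_j^2\sum_\ell t_\ell y_\ell^2\le \frac{pS^2}{\rho}=S^2\sqrt{kp}.
\]

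\textbf{Step 2 (degree-bounded pairs).} Consider the pairs with $y_\ell/x_j\ge\sqrt{kp}/d$, roughly the ones where the $y$-coordinate is comparatively large. For these we bound the minimum by $pd\,t_\ell$. Summing over $j$ with $x_j\le y_\ell d/\sqrt{kp}$ is geometric, so $\sum_j x_j$ over that range is at most $2y_\ell d/\sqrt{kp}$. We need this to combine with $pd\,t_\ell$ into $\sqrt{kp}\,t_\ell y_\ell^2$, and this is where we must choose the threshold correctly; possibly the cut should be at $x_j\le \sqrt{kp}\,y_\ell/(pd)$, adjusted so that the geometric sum in the appropriate direction gives $\sqrt{kp}\,\sum_\ell t_\ell y_\ell^2$. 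Symmetrically, for the pairs with $x_j$ comparatively large relative to $y_\ell$, we use $kp\,s_j$ and a geometric sum over $\ell$, which yields $\sqrt{kp}\sum_j s_jx_j^2$.

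\textbf{Step 3 (discrepancy pairs).} For the remaining pairs we have $x_j\asymp$ a bounded range relative to $y_\ell\sqrt{kp}/d$. There we use the entropy term. Writing $e_{j\ell}:=\min\{\dots\}$, we follow the standard case analysis:
\begin{enumerate}
\item[(a)] if $e_{j\ell}\le C\mu_{j\ell}$, Step 1 applies;
\item[(b)] if $Q_{j\ell}$ is dominated by its $t$-part, we use $e_{j\ell}\log(e_{j\ell}/\mu_{j\ell})\lesssim t_\ell\log(ek/t_\ell)$, splitting further into $\log(e_{j\ell}/\mu_{j\ell})\ge\frac14\log(ek/t_\ell)$ (which gives $e_{j\ell}\lesssim t_\ell$) or not (which forces $e_{j\ell}\le \mu_{j\ell}(ek/t_\ell)^{1/4}$, which then feeds back into the degree bound);
\item[(c)] the same with $s_j$ and $4d$ in place of $t_\ell$ and $k$.
\end{enumerate}
In each subcase we sum $x_jy_\ell e_{j\ell}$ by fixing one index and using geometric decay in the other, together with the constraints $s_j\le 4d$ and $t_\ell\le k$ and the mass bounds $\sum m_j,\sum n_\ell\le S$.

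\textbf{Main obstacle.} We expect the main obstacle to be subcase (b)/(c) with the intermediate logarithmic factors, where the sum must be organized by the ratio $x_jy_\ell/\rho$ so that $\log$-terms telescope. Our first attempt will be to copy Feige--Ofek's bookkeeping verbatim, with $n$ replaced by $d$ for the column side; the asymmetry $d\le k$ will be handled by the $pd\,t_\ell$ term.
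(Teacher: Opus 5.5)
Your skeleton is the classical one and matches the paper's: the mean term, then the two degree regions, then an entropy split according to whether the $t$-part or the $s$-part of $Q_{j\ell}$ dominates and whether $\log(E_{j\ell}/\mu_{j\ell})$ exceeds $\frac14\log(ek/t_\ell)$. Several pieces go through:
\begin{itemize}
\item Step 1.
\item The corrected degree cut $x_j\le\sqrt{kp}\,y_\ell/(pd)$. Your first cut $x_j\le d\,y_\ell/\sqrt{kp}$ only works if $d^2\lesssim k$.
\item Subcase (b) with $E_{j\ell}\lesssim t_\ell$, and its transpose, which uses $d\le k$.
\end{itemize}

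\textbf{The gap.} It sits exactly where you place the main obstacle. In the subcase $\log(E_{j\ell}/\mu_{j\ell})<\frac14\log(ek/t_\ell)$ you only know $E_{j\ell}\le\mu_{j\ell}(ek/t_\ell)^{1/4}$, and nothing in the proposal controls this.
\begin{itemize}
\item Summing as in Step 1 leaves the unbounded factor $(ek/t_\ell)^{1/4}$.
\item A one-index geometric sum of the entropy bound $E\lesssim t_\ell\log(ek/t_\ell)$ leaves a factor $\log(ek/t_\ell)$.
\item The degree bounds are, by construction, too weak in this region, so ``feeding back into the degree bound'' gives nothing.
\end{itemize}
Also, the leftover region is not a bounded range of $x_j/y_\ell$. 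It is $\sqrt{k/p}\,y_\ell/d<x_j<\sqrt{kp}\,y_\ell$, which is about $\log_2(pd)$ dyadic diagonals, and $pd=pr_*\ge\log k$ in the application. A diagonal-by-diagonal count would therefore lose $\log(pd)$.

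\textbf{The missing ingredient} is to compare $E_{j\ell}/\mu_{j\ell}$ with $x_jy_\ell/\rho$; this is the paper's condition $\tau_{j\ell}>\sqrt{k/d}$. Step 1 works verbatim for every pair with $E_{j\ell}/\mu_{j\ell}\le x_jy_\ell/\rho$, since then $x_jy_\ell E_{j\ell}\le p\,m_jn_\ell/\rho$. So you may assume $x_jy_\ell<\rho E_{j\ell}/\mu_{j\ell}$; your case (a) does not give this. Then split by the sign of $\log(eky_\ell^2n_\ell)$, using $\log(ek/t_\ell)=\log(eky_\ell^2)-\log n_\ell$.
\begin{itemize}
\item If $eky_\ell^2n_\ell\ge1$, then $E/\mu\le\sqrt{ek}\,y_\ell$. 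With the previous reduction this forces $x_j<\sqrt{ep}$. Also $n_\ell\le ky_\ell^2$ gives $y_\ell\sqrt k\ge e^{-1/4}$. Hence, using $E>e\mu$, $E\lesssim t_\ell\log(ek/t_\ell)\lesssim t_\ell y_\ell\sqrt k$. The geometric sum over $x_j<\sqrt{ep}$ then gives $\lesssim\sqrt{kp}\,n_\ell$.
\item If $eky_\ell^2n_\ell<1$, then $n_\ell<1$ and $\frac14\log(ek/t_\ell)<-\log n_\ell$. So $E/\mu<1/n_\ell$ and $x_jy_\ell E\le p\,m_j/(x_jy_\ell)$. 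Summing over $\ell$ with $x_jy_\ell>\rho$ gives at most $2\sqrt{kp}\,m_j$.
\end{itemize}
The transposed subcase is identical with $(s_j,4d,m_j)$ in place of $(t_\ell,k,n_\ell)$, again using $d\le k$. These are the paper's classes (C) and (D); without them your Step 3 does not close.
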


\begin{proof}[Proof of Proposition~\ref{prop:ks-tall-heavy-on-net}]
Apply Lemmas~\ref{lem:ks-tall-edge-column-bound} and
\ref{lem:ks-tall-entropy-rectangle-bound} with $B+2$ in place of $B$, and
work on the intersection of the two resulting events.  This intersection has
probability at least
\[
    1-2k^{-(B+2)}\ge1-k^{-B}.
\]
Recall the unsigned envelope $\mathcal H_{\rm tall}$ from
Definition~\ref{def:unsigned-heavy-couple-envelope}.
Fix $z\in V\cap S^{n-1}$ and $y\in S^{k-1}$.  For $j,\ell\ge1$, use the
canonical coordinate levels $I_j(z)$ and set
\[
    J_\ell(y):=\{u:2^{-\ell}<|y_u|\le2^{-\ell+1}\}.
\]
Write
\[
    s_j:=|I_j(z)|,\qquad t_\ell:=|J_\ell(y)|,
    \qquad
    m_j:=2^{-2j+2}s_j,\qquad n_\ell:=2^{-2\ell+2}t_\ell.
\]
Then
\[
    \sum_{j\ge1}m_j\le4,\qquad \sum_{\ell\ge1}n_\ell\le4.
\]
By Definition~\ref{def:canonical-tall-flat-partition},
$K_j^{(L)}(z)=L\max\{pr_*,ps_j\}$.  Define the corresponding canonical tall
rectangle count by
\[
    e_{\rm can}^{(L)}(I_j(z),J_\ell(y))
    :=
    \sum_{u\in J_\ell(y)}d_u(\eta,I_j(z))
    \mathbf 1_{\{d_u(\eta,I_j(z))<K_j^{(L)}(z)\}} .
\]
If a heavy edge joins $I_j(z)$ to $J_\ell(y)$, then necessarily
$2^{-j-\ell+2}>\rho$, since
$|z_i y_u|\le2^{-j-\ell+2}$ on $I_j(z)\times J_\ell(y)$.  Therefore
\begin{equation}
\label{eq:ks-tall-heavy-canonical-rectangle-sum}
    \mathcal H_{\rm tall}(z,y)
    \le
    \sum_{\substack{j,\ell\ge1\\2^{-j+1}\ge1/\sqrt{r_*}\\
        2^{-j-\ell+2}>\rho}}
    2^{-j-\ell+2}e_{\rm can}^{(L)}(I_j(z),J_\ell(y)),
\end{equation}
and our goal is to bound $e_{\rm can}^{(L)}(I_j(z),J_\ell(y))$ for all admissible parameters.

If $2^{-j+1}\ge1/\sqrt{r_*}$ and $s_j\ge1$, then
\[
    s_j\le2^{2j}\le4r_*.
\]
Moreover,
\begin{equation}
\label{eq:ks-tall-level-entropy-classical-bound}
    \Phi(j,s_j)
    =
    s_j\log\left(C_H\frac{2^{2j-2}(r+s_j)}{s_j^2}\right)
    \le
    C'_H\,s_j\log\frac{4er_*}{s_j}.
\end{equation}
Indeed, $r\le r_*$, $s_j\le4r_*$, and $2^{2j-2}\le r_*$ imply
\[
    \frac{2^{2j-2}(r+s_j)}{s_j^2}
    \le
    5\left(\frac {r_*}{s_j}\right)^2.
\]
Further, whenever $2^{-j+1}\ge1/\sqrt{r_*}$ and $s_j\ge1$, the set $I_j(z)$ belongs to
$\mathcal F_j(s_j)$ from
Lemma~\ref{lem:ks-tall-entropy-rectangle-bound}.  Thus, on the two
conditioned events from the beginning of the proof, for every pair with $2^{-j+1}\ge1/\sqrt{r_*}$ and
$s_j,t_\ell\ge1$,
\[
    e_{\rm can}^{(L)}(I_j(z),J_\ell(y))
    \le
    C\left[
        \mu_{j\ell}
        +
        \min\left\{
            s_jt_\ell,\,
            kp s_j,\,
            4Lpr_* t_\ell,\,
            \frac{Q_{j\ell}^{\rm tall}}
            {\log(e+Q_{j\ell}^{\rm tall}/\mu_{j\ell})}
        \right\}
    \right],
\]
where
\[
    \mu_{j\ell}:=ps_jt_\ell,
    \qquad
    Q_{j\ell}^{\rm tall}:=
    \Phi(j,s_j)+
    t_\ell\log\frac{ek}{t_\ell}.
\]
Indeed, Lemma~\ref{lem:ks-tall-entropy-rectangle-bound} gives the
$\mu_{j\ell}+Q_{j\ell}^{\rm tall}/
\log(e+Q_{j\ell}^{\rm tall}/\mu_{j\ell})$ term, while
the deterministic inequality $e_\eta(I_j(z),J_\ell(y))\le s_jt_\ell$ gives
the first bound, Lemma~\ref{lem:ks-tall-edge-column-bound} gives the
$kp s_j$ bound, and the canonical cutoff gives
$e_{\rm can}^{(L)}(I_j(z),J_\ell(y))
\le K_j^{(L)}(z)t_\ell\le4Lpr_* t_\ell$.
Thus, the result is proved once we show that the above upper estimate together with \eqref{eq:ks-tall-heavy-canonical-rectangle-sum}
produces a quantity of order $O(\sqrt{kp})$.

By \eqref{eq:ks-tall-level-entropy-classical-bound},
$$
Q_{j\ell}^{\rm tall}\le C'_H\,\Big(s_j\log\frac{4er_*}{s_j}
    +
    t_\ell\log\frac{ek}{t_\ell}\Big)=:C_H'\,Q_{j\ell}^{\rm cl},
$$ 
and therefore
\[
    e_{\rm can}^{(L)}(I_j(z),J_\ell(y))
    \le
    C_0\left[
        \mu_{j\ell}
        +
        \min\left\{
            s_jt_\ell,\,
            kp s_j,\,
            pr_* t_\ell,\,
            \frac{Q_{j\ell}^{\rm cl}}
            {\log(e+Q_{j\ell}^{\rm cl}/\mu_{j\ell})}
        \right\}
    \right]
\]
with $C_0=C_0(B,C_H',L)$.  Pulling out this factor $C_0$, apply
Lemma~\ref{lem:ks-classical-envelope-summation} with $d=r_*$ and $S=4$ to
the sequence whose $j$-th term is $s_j$ when
$2^{-j+1}\ge1/\sqrt{r_*}$ and zero otherwise, and to the row-level sequence
$(t_\ell)$.  It gives
\[
    \mathcal H_{\rm tall}(z,y)
    \le
    C(B,L)\sqrt{kp} .
\]
The estimate is uniform in $z$ and $y$, so taking the supremum proves the
claim.
\end{proof}

\section{Flat-majorant reduction and disjoint extraction}
\label{sec:flat-majorants}

Throughout the Flat analysis, $\eta$ denotes the designated support mask of
$\Pi$, and all row degrees and heavy-row degree sums are computed from
$\eta$.  Recall from
Definition~\ref{def:canonical-flat-profile-majorant} that the canonical Flat
majorant is
\[
    \FM_L^{\rm can}(\eta,x)
    =
    \left[
    \sum_{u=1}^k
    \left(
        \sum_{\substack{j\in\mathbb Z\\
        d_u(\eta,I_j(x))\ge K_j^{(L)}(x)}}
        2^{-j}d_u(\eta,I_j(x))
    \right)^2
    \right]^{1/2},
\]
where
\[
    K_j^{(L)}(x)=L\max\{pr_*,p\abs{I_j(x)}\}.
\]
It dominates the norm of the canonical Flat contribution up to an absolute
factor; thus the goal of the Flat analysis is to prove, under the assumptions
of the main theorem and with high probability,
\[
    \sup_{x\in\mathcal N_V}\FM_L^{\rm can}(\eta,x)
    \lesssim_{B,L}\sqrt{pk}
\]
for the relevant norming net $\mathcal N_V\subset V\cap S^{n-1}$.
In this section, we perform a (rather tedious) technical analysis of $\FM_L^{\rm can}(\eta,x)$,
which allows to estimate the canonical flat contribution in terms of the {\it leaf square function}
of certain partition of $[n]$; the related notions will be introduced in the later parts of the section.

\subsection{A peeling algorithm}

The generic Flat row profile and majorant were introduced in
Definition~\ref{def:flat-row-profile-majorant} for a weighted family of
pairwise disjoint coordinate sets.  In this subsection we temporarily work
with an arbitrary such family, rather than directly with the dyadic levels
$I_j(x)$, in order to isolate the deterministic combinatorial mechanism from
the geometry of a particular vector.  The peeling algorithm successively
selects a row of largest residual degree and removes its support, thereby
producing disjoint row-supported pieces.  Lemma~\ref{lem:flat-contribution-z}
shows that, under certain {\it heavy-row packing condition}, the resulting pieces retain a
fixed proportion of the abstract Flat majorant.  Proposition~\ref{thm:flat-extraction}
later applies this statement with $P=I_j(x)$, $\omega_P=2^{-j}$, and the
reduced thresholds.

Recall that 
for a finite family $\mathcal I$ of pairwise disjoint subsets of $[n]$,
positive weights $\omega=(\omega_P)_{P\in\mathcal I}$, and positive
thresholds $K=(K_P)_{P\in\mathcal I}$,
\[
    \FM((P,\omega_P,K_P)_{P\in\mathcal I})
    =
    \left[
    \sum_{u=1}^k
    \left(
        \sum_{\substack{P\in\mathcal I\\ d_u(\eta,P)\ge K_P}}
        \omega_P\,d_u(\eta,P)
    \right)^2
    \right]^{1/2}.
\]
\begin{definition}[Heavy-row packing condition]
Fix a realization $\eta\in\{0,1\}^{k\times n}$ of the mask and regard it as
deterministic; no randomness of $\eta$ is involved in this definition.  Let
$\mathcal I$ be a finite family of pairwise disjoint subsets of $[n]$.  Fix positive thresholds
$K=(K_P)_{P\in\mathcal I}$ and a constant $L\ge1$.
We say that $\eta$ satisfies the heavy-row packing condition on $\mathcal I$ if, for
every $P\in\mathcal I$ and every subset $J\subset P$,
\[
    R_{K_P}(\eta,J)\le L\abs J .
\]
\end{definition}

\begin{remark}
The condition above should be thought of as forbidding
``overcrowding'' of the heavy row supports
within any fixed index subset of $[n]$.
\end{remark}

Given a parameter $\kappa>0$, and fix a deterministic tie-breaking rule for rows.
For each set $P\subset[n]$, the following algorithm
constructs disjoint subsets
$$J_t(P)\subset P,\quad 1\le t\le k,$$
row indices $u_t(P)$, and numbers $z_t(P)$.

\begin{algorithm}[H]
\caption{A peeling algorithm}
\label{alg:peeling}
\begin{algorithmic}[1]
\Require $P\subset[n]$, $\kappa>0$, and the fixed row tie-breaking rule.
\Ensure Pairwise disjoint sets $J_t(P)\subset P$, numbers $z_t(P)$, and row
indices $u_t(P)$, $1\leq t\leq k$.
\For{$t=1,\ldots,k$}
    \State $R_t(P)\gets
    P\setminus\bigcup_{\ell=1}^{t-1}J_\ell(P)$
    \State Let $u_t(P)$ be a row maximizing $d_u(\eta,R_t(P))$, $1\leq u\leq k$, with ties
    broken by the fixed rule.
    \State $z_t(P)\gets d_{u_t(P)}(\eta,R_t(P))$
    \If{$z_t(P)<\kappa$}
        \State $J_t(P)\gets\varnothing$ and $z_t(P)\gets0$
    \Else
        \State $J_t(P)\gets R_t(P)\cap\{i\in[n]:\eta_{u_t(P)i}=1\}$
    \EndIf
\EndFor
\end{algorithmic}
\end{algorithm}

By construction, the sets $J_t(P)$ are pairwise disjoint.  Further, whenever
$z_t(P)>0$, we have
\begin{equation}\label{eq:z-size}
    \kappa\le z_t(P)=\max_{u\in[k]}d_u(\eta,R_t(P)),
    \qquad
    \abs{J_t(P)}=z_t(P).
\end{equation}
The sequence $(z_t(P))_{t=1}^k$ is non-increasing in $t$.
The positive rows $u_t(P)$ are distinct: once $u_t(P)$ is selected, all of
its support in the current residual set is removed, so its degree is zero in
every later residual set.

\begin{lemma}[Weighted-family peeling majorization]
\label{lem:flat-contribution-z}
Fix a realization $\eta\in\{0,1\}^{k\times n}$ of the mask and regard it as
deterministic.  Let $\mathcal I$ be a finite family of pairwise disjoint subsets of $[n]$,
and let $\omega=(\omega_P)_{P\in\mathcal I}$ be positive weights.  Let
$K=(K_P)_{P\in\mathcal I}$ be a positive threshold profile, and let
$L\ge1$. Suppose that $\eta$ satisfies the heavy-row packing condition on
$\mathcal I$ with parameters $K$ and $L$.  For every $P\in\mathcal I$, set
\[
    \kappa_P:=\frac{K_P}{2L}
\]
and run Algorithm~\ref{alg:peeling} with parameter $\kappa_P$.
Then, deterministically,
\[
    \left[
    \sum_{t=1}^k
    \left(
        \sum_{P\in\mathcal I}\omega_Pz_t(P)
    \right)^2
    \right]^{1/2}
    \ge
    c_{\text{\tiny\ref*{lem:flat-contribution-z}}}
    \,\FM((P,\omega_P,K_P)_{P\in\mathcal I}),
\]
where
$c_{\text{\tiny\ref*{lem:flat-contribution-z}}}=(3L+1)^{-1}$.
\end{lemma}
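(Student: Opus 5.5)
The plan is to compare, for each fixed $P\in\mathcal I$, the heavy-row degree vector with the peeling output $(z_t(P))_t$ through partial sums. We then glue the sets $P$ together with a rearrangement inequality and a Karamata-type majorization inequality. For $P\in\mathcal I$ and $u\in[k]$ put
\[
    a_u(P):=d_u(\eta,P)\mathbf 1_{\{d_u(\eta,P)\ge K_P\}},
\]
so that $\FM^2=\sum_u\bigl(\sum_P\omega_Pa_u(P)\bigr)^2$. Let $a^*(P)$ denote the non-increasing rearrangement of $(a_u(P))_{u\in[k]}$. Expanding the square, each cross term $\sum_u a_u(P)a_u(P')$ is at most $\sum_t a^*_t(P)a^*_t(P')$ by the rearrangement inequality. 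Hence
\[
    \FM^2\le\sum_{t=1}^k\Bigl(\sum_P\omega_Pa^*_t(P)\Bigr)^2 .
\]

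\textbf{Key step (per-set partial sums).} I would show that for every $P$ and every $m\le k$,
\[
    \sum_{t\le m}a^*_t(P)\le C_L\sum_{t\le m}z_t(P).
\]
Fix $P$, let $U$ be the set of the (at most $m$) rows realizing the top $m$ values of $a(P)$, and discard rows with $a_u(P)=0$. Let $J:=J_1(P)\cup\dots\cup J_m(P)$; then $R_{m+1}(P)=P\setminus J$ (with $R_{k+1}(P):=P\setminus\bigcup_{t\le k}J_t(P)$ if $m=k$). Consider the test set
\[
    J^*:=J\cup\bigcup_{u\in U}\bigl(R_{m+1}(P)\cap\operatorname{supp}\eta_{u\cdot}\bigr)\subset P .
\]
Every $u\in U$ has $d_u(\eta,J^*)=d_u(\eta,P)\ge K_P$, so the heavy-row packing condition gives
\[
    \sum_{u\in U}d_u(\eta,P)\le R_{K_P}(\eta,J^*)\le L\abs{J^*}.
\]
To bound $\abs{J^*}$, use $\abs J=\sum_{t\le m}z_t(P)$ by \eqref{eq:z-size}. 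Each row contributes at most $\max_ud_u(\eta,R_{m+1}(P))$, which is either $z_{m+1}(P)$ when step $m+1$ is positive, or less than $\kappa_P$ otherwise. In the first case $m\,z_{m+1}(P)\le\sum_{t\le m}z_t(P)$ by monotonicity. In the second case, $\abs U\,\kappa_P=\abs U K_P/(2L)\le(2L)^{-1}\sum_{u\in U}d_u(\eta,P)$. Combining,
\[
    \sum_{u\in U}d_u(\eta,P)\le 2L\sum_{t\le m}z_t(P)+\tfrac12\sum_{u\in U}d_u(\eta,P),
\]
and absorbing the last term gives the claim with $C_L=4L$. To reach the exact constant $3L+1$, I would split the case analysis more carefully, for instance by using $d_u(R_{m+1}(P))<\kappa_P$ row by row rather than through $\abs U$.

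\textbf{Gluing.} Set $w_t:=\sum_P\omega_Pa^*_t(P)$ and $v_t:=C_L\sum_P\omega_Pz_t(P)$. Both sequences are non-increasing and nonnegative, since each summand is. Multiplying the per-set partial-sum inequalities by $\omega_P$ and summing over $P$ gives $\sum_{t\le m}w_t\le\sum_{t\le m}v_t$ for all $m$, so $w$ is weakly submajorized by $v$. The Tomi\'c--Weyl (Karamata) inequality for the convex increasing function $x\mapsto x^2$ then yields $\sum_tw_t^2\le\sum_tv_t^2$. Together with the first display, this proves the lemma.

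The main obstacle is the per-set step: choosing $J^*$ so that every heavy row in $U$ stays $K_P$-heavy on it while $\abs{J^*}$ is still controlled by the peeled sizes. The threshold choice $\kappa_P=K_P/(2L)$ is exactly what allows the residual contribution, in the case where peeling has stopped, to be absorbed.
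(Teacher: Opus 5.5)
Your proof is correct and has the same two-stage structure as the paper's. The first stage is a per-set partial-sum bound $\sum_{t\le m}a^*_t(P)\le C\sum_{t\le m}z_t(P)$. The second glues the sets together by weak majorization. The paper does the gluing with subadditivity of top-$s$ sums and an Abel-summation duality bound; this is equivalent to your rearrangement-plus-Tomi\'c--Weyl step.

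The genuine difference is in the per-set step. The paper splits into the cases $s\le m$ and $s>m$. In the first case it applies packing to the peeled set $U_s$ alone, so it must pay $sK_P$ for rows of $H$ that are not heavy on $U_s$. In the second case it builds minimal augmentations $V_u$ in order to first prove $sK_P\le 2LS_m$. Your single test set $J^*$ adds to the peeled set the entire residual support of each selected row. This makes every row of $U$ heavy on $J^*$ at once, which removes the $sK_P$ term and unifies the two cases.

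On the constant: you lose it only by merging the two cases into one display. Keep them separate.
\begin{itemize}
\item If $z_{m+1}(P)>0$, you get directly $\sum_{u\in U}d_u(\eta,P)\le L\abs{J^*}\le 2L\sum_{t\le m}z_t(P)$.
\item Otherwise, absorbing $\tfrac12\sum_{u\in U}d_u(\eta,P)$ gives the same bound.
\end{itemize}
So $C_L=2L\le 3L+1$, which is stronger than the lemma requires. The ``row by row'' refinement you suggest changes nothing and is not needed. The case $m=k$ falls under the second alternative: if all $k$ steps are positive, every row has been selected and has degree $0$ on $R_{k+1}(P)$, which is below $\kappa_P$.
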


\begin{proof}
For every $P\in\mathcal I$, define
\[
    h_u(P):=d_u(\eta,P)\mathbf 1_{\{d_u(\eta,P)\ge K_P\}},
    \qquad 1\le u\le k.
\]
\begin{claim}
For every $P\in\mathcal I$, let
$(h_t^*(P))_{t=1}^k$ be the non-increasing rearrangement of
$(h_u(P))_{u=1}^k$.  Then, for every $1\le s\le k$,
\begin{equation}\label{eq:one-level-majorization}
    \sum_{t=1}^s h_t^*(P)\le C_0\sum_{t=1}^s z_t(P),
\end{equation}
where $C_0=3L+1$.
\end{claim}

\begin{proof}[Proof of the claim]
Fix $P\in\mathcal I$.
Recall that $(z_t(P))_{t=1}^k$ is non-increasing.
Let
\[
    m:=\max\{t\in[k]:z_t(P)>0\},
\]
with $m=0$ when all $z_t(P)$ vanish.  For $1\le q\le m$, set
\[
    U_q:=\bigcup_{t=1}^qJ_t(P),
    \qquad
    S_q:=\sum_{t=1}^qz_t(P).
\]
By \eqref{eq:z-size}, $\abs{U_q}=S_q$.

It is enough to prove \eqref{eq:one-level-majorization} when the first $s$
entries of $h^*(P)$ are positive.  Let $H\subset[k]$ be a set of $s$ rows
realizing those entries.

Suppose first that $s\le m$.  Then $z_s(P)>0$, so Algorithm~\ref{alg:peeling}
did not reset it to zero and
\[
    z_s(P)=\max_{u\in[k]}d_u(\eta,R_s(P)).
\]
Since
$R_{s+1}(P)=P\setminus U_s\subset R_s(P)$, we have
$\max_{u\in[k]}d_u(\eta,R_{s+1}(P))\le z_s(P)$, and therefore
\[
\begin{aligned}
    \sum_{t=1}^s h_t^*(P)
    &=\sum_{u\in H}d_u(\eta,P)\\
    &\le \sum_{u\in H}d_u(\eta,U_s)+s z_s(P)\\
    &\le L\abs{U_s}+sK_P+s z_s(P).
\end{aligned}
\]
Above, rows with $d_u(\eta,U_s)\ge K_P$ are controlled by
the heavy-row packing condition, while all remaining rows contribute at most
$sK_P$.  Moreover, by \eqref{eq:z-size} and the definition of $\kappa_P$,
\[
    S_s\ge s\kappa_P=\frac{sK_P}{2L},
\]
while $S_s\ge s z_s(P)$ by monotonicity of $(z_t(P))_{t=1}^k$.  Hence
\[
    \sum_{t=1}^s h_t^*(P)
    \le
    (3L+1)S_s.
\]

Now suppose that $s>m$.  Then $m<k$, and the construction gives
\[
    \max_{u\in[k]}d_u(\eta,P\setminus U_m)<\kappa_P.
\]
For each $u\in H$, choose a set
$V_u\subset (P\setminus U_m)\cap\{i:\eta_{ui}=1\}$ of the smallest cardinality for
which
\[
    d_u(\eta,U_m\cup V_u)\ge K_P.
\]
Such a set exists because $d_u(\eta,P)\ge K_P$, and it can be chosen with
$\abs{V_u}<\kappa_P$.  Put
\[
    W:=U_m\cup\bigcup_{u\in H}V_u.
\]
Then $W\subset P$, every row in $H$ has degree at least $K_P$ on $W$, and
\[
    \abs W\le S_m+s\kappa_P.
\]
The heavy-row packing condition and the definition of $\kappa_P$ give
\[
    sK_P
    \le\sum_{u\in H}d_u(\eta,W)
    \le L\abs W
    \le LS_m+Ls\kappa_P
    \le LS_m+\frac{sK_P}{2}.
\]
Thus
\begin{equation}\label{eq:sK-by-Sm}
    sK_P\le2LS_m.
\end{equation}
Splitting the degrees on $U_m$ at the threshold $K_P$, and using
$d_u(\eta,P\setminus U_m)<\kappa_P\le K_P$, we get
\[
\begin{aligned}
    \sum_{t=1}^s h_t^*(P)
    &=\sum_{u\in H}d_u(\eta,P)\\
    &=\sum_{u\in H}d_u(\eta,U_m)
      +\sum_{u\in H}d_u(\eta,P\setminus U_m)\\
    &\le
      \sum_{\substack{u\in H\\ d_u(\eta,U_m)\ge K_P}}d_u(\eta,U_m)
      +\sum_{\substack{u\in H\\ d_u(\eta,U_m)<K_P}}d_u(\eta,U_m)
      +s\kappa_P\\
    &\le LS_m+sK_P+s\kappa_P
    \le (3L+1)S_m.
\end{aligned}
\]
The penultimate inequality uses the heavy-row packing condition for the rows with
$d_u(\eta,U_m)\ge K_P$, the trivial bound $d_u(\eta,U_m)<K_P$ for the remaining
rows, and
\eqref{eq:sK-by-Sm}, together with $s\kappa_P\le S_m$, which follows from
\eqref{eq:sK-by-Sm} and the definition of $\kappa_P$.
This again proves \eqref{eq:one-level-majorization}.
\end{proof}

Now combine the levels.  Define nonnegative vectors $F,Z\in\R^k$ by
\[
    F_u:=\sum_{P\in\mathcal I}\omega_Ph_u(P),
    \qquad
    Z_t:=\sum_{P\in\mathcal I}\omega_Pz_t(P).
\]
Then $\norm F_2=\FM((P,\omega_P,K_P)_{P\in\mathcal I})$, and $Z$ is
non-increasing.
For every $s$, the characterization of the sum of the $s$ largest coordinates
gives
\[
\begin{aligned}
    \sum_{t=1}^sF_t^*
    &=\max_{\substack{H\subset[k]\\\abs H=s}}
      \sum_{u\in H}F_u\\
    &\le \sum_{P\in\mathcal I}\omega_P\sum_{t=1}^s h_t^*(P)\\
    &\le C_0\sum_{P\in\mathcal I}\omega_P\sum_{t=1}^s z_t(P)
     =C_0\sum_{t=1}^s Z_t.
\end{aligned}
\]
Thus $F^*$ is weakly majorized by $C_0Z$.

Without loss of generality, $F\ne0$. Set $y=F^*/\norm F_2$ and $y_{k+1}=0$.  Then
\[
\begin{aligned}
    \norm F_2
    &=\sum_{t=1}^k F_t^*y_t
      =\sum_{s=1}^k
        \left(\sum_{t=1}^sF_t^*\right)(y_s-y_{s+1})\\
    &\le C_0\sum_{s=1}^k
        \left(\sum_{t=1}^sZ_t\right)(y_s-y_{s+1})
      =C_0\sum_{t=1}^k Z_ty_t\\
    &\le C_0\norm Z_2.
\end{aligned}
\]
Therefore
\[
    \left[
    \sum_{t=1}^k
    \left(\sum_{P\in\mathcal I}\omega_Pz_t(P)\right)^2
    \right]^{1/2}
    \ge C_0^{-1}\FM((P,\omega_P,K_P)_{P\in\mathcal I}).
\]
\end{proof}

\subsection{Canonical-to-reduced comparison}

Recall from Subsection~\ref{sec:technical-overview-flat-contribution} the
canonical Flat row profile $\mathbf m_L^{\rm can}(x)$ and its majorant
$\FM_L^{\rm can}(x)$.  We now define the reduced objects used throughout
this section.

\begin{definition}[Reduced Flat profile and majorant]
\label{def:reduced-flat-profile-majorant}
For $L\ge1$, let the reduced threshold profile be
\[
    K_{j,{\rm red}}^{(L)}
    :=
    L\max\{pr_*,p\,2^{2j}\},
    \qquad j\in\mathbb Z,
\]
where we recall that
\[
    r_*:=\max\left\{r,\frac{\log k}{p}\right\}.
\]
For $x\in\mathbb R^n$, define the reduced flat profile
\[
    \bigl(\mathbf m_L^{\rm red}(x)\bigr)_u
    :=
    \sum_{j\in\mathbb Z}2^{-j}d_u(\eta,I_j(x))\,
    \mathbf 1_{\{d_u(\eta,I_j(x))\ge K_{j,{\rm red}}^{(L)}\}},
    \qquad u\in[k],
\]
and set $\FM_L^{\rm red}(x):=\norm{\mathbf m_L^{\rm red}(x)}_2$.
\end{definition}

The reduced profile is introduced for technical reasons;
it is easier to operate with compared to the canonical
profile with vector-dependent thresholds.
In particular, the reduced profile allows for a simple
proof of the following key lemma, which will later be used together with the peeling procedure
from the beginning of the section:

\begin{lemma}[Heavy-row packing condition for reduced profile]
\label{lem:scale-adapted-local-degree}
For every $B\ge1$ there is
$L_{\text{\tiny\ref*{lem:scale-adapted-local-degree}}}(B)\ge1$ with the
following property.  Assume
\[
    r^{10}\ge n\ge k\ge r\ge3,
    \qquad
    \frac{\log k}{k}\le p\le1,
\]
let $V\subset\R^n$ be a fixed subspace of dimension $r$, and let $\Pi$ follow
the admissible sparse-entry model with parameter $p$ and support mask $\eta$.  If
$L\ge L_{\text{\tiny\ref*{lem:scale-adapted-local-degree}}}(B)$, then, with
probability at least $1-k^{-B}$, simultaneously for every $x\in V$ with
$\norm{x}_2\le1$, every $j\in\mathbb Z$, and every $J\subset I_j(x)$,
\[
    R_{K_{j,{\rm red}}^{(L)}}(\eta,J)<2\abs J.
\]
\end{lemma}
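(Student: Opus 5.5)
The plan is a union bound over the ``bad'' sets $J$, using the level-set entropy bound to pay for the choice of $J$ and the fixed-set heavy-row tail to control the probability.

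\textbf{Reduction to minimal bad sets.} Suppose the conclusion fails for some $x$, $j$ and $J\subset I_j(x)$. Then $R_K(\eta,J)\ge2\abs J$ with $K=K_{j,{\rm red}}^{(L)}$, and we may take $\abs J=q$. Since every $i\in J$ satisfies $\abs{x_i}>2^{-j}$ and $\norm x_2\le1$, we have $q<2^{2j}$. Moreover $J\in\mathcal F_V(2^{-j},q)$ in the notation of Lemma~\ref{lem:full-trace-counting}. It therefore suffices to bound
\[
\sum_{j}\sum_{q<2^{2j}}\ \sum_{J\in\mathcal F_V(2^{-j},q)}\mathbb P\{R_K(\eta,J)\ge 2q\}.
\]
No net and no continuity in $x$ is needed, because the event depends on $x$ only through the set $J$ and the level $j$. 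A nonempty set $J$ with $R_K(\eta,J)\ge2q$ requires $q\ge K\ge L\log k$. Hence only $j$ with $2^{2j}>q\ge L\log k$ occur. Also $q\le n\le r^{10}$, so the relevant $j$ range over $O(\log k)$ values.

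\textbf{Fixed-set bound.} Next we check the hypotheses of Lemma~\ref{lem:fixed-set-heavy-row-tail} with $s=q$ and threshold $K$. First, $\lceil K\rceil\ge Lpr_*\ge L\log k\ge2\log(ek)$ once $L$ is large. Second, $\lceil K\rceil\ge Lp2^{2j}\ge Lpq\ge e^3pq$. With $t=2q$, the lemma gives
\[
\mathbb P\{R_K(\eta,J)\ge2q\}\le\Bigl(C\frac{pq}{\lceil K\rceil}\Bigr)^{2q}.
\]
Lemma~\ref{lem:full-trace-counting} with $\beta=2^{-j}$ bounds the number of sets $J$ by $\bigl(C2^{2j}(r+q)/q^2\bigr)^q$. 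Multiplying the two bounds yields
\[
\Bigl(C\frac{2^{2j}(r+q)}{q^2}\cdot\frac{p^2q^2}{K^2}\Bigr)^q=\Bigl(C\frac{p^22^{2j}(r+q)}{K^2}\Bigr)^q.
\]

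\textbf{Main step: the product is small.} This is where the doubled exponent $2q$ is essential, and the key is to use both parts of the maximum in $K$. We have $K^2\ge (Lp2^{2j})(Lpr_*)$. Together with $q<2^{2j}$ and $r\le r_*$, this gives
\[
\frac{p^22^{2j}(r+q)}{K^2}\le\frac{r+q}{L^2r_*}\le\frac{1}{L^2}+\frac{q}{L^2 r_*}.
\]
The second term is the dangerous one when $q\gg r_*$. In that regime we should instead use $K\ge Lp2^{2j}>Lpq$, which gives $p^22^{2j}q/K^2\le q/(L^22^{2j})\le L^{-2}$. Combining the two cases, the term is at most $(C'/L^2)^q\le (C'/L^2)^{L\log k}$.

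\textbf{Summation.} We sum this bound over $q\le n\le k^{10}$ and the $O(\log k)$ relevant values of $j$. Choosing $L\ge L(B)$ large makes the total at most $k^{-B}$.

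I expect the main obstacle to be exactly the entropy-versus-probability balance in the case $q\gtrsim r_*$, where the $r+q$ factor in the entropy bound must be absorbed by the $p2^{2j}$ part of the reduced threshold. The two-case bound on $K^2$ above is designed to handle it.
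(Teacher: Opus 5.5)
Your proposal is correct and follows the paper's proof essentially step for step. Both arguments take a union bound over $J\in\mathcal F_V(2^{-j},q)$ priced by Lemma~\ref{lem:full-trace-counting}, apply the fixed-set heavy-row tail with $t=2q$, and absorb the entropy factor via $(r+q)2^{2j}\lesssim\max\{r_*,2^{2j}\}^2$; your two-case bound on $K^2$ is exactly this inequality. The one point to state more carefully is where the upper bound on the relevant $j$ comes from: it is $Lp2^{2j}\le K\le q\le n$, because the event is empty when $K>q$. The bound $q\le n$ alone does not give it, since $2^{2j}>q$ only bounds $j$ from below.
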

\begin{proof}
For $j\in\mathbb Z$, put
$K_j:=K_{j,{\rm red}}^{(L)}=Lp\max\{r_*,2^{2j}\}$.  For
$j\in\mathbb Z$ and $s\ge1$, let
\[
    \mathcal F_{j,s}
    :=
    \left\{
    I\subset[n]:\abs I=s,\ \exists x\in V,\ \norm{x}_2\le1,\
    \abs{x_i}\ge2^{-j}\ \text{for every }i\in I
    \right\}.
\]
Lemma~\ref{lem:full-trace-counting} gives
\[
    \abs{\mathcal F_{j,s}}
    \le
    \left(
        C\frac{(r+s)2^{2j}}{s^2}
    \right)^s.
\]
Moreover, $\mathcal F_{j,s}\ne\varnothing$ implies $s\le2^{2j}$.

If $L$ is larger than a sufficiently large absolute constant, then
$\lceil K_j\rceil\ge2\log(e k)$ because $pr_*\ge\log k$.
Whenever $\mathcal F_{j,s}\ne\varnothing$, we also have
\[
    \frac{ps}{\lceil K_j\rceil}
    \le
    \frac{p2^{2j}}{Lp\max\{r_*,2^{2j}\}}
    \le\frac1L.
\]
Thus Lemma~\ref{lem:fixed-set-heavy-row-tail}, applied with $t=2s$, and a
union bound give
\begin{equation}
\label{eq:scale-adapted-fixed-level-failure}
\begin{aligned}
    \mathbb P\left\{
        \exists I\in\mathcal F_{j,s}:
        R_{K_j}(\eta,I)\ge2s
    \right\}                                                  
    \le
    \left(
        C'\frac{(r+s)2^{2j}p^2}{K_j^2}
    \right)^s
    \le
    \left(\frac{C''}{L^2}\right)^s,
\end{aligned}
\end{equation}
where the final inequality in \eqref{eq:scale-adapted-fixed-level-failure} follows
from
\[
    (r+s)2^{2j}
    \le(r_*+2^{2j})2^{2j}
    \le2\max\{r_*,2^{2j}\}^2.
\]

If the bad event in \eqref{eq:scale-adapted-fixed-level-failure} occurs, then
$s\ge K_j\ge Lpr_*\ge L\log k$.  Further, the left hand side of \eqref{eq:scale-adapted-fixed-level-failure}
is non-zero for only $O(\log k)$ values of $j$.
Indeed, $\mathcal F_{j,s}$ is empty for $j<0$, whereas the bad event implies
$Lp2^{2j}\le K_j\le s\le n$.  The assumptions
$n\le k^{10}$ and $p\ge(\log k)/k$ show that there are at most
$12\log(e k)$ relevant values of $j$.

Choose
$L_{\text{\tiny\ref*{lem:scale-adapted-local-degree}}}(B)$ so large that
$C''/L^2\le1/4$ and
$L\log(L^2/C'')\ge2(B+2)$.  Summing over $j\in\mathbb Z$ and
$s\ge\lceil L\log k\rceil$ then shows that the
total probability of the overcrowding of heavy row degrees is at most
\[
    12\log(e k)
    \sum_{s\ge\lceil L\log k\rceil}\left(\frac {C''}{L^2}\right)^s
    \le k^{-B}.
\]
On the complementary event, if $J\subset I_j(x)$ and $s=\abs J>0$, then
$J\in\mathcal F_{j,s}$, and hence $R_{K_j}(\eta,J)<2s$.
\end{proof}

The next lemma quantifies the cost of replacing the vector-dependent
canonical thresholds by the larger, $x$-independent reduced thresholds.

\begin{lemma}[Canonical-to-reduced Flat-profile comparison]
\label{lem:canonical-to-reduced-flat-comparison}
There is an absolute constant
$C_{\text{\tiny\ref*{lem:canonical-to-reduced-flat-comparison}}}\ge1$ with
the following property.  Let $0<p\le1$,
$r\ge1$, $L\ge1$, and let
$\mathcal N\subset S^{n-1}$.  For $x\in\mathcal N$, let
$\mathbf m_L^{\rm can}(x),\mathbf m_L^{\rm red}(x)\in\R^k$ be the Flat canonical and reduced row
profiles from Definitions~\ref{def:canonical-flat-profile-majorant} and
\ref{def:reduced-flat-profile-majorant}, respectively.
Condition on a realization of the mask $\eta\in\{0,1\}^{k\times n}$ such that
for some $A_0\ge1$,
\[
    \sup_{x\in\mathcal N}
    \sum_{j\in\mathbb Z}\sum_{u=1}^k
    d_u(\eta,I_j(x))
    \mathbf 1_{\{d_u(\eta,I_j(x))\ge K_j^{(L)}(x)\}}
    \le A_0r .
\]
Then
\[
    \sup_{x\in\mathcal N}
    \norm{\mathbf m_L^{\rm can}(x)-\mathbf m_L^{\rm red}(x)}_2
    \le
    C_{\text{\tiny\ref*{lem:canonical-to-reduced-flat-comparison}}}
    \sqrt{LA_0pr}.
\]
\end{lemma}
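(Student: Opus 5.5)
The plan is to compare the two profiles threshold by threshold, show that their difference is supported on a thin band of row degrees, and then sum that band dyadically against the total heavy degree supplied by the hypothesis.

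\textbf{Step 1: ordering of the thresholds.} Fix $x\in\mathcal N$ and write $s_j=\abs{I_j(x)}$. Since $\norm x_2=1$ and $|x_i|>2^{-j}$ on $I_j(x)$, we have $s_j2^{-2j}\le1$, i.e. $s_j\le 2^{2j}$. Hence
\[
K_j^{(L)}(x)=L\max\{pr_*,ps_j\}\le L\max\{pr_*,p2^{2j}\}=K_{j,{\rm red}}^{(L)}.
\]
The reduced indicator is therefore dominated by the canonical one. So $\mathbf m_L^{\rm can}(x)-\mathbf m_L^{\rm red}(x)$ is entrywise nonnegative, with $u$-th entry $\sum_j2^{-j}b_{uj}$, where
\[
b_{uj}:=d_u(\eta,I_j(x))\mathbf 1_{\{K_j^{(L)}(x)\le d_u(\eta,I_j(x))<K_{j,{\rm red}}^{(L)}\}} .
\]

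\textbf{Step 2: structure of the band.} If $2^{2j}\le r_*$, then also $s_j\le r_*$, so both thresholds equal $Lpr_*$ and $b_{uj}=0$. For the remaining $j$, nonvanishing of $b_{uj}$ forces $b_{uj}<Lp\,2^{2j}$, that is,
\[
2^{-j}<\bigl(Lp/b_{uj}\bigr)^{1/2}.
\]
Moreover $b_{uj}\le d_u(\eta,I_j(x))\mathbf 1_{\{d_u(\eta,I_j(x))\ge K_j^{(L)}(x)\}}$, so the hypothesis gives
\[
\sum_u\sum_jb_{uj}\le A_0r .
\]

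\textbf{Step 3: row-wise dyadic summation (the main step).} Fix a row $u$ and group the indices $j$ with $b_{uj}>0$ according to the dyadic class $m$ defined by $b_{uj}\in[2^m,2^{m+1})$. Within class $m$, every admissible $j$ satisfies $2^{-j}<(Lp/2^m)^{1/2}$, and distinct $j$ give distinct powers of two. A geometric series therefore yields
\[
\sum_{j:\,b_{uj}\sim2^m}2^{-j}b_{uj}\le 2^{m+1}\cdot C\,(Lp/2^m)^{1/2}=C'(Lp\,2^m)^{1/2}.
\]
Next sum over the classes $m$ that actually occur; again this is a geometric series, now in $2^{m/2}$. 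It is dominated by its largest term $2^{m_{\max}/2}$, and $2^{m_{\max}}\le\max_jb_{uj}\le\sum_jb_{uj}$. Squaring gives
\[
\Bigl(\sum_j2^{-j}b_{uj}\Bigr)^2\le C\,Lp\sum_jb_{uj}.
\]
This is where a naive Cauchy--Schwarz bound, for instance against $\sum_j2^{-2j}s_j\le1$, loses the factor $p$ or incurs a logarithmic count of levels. Grouping by the value of $b_{uj}$, rather than by $j$, is what recovers geometric decay, and I expect this step to be the crux.

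\textbf{Step 4: conclusion.} Summing the row bound over $u$ and using Step 2 gives
\[
\norm{\mathbf m_L^{\rm can}(x)-\mathbf m_L^{\rm red}(x)}_2^2\le CLp\,A_0r .
\]
This holds uniformly in $x\in\mathcal N$, which proves the lemma with an absolute constant $C_{\text{\tiny\ref*{lem:canonical-to-reduced-flat-comparison}}}$.
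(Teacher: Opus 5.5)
Your proof is correct and follows essentially the same route as the paper: the same threshold ordering $K_j^{(L)}(x)\le K_{j,{\rm red}}^{(L)}$, the same band quantity supported on $2^{2j}>r_*$ with $b_{uj}<Lp\,2^{2j}$, and the same row-wise inequality $\bigl(\sum_j2^{-j}b_{uj}\bigr)^2\le CLp\sum_jb_{uj}$, summed over rows against the total-degree hypothesis. The only difference is cosmetic: you prove the row-wise inequality by grouping levels according to the dyadic size of $b_{uj}$, whereas the paper splits the levels at $2^{2j}=D_u/(Lp)$ with $D_u=\sum_jb_{uj}$, using $b_{uj}\le Lp\,2^{2j}$ below the split and $2^{-j}<\sqrt{Lp/D_u}$ above it.
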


\begin{proof}
Fix $x\in\mathcal N$.  Put $s_j:=\abs{I_j(x)}$.  Recall that, since,
$\norm{x}_2=1$, we have $s_j\le 2^{2j}$ for every $j$, and therefore
\[
    K_{j,{\rm red}}^{(L)}
        \ge
    L\max\{pr_*,ps_j\}
    =
    K_j^{(L)}(x).
\]
That is, $\mathbf m_L^{\rm red}(x)\le\mathbf m_L^{\rm can}(x)$
coordinatewise.

For $u\in[k]$ and $j\in\mathbb Z$, set
\[
    a_{u j}
    :=
    d_u(\eta,I_j(x))
    \mathbf 1_{\{
        K_j^{(L)}(x)
        \le d_u(\eta,I_j(x))
        <K_{j,{\rm red}}^{(L)}
    \}} .
\]
Then
\[
    \bigl(\mathbf m_L^{\rm can}(x)-\mathbf m_L^{\rm red}(x)\bigr)_u
    =
    \sum_{j\in\mathbb Z}2^{-j}a_{u j}.
\]
If $2^{2j}\le r_*$, then $s_j\le2^{2j}\le r_*$, so
$K_{j,{\rm red}}^{(L)}=K_j^{(L)}(x)=Lpr_*$.  Thus $a_{u j}=0$
for all such $j$.  For the remaining levels, $2^{2j}>r_*$, and the strict upper
cutoff in the definition of $a_{u j}$ gives
\[
    0\le a_{u j}\le Lp2^{2j} .
\]
Moreover, by the assumption of the lemma,
\[
    \sum_{u=1}^k\sum_{j\in\mathbb Z} a_{u j}\le A_0r .
\]

We claim that, for every row $u$,
\[
    \left(\sum_{j\in\mathbb Z}2^{-j}a_{u j}\right)^2
    \le
    C Lp\sum_{j\in\mathbb Z}a_{u j}.
\]
Let $D_u:=\sum_{j\in\mathbb Z}a_{u j}$.  If $D_u=0$, there is nothing to prove.  If
$D_u\le Lpr_*$, then, since $a_{u j}$ is supported on levels with
$2^{2j}>r_*$,
\[
    \sum_{j\in\mathbb Z}2^{-j}a_{u j}
    =
    \sum_{j\in\mathbb Z}\frac{a_{u j}}{2^j}
    \le
    \frac{D_u}{\sqrt{r_*}}
    \le
    \sqrt{LpD_u}.
\]
Assume now that $D_u>Lpr_*$.  Splitting at $2^{2j}=D_u/(Lp)$, we get
\[
\begin{aligned}
    \sum_{j\in\mathbb Z}\frac{a_{u j}}{2^j}
    \le
    Lp\sum_{\substack{j\in\mathbb Z:\,
        r_*<2^{2j}\le D_u/(Lp)}}2^j
    +
    \frac{D_u}{\sqrt{D_u/(Lp)}}.
\end{aligned}
\]
Here the first inequality uses $a_{u j}\le Lp2^{2j}$ on the first range.
Consequently,
\[
    \sum_{j\in\mathbb Z}\frac{a_{u j}}{2^j}
    \le
    2Lp\sqrt{\frac{D_u}{Lp}}
    +\frac{D_u}{\sqrt{D_u/(Lp)}}
    \le
    C\sqrt{LpD_u}.
\]
This proves the claim.

Summing the claim over $u$ and using the total degree assumption,
\[
\begin{aligned}
    \norm{\mathbf m_L^{\rm can}(x)-\mathbf m_L^{\rm red}(x)}_2^2
    &\le
    C Lp\sum_{u=1}^k\sum_{j\in\mathbb Z}a_{u j}             \\
    &\le
    C Lp\,A_0r .
\end{aligned}
\]
Taking square roots and then the supremum over $x\in\mathcal N$ completes the
proof.
\end{proof}

Lemma~\ref{lem:flat-contribution-z}, the uniform estimate in
Lemma~\ref{lem:scale-adapted-local-degree}, and Lemma~\ref{lem:canonical-to-reduced-flat-comparison}
yield the following extraction
proposition.

\begin{proposition}
\label{thm:flat-extraction}
For every $\alpha\ge0$ and $B\ge1$ there is
$L_{\text{\tiny\ref*{thm:flat-extraction}}}(\alpha,B)\ge1$ such that the
following holds.  Assume
\[
    r^{10}\ge n\ge k\ge r\ge3,
    \qquad
    \frac{\log k}{k}\le p\le1.
\]
Let $V\subset\R^n$ be a fixed subspace of dimension $r$, and let
${\mathcal N}_V\subset V\cap S^{n-1}$ be deterministic with
$\abs{{\mathcal N}_V}\le\exp(\alpha r)$.  If
$L\ge L_{\text{\tiny\ref*{thm:flat-extraction}}}(\alpha,B)$, then there is
$C=C(\alpha,B,L)\ge1$ such that, with probability at least $1-k^{-B}$,
simultaneously for every $x\in{\mathcal N}_V$ there are integers
$y_j=y_j(x)\ge0$, sets
\[
    J_{j,q}(x)\subset[n],
    \qquad j\in\mathbb Z,\quad 1\le q\le y_j,
\]
and row indices $h_{j,q}(x)\in[k]$ satisfying the following properties.
\begin{itemize}
\item The sets $J_{j,q}(x)$ are nonempty and pairwise disjoint over all
      pairs $(j,q)$.
\item For every $j\in\mathbb Z$ and $1\le q\le y_j$,
      \[
          J_{j,q}(x)
          \subset
          I_j(x)\cap\{i\in[n]:\eta_{h_{j,q}(x)i}=1\}.
      \]
\item For every fixed $j$, the rows
      $h_{j,1}(x),\ldots,h_{j,y_j}(x)$ are distinct.
\item For every fixed $j$, the sequence
      $(\abs{J_{j,q}(x)})_{q=1}^{y_j}$ is non-increasing.  Each
      $\abs{J_{j,q}(x)}$ is a power of two and satisfies
      \[
          \abs{J_{j,q}(x)}
          \ge
          \frac L{10}\max\{pr_*,p2^{2j}\}.
      \]
\item With $J_{j,q}(x)=\varnothing$ for $q>y_j$,
      \[
          \FM_L^{\rm can}(x)
          \le
          14\left[
          \sum_{q=1}^k
          \left(
              \sum_{j\in\mathbb Z}2^{-j}\abs{J_{j,q}(x)}
          \right)^2
          \right]^{1/2}
          +C\sqrt{pr}.
      \]
\end{itemize}
\end{proposition}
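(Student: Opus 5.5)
The plan is to combine the three ingredients named just before the statement. We work on the intersection of two good events. The first is the event of Lemma~\ref{lem:total-degree-canonical-heavy-rows}, applied with the given $\alpha$, $B+1$ and $L\ge L_{\text{\tiny\ref*{lem:total-degree-canonical-heavy-rows}}}$. It gives the total canonical-heavy degree bound $A_0r$ with $A_0=C(\alpha,B+1)$. The second is the event of Lemma~\ref{lem:scale-adapted-local-degree} with $B+1$ and $L\ge L_{\text{\tiny\ref*{lem:scale-adapted-local-degree}}}(B+1)$. It gives the heavy-row packing condition $R_{K_{j,\rm red}^{(L)}}(\eta,J)<2|J|$ for all $J\subset I_j(x)$ and all unit $x\in V$. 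The intersection has probability at least $1-k^{-B}$ for large $k$; small $k$ are absorbed into constants.

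On this event, Lemma~\ref{lem:canonical-to-reduced-flat-comparison} gives
\[
\FM_L^{\rm can}(x)\le \FM_L^{\rm red}(x)+C\sqrt{LA_0pr}
\]
uniformly over $x\in\mathcal N_V$, which accounts for the additive $C\sqrt{pr}$ term. Next we apply Lemma~\ref{lem:flat-contribution-z} to the family $\mathcal I=\{I_j(x)\}$ with weights $\omega_j=2^{-j}$, thresholds $K_j=K_{j,\rm red}^{(L)}$ and packing constant $2$ in place of $L$. Then $c=(3\cdot 2+1)^{-1}=1/7$ and $\kappa_j=K_j/4$. Peeling $I_j(x)$ produces disjoint sets $J_t(I_j(x))\subset I_j(x)\cap\operatorname{supp}\eta_{u_t}$ with distinct rows $u_t$, sizes $z_t\ge K_j/4$ that are non-increasing in $t$, and
\[
\Bigl[\sum_t\bigl(\textstyle\sum_j 2^{-j}z_t(I_j(x))\bigr)^2\Bigr]^{1/2}\ge \tfrac17\FM_L^{\rm red}(x).
\]

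It remains to round the sizes to powers of two. For each nonzero piece, replace $J_t$ by an arbitrary subset of size $2^{\lfloor\log_2 z_t\rfloor}\ge z_t/2$. This preserves disjointness, the support inclusion, distinctness of rows within a level, and the non-increasing order, since rounding down to a power of two is monotone. The lower bound becomes $z_t/2\ge K_j/8\ge \tfrac L{10}\max\{pr_*,p2^{2j}\}$. Rounding costs a factor $2$, giving the constant $2\cdot 7=14$.

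We set $y_j$ to be the number of nonzero $z_t(I_j(x))$, and define $J_{j,q}$ and $h_{j,q}=u_q(I_j(x))$ for $q\le y_j$. Since $z_t$ vanishes for $t>y_j$, the sum over $q\le k$ matches the $t$-indexing of Lemma~\ref{lem:flat-contribution-z}. Disjointness across different $j$ is automatic because the level sets are disjoint.

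The main point requiring care is the packing hypothesis. Lemma~\ref{lem:scale-adapted-local-degree} is stated with the reduced thresholds, and it holds uniformly over all unit $x\in V$ (not just the net) and all subsets $J\subset I_j(x)$. This is exactly the form Lemma~\ref{lem:flat-contribution-z} needs, with the packing constant taken to be $2$ rather than $L$; this is why the final constant $14$ is independent of $L$. One must also check that the peeling thresholds $\kappa_j$ are positive, which holds because $K_j\ge Lpr_*>0$, so the lemma applies verbatim.
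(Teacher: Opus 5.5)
Your proposal is correct and follows the paper's proof essentially step for step. You intersect the total-degree and reduced heavy-row packing events at exponent $B+1$, and apply the peeling majorization with packing constant $2$, which gives the factor $1/7$ and $\kappa_j=K_{j,{\rm red}}^{(L)}/4$. You then round down to powers of two, which costs a factor $2$ (hence $14$) and yields the bound $K_{j,{\rm red}}^{(L)}/8\ge \frac L{10}\max\{pr_*,p2^{2j}\}$, and absorb the canonical-to-reduced error $C\sqrt{LA_0pr}$.

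One small correction: no small-$k$ caveat is needed, since $2k^{-(B+1)}\le k^{-B}$ already holds for every $k\ge 2$.
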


\begin{proof}
Take
\[
    L_{\text{\tiny\ref*{thm:flat-extraction}}}(\alpha,B)
    :=
    \max\left\{
        L_{\text{\tiny\ref*{lem:scale-adapted-local-degree}}}(B+1),
        L_{\text{\tiny\ref*{lem:total-degree-canonical-heavy-rows}}}
    \right\}.
\]
Apply Lemma~\ref{lem:total-degree-canonical-heavy-rows} to
${\mathcal N}_V$ with failure exponent $B+1$, and apply
Lemma~\ref{lem:scale-adapted-local-degree} with the same failure exponent.
Since $k\ge3$, the intersection of the resulting events has probability at
least $1-2k^{-(B+1)}\ge1-k^{-B}$.  Work on this intersection, and fix
$x\in{\mathcal N}_V$.  Apply Lemma~\ref{lem:flat-contribution-z} to the
pairwise disjoint family of nonempty sets $I_j(x)$, with
\[
    \omega_{I_j(x)}:=2^{-j},
    \qquad
    K_{I_j(x)}:=K_{j,{\rm red}}^{(L)},
\]
and heavy-row packing constant $2$.  Its hypotheses follow from Lemma~\ref{lem:scale-adapted-local-degree}, and its conclusion gives
\[
    \left[
    \sum_{q=1}^k
    \left(\sum_{j\in\mathbb Z}2^{-j}z_q(I_j(x))\right)^2
    \right]^{1/2}
    \ge
    \frac17\FM_L^{\rm red}(x).
\]

For every $j$, let $y_j$ be the number of positive terms in the
non-increasing sequence $(z_q(I_j(x)))_{q=1}^k$.  For $1\le q\le y_j$, put
\[
    \widehat z_{j,q}
    :=2^{\lfloor\log_2z_q(I_j(x))\rfloor},
\]
let $J_{j,q}(x)$ consist of the first $\widehat z_{j,q}$ elements of
$J_q(I_j(x))$ in the natural order on $[n]$, and set
$h_{j,q}(x):=u_q(I_j(x))$. The sequence $(\widehat z_{j,q})_q$ is non-increasing,
and, since
\[
    \frac12z_q(I_j(x))\le\widehat z_{j,q}\le z_q(I_j(x)),
\]
we have
\[
    \abs{J_{j,q}(x)}
    \ge\frac18K_{j,{\rm red}}^{(L)}
    \ge\frac L{10}\max\{pr_*,p2^{2j}\}.
\]
The peeling construction gives disjointness within each level, containment
in the support of the assigned row, and distinct assigned rows.  Different
levels are disjoint because the sets $I_j(x)$ are disjoint.

Finally, dyadic rounding loses at most a factor two in every coordinate of
the extracted vector.  Therefore
\[
\begin{aligned}
    \left[
    \sum_{q=1}^k
    \left(
        \sum_{j\in\mathbb Z}2^{-j}\abs{J_{j,q}(x)}
    \right)^2
    \right]^{1/2}
    &\ge
    \frac12
    \left[
    \sum_{q=1}^k
    \left(\sum_{j\in\mathbb Z}2^{-j}z_q(I_j(x))\right)^2
    \right]^{1/2} \\
    &\ge\frac1{14}\FM_L^{\rm red}(x).
\end{aligned}
\]
On the total-degree event,
Lemma~\ref{lem:canonical-to-reduced-flat-comparison}, with
\[
    A_0
    :=
    C_{\text{\tiny\ref*{lem:total-degree-canonical-heavy-rows}}}
        (\alpha,B+1),
\]
gives
\[
    \norm{\mathbf m_L^{\rm can}(x)-\mathbf m_L^{\rm red}(x)}_2
    \le C\sqrt{LA_0pr}.
\]
Consequently, the triangle inequality yields
\[
\begin{aligned}
    \FM_L^{\rm can}(x)
    &\le
    \FM_L^{\rm red}(x)
    +\norm{\mathbf m_L^{\rm can}(x)-\mathbf m_L^{\rm red}(x)}_2\\
    &\le
    14
    \left[
    \sum_{q=1}^k
    \left(
        \sum_{j\in\mathbb Z}2^{-j}\abs{J_{j,q}(x)}
    \right)^2
    \right]^{1/2}
    +C(\alpha,B,L)\sqrt{pr}.
\end{aligned}
\]
All events and estimates are uniform over ${\mathcal N}_V$, which completes
the proof.
\end{proof}

\subsection{Two-level partitions and leaf square functions}
\label{sec:two-level-partition}

This subsection reorganizes the disjoint family supplied by
Proposition~\ref{thm:flat-extraction} into a two-level rooted forest.
The roots record the dyadic coordinate levels of the vector, while each leaf
groups row-supported pieces of a common dyadic cardinality and records their
multiplicity.  This compresses an irregular family of row supports into the
dyadic parameters consisting of a coordinate level, a common width, and a
multiplicity, while retaining the relevant $\ell_2$ mass.  It therefore
allows the probabilistic argument to treat one width at a time and then sum
the resulting estimates.  We then encode the leaves by a square function and
show that it controls the canonical Flat majorant up to the
$O(\sqrt{pr})$ error in Proposition~\ref{thm:flat-extraction}.  This
representation prepares the fixed-width probabilistic estimates of
Section~\ref{sec:fixed-width-estimates}.

\begin{definition}[Two-level partition]
\label{def:two-level-partition}
Let $\eta\in\{0,1\}^{k\times n}$ be a mask and let $x\in\R^n$.  A
\emph{two-level partition associated with $(\eta,x)$} is a finite rooted forest
with the following labels and properties.

\begin{itemize}
\item Every root is labeled by a nonempty level set $I_j(x)$ and its level
      index $j$.  Distinct roots correspond to distinct levels.

\item A leaf below the root $I_j(x)$ is labeled by
      \[
          (E,j(E),w(E),b(E),m(E),H(E)),
          \qquad
          j(E)=j,
          \qquad
          w(E)=2^{-j(E)}.
      \]
      The sets $E$ are pairwise disjoint subsets of their roots.  The
      parameters $b(E)$ and $m(E)$ are positive dyadic integers, and
      $H(E)\subset[k]$.  They satisfy
      \[
          \abs E=b(E)m(E),
          \qquad
          \abs{H(E)}=m(E).
      \]
      For every $u\in H(E)$ there is a set
      \[
          S_u(E)\subset E\cap\{i\in[n]:\eta_{ui}=1\}
      \]
      of cardinality $b(E)$, and
      \[
          E=\bigsqcup_{u\in H(E)}S_u(E).
      \]
      In particular, $d_u(\eta,E)\ge b(E)$ for every $u\in H(E)$.  Distinct
      leaves below the same root have distinct values of $b(E)$.  Moreover,
      for every pair of positive dyadic integers $(b,m)$, there is at most
      one leaf $E$ in the entire forest satisfying
      $b(E)=b$ and $m(E)=m$.
\end{itemize}
\end{definition}

\begin{proposition}[Equal-size leaf grouping]
\label{prop:equal-size-leaf-grouping}
Assume the hypotheses of
Proposition~\ref{thm:flat-extraction}, and work on the event in its
conclusion.  Fix $x\in{\mathcal N}_V$, and let
$\bigl(J_{j,q}(x),h_{j,q}(x)\bigr)_{j,q}$ be a family supplied by that
proposition.  Then one can construct a two-level partition such that
\begin{equation}
\label{eq:leaf-square-function-dominates-extraction}
\begin{aligned}
    &\left[
    \sum_{t=1}^k
    \left(
        \sum_{\substack{E\text{ leaf of the partition}}}
        w(E)b(E)\mathbf 1_{\{t\le m(E)\}}
    \right)^2
    \right]^{1/2} \\
    &\qquad\ge
    \frac14
    \left[
    \sum_{q=1}^k
    \left(
        \sum_{j\in\mathbb Z}2^{-j}\abs{J_{j,q}(x)}
    \right)^2
    \right]^{1/2},
\end{aligned}
\end{equation}
and such that every constructed leaf satisfies
\begin{equation}
\label{eq:leaf-scale-lower-bound}
    b(E)
    \ge
    \frac L{10}\max\{pr_*,p2^{2j(E)}\}.
\end{equation}
\end{proposition}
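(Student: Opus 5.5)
The plan is to build the leaves level by level, grouping extracted pieces of equal cardinality, and then compare the two square functions through weak majorization, in the same way as at the end of the proof of Lemma~\ref{lem:flat-contribution-z}. Fix $x$ and write $J_{j,q}=J_{j,q}(x)$ and $h_{j,q}=h_{j,q}(x)$. For each level $j$ and each dyadic $b$, let $Q_{j,b}:=\{q\le y_j:\abs{J_{j,q}}=b\}$ and $n_{j,b}:=\abs{Q_{j,b}}$. By the monotonicity in Proposition~\ref{thm:flat-extraction}, $Q_{j,b}$ is an interval of consecutive indices $\{a_{j,b}+1,\dots,a_{j,b}+n_{j,b}\}$. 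Then
\[
Z_q:=\sum_j2^{-j}\abs{J_{j,q}}=\sum_{(j,b):\,n_{j,b}>0}2^{-j}b\,\mathbf 1_{\{q\in Q_{j,b}\}} .
\]

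\emph{Step 1 (shift to initial intervals and round).} Define
\[
Y_t:=\sum_{(j,b)}2^{-j}b\,\mathbf 1_{\{t\le n_{j,b}\}} .
\]
This vector is non-increasing. For every $s$, the sum of the $s$ largest entries of $Z$ is at most $\sum_{(j,b)}2^{-j}b\min\{s,n_{j,b}\}=\sum_{t\le s}Y_t$, because each interval indicator contributes at most $\min\{s,n_{j,b}\}$ to any $s$ coordinates. Hence $Z^*$ is weakly majorized by $Y$, and the Abel-summation argument from Lemma~\ref{lem:flat-contribution-z} gives $\norm Z_2\le\norm Y_2$.

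Next let $m_{j,b}:=2^{\lfloor\log_2 n_{j,b}\rfloor}$. Since $m_{j,b}\ge n_{j,b}/2$, we have $\min\{s,m_{j,b}\}\ge\frac12\min\{s,n_{j,b}\}$. So $Y$ is weakly majorized by $2Y'$, where $Y'$ is $Y$ with $n_{j,b}$ replaced by $m_{j,b}$. This gives $\norm Y_2\le2\norm{Y'}_2$.

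\emph{Step 2 (resolve collisions of $(b,m)$ across roots).} For fixed dyadic $(b,m)$, several levels $j$ may produce $m_{j,b}=m$. Their weights $2^{-j}$ are distinct powers of two, so their sum is at most twice the largest one. Keep only the level $j^*(b,m)$ with the largest weight and discard the others. The resulting vector $Y''$ then satisfies $Y'\le2Y''$ coordinatewise, so $\norm{Y'}_2\le2\norm{Y''}_2$. Combining the three comparisons gives $\norm Z_2\le4\norm{Y''}_2$, which is \eqref{eq:leaf-square-function-dominates-extraction}.

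\emph{Step 3 (realize the leaves).} For each kept pair $(j,b)$ with $m=m_{j,b}$, define the leaf $E:=\bigsqcup_{q=a_{j,b}+1}^{a_{j,b}+m}J_{j,q}$. Set $H(E):=\{h_{j,q}:a_{j,b}<q\le a_{j,b}+m\}$ and $S_{h_{j,q}}(E):=J_{j,q}$, with $b(E)=b$, $m(E)=m$, $w(E)=2^{-j}$, and place the leaf under the root $I_j(x)$. The properties required by Definition~\ref{def:two-level-partition} follow from Proposition~\ref{thm:flat-extraction}:
\begin{itemize}
\item The rows $h_{j,q}$ are distinct within a level, so $\abs{H(E)}=m$, and therefore $\abs E=bm$.
\item Each $J_{j,q}$ lies in $I_j(x)$ and in the support of row $h_{j,q}$, so each $S_u(E)$ is a valid row-supported piece of cardinality $b$.
\item The pieces are pairwise disjoint, so the leaves are pairwise disjoint subsets of their roots.
\item Under a fixed root, distinct leaves come from distinct widths $b$, so their values of $b(E)$ differ.
\item Step 2 guarantees that each pair $(b,m)$ occurs at most once in the whole forest.
\end{itemize}
The leaf square function of this forest is exactly $\norm{Y''}_2$. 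Finally, \eqref{eq:leaf-scale-lower-bound} is inherited directly from the lower bound $\abs{J_{j,q}}\ge\frac L{10}\max\{pr_*,p2^{2j}\}$ in Proposition~\ref{thm:flat-extraction}.

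I expect the main obstacle to be Step 2, namely enforcing the global uniqueness of each pair $(b,m)$ without losing $\ell_2$ mass. The geometric decay of the dyadic weights $2^{-j}$ is what keeps that loss to an absolute factor of $2$, and together with Step 1 it yields the constant $\frac14$.
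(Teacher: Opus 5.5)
Your proposal is correct and follows essentially the paper's proof. It uses the same grouping of pieces by $(j,b)$, the same leaf built from the first $2^{\lfloor\log_2 n_{j,b}\rfloor}$ pieces of each group, the same thinning of each $(b,m)$ class to the level with largest weight $2^{-j}$, and the same weak-majorization comparison, losing a factor $2$ for dyadic rounding and a factor $2$ for thinning. The differences are cosmetic: you compare the profiles before realizing the leaves, whereas the paper builds and thins the leaves first. Also, the quantity you call the leaf square function is really the left-hand side of the displayed inequality, not the paper's $\mathcal Q_{\rm leaf}$; the paper only relates the two afterwards.
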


\begin{proof}
Suppress $x$ from the notation.  For every $j\in\mathbb Z$ and every
positive dyadic integer $b$, put
\[
    Q_{j,b}:=\{q\le y_j:\abs{J_{j,q}}=b\},
    \qquad
    r_{j,b}:=\abs{Q_{j,b}}.
\]
Put
\[
    m_{j,b}
    :=
    \begin{cases}
        2^{\lfloor\log_2r_{j,b}\rfloor},&r_{j,b}>0,\\
        0,&r_{j,b}=0.
    \end{cases}
\]
If $r_{j,b}=0$, create no leaf.  Otherwise, let $Q'_{j,b}$ consist of the
first $m_{j,b}$ indices in $Q_{j,b}$.
Define
\[
    E_{j,b}:=\bigcup_{q\in Q'_{j,b}}J_{j,q},
    \qquad
    H(E_{j,b}):=\{h_{j,q}:q\in Q'_{j,b}\},
\]
and set
\[
    j(E_{j,b}):=j,
    \qquad
    w(E_{j,b}):=2^{-j},
    \qquad
    b(E_{j,b}):=b,
    \qquad
    m(E_{j,b}):=m_{j,b}.
\]
For $u=h_{j,q}$, define
\[
    S_u(E_{j,b}):=J_{j,q}.
\]
For every $j$ for which at least one leaf $E_{j,b}$ is constructed, create
the root labeled by $(I_j(x),j)$ and attach all such leaves to it.

The sets $E_{j,b}$ are pairwise disjoint because the sets $J_{j,q}$ are
pairwise disjoint.  For fixed $j$, the assigned rows $h_{j,q}$ are distinct,
so
\[
    \abs{H(E_{j,b})}=m_{j,b}.
\]
The support containment from Proposition~\ref{thm:flat-extraction}
shows that every $S_u(E_{j,b})$ is contained in the support of its assigned
row.  Thus all structural conditions in
Definition~\ref{def:two-level-partition}, except possibly the global
uniqueness condition for $(b(E),m(E))$, are satisfied.  There is at most one
leaf below $I_j(x)$ for each value of $b$, and
\eqref{eq:leaf-scale-lower-bound} follows directly from the size lower bound
in Proposition~\ref{thm:flat-extraction}.

We now thin this preliminary family.  For positive dyadic integers $b,m$,
put
\[
    \mathcal J_{b,m}
    :=
    \{j\in\mathbb Z:m_{j,b}=m\}.
\]
If $\mathcal J_{b,m}\ne\varnothing$, let
\[
    j_*(b,m):=\min\mathcal J_{b,m}
\]
and retain only the leaf $E_{j_*(b,m),b}$ from this class.  Discard all
other leaves, and delete any root with no retained leaf.  The retained
family satisfies the global uniqueness condition in
Definition~\ref{def:two-level-partition}.  All other structural properties,
including \eqref{eq:leaf-scale-lower-bound}, are inherited from the
preliminary family.

It remains to prove the comparison.  As in
Proposition~\ref{thm:flat-extraction}, set
$J_{j,q}=\varnothing$ for $q>y_j$, and define nonnegative non-increasing vectors
$Z,G^{(0)},G\in\R^k$ by
\[
    Z_q:=\sum_{j\in\mathbb Z}2^{-j}\abs{J_{j,q}},
    \qquad
    G_t^{(0)}
    :=
    \sum_{\substack{j\in\mathbb Z,\ b\ge1\ {\rm dyadic}}}
    2^{-j}b\,\mathbf 1_{\{t\le m_{j,b}\}},
\]
and
\[
    G_t
    :=
    \sum_{\substack{b,m\ge1\ {\rm dyadic}:\\
        \mathcal J_{b,m}\ne\varnothing}}
    2^{-j_*(b,m)}b\,\mathbf 1_{\{t\le m\}}.
\]
For every fixed $j$ and every $s\le k$,
\begin{align*}
    \sum_{q=1}^s\abs{J_{j,q}}
    \le
    \sum_{\substack{b\ge1\\b\ \mathrm{dyadic}}}
    b\min\{s,r_{j,b}\} 
    \le
    2\sum_{\substack{b\ge1\\b\ \mathrm{dyadic}}}
    b\min\{s,m_{j,b}\} 
    =
    2\sum_{t=1}^s
    \sum_{\substack{b\ge1\\b\ \mathrm{dyadic}}}
    b\,\mathbf 1_{\{t\le m_{j,b}\}}.
\end{align*}
Here we used $m_{j,b}\le r_{j,b}<2m_{j,b}$ when $r_{j,b}>0$, while both
terms vanish when $r_{j,b}=0$.  Multiplying by $2^{-j}$ and
summing over $j$ gives
\[
    \sum_{q=1}^s Z_q\le2\sum_{t=1}^sG_t^{(0)}
    \qquad (s\le k).
\]
For each nonempty $\mathcal J_{b,m}$, the numbers $2^{-j}$ are distinct
dyadic values and $j_*(b,m)$ is their smallest index.  Therefore
\[
    \sum_{j\in\mathcal J_{b,m}}2^{-j}
    \le
    2\cdot 2^{-j_*(b,m)}.
\]
Consequently, $G_t^{(0)}\le2G_t$ for every $t$.  It follows that $Z$ is
weakly majorized by $4G$.  The Euclidean norm is monotone under weak
majorization of nonnegative vectors, and hence
\[
    \norm Z_2\le4\norm G_2.
\]
This is exactly
\eqref{eq:leaf-square-function-dominates-extraction}.
\end{proof}

The fixed tie-breaking rule in
Algorithm~\ref{alg:peeling}, followed by the deterministic rounding and
thinning in Proposition~\ref{prop:equal-size-leaf-grouping}, select a unique
partition whenever the heavy-row packing event of
Lemma~\ref{lem:scale-adapted-local-degree} holds.  We denote it by
$\mathfrak T_L(\eta,x)$; outside that event, let
$\mathfrak T_L(\eta,x)$ be the empty forest.  This convention makes the
partition a deterministic function of $(\eta,x)$ and removes any choice from
the probabilistic statements below.

\begin{definition}[Leaf square function]
\label{def:leaf-square-function}
For a two-level partition $\mathfrak T$, its \emph{leaf square function} is
\[
    \mathcal Q_{\rm leaf}(\mathfrak T)
    :=
    \left[
    \sum_{\substack{1\le M\le k\\M\ \mathrm{dyadic}}}
    M
    \left(
        \sum_{\substack{E\text{ leaf of }\mathfrak T:\ m(E)=M}}
        w(E)b(E)
    \right)^2
    \right]^{1/2}.
\]
\end{definition}

\begin{lemma}[Dyadic leaf square-function comparison]
\label{lem:dyadic-leaf-square-function-comparison}
Let $\mathfrak T$ be a two-level partition, and let
\[
    Y_t
    :=
    \sum_{\substack{E\text{ leaf of }\mathfrak T}}
    w(E)b(E)\mathbf 1_{\{t\le m(E)\}},
    \qquad 1\le t\le k.
\]
Then
\[
    \mathcal Q_{\rm leaf}(\mathfrak T)
    \le
    \norm Y_2
    \le
    C\mathcal Q_{\rm leaf}(\mathfrak T)
\]
for an absolute constant $C$.
\end{lemma}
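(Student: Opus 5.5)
Group the leaves by multiplicity. For each dyadic $M\le k$ put
\[
    A_M:=\sum_{E\text{ leaf of }\mathfrak T:\ m(E)=M}w(E)b(E)\ \ge0,
\]
so that $\mathcal Q_{\rm leaf}(\mathfrak T)^2=\sum_{M}MA_M^2$. Since leaves with $m(E)=M$ contribute to $Y_t$ exactly when $t\le M$, we have
\[
    Y_t=\sum_{M\ {\rm dyadic},\ M\ge t}A_M .
\]
The lemma therefore reduces to a one-dimensional comparison between a tail-sum sequence and its dyadic blocks. The plan is to prove both inequalities by splitting $[1,k]$ into dyadic blocks $(M/2,M]$. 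Note that $m(E)\le k$, because $|H(E)|=m(E)$ and $H(E)\subset[k]$.

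For the lower bound, fix a dyadic $M$. For every $t\in(M/2,M]$ (for $M=1$ this is $t=1$), the coefficient $A_M$ appears in $Y_t$ and all terms are nonnegative, so $Y_t\ge A_M$. The blocks $(M/2,M]\cap\mathbb Z$ are disjoint, and each contains at least $M/2$ integers (exactly one when $M=1$). Summing $Y_t^2\ge A_M^2$ over each block and then over $M$ gives
\[
    \|Y\|_2^2\ge\sum_M \tfrac{M}{2}A_M^2 .
\]
This yields $\mathcal Q_{\rm leaf}\le\sqrt2\,\|Y\|_2$. The constant $1$ claimed in the statement needs more care. One route is to use $\sum_{t\le M}Y_t^2\ge\ldots$ via a different allocation. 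Another is to note that $Y_t\ge A_M$ for all $t\le M$ and expand
\[
    \sum_t Y_t^2=\sum_{M,M'}\min(M,M')A_MA_{M'}\ge\sum_M MA_M^2 ,
\]
using nonnegativity of the cross terms. This second identity, $\|Y\|_2^2=\sum_{M,M'}\min(M,M')A_MA_{M'}$, is the clean route: it gives the lower bound with constant exactly $1$.

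For the upper bound, use the same identity and bound
\[
    \sum_{M,M'}\min(M,M')A_MA_{M'}\le 2\sum_{M\le M'}M A_MA_{M'} .
\]
Write $M=2^a$ and $M'=2^{a'}$, so that $MA_MA_{M'}=2^{-(a'-a)/2}\bigl(\sqrt M A_M\bigr)\bigl(\sqrt{M'}A_{M'}\bigr)$. The kernel $2^{-|a-a'|/2}$ has summable rows and columns, with sum at most $2\sum_{d\ge0}2^{-d/2}<\infty$. Schur's test (equivalently, Cauchy--Schwarz per diagonal $d=a'-a$) then bounds the double sum by $C\sum_M MA_M^2$, which gives $\|Y\|_2\le C\mathcal Q_{\rm leaf}$.

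The only step needing care is the geometric-decay bookkeeping in the Schur test, together with checking that all $M$ are dyadic, so that the ratio $M/M'$ is a power of $2$. The global uniqueness of $(b,m)$ is not needed here; only the nonnegativity of the $A_M$ is used.
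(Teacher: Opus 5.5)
Your proof is correct and follows the paper's argument. The paper also groups leaves by $m(E)=M$ and expands $\|Y\|_2^2=\sum_{M,N}A_MA_N\min\{M,N\}$, writing this via $e_M=M^{-1/2}\mathbf 1_{\{t\le M\}}$; it drops the nonnegative cross terms for the lower bound and uses the decay $\langle e_M,e_N\rangle=2^{-|a-b|/2}$ for the upper bound, where your Schur-test bookkeeping spells out the step the paper states only as ``inner product decay implies'' (your preliminary block-allocation route with constant $\sqrt2$ can simply be dropped).
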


\begin{proof}
For dyadic $M\le k$, put
\[
    A_M
    :=
    \sum_{\substack{E\text{ leaf of }\mathfrak T:\ m(E)=M}}
    w(E)b(E),
    \qquad
    (e_M)_t:=M^{-1/2}\mathbf 1_{\{t\le M\}},
    \quad 1\le t\le k.
\]
Then
\[
    Y=\sum_{\substack{1\le M\le k\\M\ \mathrm{dyadic}}}
    A_M\sqrt M\,e_M.
\]
Since all coefficients are nonnegative,
\[
    \norm Y_2^2
    =
    \sum_{\substack{1\le M,N\le k\\M,N\ \mathrm{dyadic}}}
    A_MA_N\min\{M,N\}
    \ge
    \sum_{\substack{1\le M\le k\\M\ \mathrm{dyadic}}}MA_M^2
    =
    \mathcal Q_{\rm leaf}(\mathfrak T)^2.
\]
For the reverse inequality, write $M=2^a$ and $N=2^b$.  Then
\[
    \langle e_M,e_N\rangle
    =
    2^{-\abs{a-b}/2}.
\]
The inner product decay implies
\[
\begin{aligned}
    \norm Y_2^2
    \leq
    C\mathcal Q_{\rm leaf}(\mathfrak T)^2,
\end{aligned}
\]
completing the proof.
\end{proof}

\begin{corollary}[Canonical majorant for the selected partition]
\label{cor:canonical-flat-two-level-partition}
For every $\alpha\ge0$ and $B\ge1$ there is $L_0=L_0(\alpha,B)\ge1$ with
the following property.  Assume
\[
    r^{10}\ge n\ge k\ge r\ge3,
    \qquad
    \frac{\log k}{k}\le p\le1.
\]
Let $V\subset\R^n$ be a fixed subspace of dimension $r$, and let
${\mathcal N}_V\subset V\cap S^{n-1}$ be deterministic with
$\abs{{\mathcal N}_V}\le\exp(\alpha r)$.  If $L\ge L_0$, then, with probability
at least $1-k^{-B}$, simultaneously for every $x\in{\mathcal N}_V$, the
two-level partition $\mathfrak T_L(\eta,x)$ satisfies
\[
    \FM_L^{\rm can}(x)
    \le
    C\mathcal Q_{\rm leaf}\bigl(\mathfrak T_L(\eta,x)\bigr)
    +C(\alpha,B,L)\sqrt{pr}.
\]
\end{corollary}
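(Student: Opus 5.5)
The corollary follows by chaining three earlier results. The plan is to set $L_0(\alpha,B):=L_{\text{\tiny\ref*{thm:flat-extraction}}}(\alpha,B)$, fix $L\ge L_0$, and work on the event of probability at least $1-k^{-B}$ provided by Proposition~\ref{thm:flat-extraction}.

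That event is the intersection of the total-degree event of Lemma~\ref{lem:total-degree-canonical-heavy-rows} and the heavy-row packing event of Lemma~\ref{lem:scale-adapted-local-degree}. In particular the packing event holds there. By the convention stated after Proposition~\ref{prop:equal-size-leaf-grouping}, $\mathfrak T_L(\eta,x)$ is then the nonempty, deterministically selected partition built from the peeling output of Proposition~\ref{thm:flat-extraction}, followed by the rounding and thinning of Proposition~\ref{prop:equal-size-leaf-grouping}. One should check that the families $(J_{j,q}(x),h_{j,q}(x))$ in the proof of Proposition~\ref{thm:flat-extraction} are produced by exactly this deterministic rule, so that the partition to which Proposition~\ref{prop:equal-size-leaf-grouping} applies coincides with $\mathfrak T_L(\eta,x)$. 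This identification is the only point needing care.

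Fix $x\in\mathcal N_V$ and put
\[
    Z_q:=\sum_{j\in\mathbb Z}2^{-j}\abs{J_{j,q}(x)},
    \qquad
    Y_t:=\sum_{E\text{ leaf of }\mathfrak T_L(\eta,x)}w(E)b(E)\mathbf 1_{\{t\le m(E)\}}.
\]
The last item of Proposition~\ref{thm:flat-extraction} gives
\[
    \FM_L^{\rm can}(x)\le 14\norm Z_2+C(\alpha,B,L)\sqrt{pr}.
\]
Inequality \eqref{eq:leaf-square-function-dominates-extraction} of Proposition~\ref{prop:equal-size-leaf-grouping} gives $\norm Z_2\le4\norm Y_2$. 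The upper bound in Lemma~\ref{lem:dyadic-leaf-square-function-comparison} gives
\[
    \norm Y_2\le C\,\mathcal Q_{\rm leaf}\bigl(\mathfrak T_L(\eta,x)\bigr).
\]
Combining the three inequalities yields
\[
    \FM_L^{\rm can}(x)\le 56C\,\mathcal Q_{\rm leaf}\bigl(\mathfrak T_L(\eta,x)\bigr)+C(\alpha,B,L)\sqrt{pr}.
\]

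Every step after fixing the event is deterministic, and the event does not depend on $x$, so the bound holds simultaneously for all $x\in\mathcal N_V$. I expect no real obstacle. The only mild issue is the bookkeeping mentioned above: confirming that the canonical choice $\mathfrak T_L(\eta,x)$ is well defined on this event and equals the partition built in the proof of Proposition~\ref{prop:equal-size-leaf-grouping}.
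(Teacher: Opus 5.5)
Your proposal is correct and follows the paper's proof essentially step for step. It uses the same choice of $L_0$ as the threshold from Proposition~\ref{thm:flat-extraction}, the same event, the same chain of constants $14\cdot 4\cdot C$ through Proposition~\ref{thm:flat-extraction}, \eqref{eq:leaf-square-function-dominates-extraction} and Lemma~\ref{lem:dyadic-leaf-square-function-comparison}, and the same determinism argument for simultaneity over the net. The identification you flag, between $\mathfrak T_L(\eta,x)$ and the partition built from the specific peeling output, is exactly what the paper's convention after Proposition~\ref{prop:equal-size-leaf-grouping} provides.
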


\begin{proof}
Take
\[
    L_0(\alpha,B)
    :=L_{\text{\tiny\ref*{thm:flat-extraction}}}(\alpha,B).
\]
Work on the event from Proposition~\ref{thm:flat-extraction}. 
Proposition~\ref{thm:flat-extraction} gives
\[
    \FM_L^{\rm can}(x)
    \le
    14
    \left[
    \sum_{q=1}^k
    \left(
        \sum_{j\in\mathbb Z}2^{-j}\abs{J_{j,q}(x)}
    \right)^2
    \right]^{1/2}
    +C(\alpha,B,L)\sqrt{pr}.
\]
Proposition~\ref{prop:equal-size-leaf-grouping} bounds the extraction
profile by four times the leaf profile, and
Lemma~\ref{lem:dyadic-leaf-square-function-comparison} bounds the leaf profile
by a universal multiple of
$\mathcal Q_{\rm leaf}\bigl(\mathfrak T_L(\eta,x)\bigr)$.  Hence
\[
    \FM_L^{\rm can}(x)
    \le
    C\mathcal Q_{\rm leaf}\bigl(\mathfrak T_L(\eta,x)\bigr)
    +C(\alpha,B,L)\sqrt{pr}.
\]
The construction of $\mathfrak T_L(\eta,x)$ is deterministic, so the
conclusion is simultaneous over the net.
\end{proof}

\section{Fixed-width estimates for two-level partitions}
\label{sec:fixed-width-estimates}

Section~\ref{sec:flat-majorants} reduces the canonical Flat majorant to
a square function indexed by the leaves of a two-level partition.  Each leaf
$E$ carries three dyadic parameters: its coordinate weight $w(E)$, its row
width $b(E)$, and the number $m(E)$ of supporting rows; its cardinality is
$\abs E=b(E)m(E)$.  The purpose of this section is to control the leaf square
function one fixed value of $b(E)$ at a time, uniformly over the net vectors
and the corresponding two-level partitions.

For a fixed width, we order the leaves by their coordinate levels and thin
them to subsequences whose cardinalities grow geometrically.  This produces
the separated extractions treated in
Proposition~\ref{prop:separated-fixed-width-estimate}.  The probabilistic
estimate for such an extraction divides its
leaves into two classes: regular leaves are charged to orthogonal vector
increments, whereas exceptional leaves are witnessed by a
lower-dimensional prefix and are controlled 
by balancing entropy and probability bounds.
Summing these two estimates gives the desired
fixed-width bound.  The auxiliary estimates are proved under a linear
leaf-size restriction, but Lemma~\ref{lem:total-degree-canonical-heavy-rows}
shows that this restriction holds automatically for every admissible
extraction associated with a vector in the prescribed net.

\subsection{Fixed-width profiles and proof setup}

\begin{definition}[Fixed-width leaf profile]
\label{def:fixed-width-leaf-profile}
For a two-level partition $\mathfrak T$ associated with $(\eta,x)$ and a
positive dyadic integer $b$, define
\[
    Y_t^{(b)}(\mathfrak T)
    :=
    \sum_{\substack{E\text{ leaf of }\mathfrak T:\ b(E)=b}}
    w(E)b\,\mathbf 1_{\{t\le m(E)\}},
    \qquad 1\le t\le k.
\]
\end{definition}

Throughout this section, assume
\[
    r^{10}\ge n\ge k\ge r\ge3,
    \qquad
    \frac{\log k}{k}\le p\le1,
\]
$\Pi$ follows the admissible sparse-entry model of
Definition~\ref{def:admissible-sparse-entry-model} with parameter $p$ and
support mask $\eta$, $V\subset\mathbb R^n$ is a fixed
$r$-dimensional subspace, $L\ge1$, and
${\mathcal N}_V\subset V\cap S^{n-1}$ is deterministic with
$\abs{{\mathcal N}_V}\le\exp(\alpha r)$ for fixed $\alpha\ge0$.  The goal is
the following simultaneous fixed-width estimate.

\begin{proposition}[Fixed-width target]
\label{thm:target-bound}
For every $\alpha\ge0$ and $B\ge1$, there are constants
$L_0=L_0(\alpha,B)$ and $C=C(\alpha,B)$ such that the following holds under
the standing assumptions of this section.  If $L\ge L_0$, then, with
probability at least $1-k^{-B}$, simultaneously for every
$x\in{\mathcal N}_V$ and every positive dyadic integer $b$,
\[
    \norm{Y^{(b)}\bigl(\mathfrak T_L(\eta,x)\bigr)}_2
    \le C\sqrt{pr}.
\]
\end{proposition}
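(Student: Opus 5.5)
Fix a dyadic width $b$ and a net vector $x$, and write $\mathcal L_b$ for the set of leaves $E$ of $\mathfrak T_L(\eta,x)$ with $b(E)=b$. By Definition~\ref{def:two-level-partition} they lie below distinct roots, since there is at most one leaf of a given width per root. They therefore have distinct levels $j_1<j_2<\cdots$ with multiplicities $m_1,m_2,\dots$, and $\abs{E_c}=bm_c$. As in Lemma~\ref{lem:dyadic-leaf-square-function-comparison}, $\norm{Y^{(b)}}_2^2\lesssim\sum_c 2^{-2j_c}b^2m_c$ up to cross terms. The plan is to bound the cross terms by first thinning the leaves, in order of increasing $j$, into $O(1)$ subsequences along which the multiplicities either grow geometrically or are dominated. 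The number of subsequences must be bounded independently of $b$ and $k$. On each separated extraction I then need
\[
\sum_c 2^{-2j_c}b^2m_c\lesssim pr .
\]
I will record $\sum_c b m_c\le C r$ as the linear leaf-size restriction. It follows from Lemma~\ref{lem:total-degree-canonical-heavy-rows}: every row in $H(E)$ has degree at least $b\ge K^{(L)}_{j}(x)/10$ on $E\subset I_j(x)$. I will adjust $L$ so that these rows are canonical-heavy, or alternatively re-derive the total-degree bound with the reduced thresholds.

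I would split the leaves into regular and exceptional ones, ordering them by decreasing weight $2^{-j_c}$. \textbf{Regular leaves} are those where $2^{-2j_c}\,bm_c$ is comparable to the $\ell_2$ mass of $x$ restricted to $E_c$. This mass is at least $2^{-2j_c}bm_c$, since every coordinate in $E_c$ exceeds $2^{-j_c}$ in absolute value. For these leaves we have
\[
2^{-2j_c}b^2m_c\le b\,\norm{x_{E_c}}_2^2 .
\]
Because the sets are disjoint, summing gives at most $b\norm x_2^2=b$. That is only useful if $b\lesssim pr$, which fails in general. So a regular leaf must instead be charged to an orthogonal increment of $x$: project $x$ onto the span of the coordinate functionals $P_Ve_i$, $i\in E_1\cup\dots\cup E_c$, and use the fact that the increments in $V$ have squared norms summing to $1$. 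Each increment will pay for a factor $pr\cdot(\text{its squared norm})$ once I know the row-degree count is at most $Lp\,2^{2j_c}\cdot(\text{dimension gained})$, and the dimension gained is tied to the increment.

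\textbf{Exceptional leaves} are prefixes $(E_1,\dots,E_a)$ for which the prefix size $P_a=\sum_{c\le a}m_c\,b$ is too large relative to the rank it occupies. Such configurations are controlled by Lemma~\ref{lem:weighted-prefix-count}. Its hypotheses are as follows. We have $P_a\le r$ (after rescaling) from the linear restriction. We have $b\,2^{-2j_c}\ge Lp$ and $b\ge2\log(ek)$ from \eqref{eq:leaf-scale-lower-bound}, using $pr_*\ge\log k$. The condition $b\ge e^3pM_c$ should hold because $M_c\le 2^{2j_c}$. Each row in $H(E_c)$ has degree at least $b$ on $E_c$, so $R_b(\eta,E_c)\ge bm_c$. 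The lemma then gives a total probability of $(Cr/(LP_a))^{P_a}$, and summing over the dyadic parameters $(j_c,m_c)$, whose number is only $\exp(O(a\log\log k))$, should still be at most $k^{-B}$ provided $P_a\gtrsim\log k$. When $P_a\lesssim\log k$ the contribution is bounded trivially, since $2^{-2j}b\le$ const by the scale bound.

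The main obstacle I expect is making the regular/exceptional dichotomy precise. I have to define, deterministically from $(\eta,x)$, the witness prefix so that a failure of the bound $\sum 2^{-2j_c}b^2m_c\le Cpr$ forces an exceptional event. The union bound must then run over prefixes of bounded complexity rather than over the net, so that the estimate stays uniform in $x$ and in $b$. The final step is a union bound over the $O(\log k)$ dyadic values of $b$ and the $O(1)$ thinned subsequences.
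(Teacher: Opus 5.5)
Your outer reduction is essentially the paper's. You thin the width-$b$ leaves into $O(1)$ subsequences with geometrically growing multiplicities (the paper keeps record leaves, then splits them into residue classes modulo $h_0$ with $2^{h_0}\ge\gamma$). You get the linear-size restriction from Lemma~\ref{lem:total-degree-canonical-heavy-rows} at threshold parameter $L/10$, and you finish with Lemma~\ref{lem:dyadic-leaf-square-function-comparison}.

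The substance of the statement, however, is the per-class bound $\sum_\ell M_\ell b2^{-2j_\ell}\le Cpr$ (Proposition~\ref{prop:separated-fixed-width-estimate}), and there your argument has a genuine gap.

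\emph{The exceptional mechanism fails.} The bound $(Cr/(LP_a))^{P_a}$ from Lemma~\ref{lem:weighted-prefix-count} is at least $e^{P_a}$ whenever $P_a\le Cr/(eL)$, so it is not small merely because $P_a\gtrsim\log k$. Even the unsimplified form $\binom{r+P_a-1}{P_a}(Cp2^{2j_a}/b)^{P_a}$ yields only $P_a b2^{-2j_a}\lesssim pr$, i.e.\ each leaf contributes $O(pr)$. A class can contain of order $\log(r/b)$ leaves, and this bound does not exclude configurations in which every term $M_\ell b2^{-2j_\ell}$ is comparable to $pr$. Summing therefore loses a logarithmic factor, and avoiding that loss is exactly what the proposition is for.

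\emph{The regular side is not defined.} Your definition of a regular leaf is vacuous, since $2^{-2j_c}bm_c<\norm{x_{E_c}}_2^2\le4\cdot2^{-2j_c}bm_c$ always holds. The proposed charging rule (row-degree count at most $Lp2^{2j_c}$ times the dimension gained) is also unjustified: adjoining $E_c$ can raise the dimension by up to $\abs{E_c}$, whatever the size of the corresponding increment of $x$.

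The missing idea is the \emph{delayed} increment $z_\ell=(P_\ell-P_{\ell-h})x$, where $P_\ell$ projects onto $\operatorname{span}\{P_Ve_i:i\in E_1\cup\dots\cup E_\ell\}$. The dichotomy is whether at least half of the row chunks of $E_\ell$ have at least $b/2$ coordinates with $\abs{(z_\ell)_i}\ge2^{-j_\ell-1}$.
\begin{itemize}
\item In the regular case, a certificate $J\subset E_\ell$ of size $q\ge M_\ell/4$ lies in $\mathcal F_V(2^{-j_\ell-1}/\norm{z_\ell}_2,q)$, after dyadic rounding of $\norm{z_\ell}_2^2$. Corollary~\ref{lem:entropy-support-certificate} at this rescaled height gives $M_\ell b2^{-2j_\ell}\lesssim pr\norm{z_\ell}_2^2$, which is summable because $\sum_\ell\norm{z_\ell}_2^2\le h$.
\item In the exceptional case, $P_{\ell-h}x$ is large on a certificate $J\subset E_\ell$. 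So $J$ lies in an entropy class of the prefix space, whose dimension is at most $M_{\rm pre}\le2\gamma^{-h}M_\ell$. Its cost $(Cp2^{2j_\ell}/b)^q$ carries no factor $r/q$.
\item Lemma~\ref{lem:weighted-prefix-count} enters only to pay $(Cr/(LM_{\rm pre}))^{M_{\rm pre}}$ for specifying that prefix, with negative association used across the disjoint column blocks. This gives $M_\ell b2^{-2j_\ell}\lesssim pr\sqrt{M_\ell/r}$, which sums geometrically.
\end{itemize}
Without these two estimates your outline does not show how a failure of the target bound forces a rare event, which you yourself flag as the main obstacle.
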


The proof combines the entropy bound with a {\it delayed-prefix
dimension reduction} (to be discussed later). 

\subsection{Separated fixed-width extractions}

We next prove the probabilistic statement used for a fixed value of $b$.
The proposition is stated uniformly over every family
$E_1,\ldots,E_s$ satisfying the conditions below.  This uniformity is
needed because the selected partition $\mathfrak T_L(\eta,x)$, and hence
the record-leaf families used in the proof of
Proposition~\ref{thm:target-bound}, is constructed only after the mask
$\eta$ has been realized.  On the event supplied by the proposition, its
estimate can therefore be applied to these random families.

\begin{proposition}[Fixed-width estimate for exponentially growing set sizes]
\label{prop:separated-fixed-width-estimate}
For every $\alpha\ge0$ and $B\ge1$ there are constants
$L_0=L_0(\alpha,B)$, $\gamma>1$, and $C=C(\alpha,B)$ with the following
property.  Assume the
standing assumptions of this section, and let
$L\ge L_0$.  With probability at least $1-k^{-B}$, the following holds
simultaneously.

Let $b$ be a positive dyadic integer, let $x\in{\mathcal N}_V$, and let
$E_1,\ldots,E_s$ be pairwise disjoint sets with associated indices
$j_1<\cdots<j_s$ and $M_\ell:=\abs{E_\ell}$.
Suppose that for all admissible $\ell$
\begin{align}
    &E_\ell\subset I_{j_\ell}(x),
    \qquad
    M_\ell\text{ is dyadic},
    \qquad
    M_{\ell+1}\ge\gamma M_\ell,
    \notag\\
    &b\ge\frac L{10}\max\{pr_*,p2^{2j_\ell}\},
    \label{eq:separated-extraction-scale}
\end{align}
and suppose that each $E_\ell$ is the disjoint union of $m_\ell=M_\ell/b$
sets of cardinality $b$, each contained in the support of a different row of $\eta$.
Then
\begin{equation}
\label{eq:separated-fixed-width-conclusion}
    \sum_{\ell=1}^s M_\ell\, b\,2^{-2j_\ell}
    \le
    Cpr.
\end{equation}
\end{proposition}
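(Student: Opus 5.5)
The plan is to combine the prefix entropy--probability balance of Lemma~\ref{lem:weighted-prefix-count} with the normalization $\norm x_2=1$. Write $t_\ell:=M_\ell b2^{-2j_\ell}$. Since $\abs{x_i}>2^{-j_\ell}$ on $E_\ell\subset I_{j_\ell}(x)$ and the $E_\ell$ are disjoint,
\[
    \sum_\ell M_\ell2^{-2j_\ell}\le1,
    \qquad\text{hence}\qquad
    \sum_\ell t_\ell\le b .
\]
This settles the case $b\le C_1pr$ deterministically. So the work is in the regime $b>C_1pr$, and there the two facts I would use are a \emph{linear size bound} and a \emph{prefix bound}.

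First, the linear size bound. Each $E_\ell$ is covered by $m_\ell$ rows of degree at least $b$ on it. By \eqref{eq:separated-extraction-scale}, $b\ge(L/10)\max\{pr_*,p2^{2j_\ell}\}\ge K^{(L/10)}_{j_\ell}(x)$. So, after enlarging $L$, Lemma~\ref{lem:total-degree-canonical-heavy-rows} applied on the net gives $P_s=\sum_\ell M_\ell\le C_2r$ with probability $1-k^{-B-1}$. By the geometric growth $M_{\ell+1}\ge\gamma M_\ell$, each prefix sum satisfies $P_a\le\frac{\gamma}{\gamma-1}M_a$.

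Second, the prefix bound. Apply Lemma~\ref{lem:weighted-prefix-count} to every admissible tuple $(j_1<\dots<j_a;M_1,\dots,M_a;b)$. Take the constant there to be $L':=b2^{-2j_a}/p$, which is at least $L/10$. Its hypotheses are $b\ge2\log(ek)$ (from $pr_*\ge\log k$) and $b\ge e^3pM_c$ (from $M_c\le 2^{2j_c}$ and $b\ge (L/10)p2^{2j_c}$). Note that the heavy-row event $R_b(\eta,E_c)\ge M_c$ is exactly what the row-support structure of $E_c$ provides. The lemma then bounds the expected number of realized tuples with $b2^{-2j_a}P_a\ge Apr$ by
\[
    \Bigl(\frac{C}{A}\Bigr)^{P_a}\le \Bigl(\frac CA\Bigr)^{L\log k/10}.
\]
The number of parameter tuples is at most $(C\log k)^{O(a)}$, with $a\le\log_\gamma(C_2 r)$ because of geometric growth. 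Also $P_a\ge M_a\ge b\ge L\log k/10$. Hence a union bound, for $A$ and $L$ large, yields with probability $1-k^{-B-1}$ that every admissible prefix satisfies
\[
    b\,2^{-2j_a}P_a\le Apr .
\]
In particular $t_a\le Apr$ for every $a$. No union over $x$ is needed, since the lemma already sums over all witnessing unit vectors of $V$.

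The main obstacle is summing these bounds without losing the factor $s\sim\log r$. Neither $\sum_\ell t_\ell\le b$ nor $t_a\le Apr$ alone suffices when $b\gg pr$. The plan is a split into regular and exceptional leaves. Let $P_{a}$ denote the orthogonal projection in $V$ onto $V_a:=\operatorname{span}\{P_Ve_i:i\in E_1\cup\dots\cup E_{a}\}$. Call leaf $\ell$ \emph{regular} if
\[
    \norm{(P_{\ell}-P_{\ell-1})x}_2^2\ge c\,M_\ell2^{-2j_\ell}\,\frac{pr}{b}\cdot\frac{b2^{-2j_\ell}P_\ell}{pr},
\]
that is, if its mass is visible in a new orthogonal increment. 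The increments are orthogonal, so regular leaves contribute at most $C pr$ in total, since $\sum_\ell\norm{(P_\ell-P_{\ell-1})x}^2\le1$. A leaf is \emph{exceptional} if instead the coordinates of $E_\ell$ are mostly explained by $V_{\ell-1}$. Then the witnessing vector for $E_\ell$ can be taken in a delayed prefix subspace of dimension at most $P_{\ell-1}\le\gamma^{-1}\frac{\gamma}{\gamma-1}M_\ell$, which is much smaller than $r$. Rerunning Lemma~\ref{lem:full-trace-counting} with $d=\dim V_{\ell-1}$ in place of $r$ inside the proof of Lemma~\ref{lem:weighted-prefix-count} replaces $r$ by $O(M_\ell/\gamma)$ in the entropy factor. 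I expect this to force $t_\ell\le Apr\,\gamma^{-(s-\ell)/2}$ for exceptional leaves with high probability, after paying for the $\binom{n}{\cdot}$-free choice of the prefix via the prefix bound already established. These geometrically decaying bounds then sum to $O(pr)$. Making the regular/exceptional threshold precise so that both counts close simultaneously, including choosing $\gamma$ large enough to absorb the entropy of the prefix, is the step I expect to be delicate.
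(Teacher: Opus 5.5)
Your first steps are sound. The trivial bound $\sum_\ell t_\ell\le b$ holds, and the linear-size bound via Lemma~\ref{lem:total-degree-canonical-heavy-rows} is exactly how the paper verifies its linear-size hypothesis. Lemma~\ref{lem:weighted-prefix-count} with $L'=b2^{-2j_a}/p$ (adjusting constants since $P_a\le C_2r$ rather than $P_a\le r$) does give $t_a\le Apr$ for every leaf.

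The gap is in the regular/exceptional step. Write $\Delta_\ell:=(P_\ell-P_{\ell-1})x$ for the increment, and keep $P_\ell=\sum_{c\le\ell}M_c$ for the prefix sum in your threshold. That threshold simplifies to $\norm{\Delta_\ell}_2^2\ge c\,M_\ell P_\ell 2^{-4j_\ell}$. A regular leaf therefore only satisfies $t_\ell\le\frac{b2^{2j_\ell}}{cP_\ell}\norm{\Delta_\ell}_2^2$. Since $2^{2j_\ell}\ge\abs{I_{j_\ell}(x)}\ge M_\ell\ge(1-\gamma^{-1})P_\ell$, the coefficient is at least $(1-\gamma^{-1})b/c$. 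Orthogonality then gives nothing better than your trivial bound $\sum_\ell t_\ell\le b$, and certainly not $O(pr)$ when $b>C_1pr$.

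Suppose instead you make regularity tautological: $\norm{\Delta_\ell}_2^2\ge c\,t_\ell/(pr)$. Then the complement only says $\norm{\Delta_\ell}_2^2<c\frac{b}{pr}M_\ell2^{-2j_\ell}$. For $b\gg pr$ this is compatible with $\Delta_\ell$ carrying all of the mass of $x$ on $E_\ell$. Such a leaf is not ``explained by $V_{\ell-1}$'', so your exceptional mechanism has nothing to act on. The factor $b/(pr)$ cannot be paid by orthogonality; it must be paid probabilistically.

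That is the missing ingredient, Lemma~\ref{lem:uniform-regular-set-estimate}.

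\textbf{Regular leaves.} Define regularity coordinatewise: $E_\ell$ is regular if at least half of its row chunks contain at least $b/2$ coordinates with $\abs{(z_\ell)_i}\ge2^{-j_\ell-1}$, where $z_\ell$ is the (delayed) increment.
\begin{itemize}
\item These coordinates form a certificate $J\subset E_\ell$ with $\abs J=q\ge M_\ell/4$ and $R_{b/2}(\eta,J)\ge q$.
\item Because $z_\ell/\norm{z_\ell}_2$ is itself a unit vector of $V$, we have $J\in\mathcal F_V(2^{-j_\ell-1}/\norm{z_\ell}_2,q)$. The increment's norm thus enters the entropy through the level.
\item Round $\norm{z_\ell}_2^2$ to one of $O(\log k)$ dyadic values. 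Corollary~\ref{lem:entropy-support-certificate} then bounds the probability that such a certificate exists by $\bigl(Crp\norm{z_\ell}_2^2\,2^{2j_\ell}/(bM_\ell)\bigr)^{q}$.
\item Hence, with high probability, $t_\ell\le Crp\norm{z_\ell}_2^2$ holds simultaneously for every regular leaf. Only now does orthogonality of the increments sum these to $O(pr)$.
\end{itemize}

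\textbf{Exceptional leaves.} The coordinatewise definition is also what makes your exceptional heuristic rigorous. Its complement yields $J\subset E_\ell$ with $\abs J\ge M_\ell/4$ and $\abs{(P_{\ell-h}x)_i}\ge2^{-j_\ell-1}$ on $J$, because $x_i=(P_{\ell-h}x)_i+(z_\ell)_i$ there.
\begin{itemize}
\item To pay for the prefix you need the probability-weighted sum of Lemma~\ref{lem:weighted-prefix-count} itself. Combine it, via negative association over disjoint column blocks, with the certificate count in the subspace spanned by the fixed, hence deterministic, prefix tuple.
\item The high-probability consequence $b2^{-2j_a}P_a\le Apr$ does not reduce the number of prefix tuples to sum over, so it cannot play this role.
\item The outcome is $t_\ell\le Crp\sqrt{M_\ell/r}$. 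This sums geometrically because $M_s\le c_{\rm size}r$, giving your expected $\gamma^{-(s-\ell)/2}$ decay.
\end{itemize}
Using delay one ($V_{\ell-1}$) is fine, provided $\gamma$ is large enough that $P_{\ell-1}\le\varepsilon_0M_\ell$ with $\varepsilon_0$ small in terms of the linear-size constant.
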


\medskip

We postpone the proof of the above proposition till the end of the subsection, 
when all necessary auxiliary statements are proved.

\medskip

For the two auxiliary estimates below, fix $c_{\rm size}\ge1$,
$\gamma>1$, and a positive integer $h$. 
We will assume that the constants are large; their values can be extracted from the proofs below.
To formalize the proof, let us introduce the following local definition of an admissible data structure.

\begin{definition}[Local admissible data structure]
\label{def:fixed-width-admissible-data}
Fix a realization of the support mask $\eta$.  An admissible data
structure, with parameters $L,c_{\rm size},\gamma,h$, consists of a positive
integer $s$, a positive dyadic integer $b$, a vector
$x\in\mathcal N_V$, integer levels
\[
    j_1<\cdots<j_s,
\]
and sets $E_1,\ldots,E_s\subset[n]$.  Put
$M_\ell:=\abs{E_\ell}$.  We require the following conditions:
\begin{itemize}
    \item The sets $E_1,\ldots,E_s$ are pairwise disjoint, and, for every
    $1\le\ell\le s$,
    \[
        E_\ell\subset I_{j_\ell}(x),
        \qquad
        M_\ell\text{ is dyadic}.
    \]
    \item Their cardinalities grow geometrically:
    \[
        M_{\ell+1}\ge\gamma M_\ell,
        \qquad 1\le\ell<s.
    \]
    \item The common width satisfies, for every $1\le\ell\le s$,
    \[
        b\ge\frac L{10}\max\{pr_*,p2^{2j_\ell}\}.
    \]
    \item For every $1\le\ell\le s$, there is a set
    $H_\ell\subset[k]$ of cardinality
    \[
        m_\ell:=\frac{M_\ell}{b}
    \]
    and a prescribed row-chunk decomposition
    \[
        E_\ell=\bigsqcup_{u\in H_\ell}S_u(E_\ell),
        \qquad
        S_u(E_\ell)
        \subset E_\ell\cap\{i\in[n]:\eta_{ui}=1\},
        \qquad
        \abs{S_u(E_\ell)}=b.
    \]
\end{itemize}
In addition, assume the linear-size condition
\begin{equation}
\label{eq:linear-size-assumption}
    M_\ell\le c_{\rm size}\,r,
    \qquad 1\le\ell\le s.
\end{equation}

For such a data structure, define
\[
    U_\ell
    :=\operatorname{span}\{P_Ve_i:i\in E_1\cup\cdots\cup E_\ell\},
    \qquad U_0:=\{0\},
\]
and let $P_\ell$ be the orthogonal projection onto $U_\ell$.  Put
\begin{equation}\label{eq:delayed increment def}
    z_\ell:=(P_\ell-P_{\ell-h})x,
\end{equation}
where $P_q=0$ for $q\le0$.
Call a chunk $S_u(E_\ell)$ {\it regular} if at least $b/2$ of its coordinates
satisfy
\[
    |(z_\ell)_i|\ge2^{-j_\ell-1}.
\]
Call the set $E_\ell$ {\it regular} if at least half of its chunks are regular;
otherwise call the set {\it exceptional}.
\end{definition}

\begin{lemma}[Uniform regular-set estimate]
\label{lem:uniform-regular-set-estimate}
For every $B\ge1$ there are constants $L_0=L_0(B,c_{\rm size})$ and
$C=C(B,c_{\rm size})$ with the following property.  If $L\ge L_0$, then,
with probability at least $1-k^{-B}$, simultaneously for every $\gamma>1$,
every positive integer $h$, every choice of
$b,x,(E_\ell,j_\ell)_{\ell=1}^s$, and every prescribed row-chunk
decomposition
\[
    E_\ell=\bigsqcup_{u\in H_\ell}S_u(E_\ell),
    \qquad 1\le\ell\le s,
\]
which together form an admissible data structure in the sense of
Definition~\ref{def:fixed-width-admissible-data}, every regular $E_\ell$
satisfies
\begin{equation}
\label{eq:regular-set-bound}
    M_\ell b2^{-2j_\ell}
    \le
    C rp\,\norm{z_\ell}_2^2.
\end{equation}
\end{lemma}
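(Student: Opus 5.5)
The plan is to deduce \eqref{eq:regular-set-bound} from a single good event, built from Corollary~\ref{lem:entropy-support-certificate} (entropy times heavy-row probability), that does not depend on the data structure. The randomness of $E_\ell$, $H_\ell$ and the chunks then plays no role. Only two facts about $z_\ell$ are used: it lies in $V$, since $U_\ell\subset V$, and a regular $E_\ell$ forces many large coordinates of $z_\ell$ that are aligned with heavy rows.

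\textbf{Step 1 (deterministic extraction).} Fix a regular $E_\ell$ and assume $z_\ell\ne0$; if $z_\ell=0$, no chunk can be regular. At least $m_\ell/2$ chunks $S_u(E_\ell)$ are regular. For each of them, pick exactly $\lceil b/2\rceil$ coordinates $i\in S_u(E_\ell)$ with $|(z_\ell)_i|\ge 2^{-j_\ell-1}$. Let $G$ be the union of these picks, and put $q:=|G|$. Then $q\ge M_\ell/4$. Every row $u$ used has $d_u(\eta,G)\ge\lceil b/2\rceil$, so $R_{b/2}(\eta,G)\ge q$. Also $q\le M_\ell\le c_{\rm size}r$ by \eqref{eq:linear-size-assumption}, and $q\ge b/2$.

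Normalize $y:=z_\ell/\norm{z_\ell}_2\in V$, so that $|y_i|\ge\beta_0:=2^{-j_\ell-1}/\norm{z_\ell}_2$ on $G$. Let $\beta$ be the largest dyadic number not exceeding $\beta_0$, so $\beta\ge\beta_0/2$. Then $G\in\mathcal F_V(\beta,q)$ and $\beta^2q\le1$. Thus a violation of \eqref{eq:regular-set-bound} with a large constant $C$ yields a triple $(\beta,q,G)$ such that $G\in\mathcal F_V(\beta,q)$, $R_{b/2}(\eta,G)\ge q$, and $\beta^2 q\,b\ge C_1 pr$, where $C_1$ is proportional to $C$. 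The last inequality holds because $M_\ell b\,2^{-2j_\ell}\le 64\,q\,b\,\beta^2\norm{z_\ell}_2^2$.

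\textbf{Step 2 (probabilistic exclusion).} Define the bad event as the existence of dyadic $b$, dyadic $\beta\le1$ and an integer $q$ with $b/2\le q\le c_{\rm size}r$ and $\beta^2qb\ge C_1pr$, together with some $G\in\mathcal F_V(\beta,q)$ satisfying $R_{b/2}(\eta,G)\ge q$.

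Apply Corollary~\ref{lem:entropy-support-certificate} with $W=V$, $d=r$ and $\kappa=b/2$. Its hypotheses hold once $L$ is large depending on $c_{\rm size}$:
\begin{itemize}
\item $b\ge Lpr_*/10\ge L\log k/10$ gives $\lceil\kappa\rceil\ge2\log(ek)$;
\item $b\ge Lpr/10$ together with $q\le c_{\rm size}r$ gives $\lceil\kappa\rceil\ge e^3pq$.
\end{itemize}
Since $r+q\le(1+c_{\rm size})r$, the corollary bounds the probability for each fixed $(b,\beta,q)$ by
\[
\left(C\,\frac{p(1+c_{\rm size})r}{b\beta^2q}\right)^q\le \left(\frac{C(1+c_{\rm size})}{C_1}\right)^q\le 2^{-q}
\]
once $C_1$ is large. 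Because $q\ge b/2\ge L\log k/20$, this is at most $k^{-B-3}$ for $L\ge L_0(B)$.

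It remains to union-bound over $(b,\beta,q)$. There are $O(\log k)$ relevant values of $b$, since $b\le n\le k^{10}$. Each $\beta$ is dyadic with $\beta^2 q\le1$ and $\beta^2\ge C_1pr/(qb)\ge k^{-O(1)}$, giving $O(\log k)$ values. Summing $2^{-q}$ over $q$ is harmless.

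On the complement of the bad event, Step 1 gives \eqref{eq:regular-set-bound} for every admissible data structure, every $\gamma$, every $h$ and every regular $E_\ell$ simultaneously.

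The main obstacle is making the good event genuinely independent of the data structure. This is why Step 1 passes to the single set $G$ with a dyadically discretized $\beta$. The point to check carefully is that $G\in\mathcal F_V(\beta,q)$ uses only $z_\ell\in V$ and not $x$, and that the normalization loss and the $b/2$, $m/2$ halvings affect only constants. The orthogonality structure of $z_\ell$ (the delay $h$) is not needed here; it enters only later, when the bounds are summed over $\ell$.
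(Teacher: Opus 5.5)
Your proposal is correct and follows essentially the paper's argument: you extract from the regular chunks a certificate $G$ with $\abs G=q\ge M_\ell/4$ and $R_{b/2}(\eta,G)\ge q$, normalize $z_\ell\in V$, discretize the resulting coordinate scale dyadically, and apply Corollary~\ref{lem:entropy-support-certificate} with $\kappa=b/2$, followed by a polylogarithmic union bound over discrete profiles. The differences are cosmetic. You index profiles by $(b,\beta,q)$, merging $j_\ell$ and $\norm{z_\ell}_2$ into a single dyadic $\beta$, whereas the paper uses $(b,M_\ell,j_\ell,\bar a_\ell)$ with exactly $\lceil m_\ell/2\rceil$ chunks. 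You should also state the constraint $b\ge\frac L{10}pr_*$ explicitly in the index set of your bad event, since the corollary's hypotheses depend on it.
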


\begin{proof}
Take for a moment any regular $E_\ell$. Select exactly
$\lceil m_\ell/2\rceil$ regular chunks and exactly $b/2$ good coordinates
from each selected chunk (for concreteness, we can select using lexicographic ordering).  This produces a set $J\subset E_\ell$ with
\begin{equation}
\label{eq:regular-certificate-size}
    \frac{M_\ell}{4}\le q:= \lceil m_\ell/2\rceil\,\frac{b}{2} =\abs J\le M_\ell,
\end{equation}
such that $R_{b/2}(\eta,J)\ge q$.  Moreover, $\norm{z_\ell}_2>0$, and the unit
vector $z_\ell/\norm{z_\ell}_2\in V$ has absolute coordinates at least
$2^{-j_\ell-1}/\norm{z_\ell}_2$ on $J$.
We call such a set $J$ a {\it regular certificate} for $E_\ell$.
Let $\bar a_\ell$ be the smallest dyadic number not smaller than
$\norm{z_\ell}_2^2$.  Then
$\norm{z_\ell}_2^2\le\bar a_\ell<2\norm{z_\ell}_2^2$, and
every regular certificate belongs to
\[
    \mathcal F_V\left(
        \frac{2^{-j_\ell-1}}{\sqrt{\bar a_\ell}},q
    \right).
\]
For the union bound, we will group the regular
certificates according to the discrete profiles
\[
    (b,M_\ell,j_\ell,\bar a_\ell).
\]
Our goal is to show that the probability that
a regular certificate exists from some profile 
{\it not satisfying} \eqref{eq:regular-set-bound},
is small.

For a fixed profile, apply
Corollary~\ref{lem:entropy-support-certificate} with
\[
    W=V,
    \qquad
    \beta=\frac{2^{-j_\ell-1}}{\sqrt{\bar a_\ell}},
    \qquad
    \kappa=\frac b2.
\]
The hypotheses of that corollary follow from
\eqref{eq:separated-extraction-scale}, after increasing $L_0$.  Since
$q\le c_{\rm size}r$ and $q\ge M_\ell/4$, the corollary sums the support
probabilities over every possible certificate set $J$ and shows that the
probability that a regular certificate with this profile exists is at most
\begin{equation}
\label{eq:regular-certificate-cost}
    \left(
        C\frac{rp\bar a_\ell\,2^{2j_\ell}}
        {bM_\ell}
    \right)^q.
\end{equation}

Note that \eqref{eq:regular-certificate-size} gives
\[
    \norm{z_\ell}_2^2
    \ge \frac14q2^{-2j_\ell}
    \ge cM_\ell2^{-2j_\ell}
    \ge cLp.
\]
Here the last inequality follows because $M_\ell=bm_\ell\ge b$ and
\eqref{eq:separated-extraction-scale} gives
$b\,2^{-2j_\ell}\ge (L/10)p$.
Thus $cLp\le\bar a_\ell\le2$,
implying that 
there are $O(\log k)$ possible dyadic
values of $\bar a_\ell$. Further, there are
$O(\log k)$ choices for each of the dyadic parameters $b$
and $M_\ell$.  Further, $I_{j_\ell}(x)\ne\varnothing$ implies
$j_\ell\ge1$, while \eqref{eq:separated-extraction-scale} and
$b\le M_\ell\le c_{\rm size}r$ give
\[
    2^{2j_\ell}\le \frac{10c_{\rm size}r}{Lp},
\]
so there are $O(\log k)$ relevant levels.
We conclude that in total there are of order $O(\log^4 k)$
profiles.
If
\[
    M_\ell b2^{-2j_\ell}>C_{\rm reg}rp\,\norm{z_\ell}_2^2
\]
for a large constant $C_{\rm reg}$, then
$\bar a_\ell<2\norm{z_\ell}_2^2$ implies
\[
    M_\ell b2^{-2j_\ell}
    >\frac{C_{\rm reg}}2rp\,\bar a_\ell.
\]
Consequently,
\eqref{eq:regular-certificate-cost} bounds the probability for the
corresponding profile by
\begin{equation}
\label{eq:regular-large-profile-probability}
    \left(\frac{C'}{C_{\rm reg}}\right)^q
    \le
    \left(\frac{C'}{C_{\rm reg}}\right)^{M_\ell/4}.
\end{equation}
Choose $C_{\rm reg}=C_{\rm reg}(B,c_{\rm size})$ and then
$L_0(B,c_{\rm size})$ sufficiently large.  Since
$M_\ell\ge b\ge(L/10)\log k$, summing
\eqref{eq:regular-large-profile-probability} over the at most
$O(\log k)^4$ parameter choices gives failure probability at most
$k^{-B}$.  Hence every regular $E_\ell$ satisfies
\eqref{eq:regular-set-bound}.
This proves the lemma.
\end{proof}

\begin{lemma}[Uniform exceptional-set estimate]
\label{lem:uniform-exceptional-set-estimate}
For every $B\ge1$ there are constants $L_0=L_0(B,c_{\rm size})$,
$\gamma>1$, a positive integer $h$, and $C=C(B,c_{\rm size})$ with the
following property.  If $L\ge L_0$, then, with probability at least
$1-k^{-B}$, simultaneously for every choice of
$b,x,(E_\ell,j_\ell)_{\ell=1}^s$ and every prescribed row-chunk
decomposition
\[
    E_\ell=\bigsqcup_{u\in H_\ell}S_u(E_\ell),
    \qquad 1\le\ell\le s,
\]
which together form an admissible data structure in the sense of
Definition~\ref{def:fixed-width-admissible-data} with these values of
$\gamma$ and $h$, every exceptional $E_\ell$ satisfies
\begin{equation}
\label{eq:exceptional-set-bound}
    M_\ell b2^{-2j_\ell}
    \le
    C rp\sqrt{\frac{M_\ell}{r}}.
\end{equation}
\end{lemma}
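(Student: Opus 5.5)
The plan is to show that an exceptional set forces a rare configuration: a large heavy-row certificate living on a level set of a vector from the \emph{low-dimensional} subspace $U_{\ell-h}$. We then beat the entropy of both the certificate and the prefix that generates $U_{\ell-h}$ by the heavy-row probabilities.

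\emph{Step 1: the deterministic reduction.} For $i\in E_\ell$ we have $P_Ve_i\in U_\ell$. Since $x\in V$,
\[
(P_\ell x)_i=\langle P_\ell x,P_Ve_i\rangle=\langle x,P_Ve_i\rangle=x_i .
\]
Hence on $E_\ell$ we get $x-z_\ell=P_{\ell-h}x=:w$.

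If $\ell\le h$ then $w=0$. In that case $|(z_\ell)_i|=|x_i|>2^{-j_\ell}$ on $E_\ell$, so every chunk is regular and nothing needs proving. Otherwise, an exceptional $E_\ell$ has at least $m_\ell/2$ non-regular chunks. Each of these contains at least $b/2$ coordinates with $|(z_\ell)_i|<2^{-j_\ell-1}$, and on those coordinates $|w_i|>2^{-j_\ell-1}$.

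As in the proof of Lemma~\ref{lem:uniform-regular-set-estimate}, we select lexicographically $\lceil m_\ell/2\rceil$ such chunks and $b/2$ such coordinates from each. This gives a certificate $J\subset E_\ell$ with $q:=\abs J\in[M_\ell/4,M_\ell]$ and $R_{b/2}(\eta,J)\ge q$. Since $\norm{w}_2\le1$, we have $J\in\mathcal F_{U_{\ell-h}}(2^{-j_\ell-1},q)$.

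The dimension of $U_{\ell-h}$ is at most
\[
P:=M_1+\dots+M_{\ell-h}\le \frac{\gamma^{-h}}{1-\gamma^{-1}}M_\ell=:\varepsilon M_\ell ,
\]
by geometric growth. Choosing $\gamma=2$ and $h$ large makes $\varepsilon$ as small as we like; in particular $P\le q$.

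\emph{Step 2: union bound with two entropy costs.} The subspace $U_{\ell-h}$ depends on $\eta$, so we cannot fix it in advance. Instead we union-bound over the prefix tuple $(E_1,\dots,E_{\ell-h})$ together with the certificate $J$. The prefix sets are disjoint from $J$ and each carries its own heavy-row event $R_b(\eta,E_c)\ge M_c$. By the block-product inequality \eqref{eq:negative-association-block-product}, the joint probability factorizes.

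\textbf{Prefix factor.} Lemma~\ref{lem:weighted-prefix-count} bounds the prefix sum by $(Cr/(LP))^{P}$. Its hypotheses follow from \eqref{eq:separated-extraction-scale} and the linear-size assumption; if $P>r$ we use a cruder version, which is harmless since then $r/P$ is bounded.

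\textbf{Certificate factor.} With the prefix fixed, Corollary~\ref{lem:entropy-support-certificate} with $W=U_{\ell-h}$, $d\le P\le q$, $\beta=2^{-j_\ell-1}$ and $\kappa=b/2$ bounds the sum over $J$ by
\[
\left(C\,\frac{p\,2^{2j_\ell}}{b}\right)^{q}.
\]

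\emph{Step 3: balancing.} Suppose \eqref{eq:exceptional-set-bound} fails with a large constant $C_{\rm exc}$. Then
\[
\frac{p\,2^{2j_\ell}}{b}<\frac{1}{C_{\rm exc}}\sqrt{\frac{M_\ell}{r}},
\]
and the certificate factor is at most $\bigl(C\sqrt{M_\ell/r}/C_{\rm exc}\bigr)^{M_\ell/4}$. Its logarithm is at most
\[
-\frac{M_\ell}{4}\Bigl(\tfrac12\log\tfrac{r}{M_\ell}+\log\tfrac{C_{\rm exc}}{C}\Bigr).
\]

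The prefix factor has logarithm at most
\[
P\log\frac{Cr}{P}\le \varepsilon M_\ell\Bigl(\log\frac{r}{M_\ell}+\log\frac{C}{\varepsilon}\Bigr),
\]
using that $t\mapsto t\log(Cr/t)$ is increasing on the relevant range.

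Choosing $h$ (hence $\varepsilon$) small enough, and then $C_{\rm exc}$ large depending on $\varepsilon$, the total is at most $e^{-cM_\ell\log C_{\rm exc}}$. Since $M_\ell\ge b\ge (L/10)\log k$, this is at most $k^{-B-10}$. That leaves enough room for a union bound over the $O(\log k)^{O(1)}$ discrete parameters $b$, $j_\ell$, $M_\ell$, the prefix length and the prefix sizes.

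The sizes $M_1,\dots,M_{\ell-h}$ are distinct dyadic numbers, so each prefix size profile is a subset of $O(\log k)$ dyadic values. This costs at most $2^{O(\log k)}$ choices, which is again absorbed by the $e^{-cM_\ell\log C_{\rm exc}}$ margin once $L$ is large.

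\emph{Main obstacle.} The main obstacle is Step 2. One must justify that the negative-association factorization applies when the prefix events and the certificate event sit on disjoint column blocks. One must also handle the randomness of $U_{\ell-h}$ correctly: the entropy lemma is applied to $U_{\ell-h}$ only after conditioning on the identity of the prefix sets, not on $\eta$. Finally, one must check that the prefix entropy $\exp(P\log(r/P))$ is genuinely dominated. The $\sqrt{M_\ell/r}$ in \eqref{eq:exceptional-set-bound} is exactly what produces the $\tfrac12\log(r/M_\ell)$ gain needed for this.
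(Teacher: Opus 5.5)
Your proposal is correct and follows the paper's proof essentially step for step. The shared steps are the prefix-space witness $P_{\ell-h}x$, the union bound over deterministic prefix tuples via Lemma~\ref{lem:weighted-prefix-count}, the certificate count in the fixed span $W_{\mathbf E}$ via Corollary~\ref{lem:entropy-support-certificate}, the negative-association factorization across disjoint column blocks, and the balancing in which the factor $\sqrt{M_\ell/r}$ yields a $\log(r/M_\ell)$ gain that beats the prefix entropy. One small omission: Lemma~\ref{lem:weighted-prefix-count} is stated for fixed levels, so your union bound must also run over the prefix level profile $(j_c)_{c\le\ell-h}$. This profile is an increasing sequence among $O(\log k)$ relevant levels, so it is absorbed exactly like your size profile. (The paper instead uses the finer count $\ell\le 1+C\log(M_\ell/b)$, but your cruder $2^{O(\log k)}$ bound also suffices because $M_\ell\ge (L/10)\log k$.)
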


\begin{proof}
If $E_\ell$ is
exceptional, then on a substantial subset $J\subset E_\ell$ the delayed
increment $z_\ell$ defined in \eqref{eq:delayed increment def} is small; consequently $P_{\ell-h}x$ has large
coordinates on $J$. 
The key observation is that geometric growth and the
delay $h$ ensure that
$\dim U_{\ell-h}$ is much smaller than $M_\ell$ (the size of $E_\ell$).
The entropy bound associated to $P_{\ell-h}x$,
is computed with respect to the dimension of $U_{\ell-h}$
and the {\it prefix} size, which is much smaller than $M_\ell$. This
creates substantial savings when balancing entropy and probability estimates. Below, we provide the detailed argument.

We assume that the separation factor $\gamma\ge2$ is sufficiently large,
and the same for the integer $h$, depending on $\gamma$ and $c_{\rm size}$; its precise magnitude
can be extracted from the proof below.

\smallskip

Given the admissible data, set
\[
    M_{\rm pre}:=\sum_{c=1}^{\ell-h}M_c,
\]
and note that $\dim U_{\ell-h}\le M_{\rm pre}$.  By geometric growth,
\[
    M_{\rm pre}
    \le
    \frac{\gamma^{-h}}{1-\gamma^{-1}}M_\ell
    \le
    2\gamma^{-h}M_\ell.
\]
We assume that $h$ is so large that
\begin{equation}
\label{eq:prefix-small-fraction}
    2\gamma^{-h}\le\varepsilon_0,
    \qquad
    \varepsilon_0:=\min\left\{10^{-3},\frac1{2c_{\rm size}}\right\}.
\end{equation}
In particular, since
$M_\ell\le c_{\rm size}r$, also $M_{\rm pre}\le r/2$.

\smallskip
\noindent\emph{Construction of the exceptional certificate in the prefix
space.}
If $E_\ell$ is exceptional, at least half
of its chunks contain at least $b/2$ coordinates satisfying
$|(z_\ell)_i|<2^{-j_\ell-1}$.  Select exactly
$\lceil m_\ell/2\rceil$ such chunks and $b/2$ such coordinates from each,
and denote their union by $J$.  Again
\[
    \frac{M_\ell}{4}\le q:=\abs J\le M_\ell,
    \qquad
    R_{b/2}(\eta,J)\ge q.
\]
For $i\in E_\ell$, the vector $P_Ve_i$ belongs to $U_\ell$, and hence
\[
    x_i=(P_\ell x)_i
       =(P_{\ell-h}x)_i+(z_\ell)_i.
\]
Since $|x_i|>2^{-j_\ell}$ on $E_\ell$, every $i\in J$ satisfies
\begin{equation}
\label{eq:exceptional-prefix-witness}
    |(P_{\ell-h}x)_i|\ge2^{-j_\ell-1}.
\end{equation}

If $\ell\le h$, then $P_{\ell-h}=0$, so $E_\ell$ cannot be exceptional.
Assume $\ell>h$.  By \eqref{eq:exceptional-prefix-witness},
$P_{\ell-h}x\ne0$, so we may define
\[
    y_\ell
    :=\frac{P_{\ell-h}x}{\norm{P_{\ell-h}x}_2}
    \in U_{\ell-h}\cap S^{n-1}.
\]
Since $\norm{P_{\ell-h}x}_2\le1$,
\eqref{eq:exceptional-prefix-witness} implies
\[
    J\in\mathcal F_{U_{\ell-h}}(2^{-j_\ell-1},q),
    \qquad
    R_{b/2}(\eta,J)\ge q.
\]
Thus the set $J$ is the {\it exceptional certificate} constructed in this
step, and $y_\ell$ is its large-coordinate witness in the prefix space $U_{\ell-h}$.
Similarly to the previous lemma, our goal is to show that
the probability of finding an exceptional certificate
{\it not satisfying}
\eqref{eq:exceptional-set-bound}
for some choice of the profile (i.e some collection
of admissible parameter values) is small.

\smallskip
\noindent\emph{Probability-weighted counting of prefix tuples.}
Fix $\ell>h$, the common chunk width $b$, and the numerical prefix data
\[
    (M_c)_{c=1}^{\ell-h}
    \qquad\text{and}\qquad
    (j_c)_{c=1}^{\ell-h}.
\]
We will sum over all possible prefix tuples
$(E_c)_{c=1}^{\ell-h}$ consistent with these data.  The actual extraction
satisfies $R_b(\eta,E_c)\ge M_c$ for every $c$, because each $E_c$ is covered by
its distinct row chunks.  Moreover, for every $1\le c\le\ell-h$,
\eqref{eq:separated-extraction-scale} and
$M_c\le\abs{I_{j_c}(x)}\le2^{2j_c}$ give
\[
    b2^{-2j_c}\ge\frac L{10}p,
    \qquad
    b\ge\frac L{10}pM_c.
\]
After increasing $L_0$, these estimates give
$b\ge2\log(e k)$ and $b\ge e^3pM_c$.  Together with
$M_{\rm pre}\le r/2$ from
\eqref{eq:prefix-small-fraction}, they verify every hypothesis of
Lemma~\ref{lem:weighted-prefix-count} with its parameter $L$ replaced by
$L/10$.  Define
\[
\begin{aligned}
    \mathcal E_{\rm pre}
    :=\bigl\{(F_1,\ldots,F_{\ell-h}):\
        &F_c\subset[n],\ \abs{F_c}=M_c
            &&(c\le\ell-h),\\
        &F_c\cap F_d=\varnothing
            &&(c\ne d),\\
        &\exists\,y\in V\cap S^{n-1}\text{ such that }F_c\subset I_{j_c}(y)
            &&(c\le\ell-h)
      \bigr\}.
\end{aligned}
\]
Thus $\mathcal E_{\rm pre}$ is precisely the prefix family appearing in
Lemma~\ref{lem:weighted-prefix-count}.  For
$\mathbf E=(E_1,\ldots,E_{\ell-h})\in\mathcal E_{\rm pre}$, set
\[
    E_{\rm pre}(\mathbf E):=\bigcup_{c=1}^{\ell-h}E_c
\]
and
\[
    A_{\mathbf E}
    :=\{R_b(\eta,E_c)\ge M_c\text{ for every }c\le\ell-h\}.
\]
After changing the absolute constant, that lemma gives
\begin{equation}
\label{eq:exceptional-prefix-cost}
    \sum_{\mathbf E\in\mathcal E_{\rm pre}}\mathbb P(A_{\mathbf E})
    \le
    \left(C\frac r{L M_{\rm pre}}\right)^{M_{\rm pre}}.
\end{equation}

\smallskip
\noindent\emph{The fixed-profile probability bound.}
Fix $\mathbf E\in\mathcal E_{\rm pre}$ and put
\[
    W_{\mathbf E}
    :=\operatorname{span}\{P_Ve_i:i\in E_{\rm pre}(\mathbf E)\}.
\]
The family $\mathcal E_{\rm pre}$ is defined solely in terms of the fixed
subspace $V$ and the fixed numerical prefix data.  Hence, once
$\mathbf E$ is fixed, both $E_{\rm pre}(\mathbf E)$ and $W_{\mathbf E}$ are
deterministic; in particular, they do not depend on the random mask $\eta$.
Further,
$\dim W_{\mathbf E}\le M_{\rm pre}$.  The possible target certificates
associated with the fixed prefix tuple are contained in
\[
    \mathcal C_{\mathbf E}
    :=\left\{J\in
        \mathcal F_{W_{\mathbf E}}(2^{-j_\ell-1},q):
        J\cap E_{\rm pre}(\mathbf E)=\varnothing
      \right\}.
\]
The scale condition, $q\le M_\ell\le2^{2j_\ell}$, and a sufficiently large
$L_0$ verify the hypotheses of
Corollary~\ref{lem:entropy-support-certificate} with $\kappa=b/2$.  Since
$M_{\rm pre}\le q/10$,
Corollary~\ref{lem:entropy-support-certificate} gives
\begin{equation}
\label{eq:exceptional-certificate-cost}
    \sum_{J\in\mathcal C_{\mathbf E}}
    \mathbb P\{R_{b/2}(\eta,J)\ge q\}
    \le
    \left(C\frac{p2^{2j_\ell}}b\right)^q.
\end{equation}
For $J\in\mathcal C_{\mathbf E}$, the events $A_{\mathbf E}$ and
$\{R_{b/2}(\eta,J)\ge q\}$ are increasing functions of disjoint column
blocks.  Therefore \eqref{eq:negative-association-block-product} and a union
bound give
\[
\begin{aligned}
    \mathbb P\left(
        A_{\mathbf E}\cap
        \bigcup_{J\in\mathcal C_{\mathbf E}}
        \{R_{b/2}(\eta,J)\ge q\}
      \right) \le
      \mathbb P(A_{\mathbf E})
      \sum_{J\in\mathcal C_{\mathbf E}}
      \mathbb P\{R_{b/2}(\eta,J)\ge q\}.
\end{aligned}
\]
Suppose that
\begin{equation}
\label{eq:exceptional-large-contribution}
    M_\ell b2^{-2j_\ell}
    >C_{\rm exc}rp\sqrt{\frac{M_\ell}{r}}
\end{equation}
for a sufficiently large constant $C_{\rm exc}$.  Then
\[
    \frac{p2^{2j_\ell}}b
    <
    \frac1{C_{\rm exc}}\sqrt{\frac{M_\ell}{r}}.
\]
Summing the preceding probability estimate over
$\mathbf E\in\mathcal E_{\rm pre}$ and using
\eqref{eq:exceptional-prefix-cost} and
\eqref{eq:exceptional-certificate-cost}, we find that the probability that
there exist $\mathbf E\in\mathcal E_{\rm pre}$ and
$J\in\mathcal C_{\mathbf E}$ such that both $A_{\mathbf E}$ and
$\{R_{b/2}(\eta,J)\ge q\}$ occur is at most
\[
    \left(C\frac r{L M_{\rm pre}}\right)^{M_{\rm pre}}
    \left(
        \frac C{C_{\rm exc}}\sqrt{\frac{M_\ell}{r}}
    \right)^{M_\ell/4}.
\]
Consequently, the logarithm of this probability bound, divided by
$M_\ell$, is at most
\[
    \frac {M_{\rm pre}}{M_\ell}
    \log\left(
        \frac{C(r/M_\ell)}{L(M_{\rm pre}/M_\ell)}
    \right)
    -\frac14\log\left(
        \frac{C_{\rm exc}\sqrt{r/M_\ell}}{C}
    \right).
\]
Fix a number $D>0$ whose value will be chosen in terms of $B$. The last exponent equals
\[
    \left(\frac {M_{\rm pre}}{M_\ell}-\frac18\right)
       \log\left(\frac r{M_\ell}\right)
    +\frac {M_{\rm pre}}{M_\ell}
       \log\left(\frac{C}{L(M_{\rm pre}/M_\ell)}\right)
    -\frac14\log\left(\frac {C_{\rm exc}}C\right).
\]
Recall that, by our assumptions on parameters, $M_{\rm pre}\leq \varepsilon_0 M_\ell$,
$L\geq 1$, and $M_\ell\leq c_{\rm size}\,r$.
An inspection of the expression above then show that 
there is large enough
$C_{\rm exc}=C_{\rm exc}(D,c_{\rm size})$ so that
the above expression is less than $-D$.
It follows that the probability of an exceptional certificate satisfying
\eqref{eq:exceptional-large-contribution}, for a fixed parameter profile, is
at most
\begin{equation}
\label{eq:exceptional-fixed-profile-probability}
    \exp(-DM_\ell).
\end{equation}

\smallskip
\noindent\emph{Union bound over profiles.}
We now sum over the parameter profiles.  The complete discrete profile is
\[
    \left(
        b,M_\ell,\ell,
        (M_c)_{c=1}^{\ell-h},
        (j_c)_{c=1}^{\ell-h},
        j_\ell
    \right).
\]
For a target of size $M_\ell$,
geometric growth gives
\begin{equation}
\label{eq:exceptional-profile-length}
    \ell\le1+C\log(M_\ell/b).
\end{equation}
Further, there are $O(\log k)$ choices for each dyadic cardinality.  The scale
condition and $b\le M_\ell\le c_{\rm size}r$ give
$2^{2j_c}\le10c_{\rm size}r/(Lp)$ for every relevant level, so there are
$O_{c_{\rm size}}(\log k)$ choices for each $j_c$.  By
\eqref{eq:exceptional-profile-length}, the logarithm of the number of
choices of the prefix length, its cardinality and level profiles, and the
target level, for fixed $b$ and $M_\ell$, is at most
\begin{equation}
\label{eq:exceptional-profile-count}
    C\bigl(1+\log(M_\ell/b)\bigr)\log\log(e k).
\end{equation}
Thus
\eqref{eq:exceptional-profile-count} is absorbed by half of the exponent in
\eqref{eq:exceptional-fixed-profile-probability}.

After summing over the remaining dyadic choices of $b$ and $M_\ell$, the
total exceptional failure probability is bounded by
\[
    C(\log k)^2
    \sum_{\substack{(L/10)\log k\le M\le c_{\rm size}r:\\
        M\ {\rm dyadic}}}
    \exp\left(-\frac D2M\right)
    \le k^{-B},
\]
where the sum is over dyadic $M\le c_{\rm size}r$,
and where we choose $D$ sufficiently large depending on $B$.
This proves the lemma.
\end{proof}

\begin{proof}[Proof of Proposition~\ref{prop:separated-fixed-width-estimate}]
Put $B_0:=B+2$.  Let $L_{\rm deg}$ and
$C_0=C_0(\alpha,B_0)$ be, respectively, the threshold and the constant
supplied by Lemma~\ref{lem:total-degree-canonical-heavy-rows} with failure
exponent $B_0$, and set $c_{\rm size}:=C_0$.  For this value of
$c_{\rm size}$ and the same failure exponent, let $L_{\rm reg}$ be the
threshold supplied by Lemma~\ref{lem:uniform-regular-set-estimate}, and let
$L_{\rm exc},\gamma,h$ be the threshold and parameters supplied by
Lemma~\ref{lem:uniform-exceptional-set-estimate}.  Set
\[
    L_0:=\max\{10L_{\rm deg},L_{\rm reg},L_{\rm exc}\}.
\]

Now fix $L\ge L_0$.  Apply the total-degree lemma with threshold parameter
$L/10$, and apply the two auxiliary lemmas with parameter $L$.  The
intersection of the resulting three events has probability
at least
\[
    1-3k^{-B_0}\ge1-k^{-B},
\]
where we used $k\ge3$.  Work on this intersection event.  In particular,
the total-degree lemma gives, simultaneously for every
$x\in{\mathcal N}_V$,
\[
    \sum_{j\in\mathbb Z}\sum_{u=1}^k
    d_u(\eta,I_j(x))
    \mathbf 1_{\{d_u(\eta,I_j(x))\ge K_j^{(L/10)}(x)\}}
    \le C_0r.
\]

Fix an admissible separated extraction.  Since $x$ is a unit vector,
\[
    \abs{I_{j_\ell}(x)}\le2^{2j_\ell}.
\]
The scale condition gives
\[
    b
    \ge
    \frac L{10}\max\{pr_*,p2^{2j_\ell}\}
    \ge
    K_{j_\ell}^{(L/10)}(x).
\]
Every prescribed row chunk has cardinality $b$ and is contained in
$I_{j_\ell}(x)$ and in the support of its assigned row.  Therefore,
\[
\begin{aligned}
    M_\ell=bm_\ell
    \le
    \sum_{u\in H_\ell}
    d_u(\eta,I_{j_\ell}(x))
    \mathbf 1_{\{d_u(\eta,I_{j_\ell}(x))
        \ge K_{j_\ell}^{(L/10)}(x)\}}
    \le C_0r.
\end{aligned}
\]
Thus the auxiliary linear-size condition
\eqref{eq:linear-size-assumption} holds automatically.

Form $U_\ell$, $P_\ell$, and $z_\ell$ as above.  The projection increments
$(P_q-P_{q-1})x$ are pairwise
orthogonal.  Since each increment occurs in at most $h$ of the vectors
$z_\ell$,
\begin{equation}
\label{eq:delayed-window-energy}
    \sum_{\ell=1}^s\norm{z_\ell}_2^2\le h.
\end{equation}
Summing the regular bounds on this intersection and using
\eqref{eq:delayed-window-energy} gives
\[
    \sum_{\substack{1\le\ell\le s:\\ E_\ell\ \mathrm{regular}}}
    M_\ell b2^{-2j_\ell}
    \le C(\alpha,B)rp.
\]
For the exceptional sets, geometric growth and
$M_s\le c_{\rm size}r$ give
\[
    \sum_{\substack{1\le\ell\le s:\\ E_\ell\ \mathrm{exceptional}}}
    \sqrt{\frac{M_\ell}{r}}
    \le
    \sum_{\ell=1}^s\sqrt{\frac{M_\ell}{r}}
    \le C\sqrt{\frac{M_s}{r}}
    \le C\sqrt{c_{\rm size}}.
\]
Hence \eqref{eq:exceptional-set-bound} yields the same bound for their
total contribution.  This proves \eqref{eq:separated-fixed-width-conclusion}.
\end{proof}

\subsection{Proof of the fixed-width target}

\begin{proof}[Proof of Proposition~\ref{thm:target-bound}]
Work on the event in
Proposition~\ref{prop:separated-fixed-width-estimate}, with failure
parameter $B+2$, and on the event in
Proposition~\ref{thm:flat-extraction}, also with failure parameter $B+2$.
On the latter event, Proposition~\ref{prop:equal-size-leaf-grouping}
produces the selected partition $\mathfrak T_L(\eta,x)$ and guarantees
\eqref{eq:leaf-scale-lower-bound}.  Since $k\ge3$, the intersection has
probability at least $1-2k^{-(B+2)}\ge1-k^{-B}$.  Fix
$x\in{\mathcal N}_V$ and a positive dyadic integer $b$, and put
$\mathfrak T:=\mathfrak T_L(\eta,x)$.
Order the leaves with $b(E)=b$ by increasing $j(E)$.  Their weights then
form a strictly decreasing dyadic sequence.  Retain the first leaf and,
subsequently, retain a leaf precisely when its value of $m(E)$ is larger
than all previously retained values.  Call these the record leaves.

Between a record leaf $E$ and the next record leaf, every discarded leaf
$F$ has $m(F)\le m(E)$, while the sum of the dyadically decreasing weights
in that block is at most $2w(E)$.  It follows pointwise that
\begin{equation}
\label{eq:record-leaf-domination}
    Y_t^{(b)}(\mathfrak T)
    \le
    2\sum_{\substack{E\text{ record leaf of }\mathfrak T:\\ b(E)=b}}
    w(E)b\mathbf 1_{\{t\le m(E)\}}.
\end{equation}
The record values of $m(E)$ are strictly increasing positive dyadic
integers, and hence increase by a factor at least two.

Choose a fixed integer $h_0$ such that $2^{h_0}\ge\gamma$, where $\gamma$ is
the separation constant from
Proposition~\ref{prop:separated-fixed-width-estimate}.  Split the record
leaves into $h_0$ classes according to their positions modulo $h_0$.
Within every class the cardinalities $b\,m(E)$ grow by a factor at least
$\gamma$, while the level indices remain increasing.
The scale condition \eqref{eq:leaf-scale-lower-bound}, the row-chunk
structure in Definition~\ref{def:two-level-partition} show that
Proposition~\ref{prop:separated-fixed-width-estimate} applies to each class.
Therefore
\begin{equation}
\label{eq:record-class-square-sum}
    \sum_{\substack{E\text{ in the fixed record class}}}
    m(E)w(E)^2b^2
    \le Cpr.
\end{equation}

Fix one record class and regard its leaves, together with their roots, as a
two-level subforest $\mathfrak T_{\rm cl}$ of $\mathfrak T$.  Since the
values $m(E)$ are strictly increasing within the class, there is at most one
leaf for each dyadic value of $m(E)$.  Therefore
\[
    \mathcal Q_{\rm leaf}(\mathfrak T_{\rm cl})^2
    =
    \sum_{\substack{E\text{ in the fixed record class}}}
    m(E)w(E)^2b^2
    \le Cpr
\]
by \eqref{eq:record-class-square-sum}.  The upper comparison in the
statement of Lemma~\ref{lem:dyadic-leaf-square-function-comparison} now gives
\[
    \left\|
        \sum_{\substack{E\text{ in the fixed record class}}}
        w(E)b\mathbf 1_{\{t\le m(E)\}}
    \right\|_2
    \le C\sqrt{pr}.
\]
There are only the fixed number $h_0$ of classes.  Thus
\eqref{eq:record-leaf-domination} and the triangle inequality imply
\[
    \norm{Y^{(b)}(\mathfrak T)}_2\le C\sqrt{pr}.
\]
The uniformity in
Proposition~\ref{prop:separated-fixed-width-estimate} applies to every
vector in ${\mathcal N}_V$ and every admissible extraction, so this
conclusion holds simultaneously over $x\in{\mathcal N}_V$ and over dyadic
$b$.
This proves the proposition.
\end{proof}

\section{Completion of the main results}
\label{sec:proofs-main-results}

We now assemble the Tall estimate from Section~\ref{sec:ks-tall-argument}
and the Flat estimates from Sections~\ref{sec:flat-majorants}--
\ref{sec:fixed-width-estimates}.  We first prove the symmetric-entry estimate
and pass to centered entries by symmetrization.  We then verify the concrete
support models and record the hybrid and model-specific consequences.

\subsection{The symmetric-entry estimate}

\begin{theorem}[Main theorem]
\label{thm:main-ose-bound}
For every $B\ge1$ there is a constant
$C_{\text{\tiny\ref*{thm:main-ose-bound}}}=C(B)$ with the following
property.  Let
\[
    n\ge k\ge r\ge3,
    \qquad
    n\le r^{10},
    \qquad
    k\ge r\log^2\left(\frac{en}{r}\right),
    \qquad
    \frac{\log k}{k}\le p\le1.
\]
Let $\Pi=(\pi_{ui})$ follow the admissible sparse-entry model with parameter $p$
and a symmetric entry variable $\xi$ satisfying $|\xi|\le1$ almost surely.
Then, for every non-random $r$-dimensional subspace $V\subset\R^n$,
\[
    \mathbb P\left\{
        \norm{\Pi U_V}
        >
        C_{\text{\tiny\ref*{thm:main-ose-bound}}}\sqrt{kp}
    \right\}
    \le
    k^{-B}.
\]
\end{theorem}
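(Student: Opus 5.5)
We assemble the Tall and Flat estimates on a single net. Fix a deterministic $1/2$-net ${\mathcal N}_V\subset V\cap S^{n-1}$ with $|{\mathcal N}_V|\le 5^r$, so that $\alpha=\log 5$ in the Flat statements. By \eqref{eq:technical-overview-canonical-net-reduction} and \eqref{eq:technical-overview-flat-contribution-bound},
\[
    \norm{\Pi U_V}\le 2\sup_{z\in{\mathcal N}_V}\norm{\mathcal T_L^{\rm can}(\Pi,z)z}_2+4\sup_{z\in{\mathcal N}_V}\FM_L^{\rm can}(\eta,z)
\]
holds deterministically for every $L\ge1$. We choose $L$ as the maximum of the thresholds $L_0(\log5,B+3)$ from Corollary~\ref{cor:canonical-flat-two-level-partition} and Proposition~\ref{thm:target-bound}. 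With this fixed $L$, Proposition~\ref{prop:ks-tall-contribution-on-net} applied with $D=B+3$ bounds the Tall term by $C\sqrt{kp}$ outside an event of probability at most $k^{-B-3}$. This step needs the symmetric entries assumed in the statement.

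For the Flat term, we intersect with the events of Corollary~\ref{cor:canonical-flat-two-level-partition} and Proposition~\ref{thm:target-bound}, each taken with failure exponent $B+3$. On this event, every $z\in{\mathcal N}_V$ satisfies
\[
    \FM_L^{\rm can}(z)\le C\mathcal Q_{\rm leaf}(\mathfrak T_L(\eta,z))+C\sqrt{pr}.
\]
To control the leaf square function, write $Y=\sum_b Y^{(b)}$, the sum over dyadic widths $b$. Lemma~\ref{lem:dyadic-leaf-square-function-comparison} gives $\mathcal Q_{\rm leaf}\le\norm{Y}_2$, and the triangle inequality gives $\norm{Y}_2\le\sum_b\norm{Y^{(b)}}_2$. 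Proposition~\ref{thm:target-bound} bounds each term by $C\sqrt{pr}$.

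Only finitely many widths contribute. By \eqref{eq:leaf-scale-lower-bound}, every leaf has $b\ge (L/10)pr_*\ge\log k$. The linear-size bound (Lemma~\ref{lem:total-degree-canonical-heavy-rows}) gives $b\le |E|\le Cr$. Hence the admissible widths are the dyadic $b$ with $\log k\lesssim b\lesssim r$, and there are at most $O(\log(r/\log k))$ of them. Summing therefore gives a Flat contribution of order $\sqrt{pr}\,\log(r/\log k)$.

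This is where the hypothesis $k\ge r\log^2(en/r)$ is meant to enter, and it is the step I expect to be the main obstacle. The bound $\sqrt{pr}\log r\le C\sqrt{pk}$ needs $k\gtrsim r\log^2 r$, but the hypothesis only supplies $\log(en/r)$, which can be much smaller than $\log r$ when $n$ is close to $r$. To close the gap I would refine the range of contributing widths using \emph{column} sparsity. Every leaf set lies in $[n]$, and on the row-degree event of Lemma~\ref{lem:row-degree-upper-bound} a row has degree at most $L\max\{\log(ek),pn\}$. This caps $b$ at roughly $\max\{\log k,pn\}$. Combined with $b\gtrsim\max\{pr_*,p2^{2j}\}\ge pr$, the relevant dyadic range becomes $pr\lesssim b\lesssim p n+\log k$. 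That range contains only $O(\log(en/r))$ widths (after using $pr_*\ge\log k$), so the Flat total is at most $C\sqrt{pr}\log(en/r)\le C\sqrt{pk}$. If the row-degree cap turns out not to cut the range enough, the fallback is to sum the fixed-width bounds in $\ell_2$ across $b$ rather than by the triangle inequality, exploiting almost-orthogonality of the profiles for different widths.

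A union bound over the finitely many events, each of probability at least $1-k^{-B-3}$, then yields failure probability at most $k^{-B}$ for $k\ge3$. Small values of $k$ are absorbed into the constant $C(B)$.
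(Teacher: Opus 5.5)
Your proposal is correct and follows the paper's proof: the same net reduction, the Tall bound from Proposition~\ref{prop:ks-tall-contribution-on-net}, Corollary~\ref{cor:canonical-flat-two-level-partition} combined with Proposition~\ref{thm:target-bound} for each width, and, decisively, the row-degree event of Lemma~\ref{lem:row-degree-upper-bound}, which confines the leaf widths to $\tfrac{L}{10}pr_*\le b\le C\max\{\log(ek),pn\}\le Cpn$, leaves $O(\log(en/r))$ dyadic widths, and lets $k\ge r\log^2(en/r)$ close the Flat bound. Your initial linear-size cap $b\le Cr$ and the $\ell_2$ fallback are unnecessary, since the row-degree refinement you describe is exactly the step the paper uses.
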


\begin{proof}
Let ${\mathcal N}_V\subset V\cap S^{n-1}$ be a deterministic $1/2$-net with
$\abs{{\mathcal N}_V}\le5^r$, and fix a sufficiently large constant $L$, to be
chosen in terms of $B$.  We apply all estimates below with their failure
exponents increased by a fixed amount.  Put
\[
    r_*:=\max\left\{r,\frac{\log k}{p}\right\}.
\]

The parameter assumptions used in the preceding sections are satisfied:
$n\le r^{10}$ and $p\ge(\log k)/k$.  The exponent $10$ is not structural;
the restriction $n\le r^{10}$ is used to absorb polynomial
ambient-dimension factors in the union bounds and in the symmetrization step
below, and any fixed polynomial relation would only change the constants.
The stronger condition $k\ge r\log^2(en/r)$ is used only at the final
aggregation over the $O(\log(en/r))$ possible leaf widths, in
\eqref{eq:main-theorem-flat-bound}.

By Proposition~\ref{prop:ks-tall-contribution-on-net}, with probability at
least $1-k^{-(B+5)}$,
\begin{equation}
\label{eq:main-theorem-tall-bound}
    \sup_{z\in{\mathcal N}_V}
    \norm{\mathcal T_L^{\rm can}(\Pi,z)z}_2
    \le C(B,L)\sqrt{kp}.
\end{equation}
We next estimate the canonical Flat majorant.  Work on the events in
Corollary~\ref{cor:canonical-flat-two-level-partition},
Proposition~\ref{thm:target-bound}, and
Lemma~\ref{lem:row-degree-upper-bound}.  Their intersection has
probability at least $1-Ck^{-(B+5)}$.  Fix $z\in{\mathcal N}_V$, and consider
the selected partition $\mathfrak T_L(\eta,z)$ supplied by the corollary.

Every leaf width satisfies, by \eqref{eq:leaf-scale-lower-bound},
\[
    b(E)\ge \frac L{10}pr_*.
\]
If $u\in H(E)$ is one of its supporting rows, then
$b(E)\le d_u(\eta,[n])$.  The row-degree upper-bound event therefore gives
\[
    b(E)\le C(B)\max\{\log(ek),pn\}
    \le C(B)pn,
\]
where we used $pr_*\ge\log k$ and $r_*\le k\le n$.
Since the widths are positive dyadic integers, their number is bounded by
\[
    \#\{b(E):E\text{ leaf of }\mathfrak T_L(\eta,z)\}
    \le C(B,L)\log\left(\frac{en}{r_*}\right)
    \le C(B,L)\log\left(\frac{en}{r}\right).
\]

Proposition~\ref{thm:target-bound} gives, for every positive dyadic width
$b$,
\[
    \norm{Y^{(b)}(\mathfrak T_L(\eta,z))}_2
    \le C(B)\sqrt{pr}.
\]
Define
\[
    Y_t:=
    \sum_{\substack{E\text{ leaf of }\mathfrak T_L(\eta,z)}}
    w(E)b(E)\mathbf 1_{\{t\le m(E)\}},
    \qquad 1\le t\le k.
\]
Then
$Y=\sum_{\substack{b\ge1\\b\ {\rm dyadic}}}
Y^{(b)}(\mathfrak T_L(\eta,z))$.  Hence the triangle inequality, followed by
Lemma~\ref{lem:dyadic-leaf-square-function-comparison}, gives
\[
\begin{aligned}
    \mathcal Q_{\rm leaf}(\mathfrak T_L(\eta,z))
    &\le \norm Y_2
     \le
     \sum_{\substack{b\ge1\\b\ {\rm dyadic}}}
     \norm{Y^{(b)}(\mathfrak T_L(\eta,z))}_2 \\
    &\le C(B,L)\log\left(\frac{en}{r}\right)\sqrt{pr}.
\end{aligned}
\]
Corollary~\ref{cor:canonical-flat-two-level-partition} now yields,
simultaneously for every $z\in{\mathcal N}_V$,
\begin{equation}
\label{eq:main-theorem-flat-bound}
\begin{aligned}
    \FM_L^{\rm can}(z)
    &\le
    C(B,L)\left[
        \log\left(\frac{en}{r}\right)\sqrt{pr}
        +\sqrt{pr}
    \right] \\
    &\le C(B,L)\sqrt{kp},
\end{aligned}
\end{equation}
where the second inequality follows from
$k\ge r\log^2(en/r)$.

Finally,
$\|\Pi U_V\|=\sup_{x\in V\cap S^{n-1}}\|\Pi x\|_2$, so the standard
norming-net estimate gives
\[
    \norm{\Pi U_V}
    \le
    2\sup_{z\in{\mathcal N}_V}\norm{\Pi z}_2.
\]
For each $z\in{\mathcal N}_V$, the canonical Tall--Flat decomposition
\eqref{eq:technical-overview-canonical-tall-flat} and the Flat-majorant
estimate \eqref{eq:technical-overview-flat-contribution-bound} give
\[
    \norm{\Pi z}_2
    \le
    \norm{\mathcal T_L^{\rm can}(\Pi,z)z}_2
    +2\FM_L^{\rm can}(z).
\]
Consequently,
\[
    \norm{\Pi U_V}
    \le
    2\sup_{z\in{\mathcal N}_V}
        \norm{\mathcal T_L^{\rm can}(\Pi,z)z}_2
    +4\sup_{z\in{\mathcal N}_V}\FM_L^{\rm can}(z).
\]
Combining \eqref{eq:main-theorem-tall-bound} and
\eqref{eq:main-theorem-flat-bound} proves the asserted norm estimate.  A
union bound over the finitely many good events, with the increased failure
exponents $B+5$ used in
\eqref{eq:main-theorem-tall-bound} and
\eqref{eq:main-theorem-flat-bound} makes the total failure probability at most
$k^{-B}$.
\end{proof}

\subsection{Centered entry variables}

The main result stated in the introduction permits a centered, not necessarily
symmetric, entry variable.  It follows from the symmetric-entry estimate by a
standard symmetrization argument.

\begin{proof}[Proof of Theorem~\ref{thm:centered-entry-extension}]
Write $\pi_{ui}=\eta_{ui}\xi_{ui}$.  On an extension of the probability
space, let $(\xi'_{ui})$ be an independent copy of $(\xi_{ui})$, independent
of the support mask, and define
\[
    \Pi':=(\eta_{ui}\xi'_{ui}).
\]
Thus $\Pi$ and $\Pi'$ share the same support mask. Put
\[
    t:=2C_{\text{\tiny\ref*{thm:main-ose-bound}}}(B+7)\sqrt{kp}.
\]
Theorem~\ref{thm:main-ose-bound}, applied to $(\Pi-\Pi')/2$, gives
\begin{equation}
\label{eq:centered-symmetrized-tail}
    \mathbb P\{\|(\Pi-\Pi')U_V\|>t\}\le k^{-(B+7)}.
\end{equation}
The function
\[
    M\longmapsto (\|MU_V\|-t)_+
\]
is convex.  Conditional Jensen's inequality therefore gives
\[
    (\|\Pi U_V\|-t)_+
    \le
    \mathbb E_{\xi'}\left[
        (\|(\Pi-\Pi')U_V\|-t)_+
        \,\middle|\,\eta,(\xi_{ui})
    \right].
\]
Since $|\xi|\le1$,
\[
    \|(\Pi-\Pi')U_V\|
    \le
    \|\Pi-\Pi'\|_{\rm F}
    \le2\sqrt{kn}.
\]
Taking expectations and using \eqref{eq:centered-symmetrized-tail}, we get
\[
    \mathbb E(\|\Pi U_V\|-t)_+
    \le
    2\sqrt{kn}\,k^{-(B+7)}.
\]
Consequently, Markov's inequality yields
\[
\begin{aligned}
    \mathbb P\{\|\Pi U_V\|>2t\}
    \le
        \frac{2\sqrt{kn}}{t}\,k^{-(B+7)} 
    \le k^{-B}.
\end{aligned}
\]
Here we used $n\le r^{10}\le k^{10}$,
$p\ge(\log k)/k$, and $k\ge3$; the fixed numerical slack in
the exponent $B+7$ absorbs the resulting polynomial factor.  Taking
\[
    C_{\text{\tiny\ref*{thm:centered-entry-extension}}}(B)
    :=4C_{\text{\tiny\ref*{thm:main-ose-bound}}}(B+7)
\]
completes the proof.
\end{proof}

\subsection{Concrete admissible models}

\begin{definition}[I.i.d. sparse-entry model]
\label{def:iid-sparse-entry-model}
Let $0<p\le1$, and let $\xi$ be a real random variable.  A $k\times n$
random matrix $\Pi=(\pi_{ui})$ follows the \emph{i.i.d. sparse-entry model}
with parameters $p$ and $\xi$ if
\[
    \pi_{ui}=b_{ui}\xi_{ui},
    \qquad u\in[k],\quad i\in[n],
\]
where the variables $b_{ui}$ are independent Bernoulli$(p)$ variables, the
variables $\xi_{ui}$ are independent copies of $\xi$, and the two families
are independent.
\end{definition}

\begin{definition}[Fixed-column-degree sparse-entry model]
\label{def:fixed-column-degree-model}
Let $1\le d\le k$ be an integer and put $p=d/k$.  Independently for every
column $i\in[n]$, choose a uniformly random set $S_i\subset[k]$ of
cardinality $d$, and set
\[
    \eta_{ui}:=\mathbf 1_{\{u\in S_i\}},
    \qquad u\in[k],\quad i\in[n].
\]
Let $(\xi_{ui})_{u\in[k],\,i\in[n]}$ be independent copies of a real random
variable $\xi$, independent of the sets $(S_i)_{i\in[n]}$, and define
\[
    \pi_{ui}:=\eta_{ui}\xi_{ui},
    \qquad u\in[k],\quad i\in[n].
\]
We say that $\Pi=(\pi_{ui})$ follows the
\emph{fixed-column-degree sparse-entry model} with parameters $d$ and $\xi$.
Each column has exactly $d$ designated support locations; if
$\mathbb P\{\xi=0\}=0$, these are exactly its nonzero entries.
\end{definition}

\begin{definition}[Unnormalized SparseStack$^{\mathsf T}$ model]
\label{def:transposed-sparsestack-model}
Let $1\le s\le k$ be an integer dividing $k$, and put $p=s/k$.
Partition $[k]$ into sets $B_1,\ldots,B_s$, each of cardinality $k/s$.
Independently for every $\ell\in[s]$ and $i\in[n]$, choose
$U_{\ell i}$ uniformly from $B_\ell$, and set
\[
    \eta_{ui}:=\mathbf 1_{\{u=U_{\ell i}\}},
    \qquad \ell\in[s],\quad i\in[n],\quad u\in B_\ell.
\]
Let $(\xi_{ui})_{u\in[k],\,i\in[n]}$ be independent copies of a real random
variable $\xi$, independent of the choices
$(U_{\ell i})_{\ell\in[s],\,i\in[n]}$, and set
\[
    \pi_{ui}:=\eta_{ui}\xi_{ui},
    \qquad u\in[k],\quad i\in[n].
\]
The resulting matrix $\Pi=(\pi_{ui})$ follows the
\emph{unnormalized SparseStack$^{\mathsf T}$ model} with parameters $s$ and
$\xi$.  Each column has one designated support location in every block and
hence $s$ designated support locations.  For Rademacher $\xi$, this is the
unnormalized transpose of the SparseStack model
\cite{HuangRudelsonTikhomirov26}.
\end{definition}

\begin{proposition}[Basic examples of admissible support laws]
\label{prop:basic-admissible-support-laws}
The i.i.d. sparse-entry model with Bernoulli parameter $p$ follows the
admissible sparse-entry model with density $p$.  The fixed-column-degree
model with column degree $d$ follows the admissible sparse-entry model with
density $p=d/k$.  The unnormalized SparseStack$^{\mathsf T}$ model with $s$
blocks follows the admissible sparse-entry model with density $p=s/k$.
\end{proposition}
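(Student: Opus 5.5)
For each of the three models, the mean condition is immediate. In the i.i.d. model, $\mathbb P\{b_{ui}=1\}=p$. In the fixed-column-degree model, $\mathbb P\{u\in S_i\}=d/k$. In SparseStack$^{\mathsf T}$, for $u\in B_\ell$ we have $\mathbb P\{U_{\ell i}=u\}=s/k$. The independence of the entry variables from the mask holds by construction in each definition. So the only real work is negative association of the mask $(\eta_{ui})$, and I will derive it from the standard closure properties in \cite{JoagDevProschan83,DubhashiRanjan98}.

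\emph{Step 1: two closure properties.} First, a family of mutually independent random variables is negatively associated. Second, if $X^{(1)},\dots,X^{(N)}$ are mutually independent random vectors and each is negatively associated, then their concatenation is negatively associated. The proof of the second fact conditions on all but one block and applies the two-function inequality blockwise; the key observation is that conditional expectations of coordinatewise non-decreasing functions over a block remain non-decreasing in the remaining coordinates.

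\emph{Step 2: the zero-one principle.} If $(X_1,\dots,X_m)\in\{0,1\}^m$ satisfies $\sum_a X_a=1$ almost surely, then the family is negatively associated. More generally, the indicator vector of a uniformly random $d$-subset of $[k]$ is negatively associated. I would prove this by citing the permutation-distribution theorem of Joag-Dev--Proschan: a uniform random permutation of a fixed vector, here $(1,\dots,1,0,\dots,0)$ with $d$ ones, is negatively associated. The one-hot case is the special case $d=1$.

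\emph{Step 3: applying the principles to each model.} For the i.i.d. model, the family $(b_{ui})$ consists of independent variables, so Step 1 alone suffices. For the fixed-column-degree model, each column $(\eta_{ui})_{u\in[k]}$ is the indicator of a uniform $d$-subset, hence negatively associated by Step 2. The columns are independent, so Step 1 gives negative association of the full mask. For SparseStack$^{\mathsf T}$, the vector $(\eta_{ui})_{u\in B_\ell}$ is one-hot, hence negatively associated by Step 2. These blocks are independent across pairs $(\ell,i)$, and together they cover $[k]\times[n]$, so Step 1 again concludes.

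I expect the main obstacle to be the justification of the permutation-distribution fact in Step 2. I would cite it from \cite{JoagDevProschan83} rather than reprove it. If a self-contained proof were needed, I would handle the one-hot case directly: for non-decreasing $f,g$ on disjoint coordinate sets $S,T$, at most one of the two blocks is nonzero, and a short computation yields the inequality. I would then reduce the uniform $d$-subset case to the Joag-Dev--Proschan theorem.
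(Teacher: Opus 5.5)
Your proposal is correct and follows essentially the same route as the paper: you check the marginals, then obtain negative association from the standard results for uniform $d$-subset (sampling without replacement) and one-hot indicator vectors, combined with closure of negative association under independent unions. Your optional self-contained argument for the one-hot case (normalize so $f$ and $g$ vanish at the zero vector, so $f(X_S)g(X_T)=0$ almost surely) is a fine supplement to the citation the paper relies on.
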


\begin{proof}
Independent Bernoulli variables are negatively associated.  For the
fixed-column-degree model, the indicator vector of a uniformly random
$d$-subset of $[k]$ is negatively associated by the standard theorem for
sampling without replacement.  Independent unions of negatively associated
families are negatively associated.  Since the columns are sampled
independently and $\mathbb P\{u\in S_i\}=d/k$, the full support mask is
negatively associated with common marginal $d/k$.

For the SparseStack$^{\mathsf T}$ model, for each pair $(\ell,i)$ the vector
$(\eta_{ui})_{u\in B_\ell}$ is the indicator vector of a uniformly chosen
one-element subset of $B_\ell$, and hence is negatively associated.  These
vectors are independent over $(\ell,i)\in[s]\times[n]$, so their union is
negatively associated.  Finally,
$\mathbb E\eta_{ui}=1/\abs{B_\ell}=s/k$ for $u\in B_\ell$.
\end{proof}

Combining the proposition with
Theorem~\ref{thm:centered-entry-extension} gives the following three direct
specializations.

\begin{corollary}[Concrete sparse models]
\label{cor:concrete-sparse-models}
Assume the dimensional hypotheses of
Theorem~\ref{thm:centered-entry-extension}, let
$V\subset\mathbb R^n$ be a fixed $r$-dimensional subspace, and let $\xi$ be
centered with $|\xi|\le1$ almost surely.  For every $B\ge1$, the following
statements hold.
\begin{enumerate}
\item If $(\log k)/k\le p\le1$ and $\Pi$ follows the i.i.d.
      sparse-entry model of Definition~\ref{def:iid-sparse-entry-model}
      with parameters $p$ and $\xi$, then
      \[
          \mathbb P\left\{\|\Pi U_V\|>C(B)\sqrt{kp}\right\}
          \le k^{-B}.
      \]
\item If $\lceil\log k\rceil\le d\le k$ and $\Pi$ follows the
      fixed-column-degree model of
      Definition~\ref{def:fixed-column-degree-model} with parameters $d$
      and $\xi$, then
      \[
          \mathbb P\left\{\|\Pi U_V\|>C(B)\sqrt d\right\}
          \le k^{-B}.
      \]
\item If $s\mid k$, $\lceil\log k\rceil\le s\le k$, and $\Pi$ follows the
      unnormalized SparseStack$^{\mathsf T}$ model of
      Definition~\ref{def:transposed-sparsestack-model} with parameters $s$
      and $\xi$, then
      \[
          \mathbb P\left\{\|\Pi U_V\|>C(B)\sqrt s\right\}
          \le k^{-B}.
      \]
\end{enumerate}
\end{corollary}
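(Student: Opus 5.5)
This is a direct specialization of Theorem~\ref{thm:centered-entry-extension}, and the plan is to check its hypotheses model by model using Proposition~\ref{prop:basic-admissible-support-laws}. The dimensional hypotheses ($n\ge k\ge r\ge3$, $n\le r^{10}$, $k\ge r\log^2(en/r)$) are assumed verbatim. The entry variable $\xi$ is centered with $|\xi|\le1$, as required. So in each case it only remains to confirm two things: that the model is an admissible sparse-entry model with some density $p$, and that this $p$ satisfies $(\log k)/k\le p\le 1$. After that, $\sqrt{kp}$ is rewritten in the model's own parameters.

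\emph{Item (1).} Proposition~\ref{prop:basic-admissible-support-laws} states that the i.i.d.\ model is admissible with density $p$. The assumed range of $p$ is exactly the one in Theorem~\ref{thm:centered-entry-extension}. The bound then follows directly, with $C(B)=C_{\text{\tiny\ref*{thm:centered-entry-extension}}}(B)$.

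\emph{Item (2).} The fixed-column-degree model is admissible with density $p=d/k$. Since $d\ge\lceil\log k\rceil\ge\log k$, we get $p\ge(\log k)/k$. Since $d\le k$, we get $p\le1$. Also $kp=d$, so the theorem gives $\|\Pi U_V\|\le C(B)\sqrt d$ with failure probability at most $k^{-B}$.

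\emph{Item (3).} For SparseStack$^{\mathsf T}$ with $s\mid k$, the density is $p=s/k$. The conditions $\lceil\log k\rceil\le s\le k$ again place $p$ in the admissible range, and $kp=s$ gives the stated bound.

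There is essentially no obstacle. The one point to be careful about is that the negative-association and marginal claims really are supplied by Proposition~\ref{prop:basic-admissible-support-laws}, and that the mask is independent of the $\xi_{ui}$ in each definition; both are built into Definitions~\ref{def:iid-sparse-entry-model}--\ref{def:transposed-sparsestack-model}. The constant $C(B)$ is the same in all three items.
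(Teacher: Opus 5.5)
Your proposal is correct and follows the paper's proof: admissibility with densities $p$, $d/k$, and $s/k$ comes from Proposition~\ref{prop:basic-admissible-support-laws}, you check that each density is at least $(\log k)/k$, and Theorem~\ref{thm:centered-entry-extension} then gives the bounds. Writing $\sqrt{kp}$ as $\sqrt d$ or $\sqrt s$ is exactly how the stated forms arise.
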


\begin{proof}
Proposition~\ref{prop:basic-admissible-support-laws} gives admissible support
density $p$ in the first case, $p=d/k$ in the second, and $p=s/k$ in the
third.  Each density is at least $(\log k)/k$, so
Theorem~\ref{thm:centered-entry-extension} gives the three conclusions.
\end{proof}

\subsection{The hybrid leverage-score argument}

We next remove the ambient-dimension restriction for the concrete models by
splitting the subspace according to its coordinate leverage scores.
The next proposition is a direct application of \cite{BrailovskayaVanHandel24}:

\begin{proposition}
\label{prop:low-leverage-universality}
For every $A\ge1$ and $B\ge1$ there are constants
$c_0=c_0(A,B)$ and $C=C(A,B)$ with the following property.  Let
$k\ge r\ge3$, let $p\ge(\log k)/k$, let
$U:\mathbb R^r\to\mathbb R^n$ be an isometry, and denote by
$U^*:\mathbb R^n\to\mathbb R^r$ its adjoint (equivalently, its transpose).
Set
\[
    q:=\left\lceil c_0\log(e k)\right\rceil,
    \qquad
    \tau:=q^{-4}.
\]
Let $I\subset[n]$ satisfy $\norm{U^*e_i}_2^2\le\tau$ for every $i\in I$.
Suppose that $(g_i)_{i\in I}$ are independent centered random vectors in
$\mathbb R^k$ such that, for some $0\le v\le p$,
\[
    \mathbb E g_i g_i^*=vI_k,
    \qquad
    \left(\mathbb E\norm{g_i}_2^{2q}\right)^{1/(2q)}
    \le A\sqrt{kp}
    \quad (i\in I).
\]
Then
\[
    \mathbb P\left\{
        \left\|\sum_{i\in I}g_i(U^*e_i)^*\right\|>C\sqrt{kp}
    \right\}
    \le k^{-B}.
\]
\end{proposition}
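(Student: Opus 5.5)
We will write $X:=\sum_{i\in I}X_i$ with $X_i:=g_i(U^*e_i)^*$, a $k\times r$ matrix, and apply the universality inequality of \cite{BrailovskayaVanHandel24}. Its form relevant here is a tail bound for sums of independent centered random matrices. It states that $\|X\|\le \|G\|$-type quantities $+ C\sigma_*(X)\sqrt{t}+ C R_q(X)^{1/2}\sigma(X)^{1/2}q^{3/4}+CR_q(X)q$, with failure probability $e^{-t}$. Here $G$ is the Gaussian model with the same covariance, and
\[
R_q(X):=\Bigl(\sum_i\mathbb E\|X_i\|^{2q}\Bigr)^{1/(2q)}.
\]
We will use its self-adjoint dilation form, or equivalently the rectangular version. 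The plan is to compute the three ingredients, $\sigma(X)$, $\sigma_*(X)$ and $R_q(X)$, and to check that each of the resulting terms is $O(\sqrt{kp})$ once $q\asymp\log k$ and the leverage threshold is $\tau=q^{-4}$.

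\textbf{Covariance parameters.} Since $\mathbb Eg_ig_i^*=vI_k$, we have
\[
\mathbb E XX^*=\sum_i\|U^*e_i\|^2vI_k\preceq vI_k\cdot\mathrm{tr}(U^*U)=vr\,I_k.
\]
We also have $\mathbb E X^*X=\sum_i\mathbb E\|g_i\|^2\,(U^*e_i)(U^*e_i)^*=vk\sum_{i\in I}(U^*e_i)(U^*e_i)^*\preceq vk I_r$, by the Parseval identity $\sum_i U^*e_ie_i^*U=I_r$. Hence $\sigma(X)\le\sqrt{v\max(k,r)}\le\sqrt{kp}$. The weak variance satisfies $\sigma_*(X)^2=\sup_{\|a\|=\|b\|=1}\sum_i v\langle a,U^*e_i\rangle^2\le v\le p$. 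The Gaussian model $G=\sum_i\gamma_i$, with $\gamma_i$ Gaussian of the same covariance, has the law of $\sqrt v$ times a $k\times r$ matrix $W$ with i.i.d. $N(0,1)$ entries. Indeed, its entries are jointly Gaussian with covariance $v\,\delta_{uu'}\langle U^*e_{\cdot},\cdot\rangle$ summed against the resolution of the identity, restricted to $I$, so it is dominated in the PSD sense. We then get $\mathbb E\|G\|\le\sqrt v(\sqrt k+\sqrt r)\le2\sqrt{kp}$. Gaussian concentration at level $t=B\log k$ adds only $\sigma_*\sqrt{t}\le\sqrt{pB\log k}\le\sqrt{B kp}$.

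\textbf{The $R_q$ term; this is the main obstacle.} We have $\|X_i\|=\|g_i\|_2\|U^*e_i\|_2$, so
\[
R_q^{2q}=\sum_i\|U^*e_i\|^{2q}\,\mathbb E\|g_i\|^{2q}\le (A\sqrt{kp})^{2q}\,\tau^{q-1}\sum_i\|U^*e_i\|^2\le (A\sqrt{kp})^{2q}\tau^{q-1}r.
\]
It follows that $R_q\le A\sqrt{kp}\,\tau^{1/2-1/(2q)}r^{1/(2q)}$. With $q\ge c_0\log(ek)$ we have $r^{1/(2q)}\le e^{1/(2c_0)}=O(1)$, and $\tau^{1/2}=q^{-2}$. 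Therefore $R_q q\lesssim A\sqrt{kp}\,q^{-1+o(1)}\lesssim\sqrt{kp}$. The mixed term is $R_q^{1/2}\sigma^{1/2}q^{3/4}\lesssim\sqrt{kp}\,q^{-1}q^{3/4}\lesssim\sqrt{kp}$. This is exactly why $\tau=q^{-4}$ is chosen: the exponent $4$ absorbs the $q^{3/4}$ and $q$ losses in the universality bound. The delicate points are that the inequality must be applied in the rectangular (or dilated Hermitian) form, and that the moment-based version with tail parameter $t\asymp q$ must be used. Taking $t=B\log k\le q$ after choosing $c_0=c_0(B)$ large gives failure probability at most $k^{-B}$.

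\textbf{Conclusion.} Summing the bounds gives $\|X\|\le C(A,B)\sqrt{kp}$ with probability at least $1-k^{-B}$. The constant $C(A,B)$ collects $2$, $\sqrt B$, and powers of $A$.
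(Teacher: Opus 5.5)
Your proposal is correct and follows essentially the same route as the paper: the Hermitian dilation, the Brailovskaya--van Handel universality comparison, the leverage computation $R\lesssim A\sqrt{kp\tau}\,(r/\tau)^{1/(2q)}$, the Gaussian comparator bounded by $\sqrt v(\sqrt k+\sqrt r)$ plus concentration, and the choices $q\asymp\log k$, $\tau=q^{-4}$. The one imprecision is the tail form you quote: it is stated for almost-surely bounded summands, whereas $g_i$ here is only moment-bounded, and it would also carry a dimension factor $k+r$ in the failure probability; the paper instead uses the trace-moment comparison at order $2q$ followed by Markov, which is exactly the ``moment-based version'' you allude to.
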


\begin{proof}
For a matrix $Y\in\mathbb R^{k\times r}$, write
\[
    \operatorname{dil}(Y)
    :=
    \begin{pmatrix}
        0&Y\\
        Y^*&0
    \end{pmatrix}
    \in\mathbb R^{(k+r)\times(k+r)}.
\]
Put
\[
    Y_I:=\sum_{i\in I}g_i(U^*e_i)^*,
    \qquad
    X:=\operatorname{dil}(Y_I)=\sum_{i\in I}Z_i,
\]
where
\[
    Z_i:=
    \begin{pmatrix}
        0&g_i(U^*e_i)^*\\
        U^*e_i\,g_i^*&0
    \end{pmatrix}.
\]
Applying \cite[Theorem~2.9]{BrailovskayaVanHandel24} with
its moment parameters $p_{\rm BvH}=q$ and $q_{\rm BvH}=2q$, we obtain
\begin{equation}
\label{eq:low-leverage-bvh-moment-comparison}
    \left|
    \left(\mathbb E\operatorname{tr}|X|^{2q}\right)^{1/(2q)}
    -
    \left(\mathbb E\operatorname{tr}|X_G|^{2q}\right)^{1/(2q)}
    \right|
    \le Cq^2R_{2q}^{\rm BvH}(X).
\end{equation}
Here $X_G$ is the centered Gaussian matrix with the same covariance as $X$,
and
\[
    R_{2q}^{\rm BvH}(X)
    :=\left(\sum_{i\in I}
        \mathbb E\operatorname{tr}|Z_i|^{2q}\right)^{1/(2q)}.
\]
Writing $\ell_i=\norm{U^*e_i}_2^2$ and using
$\operatorname{tr}|Z_i|^{2q}
=2\norm{g_i}_2^{2q}\ell_i^q$ and $\sum_i\ell_i=r$, we obtain
\begin{align*}
    R_{2q}^{\rm BvH}(X)
    &\le 2^{1/(2q)}A\sqrt{kp}
        \left(\sum_{i\in I}\ell_i^q\right)^{1/(2q)} \\
    &\le 2^{1/(2q)}A\sqrt{kp\tau}
        \left(\frac r\tau\right)^{1/(2q)}
     \le C A\sqrt{kp\tau}.
\end{align*}
The last step uses $r/\tau\le kq^4$ and
$q\ge c_0\log(e k)$.

We next identify the Gaussian comparator required by the cited theorem.  It is
\[
    X_G=\operatorname{dil}\bigl(\sqrt v\,GP_IU\bigr),
\]
where $G$ is a $k\times n$ matrix with independent standard Gaussian entries
and $P_I$ is the coordinate projection onto $\mathbb R^I$.  Indeed, if
$Y_I=(Y_{ab})$, independence and centering of the $g_i$ give
\[
    \mathbb E Y_{ab}Y_{cd}
    =v\,\mathbf 1_{\{a=c\}}(U^*P_IU)_{bd},
\]
while independence of the entries of $G$ gives
\[
    \mathbb E
    (\sqrt v\,GP_IU)_{ab}(\sqrt v\,GP_IU)_{cd}
    =
    v\,\mathbf 1_{\{a=c\}}
    \sum_{i\in I}U_{ib}U_{id}
    =
    v\,\mathbf 1_{\{a=c\}}(U^*P_IU)_{bd}.
\]
Thus $Y_I$ and $\sqrt v\,GP_IU$ have the same mean and entrywise covariance,
and their Hermitizations do as well.

Put $B_I:=P_IU$.  Since $U$ is an isometry,
\[
    \norm{B_I}\le1,
    \qquad
    \norm{B_I}_{\rm F}^2
    =\operatorname{tr}(U^*P_IU)
    \le r.
\]
Then \cite[Proposition~10.1]{HalkoMartinssonTropp11} (see also \cite{Gordon85})
gives
\[
    \mathbb E\norm{GB_I}
    \le
    \sqrt{k}\,\norm{B_I}+\norm{B_I}_{\rm F}
    \le \sqrt{k}+\sqrt r.
\]
Moreover, the map $G\mapsto\norm{GB_I}$ is
$\norm{B_I}$-Lipschitz with respect to the Frobenius norm, since
\[
    \bigl|\norm{GB_I}-\norm{G'B_I}\bigr|
    \le\norm{(G-G')B_I}
    \le\norm{B_I}\norm{G-G'}_{\rm F}.
\]
The Gaussian concentration inequality for Lipschitz functions
(see, for example, \cite[Proposition~10.3]{HalkoMartinssonTropp11}), followed by integration of
its tail, therefore yields
\[
    \left(\mathbb E\norm{GB_I}^{2q}\right)^{1/(2q)}
    \le
    \mathbb E\norm{GB_I}+C\norm{B_I}\sqrt q
    \le C(\sqrt k+\sqrt r+\sqrt q).
\]
Since $\operatorname{tr}|M|^{2q}\le(k+r)\norm M^{2q}$ for every
$(k+r)\times(k+r)$ matrix $M$, and
$\norm{\operatorname{dil}(GB_I)}=\norm{GB_I}$, we conclude that
\[
    \left(\mathbb E\operatorname{tr}|X_G|^{2q}\right)^{1/(2q)}
    \le
    (k+r)^{1/(2q)}
    \sqrt v\left(\mathbb E\norm{GB_I}^{2q}\right)^{1/(2q)}
    \le C\sqrt p\,(\sqrt k+\sqrt r+\sqrt q)
    \le C\sqrt{kp}.
\]
Combining this estimate and the bound on $R_{2q}^{\rm BvH}(X)$ with
\eqref{eq:low-leverage-bvh-moment-comparison} gives
\[
    \left(\mathbb E\operatorname{tr}|X|^{2q}\right)^{1/(2q)}
    \le C(A)\sqrt{kp},
\]
because $q^2\sqrt\tau=1$.  Since
$\norm{Y_I}^{2q}=\norm X^{2q}\le\operatorname{tr}|X|^{2q}$,
Markov's inequality, after choosing
$c_0=c_0(A,B)$ and increasing $C=C(A,B)$, proves the claim.
\end{proof}

\begin{proof}[Proof of Corollary~\ref{rem:hybrid-leverage-score-argument}]
We prove the i.i.d. assertion first, using failure exponent $12$.  Let
$U:\mathbb R^r\to\mathbb R^n$ be an isometric embedding with range $V$, and
write
\[
    \ell_i:=\norm{U^*e_i}_2^2,
    \qquad
    \sum_{i=1}^n\ell_i=r.
\]
Choose the sufficiently large absolute constant $c_0$ supplied by
Proposition~\ref{prop:low-leverage-universality} for an absolute moment
constant and failure exponent $12$, and set
\[
    q:=\left\lceil c_0\log(ek)\right\rceil,
    \qquad
    \tau:=q^{-4},
\]
and split the coordinates into
\[
    H:=\{i:\ell_i>\tau\},
    \qquad
    H^{\mathsf c}:=\{i:\ell_i\le\tau\}.
\]
Then $\abs H\le r/\tau=rq^4$.

We first treat the large-leverage coordinates.  Put
\[
    N:=\max\{k,r,\abs H\},
    \qquad
    R:=\max\left\{r,\left\lceil N^{1/10}\right\rceil\right\}.
\]
If $N>n$, extend $U$ to an isometry into $\mathbb R^N$ by adjoining zero
rows, and extend $\Pi$ by adjoining independent columns with the same law as
its original columns.  If $N\le n$, no extension is needed.  In either case,
we may choose a coordinate set $S$ of cardinality $N$ containing $H$; the
restriction $\Pi_S$ follows the same concrete sparse model as $\Pi$.

We record the parameter verification needed to apply
Theorem~\ref{thm:centered-entry-extension}.  After increasing the universal
constant in the assumption of the corollary, one has
\begin{equation}
\label{eq:hybrid-padded-parameters}
    N\ge k\ge R\ge r,
    \qquad
    N\le R^{10},
    \qquad
    k\ge R\log^2\left(\frac{eN}{R}\right).
\end{equation}
Indeed, $k\ge r$ and
\[
    N\le\max\{k,rq^4\}\le kq^4.
\]
If $R=r$, then
\[
    \log\left(\frac{eN}{r}\right)
    \le
    C\left[
        \log\left(\frac{ek}{r}\right)+\log\log(er)
    \right].
\]
The assumption
$k/r\ge C(\log\log r)^2$ gives
$k/r\ge C_1\log^2\log(er)$; further,
$k/r\ge C_1\log^2(ek/r)$.  Thus
$k/r\ge\log^2(eN/r)$ after adjusting constants.  If $R>r$, then
$R\le2N^{1/10}$ and hence
\[
\begin{aligned}
    R\log^2\left(\frac{eN}{R}\right)
    \le
    C(kq^4)^{1/10}\log^2(ekq^4) 
    \le k,
\end{aligned}
\]
and \eqref{eq:hybrid-padded-parameters} is certified.
Let $W_H:=\operatorname{range}(P_HU)$.  Identify $\mathbb R^S$ with its
coordinate subspace in the ambient space.  Since
$\dim W_H\le r\le R\le N$, enlarge $W_H$ inside $\mathbb R^S$ to an
$R$-dimensional subspace $\widetilde W_H$, and let
$U_{\widetilde W_H}:\mathbb R^R\to\mathbb R^S$ be an isometry onto it.
Then
\[
    \norm{\Pi P_HU}
        \le
    \norm{\Pi_SU_{\widetilde W_H}}.
\]
Theorem~\ref{thm:centered-entry-extension}, with dimension parameter $R$ and
failure exponent $12$, applies by
\eqref{eq:hybrid-padded-parameters} and gives
\begin{equation}
\label{eq:hybrid-large-leverage-bound}
    \norm{\Pi P_HU}\le C\sqrt{kp}
\end{equation}
outside an event of probability at most $k^{-12}$.

It remains to control the small-leverage coordinates.  Set
\[
    Y:=\Pi P_{H^{\mathsf c}}U
      =\sum_{i\in H^{\mathsf c}}\Pi_{\cdot\, i}(U^*e_i)^*.
\]
The columns $\Pi_{\cdot\, i}$ are independent and centered, and their
covariance matrices equal $vI_k$, where
$v=p\mathbb E\xi^2\le p$.  Moreover,
\begin{equation}
\label{eq:hybrid-column-moment}
    \left(\mathbb E\norm{\Pi_{\cdot\, i}}_2^{2q}\right)^{1/(2q)}
    \le C\sqrt{kp}.
\end{equation}
Indeed, the square of the column norm is bounded by a
$\operatorname{Bin}(k,p)$ random variable, while
\[
    \norm{\operatorname{Bin}(k,p)}_{L_q}
    \le C(kp+q)
    \le Ckp.
\]
Here we used $kp\ge\log k$ and $q\le C\log(ek)$.
Thus Proposition~\ref{prop:low-leverage-universality}, with
$I=H^{\mathsf c}$ and $B=12$, yields
\begin{equation}
\label{eq:hybrid-small-leverage-bound}
    \mathbb P\left\{\norm{Y}>C\sqrt{kp}\right\}
    \le k^{-12}.
\end{equation}

Finally,
\[
    \Pi U=\Pi P_HU+\Pi P_{H^{\mathsf c}}U.
\]
Combining \eqref{eq:hybrid-large-leverage-bound} and
\eqref{eq:hybrid-small-leverage-bound}, and using
$2k^{-12}\le r^{-10}$ for $k\ge r\ge3$, proves the i.i.d.
assertion.

For the fixed-column-degree model, the columns are independent,
$\mathbb E\Pi_{\cdot\,i}\Pi_{\cdot\,i}^*
=(d/k)\mathbb E\xi^2\,I_k$ with $p=d/k$, and
$\|\Pi_{\cdot\,i}\|_2^2\le d=kp$ deterministically.  Thus
\eqref{eq:hybrid-column-moment} holds.  Restrictions to a coordinate set and
padding by independent columns preserve the model, so the large-leverage
argument and Proposition~\ref{prop:low-leverage-universality} apply unchanged.

For the SparseStack$^{\mathsf T}$ model, columns are likewise independent.
Every row is selected with probability $s/k=p$, distinct blocks contribute
no common row, and the independent centered entry variables eliminate off-diagonal
covariances; hence
$\mathbb E\Pi_{\cdot\,i}\Pi_{\cdot\,i}^*
=p\mathbb E\xi^2\,I_k$.  Also
$\|\Pi_{\cdot\,i}\|_2^2\le s=kp$ deterministically, so
\eqref{eq:hybrid-column-moment} holds.  Restrictions and independent padding
again preserve the model, so both parts of the argument apply.  Combining the
two bounds proves the remaining assertions.  The independent-column structure
is essential only for Proposition~\ref{prop:low-leverage-universality}, so the
corollary is not asserted for an arbitrary negatively associated mask.
\end{proof}

\begin{remark}[Constant-distortion oblivious subspace embedding]
Consider the Rademacher specializations of the i.i.d. sparse-entry and
unnormalized SparseStack$^{\mathsf T}$ models in
Corollary~\ref{cor:concrete-sparse-models}.  Write $p$ for the Bernoulli
parameter in the first model and set $p=s/k$ in the second.  Combining the
upper-edge estimates, together with the ambient-dimension reduction of
Corollary~\ref{rem:hybrid-leverage-score-argument}, with Tropp's lower-edge
estimate for the i.i.d. model and its fixed-sparsity extension for
SparseStack$^{\mathsf T}$ \cite[Theorem~6.3 and Remark~6.5]{Tropp26}, gives
the following consequence.  There are universal constants $c_0,C_0,C>0$
such that, if
\[
    p\ge C\frac{\log k}{k}
    \qquad\text{and}\qquad
    k\ge Cr\bigl(\log\log r\bigr)^2,
\]
then, for every fixed $r$-dimensional subspace $V\subset\mathbb R^n$, with
high probability,
\[
    c_0\norm{x}_2
    \le \frac{1}{\sqrt{kp}}\norm{\Pi x}_2
    \le C_0\norm{x}_2
    \qquad\text{for every }x\in V.
\]
Thus $(kp)^{-1/2}\Pi$ in the i.i.d. model, and equivalently
$s^{-1/2}\Pi$ in the SparseStack$^{\mathsf T}$ model, is a
constant-distortion oblivious subspace embedding.
\end{remark}

\appendix

\section{Proof of the standard envelope summation lemma}
\label{app:ks-classical-envelope-summation}

We briefly recall the setup of Lemma~\ref{lem:ks-classical-envelope-summation}.
The parameters $k\ge d\ge3$ and $0<p\le1$ determine the cutoff
$\rho=\sqrt{kp}/k$, while $S$ bounds the total mass of the two dyadic level
sequences.  The finitely supported sequences $(s_j)$ and $(t_\ell)$ represent
the corresponding level sizes; they satisfy $s_j\le4d$ and $t_\ell\le k$, and
their normalized masses $m_j=2^{-2j+2}s_j$ and
$n_\ell=2^{-2\ell+2}t_\ell$ both have total mass at most $S$.  For each
active pair $(j,\ell)$, set
\[
    \mu_{j\ell}:=ps_jt_\ell,
    \qquad
    Q_{j\ell}:=
    s_j\log\frac{4ed}{s_j}
    +t_\ell\log\frac{ek}{t_\ell}.
\]
Thus $\mu_{j\ell}$ is the mean edge-count scale,
$Q_{j\ell}$ is the combined entropy cost, and
$U_{j\ell}^{\rm cl}$ is the mean term plus the smallest of the trivial,
degree, and entropy-sensitive bounds.  The goal is to sum this envelope over
the region $2^{-j-\ell+2}>\rho$.

\begin{proof}[Proof of Lemma~\ref{lem:ks-classical-envelope-summation}]
Define
\[
    \mathcal P
    :=
    \{(j,\ell):j,\ell\ge1,\ s_jt_\ell>0,\
        2^{-j-\ell+2}>\rho\}.
\]

\medskip
\noindent\textbf{Claim (One-sided envelope summation).}
Let $D,\Lambda>0$, and let $E_{j\ell}\ge0$ be supported on
$\mathcal P$.  Assume that for every $(j,\ell)\in\mathcal P$ such that
$E_{j\ell}>e\mu_{j\ell}$ one has
\begin{equation}
\label{eq:one-sided-envelope-hypotheses}
    E_{j\ell}\le Dkp s_j,
    \qquad
    E_{j\ell}\log\frac{E_{j\ell}}{\mu_{j\ell}}
    \le \Lambda\, t_\ell\log\frac{ek}{t_\ell}.
\end{equation}
Then
\[
    \sum_{(j,\ell)\in\mathcal P}
    2^{-j-\ell+2}E_{j\ell}
    \le C(S,D,\Lambda)\sqrt{kp} .
\]

\smallskip
\noindent\emph{Proof of the claim.}
The pairs with $E_{j\ell}\le e\mu_{j\ell}$ contribute at most
\[
    e\sum_{\substack{j,\ell\ge1\\2^{-j-\ell+2}>\rho}}
    2^{-j-\ell+2}ps_jt_\ell
    =
    e\sum_{\substack{j,\ell\ge1\\2^{-j-\ell+2}>\rho}}
    \frac p{2^{-j-\ell+2}}m_jn_\ell
    \le e\frac p\rho
    \left(\sum_{j\ge1}m_j\right)
    \left(\sum_{\ell\ge1}n_\ell\right)
    \le eS^2\sqrt{kp} .
\]
It remains to consider the excess pairs
$\mathcal P_{\rm ex}
:=\{(j,\ell)\in\mathcal P:E_{j\ell}>e\mu_{j\ell}\}$.
For such pairs put
\[
    \tau_{j\ell}:=
    \frac{2^{-j-\ell+2}E_{j\ell}}{\sqrt{pd}\,m_jn_\ell}
    =
    \sqrt{\frac{k}{d}}
    \frac{E_{j\ell}}{\mu_{j\ell}}
    \frac{\rho}{2^{-j-\ell+2}}.
\]
Thus
$2^{-j-\ell+2}E_{j\ell}=\sqrt{pd}\,m_jn_\ell\tau_{j\ell}$.
The excess pairs with $\tau_{j\ell}\le\sqrt{k/d}$ contribute at most
\[
    \sqrt{pd}\sqrt{\frac{k}{d}}
    \left(\sum_{j\ge1}m_j\right)
    \left(\sum_{\ell\ge1}n_\ell\right)
    \le S^2\sqrt{kp} .
\]
It remains to consider pairs with $\tau_{j\ell}>\sqrt{k/d}$.  Write
\[
    W_\ell:=\log\frac{ek}{t_\ell}
    =
    \log\frac{e2^{-2\ell+2}k}{n_\ell}.
\]

The remaining excess pairs, for which $\tau_{j\ell}>\sqrt{k/d}$, are partitioned
into the following four disjoint classes:
\begin{itemize}
\item[(A)] $2^{-\ell+1}<2^{-j+1}/\sqrt{kp}$;
\item[(B)] $2^{-\ell+1}\ge2^{-j+1}/\sqrt{kp}$ and
      $\log(E_{j\ell}/\mu_{j\ell})>W_\ell/4$;
\item[(C)] $2^{-\ell+1}\ge2^{-j+1}/\sqrt{kp}$,
      $\log(E_{j\ell}/\mu_{j\ell})\le W_\ell/4$, and
      $\log(e2^{-2\ell+2}k)\ge-\log n_\ell$;
\item[(D)] $2^{-\ell+1}\ge2^{-j+1}/\sqrt{kp}$,
      $\log(E_{j\ell}/\mu_{j\ell})\le W_\ell/4$, and
      $\log(e2^{-2\ell+2}k)<-\log n_\ell$.
\end{itemize}

First consider class \emph{(A)}, in which
$2^{-\ell+1}<2^{-j+1}/\sqrt{kp}$.  For fixed $j$, the first bound in
\eqref{eq:one-sided-envelope-hypotheses} gives
\[
\begin{aligned}
    \sum_{\substack{\ell\ge1:\,(j,\ell)\in\mathcal P_{\rm ex}\\
        \tau_{j\ell}>\sqrt{k/d},\ 
        2^{-\ell+1}<2^{-j+1}/\sqrt{kp}}}
    2^{-j-\ell+2}E_{j\ell}
    &\le
    Dkp\,2^{-j+1}s_j
    \sum_{\substack{\ell\ge1\\
        2^{-\ell+1}<2^{-j+1}/\sqrt{kp}}}2^{-\ell+1} \\
    &\le
    CD\sqrt{kp}\,m_j .
\end{aligned}
\]
After summing in $j$, this class contributes at most $CDS\sqrt{kp}$.

Next consider class \emph{(B)}.  Here
\[
    \log\frac{E_{j\ell}}{\mu_{j\ell}}>\frac14 W_\ell,
\]
so the second bound in \eqref{eq:one-sided-envelope-hypotheses} gives
\[
    m_j\tau_{j\ell}
    =
    \frac{2^{-j+1}E_{j\ell}}
    {\sqrt{pd}\,2^{-\ell+1}t_\ell}
    \le
    4\Lambda\frac{2^{-j+1}}{\sqrt{pd}\,2^{-\ell+1}}.
\]
Since the degree case does not occur,
$2^{-j+1}\le\sqrt{kp}\,2^{-\ell+1}$.
Thus, for fixed $\ell$,
\[
    \sum_{\substack{j\ge1:\,(j,\ell)\in\mathcal P_{\rm ex}\\
        \tau_{j\ell}>\sqrt{k/d},\ 
        2^{-j+1}\le\sqrt{kp}\,2^{-\ell+1}\\
        \log(E_{j\ell}/\mu_{j\ell})>W_\ell/4}}
    m_j\tau_{j\ell}
    \le
    4\Lambda
    \sum_{\substack{j\ge1\\
        2^{-j+1}\le\sqrt{kp}\,2^{-\ell+1}}}
    \frac{2^{-j+1}}{\sqrt{pd}\,2^{-\ell+1}}
    \le C\Lambda\sqrt{\frac{k}{d}} .
\]
Multiplying by $\sqrt{pd}\,n_\ell$ and summing in $\ell$, this case contributes
at most $C\Lambda S\sqrt{kp}$.

For class \emph{(C)} we have
\begin{equation}
\label{eq:class-c-small-log}
    \log\frac{E_{j\ell}}{\mu_{j\ell}}\le\frac14W_\ell .
\end{equation}
Moreover,
\[
    \log(e2^{-2\ell+2}k)\ge-\log n_\ell .
\]
Then $W_\ell\le2\log(e2^{-2\ell+2}k)$, and
\eqref{eq:class-c-small-log} gives
\[
    \frac{E_{j\ell}}{\mu_{j\ell}}
    \le\sqrt e\,2^{-\ell+1}\sqrt k .
\]
Since $\tau_{j\ell}>\sqrt{k/d}$, equivalently
$(E_{j\ell}/\mu_{j\ell})\rho/2^{-j-\ell+2}>1$, this implies
\[
    2^{-j+1}<\sqrt e\,\sqrt p .
\]
Moreover, the present case and $n_\ell\le2^{-2\ell+2}k$ imply that
$2^{-\ell+1}\sqrt k$ is bounded
below by an absolute constant, and hence
$W_\ell\le C2^{-\ell+1}\sqrt k$.  Since
$E_{j\ell}>e\mu_{j\ell}$, the logarithm in the second bound of
\eqref{eq:one-sided-envelope-hypotheses} is at least $1$, so that bound gives
\[
\begin{aligned}
    m_j\tau_{j\ell}
    =
    \frac{2^{-j+1}E_{j\ell}}
    {\sqrt{pd}\,2^{-\ell+1}t_\ell} 
    \le
    \Lambda\frac{2^{-j+1}W_\ell}
    {\sqrt{pd}\,2^{-\ell+1}}
    \le
    C\Lambda\frac{2^{-j+1}\sqrt k}{\sqrt{pd}}.
\end{aligned}
\]
For fixed $\ell$,
\[
    \sum_{\substack{j\ge1:\,(j,\ell)\in\mathcal P_{\rm ex}\\
        \tau_{j\ell}>\sqrt{k/d},\ 
        \log(E_{j\ell}/\mu_{j\ell})\le W_\ell/4\\
        \log(e2^{-2\ell+2}k)\ge-\log n_\ell,\ 
        2^{-j+1}<\sqrt e\,\sqrt p}}
    m_j\tau_{j\ell}
    \le
    C\Lambda
    \sum_{\substack{j\ge1\\2^{-j+1}<\sqrt e\,\sqrt p}}
    \frac{2^{-j+1}\sqrt k}{\sqrt{pd}}
    \le C\Lambda\sqrt{\frac{k}{d}} .
\]
After summing against $\sqrt{pd}\,n_\ell$, this case contributes at most
$C\Lambda S\sqrt{kp}$.

Finally, in class \emph{(D)},
\[
    \log(e2^{-2\ell+2}k)<-\log n_\ell .
\]
Let $A_\ell:=\log(e2^{-2\ell+2}k)$.  Since
$n_\ell\le2^{-2\ell+2}k$, the present case would be impossible if
$n_\ell\ge1$; hence $-\log n_\ell>0$.  Moreover,
$A_\ell<-\log n_\ell$ and $W_\ell=A_\ell-\log n_\ell$, so
\[
    \frac14W_\ell<-\frac12\log n_\ell\le-\log n_\ell .
\]
Together with \eqref{eq:class-c-small-log}, which also holds in class
\emph{(D)}, this gives
\[
    \frac{E_{j\ell}}{\mu_{j\ell}}\le n_\ell^{-1}.
\]
Consequently
\[
\begin{aligned}
    n_\ell\tau_{j\ell}
    =
    \sqrt{\frac{k}{d}}\,
    n_\ell\frac{E_{j\ell}}{\mu_{j\ell}}
    \frac{\rho}{2^{-j-\ell+2}}
    \le
    \sqrt{\frac{k}{d}}\frac{\rho}{2^{-j-\ell+2}}.
\end{aligned}
\]
For fixed $j$, using the heavy condition $2^{-j-\ell+2}>\rho$,
\[
\begin{aligned}
    \sum_{\substack{\ell\ge1:\,(j,\ell)\in\mathcal P_{\rm ex}\\
        \tau_{j\ell}>\sqrt{k/d},\ 
        \log(E_{j\ell}/\mu_{j\ell})\le W_\ell/4\\
        \log(e2^{-2\ell+2}k)<-\log n_\ell}}
    n_\ell\tau_{j\ell}
    &\le
    \sqrt{\frac{k}{d}}\rho\,2^{j-1}
    \sum_{\substack{\ell\ge1\\2^{-j-\ell+2}>\rho}}2^{\ell-1}
    \le C\sqrt{\frac{k}{d}} .
\end{aligned}
\]
Multiplying by $\sqrt{pd}\,m_j$ and summing in $j$ completes the proof of
the claim. \hfill$\square$

\medskip
\noindent\textbf{Claim (Transposed envelope summation).}
Let $D,\Lambda>0$, and let $E_{j\ell}\ge0$ be supported on
$\mathcal P$.  Assume that for every $(j,\ell)\in\mathcal P$ such that
$E_{j\ell}>e\mu_{j\ell}$ one has
\[
    E_{j\ell}\le Dpd t_\ell,
    \qquad
    E_{j\ell}\log\frac{E_{j\ell}}{\mu_{j\ell}}
    \le
    \Lambda\, s_j\log\frac{4ed}{s_j}.
\]
Then
\[
    \sum_{(j,\ell)\in\mathcal P}
    2^{-j-\ell+2}E_{j\ell}
    \le C(S,D,\Lambda)\sqrt{kp}.
\]

The proof is identical to that of the preceding claim after transposing the
two level sequences and using $d\le k$.

We now apply the two envelope-summation claims to complete the proof of the lemma.
Recall that, for $(j,\ell)\in\mathcal P$,
\[
    U_{j\ell}^{\rm cl}
    =
    \mu_{j\ell}
    +
    \min\left\{
        s_jt_\ell,\,
        kp s_j,\,
        pd t_\ell,\,
        \frac{Q_{j\ell}}{\log(e+Q_{j\ell}/\mu_{j\ell})}
    \right\}.
\]

The mean term in
$U_{j\ell}^{\rm cl}$ contributes at most
\[
    \sum_{(j,\ell)\in\mathcal P}
    2^{-j-\ell+2}\mu_{j\ell}
    \le S^2\sqrt{kp} .
\]
For $(j,\ell)\in\mathcal P$, let
\[
    E_{j\ell}:=
    \min\left\{
        s_jt_\ell,\,
        kp s_j,\,
        pd t_\ell,\,
        \frac{Q_{j\ell}}{\log(e+Q_{j\ell}/\mu_{j\ell})}
    \right\}.
\]
It remains to bound
$\sum_{(j,\ell)\in\mathcal P}2^{-j-\ell+2}E_{j\ell}$.

Put
\[
    A_{j\ell}:=s_j\log\frac{4ed}{s_j},
    \qquad
    B_{j\ell}:=t_\ell\log\frac{ek}{t_\ell},
    \qquad
    Q_{j\ell}=A_{j\ell}+B_{j\ell}.
\]
Writing $x=Q_{j\ell}/\mu_{j\ell}$, the last term in the minimum gives
$E_{j\ell}/\mu_{j\ell}\le x/\log(e+x)\le x$, and hence
\[
    E_{j\ell}\log\left(e+\frac{E_{j\ell}}{\mu_{j\ell}}\right)
    \le
    \frac{Q_{j\ell}}{\log(e+x)}\log(e+x)
    =Q_{j\ell}.
\]
Partition $\mathcal P=\mathcal P_1\sqcup\mathcal P_2$ according as
$A_{j\ell}\le B_{j\ell}$ or $A_{j\ell}>B_{j\ell}$, and set
$E_{j\ell}^{(q)}:=E_{j\ell}\mathbf 1_{\{(j,\ell)\in\mathcal P_q\}}$.
For excess pairs in $\mathcal P_1$,
\[
    E_{j\ell}^{(1)}\le kp s_j,
    \qquad
    E_{j\ell}^{(1)}\log\frac{E_{j\ell}^{(1)}}{\mu_{j\ell}}
    \le Q_{j\ell}\le2B_{j\ell},
\]
whereas for excess pairs in $\mathcal P_2$,
\[
    E_{j\ell}^{(2)}\le pd t_\ell,
    \qquad
    E_{j\ell}^{(2)}\log\frac{E_{j\ell}^{(2)}}{\mu_{j\ell}}
    \le Q_{j\ell}<2A_{j\ell}.
\]
The one-sided and transposed envelope-summation claims now yield
\[
    \sum_{(j,\ell)\in\mathcal P}
    2^{-j-\ell+2}E_{j\ell}
    =
    \sum_{q=1}^2\sum_{(j,\ell)\in\mathcal P}
    2^{-j-\ell+2}E_{j\ell}^{(q)}
    \le C(S)\sqrt{kp}.
\]
Together with the mean-term estimate, this proves the lemma.
\end{proof}

\end{document}